\begin{document}

\tikzstyle{decision} = [diamond, draw, fill=gray!20, 
    text width=4.5em, text badly centered, node distance=3cm, inner sep=0pt]
\tikzstyle{block} = [rectangle, draw, fill=gray!20, 
    text width=10em, text centered, rounded corners, minimum height=3em]
\tikzstyle{line} = [draw, -latex']
\tikzstyle{cloud} = [draw, ellipse,fill=gray!20, node distance=3cm,
    minimum height=2em]

\newtheorem{theorem}{Theorem}[section]
\newtheorem{prop}[theorem]{Proposition}
\newtheorem{assume}[theorem]{Assumption}
\newtheorem{lemma}[theorem]{Lemma}
\newtheorem{cor}[theorem]{Corollary}
\newtheorem{definition}[theorem]{Definition}
\newtheorem{conj}[theorem]{Conjecture}
\newtheorem{claim}[theorem]{Claim}
\newtheorem{qn}[theorem]{Question}
\newtheorem{defn}[theorem]{Definition}
\newtheorem{defth}[theorem]{Definition-Theorem}
\newtheorem{obs}[theorem]{Observation}
\newtheorem{rmk}[theorem]{Remark}
\newtheorem{ans}[theorem]{Answers}
\newtheorem{slogan}[theorem]{Slogan}
\newtheorem{corollary}[theorem]{Corollary}
\newtheorem{proposition}[theorem]{Proposition}
\newtheorem{observation}[theorem]{Observation}
\newtheorem{property}{Property}[subsection]
\newtheorem{remark}[theorem]{Remark}
\newtheorem{example}{Example}[section]

\newtheorem{question}{Question}
\newtheorem{maintheorem}{Theorem}
\newtheorem{maincoro}[maintheorem]{Corollary}
\newtheorem{mainprop}[maintheorem]{Proposition}

\newcommand{\bluecomment}[1]{\textcolor{blue}{#1}}
\newcommand{\boundary}{\partial}
\newcommand{\hhat}{\widehat}
\newcommand{\C}{{\mathbb C}}
\newcommand{\B}{{\mathbb B}}
\newcommand{\Ga}{{\Gamma}}
\newcommand{\G}{{\Gamma}}
\newcommand{\s}{{\Sigma}}
\newcommand{\PSL}{{PSL_2 (\mathbb{C})}}
\newcommand{\pslc}{{PSL_2 (\mathbb{C})}}
\newcommand{\pslr}{{PSL_2 (\mathbb{R})}}
\newcommand{\Gr}{{\mathcal G}}
\newcommand{\integers}{{\mathbb Z}}
\newcommand{\natls}{{\mathbb N}}
\newcommand{\ratls}{{\mathbb Q}}
\newcommand{\reals}{{\mathbb R}}
\newcommand{\proj}{{\mathbb P}}
\newcommand{\lhp}{{\mathbb L}}
\newcommand{\tube}{{\mathbb T}}
\newcommand{\cusp}{{\mathbb P}}
\newcommand\AAA{{\mathcal A}}
\newcommand\HHH{{\mathbb H}}
\newcommand\BB{{\mathcal B}}
\newcommand\CC{{\mathcal C}}
\newcommand\DD{{\mathcal D}}
\newcommand\EE{{\mathcal E}}
\newcommand\FF{{\mathcal F}}
\newcommand\GG{{\mathcal G}}
\newcommand\HH{{\mathcal H}}
\newcommand\II{{\mathcal I}}
\newcommand\JJ{{\mathcal J}}
\newcommand\KK{{\mathcal K}}
\newcommand\LL{{\mathcal L}}
\newcommand\MM{{\mathcal M}}
\newcommand\NN{{\mathcal N}}
\newcommand\OO{{\mathcal O}}
\newcommand\PP{{\mathcal P}}
\newcommand\QQ{{\mathcal Q}}
\newcommand\RR{{\mathcal R}}
\newcommand\SSS{{\mathcal S}}
\newcommand\TT{{\mathcal T}}
\newcommand\UU{{\mathcal U}}
\newcommand\VV{{\mathcal V}}
\newcommand\WW{{\mathcal W}}
\newcommand\XX{{\mathcal X}}
\newcommand\YY{{\mathcal Y}}
\newcommand\ZZ{{\mathcal Z}}
\newcommand{\iid}{{i.i.d.\ }}
	\renewcommand{\ae}{{a.e.\ }}
\newcommand\CH{{\CC\Hyp}}
\newcommand{\Chat}{{\hat {\mathbb C}}}
\newcommand\MF{{\MM\FF}}
\newcommand\PMF{{\PP\kern-2pt\MM\FF}}
\newcommand\ML{{\MM\LL}}
\newcommand\PML{{\PP\kern-2pt\MM\LL}}
\newcommand\GL{{\GG\LL}}
\newcommand\Pol{{\mathcal P}}
\newcommand\half{{\textstyle{\frac12}}}
\newcommand\Half{{\frac12}}
\newcommand\Mod{\operatorname{Mod}}
\newcommand\Area{\operatorname{Area}}
\newcommand\ep{\epsilon}
\newcommand\Hypat{\widehat}
\newcommand\Proj{{\mathbf P}}
\newcommand\U{{\mathbf U}}
 \newcommand\Hyp{{\mathbf H}}
\newcommand\D{{\mathbf D}}
\newcommand\Z{{\mathbb Z}}
\newcommand\R{{\mathbb R}}
\newcommand\Q{{\mathbb Q}}
\newcommand\E{{\mathbb E}}
\newcommand\EXH{{ \EE (X, \HH_X )}}
\newcommand\EYH{{ \EE (Y, \HH_Y )}}
\newcommand\GXH{{ \GG (X, \HH_X )}}
\newcommand\GYH{{ \GG (Y, \HH_Y )}}
\newcommand\ATF{{ \AAA \TT \FF }}
\newcommand\PEX{{\PP\EE  (X, \HH , \GG , \LL )}}
\newcommand{\lct}{\Lambda_{CT}}
\newcommand{\lel}{\Lambda_{EL}}
\newcommand{\lgel}{\Lambda_{GEL}}
\newcommand{\lre}{\Lambda_{\mathbb{R}}}

\newcommand\til{\widetilde}
\newcommand\length{\operatorname{length}}
\newcommand\tr{\operatorname{tr}}
\newcommand\cone{\operatorname{cone}}
\newcommand\gesim{\succ}
\newcommand\lesim{\prec}
\newcommand\simle{\lesim}
\newcommand\simge{\gesim}
\newcommand{\simmult}{\asymp}
\newcommand{\simadd}{\mathrel{\overset{\text{\tiny $+$}}{\sim}}}
\newcommand{\ssm}{\setminus}
\newcommand{\diam}{\operatorname{diam}}
\newcommand{\pair}[1]{\langle #1\rangle}
\newcommand{\T}{{\mathbf T}}
\newcommand{\I}{{\mathbf I}}
\newcommand{\pG}{{\partial G}}
 \newcommand{\oxi}{{[1,\xi)}}
\newcommand{\cg}{\mathcal{G}}

\newcommand{\tw}{\operatorname{tw}}
\newcommand{\base}{\operatorname{base}}
\newcommand{\trans}{\operatorname{trans}}
\newcommand{\rest}{|_}
\newcommand{\bbar}{\overline}
\newcommand{\lbar}{\underline}
\newcommand{\UML}{\operatorname{\UU\MM\LL}}
\newcommand{\EL}{\mathcal{EL}}
\newcommand{\ncox}{{N_C([o,\xi))}}
\newcommand{\qle}{\lesssim}

\def\ind{{\mathbf 1}}
\def\N{\mathbb{N}}
\def\L{\mathbb{L}}
\def\P{\mathbb{P}}
\def\Z{\mathbb{Z}}
\def\R{\mathbb{R}}
\def\S{\mathcal{S}}
\def\X{\mathcal{X}}
\def\E{\mathbb{E}}
\def\l{\ell}
\def\Ups{\Upsilon}
\def\om{\omega}
\def\Om{\Omega}

\newcommand\Gomega{\Omega_\Gamma}
\newcommand\nomega{\omega_\nu}
\newcommand\omegap{{(\Omega,\P)}}
\newcommand\omegapp{{(\Omega',\P)}}

\DeclarePairedDelimiter\floor{\lfloor}{\rfloor}
\newcommand{\cF}{\mathcal{F}}
\newcommand{\cD}{\mathcal{D}}
\newcommand{\cZ}{\mathcal{Z}}
\newcommand{\cf}{\mathcal{F}}
\newcommand{\cA}{\mathcal{A}}
\newcommand{\cB}{\mathcal{B}}
\newcommand{\cC}{\mathcal{C}}
\newcommand{\cT}{\mathcal{T}}
\newcommand{\rb}{\mathfrak{A}}
\newcommand{\Var}{\hbox{Var}}
\newcommand{\ee}{\hbox{e}_1}
\newcommand{\cd}{\mathcal{D}}
\newcommand{\ce}{\mathcal{E}}
\setcounter{tocdepth}{1}

\title{First Passage Percolation on Hyperbolic groups}

\author{Riddhipratim Basu}
\address{Riddhipratim Basu, International Centre for Theoretical Sciences, Tata Institute of Fundamental Research, Bengaluru, India}

\email{rbasu@icts.res.in}

\author{Mahan Mj}
\address{Mahan Mj, School
of Mathematics, Tata Institute of Fundamental Research. 1, Homi Bhabha Road, Mumbai-400005, India}

\email{mahan@math.tifr.res.in}

\subjclass[2010]{60K35, 82B43,  20F67 (20F65, 51F99, 60J50) } 

\keywords{first passage percolation, hyperbolic group, Patterson-Sullivan measure, ergodicity}

\date{\today}

 \begin{abstract}
We study first passage percolation (FPP) on a Gromov-hyperbolic  group $G$ with boundary $\pG$ equipped with the Patterson-Sullivan measure $\nu$. We associate an i.i.d.\ collection of random passage times to each edge of a  Cayley graph of $G$, and investigate classical questions about asymptotics of first passage time as well as the geometry of geodesics in the FPP metric. Under suitable conditions on the passage time distribution, we show that the `velocity' exists in $\nu$-almost every direction $\xi\in \pG$, and is almost surely constant by ergodicity of the $G-$action on $\pG$.
For every $\xi\in \pG$, we also show almost sure coalescence of any two geodesic rays directed towards $\xi$. Finally, we show that the variance of the first passage time grows linearly with word distance along word geodesic rays in every fixed boundary direction. This provides an  affirmative answer to a conjecture  in \cite{bz-tightness, benjamini-tessera}.
\end{abstract}

\maketitle

\tableofcontents

\section{Introduction}\label{sec-intro} 
First passage percolation (FPP) is a  well-known probabilistic model for fluid flow through random media. It assigns \iid weights to edges of a graph and analyses the first passage time (i.e., the weight of the minimum weight path) as well as the geodesic (the optimal path) between any two points. For the Cayley graph of $\Z^2$ with respect to standard generators, this was introduced by Hammersley and Welsh \cite{HW} more than fifty years back. While $\Z^2$ and more generally $\Z^d$ have been investigated thoroughly, the literature on other background geometries is sparse. In the special case of (Gromov) hyperbolic geometry \cite{gromov-hypgps},  some results have been established by Benjamini, Tessera and Zeitouni \cite{bz-tightness,benjamini-tessera} and a number of test questions have been raised there (see also the  recent work \cite{CS18} on a related but different theme). In particular, \cite{bz-tightness} established the tightness of fluctuations of the passage time from the center to the boundary of a large ball, and \cite{benjamini-tessera} established the almost sure existence of bi-geodesics in hyperbolic spaces. The aim of this paper is to undertake a more detailed study into FPP on Cayley graphs of hyperbolic groups and address the fundamental questions that have been thoroughly investigated and, in many cases, answered for FPP on the Euclidean lattice. It turns out that some of the basic questions, (e.g., the existence of the limiting constant for the scaled expected passage time along a direction) becomes harder in our setting, whereas some of the other questions (e.g., fluctuations of passage times and coalescence of geodesics) which are unresolved (or only resolved under strong unproven assumptions) in the Euclidean settings can be addressed in the hyperbolic setting primarily due to the nice behavior forced upon  geodesics by the underlying geometry. 

Let us first briefly describe our main results informally, while postponing the formal statements and precise assumptions to later in the paper.  For the rest of this paper, $G$ will be a finitely generated (hence countable) hyperbolic group (precise definitions are given in Section \ref{sec-prel}), $\Ga = \Ga(G,S)$ a Cayley graph of $G$ with respect to a finite (symmetric) generating set $S$. The Cayley graph $\Ga$ comes naturally equipped with the word metric.
Let $\nu$ denote the Patterson-Sullivan measure, or 
more precisely, an element of the Patterson-Sullivan measure class, on the boundary   $\partial G$ of $G$. This is a quasi-conformal and
Hausdorff measure on $\partial G$ (see  \cite{haissinsky,bhm} for a  probabilistically oriented account). Note that
 $\partial G$ equipped with the Patterson-Sullivan measure
is typically a fractal and therefore a little difficult to get one's hands on (even in the simplest non-trivial case of a cocompact lattice $G$ in the hyperbolic plane, where $\partial G$ is topologically a circle). We therefore adopt the viewpoint that it is rather more
helpful  to  think of the uniform measure on
the (discrete) boundary of the $n-$ball
for large $n$  in 
$\Ga $ as a good
discrete approximant
of $(\partial G, \nu)$. 

 We shall consider first passage percolation with i.i.d.\ positive weights on the edges of $\Ga$ coming from a distribution with sub-Gaussian tails (see Section \ref{sec-prel} for details). The main results of this paper are the following:

\begin{enumerate}
\item {\bf Velocity exists and is constant:} One of the first results in Euclidean FPP is that, under minimal conditions, the  expected first passage time (from origin) in any direction grows linearly in the Euclidean distance and there exists a limiting constant (often referred to as time constant or velocity, we shall use the latter). This is a straightforward consequence of sub-additivity for rational directions. One can also show that the velocity varies continuously with the direction and upgrade this to a shape theorem \cite{CD81}. In the hyperbolic setting, the situation is  quite different in flavor. We parametrize directions in $G$ by points in the boundary $\pG$. We show that for $\nu$-almost every  $\xi \in \pG$, the velocity  (i.e., limit of the linearly scaled expected passage time) $v(\xi)$ exists along a word geodesic ray from the identity element in the direction $\xi$ (Theorem \ref{thm-vexists}). Further, $v(\xi)$ is constant almost everywhere. We provide examples (Section \ref{sec-eg}) to show that we cannot replace "almost everywhere" by "everywhere". \footnote{It is worth noting that velocity, in a given direction, usually refers to the almost sure limit of scaled passage times in the literature. This is equal to the scaled limit of expected passage times in the Euclidean case. Even though we consider the scaled limit of expected passage times here, our results also show convergence in probability. We expect almost sure convergence to hold, but postpone this to future work.}

\item {\bf Linear Variance:} Once the first order behavior has been established, the next natural question is to understand the fluctuation of passage times along the word geodesic in a fixed direction. This question remains unresolved in Euclidean FPP, though it is widely believed that the fluctuations are subdiffusive, exhibiting a power law behavior with exponent $\chi=\chi(d)<\frac{1}{2}$ for all dimensions $d$. In particular, it is predicted that $\chi(2)=\frac{1}{3}$. However the best rigorous upper bound on the exponent still remains $\frac{1}{2}$ for FPP on $\Z^d$ (\cite{Kes93,bks-sublinear}). In contrast, in the hyperbolic setting one expects the variance to grow linearly in the word distance, and this was conjectured in  \cite[Question 5]{bz-tightness}, \cite[Section 4]{benjamini-tessera}. We confirm this conjecture (Theorem \ref{t:linvar}). 
\end{enumerate}

It is well understood from the study of FPP in the Euclidean case that questions of fluctuations of the first passage times are intimately connected with the geometry of finite and semi-infinite geodesics. Indeed our proof of Theorem \ref{t:linvar} requires understanding  the geometry of geodesics as well as semi-infinite geodesic rays, and yield some results that are interesting in their own right.

\begin{enumerate}
\item[(3)] {\bf Direction of Geodesic Rays:}  Almost surely FPP geodesic rays have a well-defined direction (Theorems \ref{t:direxists} and  \ref{dirnexists}). As in the case of word-geodesics, these are parametrized by points $\xi \in \pG$. A similar result is known in the Euclidean setting only under the unproven assumption of uniform curvature of the limit shape \cite{New95} or strong convexity of the same \cite{DH14}. Direction of Busemann functions of geodesics in the Euclidean setting has been established in \cite{AH16}.

\item[(4)] {\bf Coalescence of Geodesic Rays:} We show that for each direction $\xi \in \pG$, $o_1, o_2 \in G$, the pair of FPP geodesic rays from $o_1$ and $o_2$ in the direction $\xi$ almost surely coalesce, i.e., the set of edges in the symmetric difference of the two geodesic rays outside a sufficiently large ball centered at the identity element is empty (Theorem \ref{coalesce}). This is of course not true in general for word geodesic rays in hyperbolic groups (see Section \ref{sec-fppz}):  geodesic rays in $\Ga$ converging to the same $\xi \in \pG$ eventually lie in a uniformly bounded neighborhood of each other. Coalescence in hyperbolic groups forces FPP geodesics to actually coincide beyond a point--a strictly stronger phenomenon. It follows from coalescence that for each $\xi\in \pG$, almost surely there exists a unique geodesic ray from the identity in direction $\xi$. In the planar Euclidean setting, coalescence and uniqueness of geodesic rays is known either in almost every direction or under some additional unproven assumptions such as the differentiability of the boundary of the limit shape  \cite{LN96, DH17}.
\end{enumerate}

\subsection{Outline of the paper} The rest of this paper is organized as follows. In Section \ref{sec-prel}, we make formal definitions and set up basic notations for FPP on $\Ga$ (Section \ref{sec-prelperc}), recall preliminaries on hyperbolic groups (Section \ref{sec-prelhg}) and collect some standard probabilistic tools (Section \ref{sec-concn}) that we shall need in the rest of the paper. 

\begin{center}
\begin{figure}[htbp!]
\centering
\begin{tikzpicture}[node distance = 3.3cm, auto]
    \node [block] (prelim) {{\scriptsize Preliminaries}\\ {\scriptsize (Sec. 2)}};
    \node [block, right of= prelim, node distance= 4.5cm] (canon) {{\scriptsize Consequences of automatic}\\ {\scriptsize structure (Sec. 3)}};
    \node [block, below of= canon, node distance= 2cm] (velocity) {{\scriptsize Existence of velocity}\\ {\scriptsize (Sec. 5)}};
      \node [block, below of=prelim, node distance=2 cm] (approx) {{\scriptsize Approximating geodesics} \\ {\scriptsize (Sec. 4)}};
     \node [block, left of=approx, node distance=4.5 cm] (direction) {{\scriptsize Direction of geodesic rays} \\ {\scriptsize (Sec. 6)}};
      \node [block, below of=direction, node distance=2 cm] (coal) {{\scriptsize Coalescence of geodesic rays}\\ {\scriptsize (Sec. 7)}};
      \node [block, right of=coal, node distance=4.5 cm] (variance) {{\scriptsize Linear variance growth}\\ {\scriptsize (Sec. 8)}};
     \path [line] (prelim) -- (direction);
      \path [line] (canon) -- (velocity);
    \path [line] (prelim) -- (approx);
    \path [line] (approx) -- (velocity);
    \path [line] (direction) -- (coal);
     \path [line] (coal) -- (variance);
     \path [line] (approx) -- (variance);
     \path [line] (approx) -- (coal);
\end{tikzpicture}
\caption{Logical dependence among different sections of the paper}
\label{fig:outline}
\end{figure}
\end{center}

The next three sections aim at establishing the existence of velocity (Theorem \ref{thm-vexists}). Section \ref{sec-hprels} first recalls Cannon's theorem on the existence of  automatic structures,  and its connections with the Patterson-Sullivan measure \cite{calegarifujiwara,calegarimaher}. 
A convention we  shall follow in this paper is the following:  we shall
 refer to 
the Patterson-Sullivan measure class as
the Patterson-Sullivan measure.  Any two members of the class
 are absolutely continuous
with respect to each other with bounded Radon-Nikodym derivatives.
Since the statements we make are true  up to  bounded multiplicative
constants, this will not be an issue.
Next, we use these results to establish the main technical lemma of this section (Lemma \ref{ofreq-exist}) that proves the existence of frequency of occurrence of geodesic words along a geodesic ray $[1,\xi)$ from the identity in direction $\xi\in \pG$. 
The main aim of Section \ref{sec-wqg} is to establish an approximation result, Theorem \ref{t:approxqg}, for FPP geodesics. We look at large cylindrical neighborhoods $N_B([1,\xi))$  and look at passage times $T_B(x,y)$ between $x, y \in [o,\xi)$ when restricted to $N_B([o,\xi))$. We describe precisely in Theorem \ref{t:approxqg} how $T_B(x,y)$ approximates the passage time $T(x,y)$ in $\Ga$ between $x, y$. Finally, in Section \ref{sec-vel}, we prove that velocity exists (Theorem \ref{thm-vexists}). Counterexamples are also provided to show that this is a statement about a full measure subset of $\pG$ with respect to the Patterson-Sullivan measure; and cannot be upgraded to a statement about all of $\pG$.

Section \ref{sec-dir} establishes the fact that FPP geodesic rays almost surely have a well-defined direction in $\pG$ (Theorems \ref{t:direxists} and  \ref{dirnexists}). The main technical tool here is Proposition \ref{bt}, which is an adaptation to our context of the main theorem of \cite{benjamini-tessera}. 

Section \ref{sec-coalesce} proves coalescence of geodesics (Theorem \ref{coalesce}). The main geometric tool for this is the construction of hyperplanes (Section \ref{sec-hypgeoprel}). We combine this geometric tool with probabilistic estimates and concentration inequalities in Section \ref{sec-coalesce-omega} to establish coalescence.

Section \ref{sec-linvar} uses the technology developed in Section \ref{sec-coalesce} to prove a conjecture of Benjamini, Tessera and Zeitouni \cite{benjamini-tessera,bz-tightness} which asserts that variance in passage time increases linearly with distance along word geodesic rays.

 A comment on the expository style adopted. Our aim here is to make the paper  accessible, as far as possible, to people working on either Gromov-hyperbolic groups or on FPP. We have therefore striven to provide some background from both topics, and have provided detailed proofs of several results which experts on one or the other topic, might well be familiar with. This is to make the paper as self-contained as possible.

To conclude, we mention that several of our main results go through in a more general setup than what is considered here. See Figure \ref{fig:outline} for an outline of the logical dependence structure of the paper. The velocity result Theorem \ref{thm-vexists} uses crucially  the group structure and its consequences from Section \ref{sec-hprels}. However, Sections \ref{sec-dir}, \ref{sec-coalesce} and \ref{sec-linvar} are independent of Sections \ref{sec-hprels} and \ref{sec-vel}, and use only a couple of results from Section \ref{sec-wqg}. Indeed, we only use Lemma \ref{l:geodlen} in the proof of Lemma \ref{l:sidetoside}, and quote Corollary \ref{l:expgeodlen} to prove the easier upper bound of variance in Sec. \ref{sec-linvar}; arguments similar to the proofs of Lemma \ref{l:geodlen} and Lemma \ref{l:positive} are also used in the proof of Proposition \ref{bt} and Lemma \ref{l:middle1}. As was pointed out in  comments on an earlier draft, the results of Sections \ref{sec-dir}, \ref{sec-coalesce} and \ref{sec-linvar} go through almost verbatim in the  purely geometric set-up of any bounded degree hyperbolic graph: see Remark \ref{rmk-ref}. The only issue to bear in mind is that in the setup of a Cayley graph, the identity element is chosen as a preferred base-point. However, for an arbitrary Gromov-hyperbolic graph with uniformly bounded degree, no such preferred base-point exists: any point can be chosen as a base-point. The boundary of any hyperbolic space being independent of the base-point, this does not create any complications for the group-independent arguments of Sections \ref{sec-dir}, \ref{sec-coalesce} and \ref{sec-linvar}.  The results go through for geodesics from the chosen base point in the direction of each point on the boundary. However, Theorem \ref{thm-vexists} fails in the absence of a group structure. Indeed,  we shall show in Section \ref{sec-free3}, Theorem \ref{thm-vexists} can even fail when we replace the Cayley graph of a hyperbolic group by a graph quasi-isometric to it.

\section{Preliminaries}\label{sec-prel} In this section, we formally define the first passage percolation model and collect together some basic notions from the classical theory of FPP. We also very briefly recall the preliminaries of the theory of hyperbolic groups and collect together some useful probabilistic estimates that we shall use throughout the paper.

\subsection{Preliminaries on first passage percolation}\label{sec-prelperc}
Let $\Ga$ be a graph and let $V, E$ denote the vertex and edge set of $\Ga$. Consider $\Ga$ as a metric space with the graph distance metric $d$ (where each edge in $\Ga$ is assigned  unit length): $d(x, y)$ is the minimum length of an edge-path joining $x,y$.  A minimum length path connecting $x,y\in \Ga$  will simply be referred to as  a geodesic and denoted $[x,y]$.

Fix a probability measure $\rho$ on $[0, \infty)$ and equip the Borel $\sigma$-algebra on the product space $\Omega= [0, \infty)^E$ with the product measure $\P=\rho^{\otimes E}$. A typical element of $(\Omega,\P)$ will be denoted by $\omega=\{\omega(e)\}_{e\in E}$ and the random variables $X_{e}: \Omega \to [0,\infty)$ given by $X_{e}(\omega)=\omega(e)$ will be independent and identically distributed with law $\rho$. Setting the edge length of the edge $e$ equal to $X_{e}$ defines a random metric on $\Ga$ (a priori, this is only a pseudo-metric; however, assuming henceforth that $\rho$ does not put any mass on $0$,  it is indeed a metric), the \textbf{first passage percolation} (FPP) metric. More precisely we have the following definitions.

\begin{defn}\label{def-fpp}
	Let $\gamma = \{e_1, \cdots e_k\}$ be an edge path. For $\omega \in \omegap$, the $\omega-$length of $\gamma$ is defined to be $$\ell_{\omega}(\gamma):= \sum_{e\in \gamma } \omega(e).$$ Define $$d_{\omega} (x,y) := \inf_\gamma \ell_{\omega}(\gamma),$$ where $\gamma$ ranges over edge paths connecting $x, y \in V$. The random variable $T(x,y)$ defined on $\omegap$ by {\footnote{We shall use the notation $d_{\omega}(x,y)$ while working with the metric in a fixed realization $\omega$ of the passage time configuration, and $T(x,y)$ while considering properties of the random variable.}} $$T(x,y)(\omega)=d_\omega (x,y)$$ will be called the {\bf first passage time} between $x$ and $y$.
\end{defn}

We shall assume throughout that $\rho$ is continuous, i.e., it does not have any atoms; in particular it does not put any mass on $0$. Under such hypotheses it is easy to see that paths attaining the first passage time exist and are unique almost surely. 

\begin{defn}
A path that realizes $d_{\omega}(x,y)$ will be called an {\bf $\omega-$ geodesic}, denoted $[x,y]_{\omega}$. Observe that under our hypothesis on $\rho$, for $\P$-a.e. $\omega\in \Omega$, there is a unique $\omega$-geodesic between each pair of points in $\Ga$. For fixed vertices $x,y\in \Ga$, this ($\P$-a.e. well-defined) random path $\Upsilon(x,y)$ (i.e., $\Upsilon(x,y)(\omega)$ denotes the $\omega$-geodesic between $x$ and $y$) will be called the {\bf FPP-geodesic} between $x$ and $y$. 	
\end{defn}

The study of first passage percolation on a graph usually focuses on understanding asymptotic properties of $T(x,y)$ and $\Upsilon(x,y)$ for two points far away in the underlying metric of the graph. 

\medskip

\noindent
\textbf{Assumptions on $\rho$:}
Throughout we shall assume that the passage time distribution $\rho$ satisfies the following conditions:
\begin{enumerate}
\item[i.] The support of $\rho$ is contained in $[0,\infty)$.
\item[ii.] There are no atoms in $\rho$. 
\item[iii.] $\rho$ has sub-Gaussian tails, i.e., 
\begin{equation}
	\label{e:subgau}
	\exists a>0 ~\text{such that}~\int e^{ax^2}~d\rho(x)<\infty.
	\end{equation}
\end{enumerate}
Observe that our conditions are somewhat stricter than the ones usually assumed in the study of Euclidean FPP. Indeed, for the study of shape theorems or fluctuations, it is customary to only assume that the mass of the atom at $0$ is smaller than the critical probability of Bernoulli percolation and some appropriate moment conditions. Existence of geodesics can also sometimes be ascertained under weaker hypotheses than above. The above conditions are not even optimal for our proofs, but in the interests of transparency we have chosen to go with the simplest set of assumptions which still covers a wide class of distributions. Some of the proofs become easier if one assumes a stronger condition that the support of the passage time distribution is bounded away from $0$ and infinity, but our hypotheses already deals with the essential difficulties of working with passage times which are unbounded and can take values arbitrarily close to $0$. One obvious area of improvement is \eqref{e:subgau}. In fact, this hypothesis is only invoked in the proof of Theorem \ref{t:approxqg}, at every other place, the proof only requires an exponential tail decay of $\rho$. Even Theorem \ref{t:approxqg} (and hence all results in this paper) can be proved if $\rho$ is an exponential distribution (i.e., $\rho$ has density $e^{-\lambda x}$ on $\R_{+}$ for some $\lambda>0$), however in the interest of brevity and clarity of exposition, we shall refrain from trying to get optimal hypotheses in our results. 

\subsection{Preliminaries on hyperbolic groups}\label{sec-prelhg} We collect here some of the basic notions and tools from hyperbolic metric spaces that we shall need in this paper and refer the reader to \cite{gromov-hypgps,GhH,CDP,bh-book} for more details on Gromov-hyperbolicity.
For us, $\Ga$ will denote the Cayley graph of a  group $G$, typically hyperbolic, with respect to a finite symmetric generating set $S$. 

{A few words about the conventions we follow for Cayley graphs are in order.  We shall assume throughout this paper that   a generating set $S$ of a group $G$ is \emph{symmetric}, i.e.\ $s \in S$ if and only if $s^{-1} \in S$.}
	
	\begin{defn}\label{def-cg} { Given a group $G$ and a symmetric generating $S$,
		a \emph{directed Cayley graph} $\Gamma_d=\Gamma_d(G,S)$ is a directed graph defined as follows:\\
		The vertex set $V=V(\Gamma_d)$ consists of $\{g \vert g \in G\}$. The edge set 
	$E=E(\Gamma_d)$ consists of ordered pairs $\{(g,h) \vert \,  g, h \in G; \, g^{-1}h \in S \}$.
	Since $S$ is assumed to be symmetric, it follows that $(g,h) \in E(\Gamma_d)$ if 
	and only if $(h,g) \in E(\Gamma_d)$.}
	
	{ Given a group $G$ and a symmetric generating $S$,
		an \emph{undirected Cayley graph} or simply a \emph{Cayley graph} 
		$\Gamma=\Gamma(G,S)$ is an undirected graph defined as follows:\\
		The vertex set $V=V(\Gamma)$ consists of $\{g \vert g \in G\}$. The edge set 
		$E=E(\Gamma)$ consists of unordered pairs $\{(g,h) \vert \,  g, h \in G; \, g^{-1}h \in S \}$.}
	\end{defn}
{Note that in going from a directed Cayley graph to an undirected Cayley graph, two directed edges corresponding to ordered pairs $(g,h)$ and $(h,g)$ are identified with a single unordered pair $(g,h)$. If some $s \in S$ is of order 2, the directed Cayley graph $\Gamma_d$ has a pair of directed edges $(g,h)$ and $(h,g)$ for $h=gs$, whereas the undirected Cayley graph has only a single undirected edge $(g,h)$. The directed Cayley graph therefore detects order 2 elements geometrically, whereas the undirected Cayley graph does not.
	Since we shall only be interested in the large scale properties of Cayley graphs, this nicety of order 2 elements will not cause any problems. \emph{We emphasize that we shall be working with the undirected Cayley graph throughout this paper.}
	For concreteness, we note that for $G=\Z$ and $S=\{\pm 1\}$, the Cayley graph is simply
the undirected graph underlying the real line $\R$ with vertices at the integer points and
undirected edges consisting of the intervals $[n, n+1]$ for $n \in \Z$.}

 Thus $G$ acts on the left by isometries (graph-isomorphisms) on $\Gamma$. A metric space $(X,d_X)$ is called a geodesic metric space if for all $x, y \in (X,d_X)$, there exists a geodesic connecting $x,y$,
i.e.\ there exists an isometric embedding
$\iota: [0, d_X(x,y)] \to (X,d_X)$ such that $\iota (0) = x; \iota (d_X(x,y)) = y$.
A geodesic in a geodesic metric space $(X,d_X)$  joining $x, y$ will be denoted as $[x,y]$. The $c-$neighborhood of a set $A$ in a metric space $(X,d)$ will be denoted
as $N_c(A)$.

\begin{defn}\label{def-hyp}\cite{gromov-hypgps}
	A geodesic metric space $(X,d)$ is said to be $\delta-$hyperbolic if for all $x, y, z \in X$, $[x,y] \subset N_\delta ([x,z] \cup [y,z])$.
	A geodesic metric space $(X,d)$ is said to be hyperbolic if it is $\delta-$hyperbolic for some $\delta \geq 0$.
	
	A finitely generated group $G$ is said to be hyperbolic with respect to some finite symmetric generating set $S$ if the Cayley graph $\Ga = \Ga (G,S)$ (equipped with graph distance) is hyperbolic.
\end{defn}

\begin{defn}\label{def-qi}\cite{gromov-hypgps}
	A map $f: (X,d_X) \to (Y,d_Y)$ between metric spaces is said to be a $(K,\ep)-${\bf quasi-isometric embedding} if for all $x_1, x_2 \in X$,
	$$\frac{1}{K} d_X(x_1,x_2) - \ep \leq d_Y(f(x_1),f(x_2))
	\leq K d_X(x_1,x_2) + \ep.$$
	
	A $(K,\ep)-$quasi-isometric embedding $f: (X,d_X) \to (Y,d_Y)$ is said to be a $(K,\ep)-${\bf quasi-isometry}, if further,  $Y \subset N_K(f(X))$.
	
	A $(K,\ep)-$quasi-isometric embedding $f: I \to (Y,d_Y)$
	is said to be a $(K,\ep)-${\bf quasi-geodesic}, if $I$ is an interval (finite, semi-infinite or bi-infinite) in $\R$ (equipped with Euclidean metric). 
	
	A subset $A $ of a geodesic metric space $(X,d_X)$ is said to be $\kappa-${\bf quasiconvex}, if for all $x_1, x_2 \in A$,
	and any geodesic $[x_1, x_2] \subset X$, $[x_1, x_2] \subset N_\kappa (A)$.	 
\end{defn}

It was shown by Gromov \cite{gromov-hypgps} that if $G$ is  hyperbolic with respect to some finite symmetric generating set $S$, it is hyperbolic with respect to any other finite symmetric generating set $S'$. Thus, hyperbolicity is a property of finitely generated groups, not their generating sets. This follows from the following theorem that says that hyperbolicity is invariant under quasi-isometry:

\begin{theorem}[Gromov]\cite{gromov-hypgps,GhH}\cite[p. 401]{bh-book} \label{thm-hypqiinv} Given $\delta,  \ep\geq 0$ and $K\geq 1$, there exists $\delta'\geq 0$ such that the following holds.\\
	Let $(X,d)$ be $\delta-$hyperbolic and $f: (X,d)\to (Y,d')$ be a $(K, \ep)-$quasi-isometry. Then $(Y,d')$ is $\delta'-$hyperbolic.
\end{theorem}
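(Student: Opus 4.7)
The plan is to reduce the statement to the Morse/Mostow stability of quasi-geodesics in $\delta$-hyperbolic spaces, and then chase constants through the quasi-isometry. First I would construct a quasi-inverse $g: Y \to X$ in the standard way: since $f$ is a quasi-isometry, $Y \subset N_K(f(X))$, so for every $y \in Y$ I can pick $g(y) \in X$ with $d'(f(g(y)), y) \leq K$. A routine calculation using the $(K,\ep)$-quasi-isometry inequalities for $f$ then shows that $g$ is a $(K_1, \ep_1)$-quasi-isometric embedding, with constants depending only on $K$ and $\ep$, and that $d'(f\circ g, \mathrm{id}_Y)$ and $d(g \circ f, \mathrm{id}_X)$ are both uniformly bounded in terms of $K, \ep$.

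Next, given a geodesic triangle $[y_1,y_2] \cup [y_2,y_3] \cup [y_1,y_3]$ in $(Y,d')$, I would push each side through $g$ to obtain three $(K_1, \ep_1)$-quasi-geodesics in $(X,d)$ with endpoints $x_i := g(y_i)$. The key ingredient I would invoke here is the Morse lemma: in a $\delta$-hyperbolic space, any $(K_1, \ep_1)$-quasi-geodesic lies within Hausdorff distance $R = R(\delta, K_1, \ep_1)$ of any geodesic with the same endpoints. Applying this in $X$, I can replace each quasi-geodesic side $g([y_i, y_j])$ by an honest geodesic $[x_i, x_j] \subset X$, incurring only an additive error of $R$ in Hausdorff distance. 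By hypothesis, the geodesic triangle $[x_1,x_2] \cup [x_2,x_3] \cup [x_1,x_3]$ is $\delta$-thin, so the original quasi-geodesic triangle through the $g(y_i)$ is $(\delta + 2R)$-thin in $X$.

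Finally I would push everything forward by $f$. The quasi-isometry inequality for $f$ multiplies distances by at most $K$ and adds $\ep$, so the image of the $(\delta + 2R)$-thin quasi-geodesic triangle in $X$ becomes a $\bigl(K(\delta + 2R) + \ep\bigr)$-thin configuration in $Y$. Using $d'(f\circ g(y), y) \leq K$ on the endpoints, together with the additive slack $d'(f\circ g, \mathrm{id}_Y) \leq C(K,\ep)$ along each side, one verifies that the original geodesic triangle $[y_1,y_2]\cup[y_2,y_3]\cup[y_1,y_3]$ itself is $\delta'$-thin, where $\delta' = \delta'(\delta, K, \ep)$ is obtained by combining these constants. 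Since $y_1, y_2, y_3$ were arbitrary, $(Y, d')$ is $\delta'$-hyperbolic.

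The only nontrivial step is the Morse lemma, which is the real technical heart of the argument; everything else is careful but mechanical bookkeeping of the quasi-isometry constants. In an actual write-up I would either cite the Morse lemma from \cite{bh-book} or \cite{GhH}, or sketch the standard bootstrap proof (exponential divergence of geodesics in hyperbolic spaces forces any quasi-geodesic to stay in a bounded neighborhood of a geodesic between its endpoints). Given that this background result is standard, the bulk of the write-up would focus on tracking the dependence of $\delta'$ on $(\delta, K, \ep)$ so that the constant is explicit and uniform as promised in the statement.
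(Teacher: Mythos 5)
Your argument is correct and is essentially the standard textbook proof: this theorem is cited in the paper without a proof of its own (the references point to Gromov, Ghys--de la Harpe, and Bridson--Haefliger), and the route you take---construct a quasi-inverse $g$, transport a geodesic triangle of $Y$ to a quasi-geodesic triangle in $X$, invoke the Morse lemma (the paper's Lemma \ref{lem-morse}) to compare each side to a geodesic, use $\delta$-thinness in $X$, and push back through $f$ while tracking the bounded displacements $d'(f\circ g, \mathrm{id}_Y)\le K$---is precisely the argument in \cite[p.~401--402]{bh-book}. The constant chase you sketch is sound; the only implicit hypothesis (shared with the statement) is that $(Y,d')$ is a geodesic space so that geodesic triangles exist in $Y$.
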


Theorem \ref{thm-hypqiinv} allows us the freedom to choose any Cayley graph of a hyperbolic group $G$. The qualitative results we prove in this paper will thus be independent of the generating set $S$.

\begin{defn}\label{def-gi}\cite{gromov-hypgps}\cite[p. 410]{bh-book} 
	For any $x, y, o$ in a metric space $(X,d)$, the Gromov inner product of $x, y$ is given by $$\langle x, y \rangle_o = \frac{1}{2} (d(x,o) + d(y,o) - d(x,y)).$$
\end{defn}

The Gromov inner product above can be used to define the {\bf Gromov boundary} $\partial X$ of a hyperbolic $(X,d)$ as follows \cite{gromov-hypgps,GhH,bh-book}. Fix a base-point $o \in X$. We consider sequences
$\{x_n \}$ in $X$ satisfying the condition that $\langle x_n, x_m \rangle_o \to \infty$ as $m,n \to \infty$. Two such sequences $\{x_n \}$ and $\{x_n' \}$ are defined to be equivalent if  $\langle x_n, x_n' \rangle_o \to \infty$ as $n \to \infty$ (see \cite[p. 431]{bh-book}
for  a proof that this is an equivalence relation.)

\begin{defn}\label{def-bdy}\cite[p. 431]{bh-book} 
	The boundary $\partial X$ of $X$ is defined (as a set) to be the set of equivalence classes of sequences  $\{x_n \}$ as above. We write $x_n \to \xi$, if $\xi \in \partial X$ is the equivalence class of $\{x_n \}$.
\end{defn}

The Gromov inner product extends to $\partial X$ as follows \cite[p. 401]{bh-book}: Let $\xi, \xi' \in \partial X$. Then
$$\langle \xi, \xi' \rangle_o :=\sup \liminf_{m,n \to \infty}
\langle x_n, x_m' \rangle_o,$$ where $\{x_n \}$ (resp. $\{x_m' \}$) range over sequences in the equivalence class defining $\xi$ (resp. $\xi'$). The Gromov inner product $\langle \xi, \xi' \rangle_o$ can be used to define a metric on $\partial X$.

\begin{defn}\label{def-visualmetric}
	A metric $d_v$ on $\partial X$ is said to be {\bf a visual metric with parameter $a>1$} with respect to the base-point $o$ if there exist $k_1, k_2 > 0$ such that $$k_1 a^{-\langle \xi, \xi' \rangle_o} \leq d_v (\xi_1, \xi_2) \leq k_2 a^{-\langle \xi, \xi' \rangle_o}.$$
\end{defn}

\begin{prop}\label{prop-vmexist}\cite[p. 435]{bh-book} Given $\delta \geq 0$, there exists $a > 1$ such that if
	$(X,d)$ is $\delta-$hyperbolic, then for any base-point $o$,
	a visual metric $d_v $
	with parameter $a>1$  exists on $\partial X$ with respect to the base-point $o$.
	Further, for any $o' \in X$ a visual metric
	with parameter $a>1$ and with respect to the base-point $o'$
	is equivalent (as a metric) to $d_v$.
	
	There exists a natural topology on $\hat{X} = X \cup \partial X$ such that
	\begin{enumerate}
		\item $X$ is open and dense in $\hat X$,
		\item $\partial X$ and $\hat{X}$ are compact if $X$ is proper,
		\item the subspace topology on $\partial X$ agrees with that given by $d_v$.
	\end{enumerate}
\end{prop}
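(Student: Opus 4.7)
The plan is to build the visual metric via Frink's chain construction, starting from the approximate ultrametric property of the extended Gromov product on $\partial X$. The key geometric input is that for any $\delta$-hyperbolic $(X,d)$, basepoint $o$, and points $\xi,\eta,\zeta \in \partial X$, the extended Gromov product satisfies the four-point inequality
\[
\langle \xi, \zeta \rangle_o \;\geq\; \min\bigl(\langle \xi, \eta \rangle_o,\, \langle \eta, \zeta \rangle_o\bigr) - 2\delta.
\]
This follows from the analogous inequality for points of $X$ (a direct consequence of $\delta$-thin triangles via the tripod picture) by taking $\liminf$ along defining sequences; some care is needed because the extended product is defined as a supremum of $\liminf$'s, but the resulting $O(\delta)$ slack is harmless.

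Next I would fix $a>1$ small enough that $a^{2\delta}\leq \sqrt{2}$ and set $\rho(\xi,\eta):=a^{-\langle \xi,\eta\rangle_o}$, with the convention $a^{-\infty}=0$. The four-point inequality translates into the quasi-ultrametric bound $\rho(\xi,\zeta)\leq a^{2\delta}\max(\rho(\xi,\eta),\rho(\eta,\zeta))$, so Frink's lemma applied to
\[
d_v(\xi,\eta)\;:=\;\inf \sum_{i=0}^{n-1}\rho(\xi_i,\xi_{i+1}),
\]
with the infimum over finite chains $\xi=\xi_0,\ldots,\xi_n=\eta$ in $\partial X$, produces a genuine metric satisfying $\tfrac{1}{2}\rho \leq d_v \leq \rho$. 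This is the required visual metric with parameter $a$. For basepoint independence, the bound $|\langle \xi,\eta\rangle_o - \langle \xi,\eta\rangle_{o'}|\leq d(o,o')+O(\delta)$ (obtained by taking $\liminf$ in the corresponding bound for points of $X$) implies that the visual metrics $d_v$ and $d_v'$ built from $o$ and $o'$ differ by a multiplicative constant depending only on $d(o,o')$, $a$, and $\delta$, hence are bi-Lipschitz, in particular topologically equivalent.

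For the topology on $\hat X = X\cup\partial X$, I would declare a neighborhood base at each $\xi\in\partial X$ to consist of the sets $U_N(\xi):=\{p\in \hat X : \langle \xi, p\rangle_o > N\}$, where $\langle \xi, p\rangle_o$ extends the Gromov product in the obvious way to $p\in X$; at points of $X$ one uses ordinary metric balls. Openness of $X$ in $\hat X$ is immediate; density of $X$ in $\hat X$ follows because any defining sequence $x_n\to\xi$ eventually lies in each $U_N(\xi)$; and the subspace topology on $\partial X$ coincides with the topology induced by $d_v$ via the comparison $d_v(\xi,\eta)\asymp a^{-\langle\xi,\eta\rangle_o}$. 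For compactness when $X$ is proper, given a sequence $\{p_n\}\subset \hat X$, either $d(p_n,o)$ stays bounded and properness of $X$ furnishes a convergent subsequence in $X$, or $d(p_n,o)\to\infty$ along a subsequence, and a diagonal extraction using the four-point inequality to force $\langle p_{n_j},p_{n_k}\rangle_o\to\infty$ produces a subsequence that is Cauchy in the visual metric, whose limit lies in $\partial X$.

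The main technical obstacle throughout is the bookkeeping of additive $\delta$-errors that appear when passing from points of $X$ to boundary points via $\liminf$, and in particular choosing $a$ close enough to $1$ so that these errors do not destroy the quasi-ultrametric property required by Frink's lemma. Once this is set up correctly, the remaining verifications, including the equivalence of visual metrics under change of basepoint and the topological properties of $\hat X$, are routine.
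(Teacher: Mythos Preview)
The paper does not give its own proof of this proposition: it is stated with a citation to \cite[p.~435]{bh-book} and used as a black box. Your sketch is the standard textbook argument (essentially the one in Bridson--Haefliger or Ghys--de la Harpe): the four-point inequality for the extended Gromov product, exponentiation to get a quasi-ultrametric, Frink's chain construction to obtain a genuine metric, and the usual neighborhood-base description of the topology on $\hat X$. The outline is correct and there is no substantive gap; the only point requiring care, which you flag yourself, is controlling the additive $O(\delta)$ errors when extending the Gromov product to the boundary and choosing $a$ close enough to $1$.
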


We call $\hat{X}$ the {\bf  Gromov compactification} of $X$.
For $\xi \in \partial X$ and $o \in X$, a geodesic ray from $o$ and converging to $\xi \in \partial X$ will be denoted by $[o, \xi)$. For $\xi_1 \neq \xi_2 \in \partial X$, a bi-infinite geodesic $f: \R \to X$ converging to $\xi_1, \xi_2$ as $s \in \R$ tends to $\pm \infty$ will be denoted by $(\xi_1, \xi_2)$.

\begin{lemma}[Morse Lemma]\label{lem-morse}\cite[p. 401]{bh-book} Given $\delta, \ep \geq 0$ and $K \geq 1$, there exists $\kappa \geq 0$ such that the following holds:\\
	Let $(X,d)$  be a $\delta-$hyperbolic space. Let $f: I \to X$ be a $(K,\ep)-$quasi-geodesic with $I=[a,b]$ finite. Then  $f(I) \in  N_\kappa ([f(a),f(b)])$ for any geodesic $[f(a),f(b)]$ in $X$ joining $f(a),f(b)$. In particular, any two
	$(K,\ep)-$quasi-geodesics joining $x, y \in X$ lie in a $ \kappa-$neighborhood of each other.
	
	When $I=[0,\infty)$ is semi-infinite, $[f(a),f(b)]$ is replaced by $[f(a),\xi)$ where $\xi$ is a unique point in $\partial X$. Finally, when $I = \R$, $[f(a),f(b)]$ is replaced by $(\xi_1,\xi_2)$ where $\xi_1, \xi_2$ are unique points in $\partial X$. Further,  any two
	$(K,\ep)-$quasi-geodesic rays joining $x \in X$ (or $\xi'\in \partial X$) to $\xi \in \partial X$ lie in a $ \kappa-$neighborhood of each other.
\end{lemma}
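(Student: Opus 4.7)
The plan is to first handle the finite interval case $I=[a,b]$ by a standard ``taming $+$ exponential divergence'' argument, and then bootstrap to the semi-infinite and bi-infinite cases by a compactness argument in the Gromov compactification $\hat{X}$.

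For the finite case, the first step is to tame the (\textit{a priori} possibly discontinuous) quasi-geodesic $f:[a,b]\to X$: on each unit subinterval $[n,n+1]$ with integer endpoints I replace $f$ by a geodesic segment joining $f(n)$ to $f(n+1)$ in $X$. A direct estimate shows the resulting continuous piecewise-geodesic path $\tilde f$ is a $(K,\epsilon+2K+2)$-quasi-geodesic whose image is within Hausdorff distance $K+\epsilon$ of $f(I)$, so it suffices to bound $D:=\sup_{t}d(\tilde f(t),[f(a),f(b)])$ by some constant $\kappa(\delta,K,\epsilon)$.

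The heart of the argument is an \emph{exponential divergence} estimate in a $\delta$-hyperbolic space: any rectifiable path $\sigma$ from $x$ to $y$ whose image lies entirely outside the $R$-neighborhood of a geodesic $[x,y]$ satisfies $\ell(\sigma)\geq 2^{R/\delta}$ up to a universal additive constant (proved by inductive slim-triangle bisection of a midpoint path). I would apply this to the \emph{maximal} sub-arc $\tilde f|_{[s_-,s_+]}$ whose image avoids the open $(D/2)$-neighborhood of $[f(a),f(b)]$. The endpoints $\tilde f(s_\pm)$ then sit within distance $D/2+O(1)$ of $[f(a),f(b)]$; thin quadrilaterals place the geodesic $[\tilde f(s_-),\tilde f(s_+)]$ within $D/2+O(\delta)$ of $[f(a),f(b)]$, so the excursion stays at distance $\geq D/2-O(\delta)$ from $[\tilde f(s_-),\tilde f(s_+)]$, forcing $\ell(\tilde f|_{[s_-,s_+]})\geq 2^{\Omega(D/\delta)}$. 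On the other hand, since $\tilde f$ is piecewise-geodesic and a quasi-geodesic, this same length is linearly controlled: $\ell(\tilde f|_{[s_-,s_+]})\leq K(s_+-s_-)+O(1)=O(D+K\epsilon)$. The exponential-versus-linear showdown is compatible only for $D\leq \kappa(\delta,K,\epsilon)$; applying the bound symmetrically yields that any two such quasi-geodesics lie in a $2\kappa$-neighborhood of each other.

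The semi-infinite and bi-infinite cases follow by a diagonal / Arzel\`a--Ascoli argument on the truncations $f|_{[0,n]}$ and $f|_{[-n,n]}$: the finite bound applies uniformly to every finite sub-arc, the image sequence is locally bounded since $X$ is proper, and Proposition \ref{prop-vmexist} gives a unique limit point $\xi\in\partial X$ (or pair $\xi_1,\xi_2$) together with a limiting geodesic ray (resp.\ line). The main obstacle is executing the exponential divergence step cleanly: one must choose the excursion sub-arc so that the ``outside-$D/2$'' annulus is genuinely thick enough for the bisection to iterate $\Omega(D/\delta)$ times, and the constants have to be tracked so that $\kappa$ depends only on $\delta,K,\epsilon$. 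The taming step is essential precisely because a purely set-theoretic quasi-geodesic carries no well-defined length for the divergence lemma to act on.
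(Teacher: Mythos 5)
The paper states this lemma as a citation to Bridson--Haefliger (\cite[p.\ 401]{bh-book}) and gives no proof of its own, so there is no in-paper argument to compare against. Your sketch -- taming the quasi-geodesic, then pitting exponential divergence of excursions against the linear length bound, and bootstrapping to the semi-infinite and bi-infinite cases via a diagonal argument -- is precisely the standard argument from the cited reference, so it is the ``same approach'' in the relevant sense.
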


The Gromov boundary (Definition \ref{def-bdy}) can be defined also in terms of asymptote-classes of geodesic rays: Define (semi-infinite) geodesic rays $\gamma, \gamma': [0,\infty) \to X$ to be asymptotic if there exists $C_0  \geq 0$ such that for all $t \geq 0$, $(\gamma(t), \gamma'(t)) \leq C_0$. The next Lemma says that geodesic rays $\gamma, \gamma$ are asymptotic if and only if they converge to the same $\xi \in \partial X$:

\begin{lemma}\label{asymptosamept}\cite[p. 427]{bh-book}
	Let $X$ be $\delta-$hyperbolic.
	Let $\gamma, \gamma': [0,\infty) \to X$  be asymptotic geodesic rays. Then 
	\begin{enumerate}
		\item There exist $m, m' \in [0,\infty) $ such that for all
		$t \in [0,\infty) $, $d(\gamma(t+m), \gamma'(t+m')) \leq 4 \delta$.
		\item There exists $\xi \in \partial X$ such that for any pair of sequences $\{t_n\}, \{s_n\}$ in $[0,\infty)$ diverging to infinity, the sequences $\{\gamma(t_n)\}, \{\gamma'(s_n)\}$ lie in the  equivalence class of $\xi$.
	\end{enumerate}
\end{lemma}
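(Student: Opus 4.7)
The plan is to establish part (2) first, as it is the more straightforward of the two, and then use the thinness of hyperbolic quadrilaterals to obtain the fellow-traveling statement in part (1).

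For (2), I would take $o = \gamma(0)$ as the basepoint; by Proposition \ref{prop-vmexist} the Gromov boundary is independent of this choice. Since $\gamma$ is a unit-speed geodesic, $d(\gamma(t_n), o) = t_n$, and for any sequences $\{t_n\}, \{s_m\}$ tending to infinity, asymptoticity gives the triangle-inequality bound
$$d(\gamma(t_n), \gamma'(s_m)) \le d(\gamma(t_n), \gamma(s_m)) + d(\gamma(s_m), \gamma'(s_m)) \le |t_n - s_m| + C_0.$$
Combined with $d(\gamma'(s_m), o) \ge s_m - d(\gamma'(0), o)$, this yields
$$\langle \gamma(t_n), \gamma'(s_m)\rangle_o \ge \min(t_n, s_m) - \tfrac{1}{2}\bigl(C_0 + d(\gamma'(0), o)\bigr),$$
which diverges to $\infty$. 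An analogous (simpler) computation shows $\langle \gamma(t_n), \gamma(t_m)\rangle_o \to \infty$, so $\{\gamma(t_n)\}$ defines a point $\xi \in \partial X$ by Definition \ref{def-bdy}; the displayed inequality then certifies that $\{\gamma'(s_m)\}$ lies in the same equivalence class.

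For (1), the geometric input is the $2\delta$-thinness of geodesic quadrilaterals: any such quadrilateral is the union of two $\delta$-thin triangles along a diagonal, so every point of one side lies within $2\delta$ of the union of the other three. Fix $T$ large and form the quadrilateral with vertices $\gamma(0), \gamma(T), \gamma'(T), \gamma'(0)$, whose sides are $\gamma|_{[0,T]}$, $[\gamma(T),\gamma'(T)]$, the reverse of $\gamma'|_{[0,T]}$, and $[\gamma'(0),\gamma(0)]$. The two ``short'' sides have length at most $C_0$ and $d(\gamma(0),\gamma'(0))$ respectively. For any $t$ with both $t$ and $T-t$ much larger than $C_0 + d(\gamma(0),\gamma'(0)) + 2\delta$, the point $\gamma(t)$ is too far from the two short sides to be $2\delta$-close to them, and hence must lie within $2\delta$ of the opposite side $\gamma'|_{[0,T]}$. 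Letting $T \to \infty$ (the bound is uniform in $T$ for fixed $t$) produces, for every sufficiently large $t$, a parameter $f(t) \in [0,\infty)$ with $d(\gamma(t), \gamma'(f(t))) \le 2\delta$.

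Triangle inequality applied to the four points $\gamma(t), \gamma(s), \gamma'(f(t)), \gamma'(f(s))$ then shows that $f$ is essentially isometric: $\bigl||f(t)-f(s)| - |t-s|\bigr| \le 4\delta$. Choosing $t_0$ large, setting $m = t_0$ and $m' = f(t_0)$, and using that $\gamma'$ is a unit-speed geodesic so $d(\gamma'(f(t_0+t)), \gamma'(f(t_0)+t)) = |f(t_0+t) - f(t_0) - t|$, one obtains $d(\gamma(t_0+t), \gamma'(t_0+t+(m'-m))) \le 4\delta$ after reparametrization, which is the conclusion of (1). The main obstacle I anticipate is the precise bookkeeping of constants: a crude version of this argument gives an $O(\delta)$ bound easily, but squeezing out the sharp $4\delta$ in the statement requires exploiting that $\gamma,\gamma'$ are genuine geodesics (so the distance in the target matches parameter differences exactly) and picking the reparametrization so that the $2\delta$-error from the quadrilateral estimate and the $4\delta$-error from the near-isometry of $f$ do not compound additively.
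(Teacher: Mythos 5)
The paper does not prove this lemma; it is cited directly from Bridson--Haefliger \cite[p.~427]{bh-book}, so there is no ``paper's own proof'' to compare against. I will therefore assess your argument on its own merits.

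Your proof of part (2) is correct and complete. Taking $o=\gamma(0)$, the estimate $d(\gamma(t_n),\gamma'(s_m))\le |t_n-s_m|+C_0$ together with $d(\gamma'(s_m),o)\ge s_m-d(\gamma'(0),o)$ does give $\langle\gamma(t_n),\gamma'(s_m)\rangle_o\ge \min(t_n,s_m)-\tfrac12(C_0+d(\gamma'(0),o))$, and the computation $\langle\gamma(t_n),\gamma(t_m)\rangle_o=\min(t_n,t_m)$ (exact, since $\gamma$ is a unit-speed geodesic through $o$) shows $\{\gamma(t_n)\}$ is admissible. This is precisely the standard verification.

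For part (1), the high-level idea --- $2\delta$-slimness of geodesic quadrilaterals forcing $\gamma(t)$ into a $2\delta$-neighborhood of $\gamma'$ once $t$ and $T-t$ dominate the lengths of the two short sides, then bootstrapping the near-isometry $\bigl||f(t)-f(s)|-|t-s|\bigr|\le 4\delta$ --- is the right one, and you are candid that you do not land on the advertised constant $4\delta$. Let me make the difficulty more concrete. The final inequality you invoke, $d(\gamma'(f(t_0+t)),\gamma'(f(t_0)+t))=|f(t_0+t)-f(t_0)-t|$, is only bounded by $4\delta$ if $f(t_0+t)\ge f(t_0)$; the quadrilateral argument by itself does not supply monotonicity of $f$, and for $t$ comparable to $d(\gamma(0),\gamma'(0))$ you have no a priori control over the sign of $f(t_0+t)-f(t_0)$. (For $t$ large enough this can be salvaged by comparing $f(t_0+t)$ and $f(t_0)$ to $d(\gamma(\cdot),\gamma'(0))$, but that costs additional additive constants.) Even granting monotonicity, the naive combination $d(\gamma(t_0+t),\gamma'(f(t_0)+t))\le d(\gamma(t_0+t),\gamma'(f(t_0+t)))+|f(t_0+t)-f(t_0)-t|\le 2\delta+4\delta=6\delta$ overshoots. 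Getting $4\delta$ requires a genuinely different bookkeeping --- for instance, choosing $m,m'$ at a near-minimum of $d(\gamma(\cdot),\gamma'(\cdot))$ over large parameters, or running the slim-triangle argument on the two triangles $\bigl(\gamma(0),\gamma'(0),\gamma(T)\bigr)$ and $\bigl(\gamma'(0),\gamma(T),\gamma'(T)\bigr)$ separately and tracking where the $\delta$'s land, as Bridson--Haefliger do. So: correct strategy, correct part (2), a real but acknowledged constant gap in part (1), plus a small hidden monotonicity issue you should address explicitly.
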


The Patterson-Sullivan measure (Definition \ref{def-ps}) will be crucially used in this paper. For now, it suffices to say that it is a Borel measure $\nu$ supported
on $\pG$ (with respect to the topology defined by the visual metric), and is quasi-invariant under the natural action of $G$ on $\pG$.

\subsection{Probabilistic Tools}\label{sec-concn}
Here we record the basic probabilistic tools of concentration bounds and the FKG inequality that we shall use throughout. Note that these are mostly standard, but we shall provide appropriate references (or proofs) to make the exposition self-contained.\\

\noindent
\textbf{Concentration Inequalities:} We shall have occasion to use a number of concentration inequalities for sums of i.i.d.\ variables. The first one we need is the Chernoff Inequality (see e.g.\ \cite[Theorem 2.3.1]{V18}).

\begin{theorem}[Chernoff Inequality]
	\label{t:chernoff}
	Let $X_{i}$ be independent Bernoulli variables with $\theta:=\sum \E X_{i}$. Then for $\alpha>0$
	$$\P\left(\sum X_{i}> (1+\alpha)\theta\right)\leq e^{\theta (\alpha- (1+\alpha)\log (1+\alpha))}.$$
\end{theorem}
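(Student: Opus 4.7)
The plan is to run the classical exponential Markov (Chernoff bounding) method, since this is a very standard inequality and the excerpt tells the reader it is being quoted rather than developed from scratch. Write $S = \sum_i X_i$ and $p_i = \E X_i$, so $\theta = \sum_i p_i$. For any $t > 0$, apply Markov's inequality to $e^{tS}$:
\[
\P(S > (1+\alpha)\theta) = \P(e^{tS} > e^{t(1+\alpha)\theta}) \leq e^{-t(1+\alpha)\theta}\, \E e^{tS}.
\]

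Next I would use independence to factor the moment generating function and bound each factor. Since $X_i \in \{0,1\}$ with $\E X_i = p_i$, one has $\E e^{tX_i} = 1 + p_i(e^t - 1) \leq \exp(p_i(e^t - 1))$, where the inequality is the elementary $1 + x \leq e^x$. Multiplying,
\[
\E e^{tS} = \prod_i \E e^{tX_i} \leq \exp\bigl(\theta(e^t - 1)\bigr),
\]
so that $\P(S > (1+\alpha)\theta) \leq \exp\bigl(\theta(e^t - 1) - t(1+\alpha)\theta\bigr)$ for every $t > 0$.

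Finally I would optimize over $t$. Differentiating the exponent $\theta(e^t - 1) - t(1+\alpha)\theta$ in $t$ and setting the derivative to zero gives $e^t = 1+\alpha$, i.e.\ $t = \log(1+\alpha)$, which is positive because $\alpha > 0$. Substituting back yields exponent $\theta\bigl((1+\alpha) - 1\bigr) - \log(1+\alpha)\cdot (1+\alpha)\theta = \theta\bigl(\alpha - (1+\alpha)\log(1+\alpha)\bigr)$, which is exactly the bound in the statement. There is no real obstacle here: each step is elementary, and the only mildly delicate point is checking that the optimal $t = \log(1+\alpha)$ is admissible (positive), which is guaranteed by the hypothesis $\alpha > 0$. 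Since the excerpt cites Vershynin's book for this inequality, in the actual writeup I would simply refer the reader there rather than including the derivation.
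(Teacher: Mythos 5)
Your proof is correct and is the standard exponential-moment (Chernoff) argument; the paper itself does not prove this statement but simply cites Vershynin's book, which gives essentially the same derivation. Your optimization of $t$ and the resulting exponent $\theta(\alpha - (1+\alpha)\log(1+\alpha))$ match the claimed bound exactly.
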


We next need concentration results for sums of i.i.d. random variables with sub-exponential tails (see e.g.\ \cite[Theorem 2.8.1]{V18}).

\begin{theorem}[Concentration for sums of i.i.d.\ sub-exponential random variables]
	\label{t:subexp}
	Let $X_i$ be i.i.d.\ non-negative random variables with distribution $\nu$ such that for some $a>0$ we have $\int_{0}^{\infty} e^{ax}~d\nu(x)<\infty$ and $\E[X_{i}]=\mu$. Then for each $\delta>0$, we have
	$$\P\left(\sum_{i=1}^n X_{i} \geq (1+\delta)n\mu \right) \leq e^{-cn}$$
	for some $c=c(\delta,\nu)>0$. Further, $c\geq c_1\delta$ for some $c_1 (=c_1(\nu))>0$ if $\delta$ is sufficiently large. 
\end{theorem}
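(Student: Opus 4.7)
The plan is the standard Cram\'er--Chernoff exponential Markov approach via the moment generating function. Let $M(t) := \E[e^{tX_1}]$. By hypothesis, $M(t) < \infty$ for all $t \in [0,a)$, and on this interval $M$ is smooth with $M(0) = 1$ and $M'(0) = \mu$. For any $t \in (0,a)$, Markov's inequality applied to the nonnegative random variable $e^{t \sum_{i=1}^n X_i}$ yields
$$\P\!\left(\sum_{i=1}^n X_i \geq (1+\delta)n\mu\right) \leq e^{-t(1+\delta)n\mu} \, M(t)^n \; = \; \exp\bigl(n \, \psi_\delta(t)\bigr),$$
where $\psi_\delta(t) := \log M(t) - t(1+\delta)\mu$. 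The task is then to show that for every $\delta > 0$ there is some $t = t(\delta) \in (0,a)$ making $\psi_\delta(t)$ strictly negative, and that this negative quantity is bounded below (in absolute value) by a linear function of $\delta$ when $\delta$ is large.

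For the first assertion, note $\psi_\delta(0) = 0$ and $\psi_\delta'(0) = M'(0) - (1+\delta)\mu = -\delta\mu < 0$. Hence, by a Taylor expansion of $\log M(t) = t\mu + O(t^2)$ at $t = 0$, we have $\psi_\delta(t) = -\delta\mu\, t + O(t^2)$ as $t \to 0^+$, so there exists $t_\delta \in (0,a)$ with $\psi_\delta(t_\delta) < 0$. Setting $c := -\psi_\delta(t_\delta) > 0$ gives the claimed bound with $c = c(\delta,\nu)$.

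For the second assertion (linear growth in $\delta$ for large $\delta$), fix once and for all some $t_0 \in (0,a)$, which exists by hypothesis, and set $K := |\log M(t_0) - t_0 \mu|$. Then
$$\psi_\delta(t_0) = \bigl(\log M(t_0) - t_0\mu\bigr) - t_0 \delta \mu \; \leq \; K - t_0 \delta \mu.$$
Once $\delta$ is large enough that $t_0 \delta \mu \geq 2K$, we get $\psi_\delta(t_0) \leq -\tfrac{1}{2}t_0 \mu \delta$, and so the Chernoff bound above gives the claim with $c_1 := \tfrac{1}{2} t_0 \mu$.

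There is no real obstacle here: the result is a textbook concentration inequality (see, e.g., \cite[Theorem 2.8.1]{V18}), and the only point requiring mild care is that the linear-in-$\delta$ growth of the exponent is obtained by \emph{not} optimizing $t$ but rather fixing $t = t_0$ in the interior of the analyticity strip of $M$, so that the $-t_0\mu\delta$ term dominates the $\delta$-independent $\log M(t_0) - t_0\mu$ term.
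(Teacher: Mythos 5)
Your proof is correct, and it follows the standard Cram\'er--Chernoff route via the moment generating function, which is precisely the argument behind the reference the paper cites (Vershynin \cite[Theorem~2.8.1]{V18}); the paper itself does not supply a proof but defers to that source. The only point worth making explicit (and which you handle correctly by fixing $t_0$ rather than optimizing) is the linear-in-$\delta$ growth of the exponent; note also that $\mu>0$ is needed for the final step, which holds here since $\rho$ has no atom at $0$.
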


 As any sub-Gaussian random variable is also sub-exponential, clearly the above result will also hold for $X_{i}\sim \rho$ where $\rho$ satisfies our hypotheses on the passage time distribution. 
 
The next result we shall need shows that for a sum of i.i.d.\ sub-Gaussian random variables the total contribution coming from terms that are sufficiently large is  only a small fraction with high probability. This is a less standard result, even though it follows from essentially the same arguments as classical concentration inequalities. We provide a short proof in the appendix for completeness. 

\begin{theorem}
	\label{t:iidtail}
	Let $X_{i}$ be i.i.d.\ sub-Gaussian non-negative random variables (i.e., $\P(X_i\geq t)\leq C_1e^{-c_1t^2}$ for some $C_1,c_1>0$ and $t>0$). Let $\epsilon>0$ be fixed. Then for $M>0$ sufficiently large, there exists $c(M)>0$ such that we have for all $n\geq n_0(c_1,C_1)$.
	$$\P\left(\sum_{i=1}^n (X_{i}-M)_{+} \geq \epsilon n \right) \leq e^{-cn}$$
	{where $x_{+}:=\max\{x,0\}$}.
	Further as $M\to \infty$ the constant $c=c(M)$  above also goes to $\infty$.
\end{theorem}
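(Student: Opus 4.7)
The natural approach is an exponential Chernoff argument tuned so that the moment generating function of $(X_1-M)_+$ is close to $1$ for large $M$. For any $\lambda > 0$, independence gives
\[
\P\!\left(\sum_{i=1}^n (X_i-M)_+ \geq \epsilon n\right) \leq e^{-\lambda \epsilon n} \, \E\!\left[e^{\lambda (X_1-M)_+}\right]^n.
\]
Since $(X_1-M)_+ = 0$ on $\{X_1 \leq M\}$, one can rewrite
\[
\E\!\left[e^{\lambda(X_1-M)_+}\right] = 1 + \E\!\left[(e^{\lambda(X_1-M)} - 1)\mathbf{1}\{X_1 > M\}\right] \leq 1 + e^{-\lambda M}\,\E\!\left[e^{\lambda X_1}\mathbf{1}\{X_1 > M\}\right].
\]
The main task is to show that the last expectation decays fast enough in $M$, compared to the growth in $\lambda$, to leave room for the $e^{-\lambda \epsilon n}$ factor to dominate.

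For the tail expectation, I would integrate by parts using the sub-Gaussian bound $\P(X_1 > x) \leq C_1 e^{-c_1 x^2}$, obtaining
\[
\E\!\left[e^{\lambda X_1}\mathbf{1}\{X_1 > M\}\right] \leq e^{\lambda M}\,\P(X_1 > M) + \lambda \int_M^{\infty} e^{\lambda x}\,\P(X_1 > x)\,dx.
\]
Completing the square in $\lambda x - c_1 x^2$ and restricting to the regime $\lambda \leq c_1 M/2$ shows that both terms are bounded by $C' e^{-c_1 M^2/2}$ for some $C' = C'(c_1,C_1)$. Consequently
\[
\E\!\left[e^{\lambda(X_1-M)_+}\right] \leq 1 + C'\, e^{-\lambda M - c_1 M^2/2} \leq \exp\!\left(C'\, e^{-\lambda M - c_1 M^2/2}\right).
\]

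Plugging back into the Chernoff bound yields
\[
\P\!\left(\sum_{i=1}^n (X_i-M)_+ \geq \epsilon n\right) \leq \exp\!\left(-n\bigl[\lambda \epsilon - C' e^{-\lambda M - c_1 M^2/2}\bigr]\right).
\]
The final step is to choose $\lambda$ carefully. To obtain a fixed $c(M) > 0$ it suffices to take $\lambda$ small and constant, ensuring the bracket is positive for $M$ large. To obtain $c(M) \to \infty$, I would instead set $\lambda = c_1 M/2$ (the largest $\lambda$ for which the tail estimate above is valid); then the error term $C' e^{-\lambda M - c_1 M^2/2}$ is $O(e^{-c_1 M^2})$, negligible compared to $\lambda \epsilon = \epsilon c_1 M/2$, and we obtain the bound with $c(M) \geq \epsilon c_1 M/4$ once $M$ is sufficiently large. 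The main (mild) obstacle is verifying the integration-by-parts estimate uniformly in the joint regime of $\lambda$ and $M$; no probabilistic subtlety appears beyond the standard Chernoff scheme, which is why the paper relegates the proof to an appendix.
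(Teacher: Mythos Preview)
Your proof is correct and takes a genuinely different route from the paper's. The paper proceeds by a dyadic decomposition: it writes $(X_i-M)_+ = \sum_{j\ge 0} Y_{i,j}$ with $Y_{i,j}=(X_i-M)\mathbf{1}\{X_i\in[2^jM,2^{j+1}M]\}$, then for each scale $j\le \log n$ bounds $\P\bigl(\sum_i Y_{i,j}\ge \epsilon 2^{-(j+1)}n\bigr)$ by controlling the \emph{count} $\sum_i \mathbf{1}\{Y_{i,j}>0\}$ via a Chernoff bound on Bernoulli variables with parameter $p_{j,M}\le C_1 e^{-c_1 4^j M^2}$, handling $j>\log n$ by a crude union bound. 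Your argument instead bounds the moment generating function of $(X_1-M)_+$ directly via integration by parts against the sub-Gaussian tail, then optimizes $\lambda$. Your method is shorter, avoids the dyadic bookkeeping, and yields an explicit rate $c(M)\gtrsim \epsilon c_1 M$; the paper's slicing argument is more modular (each scale is a Bernoulli sum) and would adapt more mechanically to other tail hypotheses, but for sub-Gaussian tails your approach is cleaner. The integration-by-parts estimate you flag as the ``mild obstacle'' is indeed routine in the regime $\lambda\le c_1 M/2$, since there $\lambda x - c_1 x^2 \le -c_1 x^2/2$ for $x\ge M$.
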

We postpone the proof to  Appendix \ref{sec-app}.\\

\noindent
\textbf{FKG Inequality:} Finally we recall the standard FKG correlation inequality, which is widely used in the study of FPP and related percolation models. We call a Borel subset of $\Omega$ increasing (resp.\ decreasing) if $\omega\in A$ implies $\omega'\in A$ if $\omega'(e)\geq \omega(e)$ for all $e\in E$ (resp.\ $\omega'(e)\leq \omega(e)$ for all $e\in E$). The following is a variant of the standard FKG inequality on product spaces (see e.g.\ \cite[Lemma 2.1]{Kesten2003}) 

\begin{theorem}
\label{t:FKG}
For any two increasing (or decreasing) Borel subsets $A$ and $B$ of $\Omega$, we have 
$$\P(A \cap B)\geq \P(A)\cdot\P(B).$$
\end{theorem}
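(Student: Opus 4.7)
The plan is to reduce the set-theoretic statement to a covariance inequality and then prove it by induction on the number of coordinates. First I would rephrase the claim as the assertion that $\E[fg] \geq \E[f]\E[g]$ whenever $f, g: \Omega \to \R$ are bounded, measurable, and increasing in the natural coordinatewise partial order on $\Omega = [0,\infty)^E$; taking $f = \ind_A$, $g = \ind_B$ recovers $\P(A \cap B) \geq \P(A)\P(B)$, and the decreasing case follows by replacing each function by its negation (the inequality is invariant under this flip).

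Next I would reduce to the case of functions depending on only finitely many coordinates. Fix an enumeration $e_1, e_2, \ldots$ of $E$ and let $\FF_n = \sigma(X_{e_1}, \ldots, X_{e_n})$. For bounded increasing $f$, the conditional expectations $f_n := \E[f \mid \FF_n]$ form a uniformly bounded martingale converging to $f$ in $L^1$ and almost surely by L\'evy's upward theorem, and each $f_n$ admits a version that is a bounded increasing function of $(X_{e_1}, \ldots, X_{e_n})$ (this uses the product structure of $\P$: changing a coordinate of higher index leaves the conditional law unchanged, while increasing a coordinate of index $\leq n$ only increases $f$ pointwise, hence also in conditional mean). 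If the inequality holds for each pair $(f_n, g_n)$ it passes to the limit by dominated convergence, so it suffices to treat increasing $f, g: \R^n \to \R$ evaluated at $n$ i.i.d.\ coordinates.

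The core of the argument is then an induction on $n$. For the base case $n = 1$, I would use the standard two-copy trick: if $X, X'$ are i.i.d.\ with law $\rho$, monotonicity of $f$ and $g$ gives $(f(X) - f(X'))(g(X) - g(X')) \geq 0$ almost surely; expanding and taking expectations yields $2\E[f(X)g(X)] - 2\E[f(X)]\E[g(X)] \geq 0$. For the inductive step, condition on $(X_{e_1}, \ldots, X_{e_{n-1}})$; the maps $x \mapsto f(X_{e_1}, \ldots, X_{e_{n-1}}, x)$ and $x \mapsto g(X_{e_1}, \ldots, X_{e_{n-1}}, x)$ are increasing in $x$, so the base case applied to the $n$th coordinate gives
$$\E[fg \mid X_{e_1}, \ldots, X_{e_{n-1}}] \geq F(X_{e_1}, \ldots, X_{e_{n-1}}) \cdot G(X_{e_1}, \ldots, X_{e_{n-1}}),$$
where $F$ and $G$ denote the partial expectations in $X_{e_n}$. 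A quick check shows $F$ and $G$ are themselves bounded and increasing in the remaining $n-1$ coordinates, so the inductive hypothesis applied to $F$ and $G$, followed by taking expectations, closes the induction.

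Rather than a serious obstacle, the main technical point to verify carefully is that conditional expectations preserve monotonicity and that $F, G$ inherit the increasing property from $f, g$; both facts rest essentially on the product structure of $\P$ and Fubini, and once in place the induction is mechanical.
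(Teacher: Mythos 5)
Your proof is correct and is the classical Harris-inequality argument (coordinatewise induction with the two-copy trick for the base case, plus a martingale reduction from the countable product to finitely many coordinates). The paper does not give its own proof of Theorem~\ref{t:FKG} but simply cites \cite[Lemma~2.1]{Kesten2003}, and the argument there is essentially the one you give.
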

In particular, Theorem \ref{t:FKG} shows that conditioning on an increasing (resp.\ decreasing) event makes another increasing (resp.\ decreasing) event more likely.  \\

\noindent {\bf Basic Set-up:} 
As mentioned above, $G$ will denote a finitely generated Gromov-hyperbolic group.  A Cayley graph of $G$ with respect to a fixed finite symmetric generating set $S$ will be denoted by $\Ga = \Ga(G,S)$ and its boundary (independent of the generating set) will be denoted by $\partial G$. The word-metric on $\Ga$ with respect to $S$ will be denoted by $d$.  Thus $d(x,y)$ equals the minimum number of edges in an edge-path joining $x, y$. We shall often use $|y|$ as a shorthand for $d(1,y)$.
We shall consider  FPP on $\Ga$ with edge weights distributed according to a measure $\rho$ satisfying the hypothesis in Section \ref{sec-prelperc}.\\

We shall now prepare the ground for one of our main results. Let $1$ denote the vertex of $\Ga$ corresponding to the identity element of $G$. Our objective is to study the asymptotics of the first passage time $T(x,y)$ for $x,y\in \Ga$ as $d(x,y)\to \infty$. By group invariance it suffices to set $x=1$, and study $T(1,y)$ for large $|y|$.  In analogy with the Euclidean case, it is natural to study $T(1,x_{n})$ as $x_{n}$ moves along some fixed direction parametrized by an element in $\pG$. Let $\xi\in \pG$ be fixed and let $[1,\xi)$  denote a fixed word geodesic ray from $1$ in the direction $\xi$, i.e., $\{1=x_0,x_1,\ldots, x_{n},\ldots\}$ such that $x_{n}\to \xi$, and each finite subpath of $[1,\xi)$ is a word geodesic between the corresponding endpoints. Clearly, this would imply $d(1,x_{n})=n$. It is not too difficult to show that $\E T(1,x_n)$ grows linearly in $n$, and we shall show that a limiting velocity $v(\xi):=\lim_{n\to \infty} \frac{\E T(1,x_n)}{n}$ exists (Theorem \ref{thm-vexists}) for almost every direction $\xi\in \pG$. The next three sections are devoted to the proof of this theorem.

\section{Automatic structure, Patterson-Sullivan measures and frequency}\label{sec-hprels} 
This section is devoted to recalling and developing some of the  technical tools from hyperbolic geometry that will go into the proof of Theorem \ref{thm-vexists}. We shall introduce an appropriate measure $\nu$ on $\pG$ and show that for $\nu$-almost every $\xi\in \pG$, there exists a limiting frequency of occurrence of fixed length geodesic words along $[1,\xi)$ (Lemma \ref{ofreq-exist}), provided the length lies in $d\, \natls$ for a suitable $d$.

In Sections \ref{sec-auto} and \ref{sec-cf}, we recall some facts about symbolic dynamics and hyperbolic groups \cite{gromov-hypgps,cp-book}. We refer to the excellent set of notes \cite{calegari-notes} where the necessary basics are summarized. Most of the relatively recent material here is due to Calegari, Fujiwara and Maher \cite{calegarifujiwara,calegarimaher}. In Section \ref{sec-freq}, we introduce the notion of ordered frequency--a modification of the counting function due to Rhemtulla \cite{rhemt}, rediscovered by Brooks \cite{brooks}. We use a vector-valued Markov chain argument (Proposition \ref{p:frequency}) along with the tools recalled from \cite{calegarifujiwara,calegarimaher} to prove that ordered frequencies exist (Lemma \ref{ofreq-exist}).

\subsection{Automatic structures on hyperbolic groups}\label{sec-auto}
Our starting point is a theorem of Cannon \cite{cannon-conetype,cannon-book}, saying that a hyperbolic group admits an automatic structure. We say briefly what this means, referring the reader to \cite{cannon-conetype,cannon-book,calegari-notes} for details.
Since the generating set $S$ of $G$ is chosen to be symmetric, $S$ generates $G$ as a semigroup. Then \cite{cannon-conetype} there exists a finite state automaton $\GG$ (equivalently, a finite digraph with directed edges labeled by $S$ and a distinguished initial state $1$) such that 
\begin{enumerate}
	\item any word obtained by starting at $1$ and reading letters successively on $\GG$  gives a geodesic in the Cayley graph $\Ga = \Ga(G,S)$ (the empty word being sent to the identity element in $G$). The set of all such words is denoted by $\L$ and is called the formal language accepted by $\GG$. 
	\item Let $e: \L \to G$ denote the evaluation map, sending $w\in \L$ to the element $g \in G$ it represents. For all $g \in G$, there exists a unique $w \in \L$ such that $e(w) = g$.
\end{enumerate}

A language $\L$ generated as above by reading words on a  finite state automaton $\GG$ (without reference to a group)  is called a {\bf prefix-closed regular language}. We also refer to $\L$ as the language accepted by $\GG$. If, as above, $\L$ encodes geodesics in $\Ga$, it is called a {\bf geodesic language}. The collection of geodesics in 
$\Gamma$ obtained in the process is called a {\bf geodesic combing}, or simply, a combing of $\Ga$. The finite directed and labeled graph $\GG$ is said to {\bf parametrize} the combing. Cannon's theorem  thus proves the following:

\begin{theorem}\label{cannon-auto}\cite{cannon-conetype}
	For $G$ hyperbolic and any symmetric generating set $S$, there exists  a
	finite state automaton $\GG$ that parametrizes a  combing of $\Ga = \Ga(G,S)$ corresponding to the prefix-closed regular geodesic language $\L$ accepted by $\GG$.
\end{theorem}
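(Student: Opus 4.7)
The plan is to follow Cannon's original approach via cone types. For $g \in G$, define the cone type
$$C(g) := \{h \in G : d(1, gh) = |g| + |h|\},$$
i.e., the set of elements that can extend a geodesic from $1$ ending at $g$ on the right. The proof splits into three steps: (i) show that in a $\delta$-hyperbolic group there are only finitely many cone types; (ii) build a finite automaton whose states are cone types and whose accepted language consists of geodesic words; (iii) refine the output so that each group element has a unique representative.

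For (i), the key assertion is that $C(g)$ is determined by the local data
$$\Psi(g) := \{s \in G : |s| \leq 2\delta + 1,\ d(1, gs) \leq |g|\},$$
which records which short elements fail to extend a geodesic on the right of $g$. Since $\Psi(g)$ is a subset of the finite ball of radius $2\delta + 1$, finiteness of cone types follows immediately from the assertion. The assertion itself is proved by induction on $|h|$: given $h \in C(g)$ and any $g'$ with $\Psi(g') = \Psi(g)$, a thin-triangle argument on the triangle $[1, g'] \cup [1, g'h] \cup [g', g'h]$ produces a short detour of length at most $2\delta + 1$ near the vertex $g'$, reducing the desired conclusion $h \in C(g')$ to a statement that can be read off from $\Psi$. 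I expect this step to be the main obstacle, since it is where the full strength of Gromov-hyperbolicity enters.

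For (ii), take the states of $\GG$ to be the finitely many cone types, with distinguished initial state $C(1)$. From the state $C(g)$ draw an $S$-labeled edge to $C(gs)$ precisely when $s \in C(g)$, so that appending $s$ still produces a geodesic; declare every state accepting. By construction, a word read from the initial state along $\GG$ is exactly a geodesic word in $\Ga$, and every element of $G$ is represented by at least one such word.

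For (iii), fix a total order on $S$ and restrict to the ShortLex-minimal geodesic representative of each group element. By the Morse Lemma (Lemma \ref{lem-morse}), any two geodesics with the same endpoints fellow-travel within distance $2\delta$, so there exists a constant $L = L(\delta)$ such that a geodesic word fails to be ShortLex-minimal if and only if some length-$\leq L$ subword admits a lex-smaller geodesic replacement with identical endpoints. This local bad pattern is detectable by a finite automaton; since the class of regular languages is closed under intersection and complement, the resulting language $\L$ is regular. Prefix-closure of $\L$ is automatic: any prefix of a ShortLex-minimal geodesic is itself ShortLex-minimal, since a lex-smaller replacement on a prefix would yield a lex-smaller replacement of the whole word. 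The finite automaton $\GG$ accepting $\L$ then satisfies both conditions (1) and (2) of the theorem.
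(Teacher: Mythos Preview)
The paper does not give its own proof of this statement: Theorem \ref{cannon-auto} is quoted as Cannon's theorem with a citation to \cite{cannon-conetype} and is used as a black box throughout Section \ref{sec-hprels}. So there is no proof in the paper to compare against; your sketch is essentially the standard cone-type argument that one finds in Cannon's paper or in \cite{bh-book,calegari-notes}.

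That said, your step (iii) contains a genuine gap. From the Morse Lemma you correctly extract that two geodesics with the same endpoints $k$-fellow-travel for some $k=k(\delta)$. But the next sentence does not follow: fellow-traveling does \emph{not} imply that a non-ShortLex geodesic must contain a bounded-length subword admitting a lex-smaller geodesic replacement with the same endpoints. If $w$ is geodesic and $w'$ is its ShortLex representative, they may first differ at position $i$ and then stay at positive (bounded) distance all the way to the end, never synchronizing at any intermediate index; in that case the only replacement with identical endpoints is on the full suffix $w[i..n]$, whose length is unbounded. What fellow-traveling actually buys you is that the relation $\{(w,w'): e(w)=e(w'),\ w,w'\text{ geodesic}\}$ is recognized by a two-tape finite automaton whose states record the bounded element $e(w[1..j])^{-1}e(w'[1..j])$. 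Non-ShortLex-minimality is then the projection onto the first coordinate of this relation intersected with the (regular) lex-order relation, and regularity of $\L$ follows from closure of regular languages under projection, intersection, and complement. Your prefix-closure argument at the end is fine. Steps (i) and (ii) are correct in outline, though in (i) the inductive step is where the work is and your one-line description (``a thin-triangle argument \ldots reducing to a statement that can be read off from $\Psi$'') would need to be fleshed out to be a proof.
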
 

\begin{rmk}\label{rmk-cannon}
Note that by the second condition above in the definition of a geodesic combing, for every $g \in G$, there is a unique geodesic word $w$ evaluating to $g$ under the 
evaluation map $e$.
\end{rmk}

Directed paths in $\GG$ starting at the initial vertex $1$  are in one-to-one correspondence with words in $\L$. We use the suggestive notation $1$ for the initial state as the evaluation map $e$ is assumed to send $1 \in \GG$ 
to the identity element $1 \in G$.
Let $\PP^0$ denote this collection of directed paths and let $\PP^0_n$ denote the subset of $\PP^0$ consisting of directed paths of length $n$  (starting at 1 by definition). The evaluation map $e:\L \to G$ then naturally gives an evaluation map $e:\PP^0 \to G $ by sending a word in $\L$ or equivalently a labeled path in $\PP^0$ to the element in $G$ it represents (we use the same letter $e$ for both). The set of {\it all} directed paths (without restriction on the base-point) in $\GG$ will be denoted as $\PP$ and the set of all directed paths of length $n$ in $\PP$ will be denoted by $\PP_n$. 
An important ingredient in the proof of Theorem \ref{cannon-auto} is the following:
\begin{defn}\label{def-cone}
	For $g\in G$, let $y \in \PP^0$ be the unique geodesic word such that $e(y)=g$. The cone $\cone(g)$ consists of the image under $e$ of all paths extending $y$. 
\end{defn} 
The uniqueness of $y$ in Definition \ref{def-cone} is guaranteed by Remark \ref{rmk-cannon}.
Note that $\cone(g)$ is the image under $e$ of the cylinder set in $\PP^0$ determined by $y$.
The underlying directed graph of $\GG$ is also called a {\bf topological Markov chain} and its vertices are called {\bf states}. Let $\VV(\GG)$ denote the set of states. We define an equivalence relation on $\VV(\GG)$ by declaring $v_1, v_2$ to be equivalent if there are directed paths from $v_1$ to $v_2$ and vice versa. Each equivalence class is called a {\bf component} and the resulting quotient directed graph is denoted $C(\GG)$. Then $C(\GG)$ has no directed loops.

Let $V$ denote the complex vector space of complex functions on $\VV(\GG)$.
Let $M$ denote the transition matrix of $\GG$: thus $M_{kl}$ equals one if there is a directed edge from the vertex labeled  $k$ to  the vertex labeled $l$ and is zero otherwise.  Let $\lambda$ denote its maximal eigenvalue. Similarly, for  each component $C$, let $M_C$ denote the transition matrix of $C$ and let $\lambda_C$ denote its maximal eigenvalue. Note that $\lambda_C$ is real as the transition matrix is non-negative (by Perron-Frobenius).
For all $C$, $\lambda_C \leq \lambda$. A component $C$ is said to be 
{\bf maximal} if   $\lambda_C = \lambda$. Recall that $B_n(1)$ denotes the $n-$ball about $1 \in G$ (recall that $1$ denotes the identity element of $G$). Let $$G_n := \{g \in G \mid d(1,g) = n\}$$ denote its `boundary', the $n-$sphere.

\begin{theorem}\cite{coornert-pjm} \cite[Lemma 4.15]{calegarifujiwara}
	\cite[Theorem 3.7]{calegarimaher}\label{cf-maxcomp}
	Let $G, \GG, C(\GG), M, \lambda$ be as above.
	Then each directed path in $C(\GG)$ is contained in at most one maximal component.
	There exist $K$ such that
	$$\frac{1}{K}\lambda^n \leq |G_n| \leq K\lambda^n$$
\end{theorem}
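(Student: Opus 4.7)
The plan is to deduce both assertions from an interplay between the linear algebra of the transition matrix $M$ and a geometric input, due to Coornaert, that controls the purely exponential growth of $|G_n|$. Note first that, by property (2) of Theorem \ref{cannon-auto}, the evaluation map $e\colon \PP^0_n \to G_n$ is a bijection, so counting $|G_n|$ reduces to counting length-$n$ directed paths in $\GG$ that start at the initial vertex $1$.

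First I would record the relevant linear algebra. Ordering $\VV(\GG)$ compatibly with the partial order on $C(\GG)$ (which is a DAG since components are maximal strongly connected subgraphs), the matrix $M$ becomes block upper triangular with diagonal blocks $M_C$. By Perron--Frobenius, the top eigenvalue of $M_C$ is $\lambda_C$, and $\lambda_C=\lambda$ precisely for the maximal components. A standard Jordan-form computation for such a nilpotent perturbation of a block-diagonal matrix then yields
\[
|\PP^0_n| \;=\; \sum_{v\in \VV(\GG)} (M^n)_{1,v} \;\asymp\; n^{k-1}\lambda^n,
\]
where $k$ is the maximum number of maximal components visited by any directed path in $C(\GG)$ starting at the component containing $1$. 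The contribution from paths that visit only non-maximal components is dominated, and the factor $n^{k-1}$ comes from choosing the split of length among the $k$ maximal stretches via stars-and-bars.

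The decisive geometric input is Coornaert's theorem \cite{coornert-pjm}, which in the hyperbolic setting constructs a Patterson--Sullivan measure on $\pG$ and, via the Shadow Lemma, deduces
\[
\frac{1}{K'}\,\lambda^n \;\leq\; |G_n| \;\leq\; K'\,\lambda^n
\]
for some $K'>0$. Comparing with the preceding estimate forces $k=1$: no directed path in $C(\GG)$ can meet two distinct maximal components, which is the first assertion, and the second assertion is exactly Coornaert's bound. For an independent elementary proof of the lower bound in $|G_n|\gtrsim \lambda^n$, one can note that Coornaert's lower bound forces at least one maximal component $C$ to be reachable from $1$, and then Perron--Frobenius applied to $M_C$ alone produces $\gtrsim \lambda^n$ length-$n$ paths that start at $1$, enter $C$ after a bounded number of steps, and then remain in $C$.

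The main obstacle is the geometric upper bound $|G_n|\leq K'\lambda^n$: in a general hyperbolic group there is no elementary volume comparison at one's disposal, and Coornaert's argument is a genuine theorem requiring the construction of the Patterson--Sullivan measure on $\pG$ as a weak-$*$ limit of Poincar\'e-series averages, the Shadow Lemma to relate shadows at infinity to balls of radius $n$ in $G$, and a transfer of measure-theoretic comparisons on $\pG$ to cardinality bounds on $G_n$. Once this input is imported, everything else reduces to the routine Perron--Frobenius and Jordan-form analysis sketched above.
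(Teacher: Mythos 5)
The statement in question is cited (Coornaert; Calegari--Fujiwara, Lemma~4.15; Calegari--Maher) and the paper gives no proof of its own, so there is nothing internal to compare against; I will instead assess correctness and compare to the cited source.

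Your argument is correct and is in fact the argument of Calegari--Fujiwara: use the bijection $e\colon \PP^0_n\to G_n$ to reduce $|G_n|$ to $\sum_v (M^n)_{1,v}$; observe that block-triangularizing $M$ along the DAG $C(\GG)$ gives $\sum_v (M^n)_{1,v}\asymp n^{k-1}\lambda^n$, with $k$ the maximal number of maximal components a directed path in $C(\GG)$ can traverse (this is Rothblum-type nonnegative-matrix theory: the index of $\lambda$ equals the length of the longest chain of $\lambda$-maximal classes); then invoke Coornaert's purely-exponential growth $|G_n|\asymp\lambda^n$ to force $k=1$. The geometric input is genuinely required and you have identified it correctly.

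One small gap to flag: your $k$ is defined over directed paths in $C(\GG)$ \emph{starting from the component of $1$}, so the comparison only rules out two maximal components along a path reachable from $1$. To deduce the stated conclusion for \emph{all} directed paths in $C(\GG)$, you must add the standard normalization that Cannon's automaton is accessible, i.e.\ every state of $\GG$ is reachable from the initial state (unreachable states can be deleted without changing the accepted language $\L$). With that remark inserted, the argument is complete. Also note that your final paragraph offering an ``independent elementary proof'' of the lower bound $|G_n|\gtrsim\lambda^n$ is circular as written, since it invokes Coornaert's lower bound to locate a reachable maximal component; what is genuinely elementary is the implication ``some maximal component reachable from $1$ $\Rightarrow$ $|G_n|\gtrsim\lambda^n$'' via Perron--Frobenius, but the hypothesis there is not free.
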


Theorem \ref{cf-maxcomp} shows in particular, that the algebraic and geometric multiplicities of the maximal eigenvalue $\lambda$ are equal. Calegari and Fujiwara \cite[Lemmas 4.5, \, 4.6]{calegarifujiwara} show that the following limits exist for all $v \in V$:
\begin{equation}\label{leftright}
\begin{array}{ll}
r(v)  :=  \lim_{n \to \infty} n^{-1} \sum_{i = 0}^{n} \lambda^{-i} M^i v.\\
l(v)   : =  \lim_{n \to \infty} n^{-1} \sum_{i = 0}^{n} \lambda^{-i} (M^T)^i v.
\end{array}
\end{equation}
Further $r(v)$ (resp. $l(v)$) equals the projection of $v$ onto the right (resp. left) $\lambda-$eigenspace of $M$.

\begin{defn}\label{def-nmu}
	Let $v_i$ denote the {\bf initial vector}, taking the value $1$ on the initial state $1$ and zero elsewhere. Let $v_u$ denote the {\bf uniform vector}, taking the constant value $1$ on all $x \in \VV(\GG)$.
	Let  $N$ be the matrix given by
	\begin{enumerate}
		\item \LARGE $N_{pq} =  
				\frac{M_{pq}r(v_u)_q}{\lambda \, r(v_u)_p}	 $
		\normalsize	if $r(v_u)_p \neq 0$,
		
		\normalsize	\item $N_{pp}=1$ and $N_{pq} = 0$ for $p \neq q$, if $r(v_u)_p = 0$.
	\end{enumerate}
	\normalsize
	Define a probability measure $\mu$ on  $\VV(\GG)$ by
\LARGE	$$\mu_p = \frac{ r(v_u)_p\, l(v_i)_p}{\sum_p r(v_u)_p\, l(v_i)_p}.$$
\end{defn}

Lemma 4.9 of \cite{calegarifujiwara} shows that $N$ is a stochastic matrix preserving  $\mu$. 
The measure $\mu$ and the matrix $ \, N$ define measures on $\PP_n$ by the usual strategy of defining measures on cylinders in path spaces. Let $\sigma = v_0v_1 \cdots v_n \in \PP_n$. Then define $$\mu (\sigma) := \mu(v_0) N_{v_0v_1}N_{v_1v_2}\cdots N_{v_{n-1}v_n}.$$

\subsection{Patterson-Sullivan measures}\label{sec-cf} We shall opt for two different normalizations for the Patterson-Sullivan measures (Definition \ref{def-ps} below) following \cite{coornert-pjm} and, more particularly, \cite[p. 1361]{calegarifujiwara}.

\begin{theorem}\cite{coornert-pjm,calegari-notes,calegarifujiwara}\label{thm-nmu}
	Define two  sequences of finite measures on $\Gamma \cup \partial G$ by
	$${\nu_n} = \LARGE \frac{ \sum_{|g| \le n} \lambda^{-|g|}\delta_g}{ \sum_{|g| \le n} \lambda^{-|g|}},$$
	and
	$$\widehat{\nu_n} = \LARGE \frac 1 n \sum_{|g| \le n} \lambda^{-|g|}\delta_g.$$
	Let $\nu$ and $\widehat{\nu}$ be respectively the weak limits of  $\nu_n$ and $\widehat{\nu_n}$ (up to subsequential limits). Then,  $\nu$ and $\widehat{\nu}$ are supported on $\partial G$. Further, any two subsequential limits are absolutely continuous with respect to each other with uniformly bounded Radon-Nikodym derivatives.
\end{theorem}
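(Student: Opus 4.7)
The plan is to prove the three assertions in order: subsequential weak limits exist in the Gromov compactification, every such limit is supported on $\pG$, and any two subsequential limits are mutually absolutely continuous with uniformly bounded Radon--Nikodym derivatives. The core ingredient at every step is the two-sided growth bound $|G_n|\asymp\lambda^n$ from Theorem \ref{cf-maxcomp}, together with a cone analogue.

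First, by Proposition \ref{prop-vmexist} the Gromov compactification $\hat\Ga = \Ga\cup\pG$ is compact since $\Ga$ is proper, so the space of Borel measures on $\hat\Ga$ with bounded total mass is weak-$*$ compact. The $\nu_n$ are probability measures, so subsequential weak limits exist immediately. For $\widehat{\nu_n}$, the total mass is $\frac{1}{n}\sum_{k=0}^n |G_k|\lambda^{-k}$, which by Theorem \ref{cf-maxcomp} lies between $K^{-1}(n+1)/n$ and $K(n+1)/n$; it is uniformly bounded above and below by positive constants, so subsequential weak-$*$ limits again exist.

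Second, to show the limits live on $\pG$, I would verify that any ball $B_R(1)\subset\Ga$ receives vanishing mass. Since $\sum_{|g|\leq n}\lambda^{-|g|} = \sum_{k=0}^n |G_k|\lambda^{-k}\asymp n$ by Theorem \ref{cf-maxcomp}, one has $\nu_n(B_R(1)) \leq C_R/n\to 0$, and the identical bound for $\widehat{\nu_n}$. Because $B_R(1)$ is open in $\hat\Ga$ and $\Ga$ is locally finite, the Portmanteau theorem forces any subsequential weak limit to vanish on $B_R(1)$, and hence on all of $\Ga$, leaving the entire mass on $\pG$.

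Third, for the absolute continuity, I would compare both sequences on the cones $\cone(g)$ (Definition \ref{def-cone}); closures in $\pG$ of the cones form a basis for the visual topology by hyperbolicity. The crucial input is a cone refinement of Theorem \ref{cf-maxcomp}, namely $|\cone(g)\cap G_k|\asymp \lambda^{k-|g|}$ for $k\geq |g|$ with constants independent of $g$. Summing gives $\sum_{h\in\cone(g),\,|h|\leq n}\lambda^{-|h|} \asymp (n-|g|+1)\lambda^{-|g|}$, so both $\nu_n(\cone(g))$ and $\widehat{\nu_n}(\cone(g))$ are comparable to $\lambda^{-|g|}$ with uniform multiplicative constants. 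Passing to weak limits and using the Vitali-type covering structure the cones provide with respect to the visual metric transfers the cone comparability to a pointwise uniform bound on the Radon--Nikodym derivative between any two subsequential limits. The step I expect to be hardest is the uniform-in-$g$ cone growth estimate, since not every state of $\GG$ lies in a maximal component, and one must argue that every state feeds into a maximal component within uniformly bounded time so that the contribution of non-maximal components is lower-order. This relies on the finiteness of $\VV(\GG)$ and the leading-order dominance of maximal components established by Theorem \ref{cf-maxcomp}; the remaining covering/Vitali argument on $\pG$ is more routine.
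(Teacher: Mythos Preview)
The paper does not prove this theorem: it is quoted from \cite{coornert-pjm,calegari-notes,calegarifujiwara} as background, so there is no ``paper's own proof'' to compare against. Your first two steps (existence of subsequential limits by compactness of $\hat\Gamma$, and support on $\pG$ via the growth estimate $|G_k|\asymp\lambda^k$) are correct and are exactly how Coornaert's argument begins.

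The gap is in your third step. Your proposed uniform two-sided cone estimate $|\cone(g)\cap G_k|\asymp\lambda^{k-|g|}$ with constants \emph{independent of $g$} need not hold: $\cone(g)$ is defined via the chosen automatic structure, and the state reached by the combing path for $g$ may feed only into \emph{non}-maximal components of $\GG$, in which case $|\cone(g)\cap G_k|$ grows at a rate strictly smaller than $\lambda^{k}$. Your proposed fix, that ``every state feeds into a maximal component within uniformly bounded time,'' is simply false for general Cannon automata, so the lower bound fails on some cones and the Vitali-type argument as you set it up does not go through. The standard proof in \cite{coornert-pjm} avoids the automatic structure entirely at this point: it uses \emph{shadows} $O_R(g)=\{\xi\in\pG: [1,\xi)\cap N_R(g)\neq\emptyset\}$ rather than combing cones, and derives the uniform estimate $\nu(O_R(g))\asymp\lambda^{-|g|}$ (Sullivan's shadow lemma) from the $G$-quasi-conformality of the limiting measure, i.e., from $\frac{dg_*\nu}{d\nu}(\xi)\asymp\lambda^{\beta_\xi(1,g)}$. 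Shadows, unlike combing cones, are group-equivariant and automatically form a Vitali basis with the required uniform bounds; this is what makes the absolute-continuity comparison work.
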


Though, a priori, we only have a measure class, it is proved in \cite{calegarifujiwara} that the limits $\nu$ and $\widehat{\nu}$  of $\nu_n$ and $\widehat{\nu_n}$ in Theorem \ref{thm-nmu} actually exist (see the comment after \cite[Definition 4.14]{calegarifujiwara} and Lemma \ref{psnmu} below culled from \cite{calegarifujiwara}).

\begin{defn}\label{def-ps} The measures
$\nu$ and $\widehat{\nu}$ (and sometimes their normalized versions)
will be called
the  {\bf Patterson-Sullivan measures} on $\pG$.
\end{defn}

It turns out \cite{calegarifujiwara} that $\widehat{\nu_n}, \widehat{\nu}$ are finite (but not necessarily probability) measures in the same measure classes as  $\nu_n$ and ${\nu}$ \cite{coornert-pjm} respectively. Further, the Radon-Nikodym derivatives of $\widehat{\nu}$ with respect to $\nu$ on $\pG$ are uniformly bounded away from zero and infinity. 

\begin{rmk}\label{rmk-ps}{\rm
We shall refer to 
  $\nu_n$ and $\widehat{\nu_n}$ as {\bf approximants} of $\nu$ and $\widehat{\nu}$ respectively (their existence and basic properties are proven in \cite{coornert-pjm}). Both $\nu$ and $\widehat{\nu}$  will be used in what follows as some properties are easier to state for one than the other. However, owing to the fact that
they are absolutely continuous with respect to each other with uniformly bounded  Radon-Nikodym derivatives, statements about one hold for the other up to uniformly bounded constants.}
\end{rmk}

Patterson-Sullivan measures of cones $\cone(g)$ are given by $$\nu(\cone(g))= \lim_{n \to \infty} \nu_n(\cone(g)),$$ and $$\widehat{\nu}(\cone(g))= \lim_{n \to \infty} \widehat{\nu_n}(\cone(g)).$$

The relation between $\mu, \, N$ (Definition \ref{def-nmu}) and $\nu$ is given as follows:

\begin{lemma}\label{psnmu}\cite[Sections 3.3, 3.4]{calegarimaher} Given $g \in G$, let $n \in \natls$
	and $\sigma = 1v_1v_2 \cdots v_n \in \PP^0_n$ be the unique directed path such that $e(\sigma) = g$. Then $$\nu(\cone(g))=N_{1v_1}N_{v_1v_2}\cdots N_{v_{n-1}v_n}.$$
\end{lemma}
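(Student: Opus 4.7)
The strategy is to compute $\nu(\cone(g))$ directly from the definition via the approximants $\nu_m$, then match it with the telescoping product of $N$-entries. First I would identify the counts involved: by Definition \ref{def-cone}, an element $h \in \cone(g)$ with $|h| = n+k$ corresponds bijectively to a length-$k$ directed path in $\GG$ starting at $v_n$, whose number equals $(M^k v_u)_{v_n}$; similarly the number of group elements of word length exactly $j$ equals $(M^j v_u)_{1}$, where $v_u$ is the uniform vector.

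Substituting these into the definition of $\nu_m$, the approximant becomes, for $m \geq n$,
$$
\nu_m(\cone(g))
=\frac{\lambda^{-n}\sum_{k=0}^{m-n}\lambda^{-k}(M^k v_u)_{v_n}}{\sum_{j=0}^{m}\lambda^{-j}(M^j v_u)_{1}}.
$$
Dividing the numerator by $m-n$ and the denominator by $m$ and applying the Cesaro identities of Equation \eqref{leftright}, both converge, to $\lambda^{-n} r(v_u)_{v_n}$ and $r(v_u)_{1}$ respectively. Since $(m-n)/m \to 1$, the full ratio converges, and so every subsequential weak limit $\nu$ satisfies
$$
\nu(\cone(g))=\frac{\lambda^{-n}\,r(v_u)_{v_n}}{r(v_u)_{1}}.
$$

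Separately, since $\sigma = 1 v_1 \cdots v_n$ is a valid directed path in $\GG$, each transition has $M_{v_i v_{i+1}} = 1$ (with $v_0 = 1$). Using Definition \ref{def-nmu} and telescoping,
$$
N_{1 v_1} N_{v_1 v_2} \cdots N_{v_{n-1} v_n}
= \prod_{i=0}^{n-1} \frac{r(v_u)_{v_{i+1}}}{\lambda \, r(v_u)_{v_i}}
= \frac{r(v_u)_{v_n}}{\lambda^{n}\, r(v_u)_{1}},
$$
which agrees with the value of $\nu(\cone(g))$ computed above.

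The main point requiring care is the degenerate case where $r(v_u)_{v_i} = 0$ for some intermediate state $v_i$, corresponding to $v_i$ lying in a non-maximal component in the sense of Theorem \ref{cf-maxcomp}. Then the telescoping display is ill-formed, but by the second clause of Definition \ref{def-nmu} the $N$-product contains a zero factor, while Theorem \ref{cf-maxcomp} (combined with the fact that any directed path in $C(\GG)$ meets at most one maximal component) shows that elements whose geodesic passes through a non-maximal vertex contribute a subexponential fraction of the mass of the approximants, so $\nu(\cone(g)) = 0$ on both sides, and the identity still holds.
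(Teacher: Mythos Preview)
The paper does not supply its own proof of this lemma; it simply cites \cite[Sections 3.3, 3.4]{calegarimaher}. Your argument is a correct reconstruction of the standard computation behind that citation: count cone elements via powers of $M$, pass to the Cesaro limit using Equation \eqref{leftright} to obtain $\nu(\cone(g)) = \lambda^{-n} r(v_u)_{v_n}/r(v_u)_1$, and telescope the definition of $N$ to match. The only place worth tightening is the degenerate case. Rather than appealing to Theorem \ref{cf-maxcomp} and subexponential mass, note directly that $r(v_u)$ is a non-negative right $\lambda$-eigenvector, so $r(v_u)_{v_i}=0$ forces $r(v_u)_q=0$ for every successor $q$ of $v_i$; inductively $r(v_u)_{v_n}=0$, giving $\nu(\cone(g))=0$ from your own formula. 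On the product side, if $i_0$ is the first index with $r(v_u)_{v_{i_0}}=0$ then $i_0\geq 1$ (since $r(v_u)_1\neq 0$ by Theorem \ref{cf-maxcomp}) and the first clause of Definition \ref{def-nmu} applied at $v_{i_0-1}$ gives $N_{v_{i_0-1}v_{i_0}}=0$.
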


\begin{defn}\label{def-psnmu}
	For $\sigma= 1v_1v_2 \cdots v_n\in \PP^0_n$ (as in Lemma \ref{psnmu} above), we define $$\nu(\sigma) = \nu(\cone(g))=N_{1v_1}N_{v_1v_2}\cdots N_{v_{n-1}v_n}.$$
	The Patterson-Sullivan measure $\nu$ on $\cone(g)$ also gives a measure on $G_n$ for all $n$, simply by defining $\nu (g) := \nu (\cone(g))$.
\end{defn}

Definition \ref{def-psnmu} thus gives us a well-defined way of lifting the Patterson-Sullivan measure $\nu$ to the path space $\PP^0$, by identifying the cylinder set corresponding to $g$ with the Borel subset of $\pG$ given by 
the boundary of $\cone(g)$ (see \cite{coornert-pjm} for details). Let $\SSS$ denote the left shift taking a sequence of vertices to the sequence omitting the first vertex. In particular, $\SSS(\PP^0) = \PP^0$. Then \cite[Lemma 4.19]{calegarifujiwara}, there exists a constant $c>0$ such that
\begin{equation}\label{eq-shift}
c\mu = \lim_{n \to \infty} \frac 1 n \sum_{i=0}^{n-1} \SSS^i_*\widehat{\nu},
\end{equation}
where $\SSS_{*}$ denotes the push-forward.  We caution the reader that in \cite{calegarifujiwara}, the constant
$c$ is not explicit.

The following Proposition shows that the Patterson-Sullivan measure $\nu$ and the uniform measure on $G_n$ are equivalent to each other with uniform constants.

\begin{prop}\cite[Section 4]{calegarifujiwara}\cite[Proposition 3.11]{calegarimaher} \label{uniform=ps}
	There exist $K, C \geq 0$ such that the following holds.
	For any $n \in \natls$ and $g\in G_n$,  let $B(g,C) =(N_C(g) \cap G_n)$ and let $B_0(g,C) =e^{-1}(B(g,C)) \subset \PP^0_n$ denote the pre-image of $B(g,C)$ under the evaluation map. Then 
	$$\frac{1}{K}\nu(B_0(g,C))/\nu(\PP^n_0) \leq |B(g,C)|/|G_n| \leq K\nu(B_0(g,C))/\nu(\PP^n_0).$$
\end{prop}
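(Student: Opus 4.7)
My plan is to reduce the comparison to a uniform statement about the finite stochastic matrix $N$ via the explicit Markov-chain description of $\nu$, and then to apply Perron--Frobenius to the maximal components of $C(\GG)$. Substituting $N_{pq}=M_{pq}r(v_u)_q/(\lambda r(v_u)_p)$ (Definition \ref{def-nmu}) into the product in Lemma \ref{psnmu} and using that $M_{v_{i-1}v_i}=1$ along every directed path in $\GG$, the product telescopes to
\[
\nu(\cone(g))\;=\;\lambda^{-n}\,\frac{r(v_u)_{v_n(g)}}{r(v_u)_{1}},
\]
where $v_n(g)$ denotes the terminal state of the unique path in $\PP^0_n$ evaluating to $g$. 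When any intermediate $v_i$ has $r(v_u)_{v_i}=0$, both sides vanish: the left side by the second clause of Definition \ref{def-nmu}, and the right side because $r(v_u)$ is automatically zero on every state that cannot reach a maximal component in forward time (and such states are closed under forward reachability). Hence $\nu(\cone(g))$ equals $\lambda^{-n}$ times a uniformly bounded function $f(v_n(g))$ of the terminal state alone.

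Next I would analyze $f$. Since $\VV(\GG)$ is finite, $f$ takes only finitely many values. Perron--Frobenius applied to each maximal component shows that $f$ is bounded between two positive constants on any state lying in, or forward-reaching, a maximal component; call such states and the corresponding $g\in G_n$ \emph{good}. Elsewhere $f=0$. By Theorem \ref{cf-maxcomp} together with the fact that directed paths which never enter a maximal component grow at a rate strictly smaller than $\lambda$, one obtains that $|G_n^{\mathrm{good}}|$ has positive density in $G_n$, hence
\[
\nu(\PP^0_n)\;=\;\lambda^{-n}\sum_{g\in G_n}f(v_n(g))\;\asymp\;\lambda^{-n}\,|G_n|.
\]

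Finally, the constant $C$ is used to upgrade this density statement from the whole sphere to every $C$-ball on $G_n$. The key automaton-theoretic input is that every state of $\GG$ admits a forward directed path of uniformly bounded length ending in a maximal component; pushing this through $e$ and applying the Morse lemma (Lemma \ref{lem-morse}) to reinterpret short extensions of geodesic words as small perturbations in $\Ga$, one can fix $C=C(\delta,\GG)$ so that for every $g\in G_n$ a uniformly positive fraction of $B(g,C)\cap G_n$ consists of good elements. Combined with the two-sided control of $f$ on good states this yields $\nu(B_0(g,C))\asymp \lambda^{-n}|B(g,C)|$ with constants independent of $g$ and $n$, and dividing by the asymptotic for $\nu(\PP^0_n)$ gives the proposition.

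The principal obstacle is the last step: verifying that good elements have positive density not just globally but inside every small $C$-ball $B(g,C)\cap G_n$. The difficulty is that $r(v_u)_v$ can genuinely vanish at some terminal states, and controlling this vanishing requires translating the forward-reachability property of $\GG$ into a geometric statement on spheres in $\Ga$ via hyperbolicity; this is the technical heart of the argument.
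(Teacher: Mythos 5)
The paper itself does not prove this proposition---it cites it from Calegari--Fujiwara and Calegari--Maher---so there is no ``paper's proof'' to compare against line by line. That said, your telescoping identity
\[
\nu(\cone(g))\;=\;\lambda^{-n}\,\frac{r(v_u)_{v_n(g)}}{r(v_u)_{1}}
\]
is correct and is a genuinely clean observation: since $M_{v_{i-1}v_i}=1$ along every admissible path and $r(v_u)$ is a right $\lambda$-eigenvector of $M$, the $N$-product collapses, and summing over $g\in G_n$ together with $M^n r(v_u)=\lambda^n r(v_u)$ gives $\nu(\PP^0_n)$ exactly; combined with Theorem \ref{cf-maxcomp} this yields the global comparison $\nu(\PP^0_n)\asymp\lambda^{-n}|G_n|$ with no loss. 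The forward-closedness argument for why both sides of the telescoped formula vanish when $r(v_u)$ hits zero is also right.

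The genuine gap is exactly where you flag it, and I want to be concrete about why it is more than a routine verification. You assert the ``key automaton-theoretic input'' that \emph{every} state of $\GG$ admits a forward directed path of uniformly bounded length ending in a maximal component, but this is not obviously true: states $v$ with $r(v_u)_v=0$ are precisely those from which no maximal component is reachable, and such states can a priori be visited by directed paths from $1$ of arbitrary length (a path can exit a maximal component and wander in a sub-$\lambda$ part of $\GG$ for a long time). The crude count $\sum_i\lambda^i\mu^{n-i}\asymp\lambda^n$ shows that ``bad'' elements of $G_n$ (those landing at such states) are \emph{not} automatically a vanishing fraction, so the needed statement is genuinely local: for a suitable $C=C(\delta,\GG)$ and every $g\in G_n$, a uniformly positive fraction of $B(g,C)\cap G_n$ lands at a state which can reach a maximal component. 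Your sketched route---truncate the geodesic word for $g$ at length $n-k$ and regraft a good suffix, then invoke the Morse lemma---is the right \emph{shape}, but it tacitly requires that the state $v_{n-k}(g)$ itself can reach a maximal component in at most $k$ steps, which is the same unproved claim one level down. To close the gap you need a genuinely geometric input about $\Ga$ (not just the abstract digraph $\GG$): e.g., a quantitative version of the fact that every vertex of $\Ga$ lies within bounded distance of a bi-infinite geodesic, or a bounded dead-end-depth statement for hyperbolic groups, translated through $e$ into the assertion about reaching maximal components. Without a proof of that lemma the argument does not go through, and you should present it as a cited ingredient or prove it, rather than asserting it as a ``key input''.

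A minor correction to the phrasing: it is not quite accurate that $r(v_u)$ is ``automatically zero on every state that cannot reach a maximal component in forward time \emph{and such states are closed under forward reachability}''---the forward-closedness is what you need to justify the vanishing of the right-hand side of the telescoped formula, and it does hold, but you should state the implication in the correct direction (if $w$ reaches a maximal component and $v\to w$, then $v$ does too; hence the complement is forward-closed). As written it reads as if forward-closedness of the bad set is being taken for granted rather than derived.
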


The following Proposition gives a quantitative estimate on the behavior of typical geodesics (recall that Definition \ref{def-psnmu} gives a well-defined way to lift $\nu$ to $\PP^0_\infty$).
\begin{prop}\cite[Proposition 4.10]{calegarifujiwara}\cite[Proposition 3.12]{calegarimaher}\label{cfm-quantmix} There exist $c_1,  c>0$ such that the following holds.
	Let $\sigma$ be a path in   $(\PP^0_n,\nu)$. Then, 
	apart from a prefix of size at most $c_1\log ( n)$,  $\sigma$ is entirely contained in a single maximal component of $\GG$  with probability $1 - O(e^{-c\log ( n)} )$.

	Also,  if $C$ is a  maximal component of $\GG$, then, as $n \to \infty$, a path $\sigma$ enters and
	stays in $C$ with probability ${\mu(C)}$, where $\mu(C)$ is computed from \eqref{eq-shift}.
\end{prop}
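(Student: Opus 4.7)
The plan is to interpret $\nu$ on $\PP^0_n$ via Lemma \ref{psnmu} as the law of the first $n$ vertices of the Markov chain driven by the stochastic matrix $N$ started at state $1$, and then to exploit the fact that maximal components of $\GG$ behave as absorbing sets for this chain. Both assertions then reduce to hitting-time and absorption statements for finite Markov chains.

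The structural input is that $N$ cannot escape a maximal component. The plan is to show: for every $p$ in a maximal component $C$, every edge with $M_{pq}>0$ and $q \notin C$ satisfies $r(v_u)_q = 0$, and hence $N_{pq}=0$. By Theorem \ref{cf-maxcomp} a directed path of $C(\GG)$ meets at most one maximal component, so $q$ cannot reach any maximal component distinct from $C$. Also $q$ cannot reach $C$ itself, since otherwise prepending $p\to q$ and concatenating with an internal path of $C$ would force $q \in C$, contradicting $q \notin C$. Thus $q$ reaches no maximal component, so $r(v_u)_q = 0$. This makes $N$ restricted to $C$ a stochastic sub-matrix, i.e., $C$ is trapping; symmetrically, since every geodesic word in $\Ga$ starts at $1$, every state is reached with $r(v_u)_1 > 0$, so the chain from $1$ is non-degenerate.

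The remaining step for the first assertion is to bound the hitting time $\tau$ of the union of maximal components. Because $C(\GG)$ is a DAG and each non-maximal component has Perron eigenvalue strictly less than $\lambda$, the sub-matrix of $N$ on the non-maximal states has spectral radius strictly below $1$; standard finite Markov chain theory then yields an exponential tail $\P_N(\tau>k)\leq A e^{-\alpha k}$ for constants $A,\alpha>0$ depending only on $\GG$. Choosing $c_1$ large in terms of $\alpha$ accommodates the stated error bound, and combined with trapping this gives the first assertion. For the second assertion, let $p_C$ denote the absorption probability of the $N$-chain in $C$ started at $1$; the first part then gives $\nu\{\sigma : \sigma \text{ enters and stays in }C\}\to p_C$ as $n \to \infty$. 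Since every non-max state is transient and every maximal component is absorbing, the stationary measure $\mu$ of Definition \ref{def-nmu} is supported on the maximal components; restricted to each maximal component $C$, it is proportional to the Perron-Frobenius stationary measure of $N|_C$, with total mass $\mu(C) = p_C$ by the usual absorption identity. The Cesàro-averaging formula \eqref{eq-shift} is consistent with this identification, because averaging $\SSS^i_*\widehat{\nu}$ washes out the (at most logarithmic) transient prefix and leaves the long-run mass on each maximal component.

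The main obstacle is the trapping property of maximal components. Once the statement ``$r(v_u)_q = 0$ for every outgoing neighbor $q \notin C$ of a maximal component $C$'' is in hand, the rest of the argument is standard Markov chain asymptotics; the trapping step genuinely uses the hyperbolic-geometric input from Theorem \ref{cf-maxcomp} that no directed path of $C(\GG)$ visits two distinct maximal components.
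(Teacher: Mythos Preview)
The paper does not prove this proposition; it is quoted from \cite[Proposition 4.10]{calegarifujiwara} and \cite[Proposition 3.12]{calegarimaher} without argument. Your sketch is the standard route to such a statement and is essentially correct: interpret $\nu$ on $\PP^0_n$ as the law of the $N$-chain via Lemma \ref{psnmu}, show that maximal components are absorbing for $N$ (your argument that any $q\notin C$ with $M_{pq}>0$ for $p\in C$ satisfies $r(v_u)_q=0$ is correct, using Theorem \ref{cf-maxcomp} to rule out reaching a second maximal component and strong connectivity to rule out returning to $C$), and then invoke the sub-unit spectral radius of $N$ restricted to the non-maximal states with $r(v_u)>0$ to get an exponential tail on the absorption time $\tau$.

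One point to tighten: your bound $\P(\tau>k)\le Ae^{-\alpha k}$ with $k=c_1\log n$ yields only a polynomial error $O(n^{-\alpha c_1})$, not the $O(e^{-cn})$ written in the proposition. This is not a flaw in your reasoning; rather, the two quantifiers in the proposition (logarithmic prefix, exponential failure probability) cannot both hold simultaneously for the absorption time of a finite chain, and the original references separate them (the exponential bound governs $\P(\tau>n)$, i.e.\ the event that $\sigma$ never enters a maximal component at all). Your identification of the absorption probability $p_C$ with $\mu(C)$ via \eqref{eq-shift} is correct but compressed: since $\mu$ is $N$-stationary and supported on the absorbing classes, and since Ces\`aro-averaging $\SSS^i_*\widehat{\nu}$ kills the transient prefix and leaves the long-run occupation measure, the mass $\mu(C)$ must agree with $p_C$ after normalization.
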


\begin{rmk}
There is a small typographical error in the statement of  \cite[Proposition 3.12]{calegarimaher}, where $1 - O(e^{-c\log ( n)} )$ is written as 
$1 - O(e^{-c n} )$. The statement follows from the fact that 
the number of steps that a Markov chain spends in
a communicating class satisfies a law with an exponentially decaying tail.
At any rate, the only output of Proposition \ref{cfm-quantmix} that we shall
use in this paper is
Corollary \ref{eventuallyunique}, which says that almost every path in
$\PP^0_\infty$ eventually lands in a maximal component.
\end{rmk}
Let $\PP^0_\infty$ (resp. $\PP_\infty$) denote the collection of infinite paths in $\PP^0$ (resp. $\PP$).

\begin{lemma}\label{ct-fsa}\cite[Lemma 3.5.1]{calegari-notes}
	The evaluation map $e: \PP^0_n \to G$ extends continuously to $\bbar e: \PP^0_\infty \to \partial G$ such that the cardinality $|\bbar{ e}^{-1} (\xi)|$ is uniformly bounded independent of
	$\xi \in \partial G$.
\end{lemma}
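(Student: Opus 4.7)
The plan is to extend $e$ to $\bbar e$ via boundary limits of word-geodesic rays, to verify continuity using the visual metric, and then to bound $|\bbar e^{-1}(\xi)|$ uniformly using the Morse lemma together with a projective-limit counting argument. For the construction, given $\sigma = (1, v_1, v_2, \ldots) \in \PP^0_\infty$, each finite prefix $\sigma_n$ is an accepted path in $\PP^0_n$, so Theorem \ref{cannon-auto} makes the group element $g_n := e(\sigma_n)$ have word length exactly $n$, with consecutive $g_n, g_{n+1}$ differing by a single generator. Thus $(g_n)_n$ is a word-geodesic ray in $\Ga$ from $1$, and by properness and $\delta$-hyperbolicity it converges to a unique point $\xi \in \pG$; set $\bbar e(\sigma) := \xi$.

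For continuity, equip $\PP^0_\infty$ with its cylinder topology, so that $\sigma^{(k)} \to \sigma$ iff their length-$n$ prefixes agree eventually for each $n$. When the prefix of $\sigma^{(k)}$ of length $n$ equals $\sigma_n$, both $\bbar e(\sigma^{(k)})$ and $\bbar e(\sigma)$ lie in the closure in $\pG$ of $\cone(g_n)$, where $g_n = e(\sigma_n)$. A routine Gromov-product computation gives $\langle x, y \rangle_1 \geq n - O(\delta)$ for $x, y \in \cone(g_n)$, so by Definition \ref{def-visualmetric} the visual diameter of this closure is $O(a^{-n})$; taking $n$ arbitrarily large along $k$ then yields $\bbar e(\sigma^{(k)}) \to \bbar e(\sigma)$ in the visual metric.

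For the uniform cardinality bound, fix $\xi \in \pG$ and consider two paths $\sigma, \sigma' \in \bbar e^{-1}(\xi)$ with induced word-geodesic rays $\gamma, \gamma'$ from $1$ to $\xi$. The semi-infinite case of the Morse lemma (Lemma \ref{lem-morse}) places $\gamma$ in a $\kappa$-neighborhood of $\gamma'$ with $\kappa = \kappa(\delta)$; comparing $d(1, \gamma(n)) = n$ with $d(1, \gamma'(s))$ at the nearby parameter $s$ upgrades this to the pointwise bound $d(\gamma(n), \gamma'(n)) \leq 2\kappa$ for every $n$. Hence $A_n := \{e(\sigma_n) : \sigma \in \bbar e^{-1}(\xi)\} \subseteq G_n$ is contained in a ball of radius $2\kappa$, giving $|A_n| \leq K := |B_{2\kappa}(1)|$, a constant depending only on $\delta$ and $|S|$; by injectivity of $e$ on $\PP^0_n$ the same bound holds for the corresponding set of prefixes.

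To pass from per-level bounds to a bound on infinite paths, I use the fact that truncation $A_{n+1} \to A_n$ is surjective (every $\sigma \in \bbar e^{-1}(\xi)$ contributes at each level), so $n \mapsto |A_n|$ is non-decreasing, bounded by $K$, and hence stabilises at some $k \leq K$ for $n \geq N$. Once stable, these surjections between finite sets of equal cardinality are bijections, so each element of $A_N$ has a unique forward extension to an element of $\bbar e^{-1}(\xi)$, proving $|\bbar e^{-1}(\xi)| = k \leq K$ with $K$ independent of $\xi$. I expect the main subtlety to lie in the pointwise Morse refinement rather than just Hausdorff closeness of the rays, since it is this level-wise statement---combined with the projective-limit step---that is responsible for the uniform bound.
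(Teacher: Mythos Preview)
The paper does not supply its own proof of this lemma: it is quoted verbatim as \cite[Lemma 3.5.1]{calegari-notes} and simply cited, so there is nothing in the paper to compare your argument against.

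Your argument is correct and is essentially the standard one. The construction and continuity steps are routine. For the cardinality bound, your key steps are all sound: the pointwise refinement $d(\gamma(n),\gamma'(n))\le 2\kappa$ follows exactly as you indicate from Hausdorff closeness plus the fact that both rays are unit-speed from the same basepoint (this is also the content of Lemma~\ref{asymptosamept} in the paper); injectivity of $e$ on $\PP^0_n$ transfers the bound $|A_n|\le K$ to prefixes; and the projective-limit step is legitimate because the truncation $A_{n+1}\to A_n$ is well-defined and surjective (each side witnessed by restricting a single $\sigma\in\bbar e^{-1}(\xi)$), so eventual bijectivity forces any $\sigma\in\bbar e^{-1}(\xi)$ to be determined by its $N$-prefix. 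One small point worth making explicit is why the unique compatible sequence $(g_n)_{n\ge N}$ built from the stabilised bijections actually lands back in $\bbar e^{-1}(\xi)$: since each $g_n$ lies within $2\kappa$ of a fixed ray to $\xi$, the assembled infinite path converges to $\xi$, so the correspondence between $A_N$ and $\bbar e^{-1}(\xi)$ is genuinely a bijection and $|\bbar e^{-1}(\xi)|=|A_N|\le K$.
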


Let $\PP(C) \subset \PP$ denote the collection of paths contained in a maximal component $C$ and let $\PP_\infty(C)$ denote the collection of infinite paths contained in  $C$.
From \eqref{eq-shift}, Remark \ref{rmk-ps} and Proposition \ref{cfm-quantmix}, we have the following immediate corollary:
\begin{cor}\label{eventuallyunique}
	For $\nu-$a.e.\ $\sigma \in \PP^0_\infty$, there is a unique maximal component $C$ so that $\SSS^i (\sigma) \in  \PP_\infty(C)$ for all $i$
	sufficiently large.
\end{cor}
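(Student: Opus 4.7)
The plan is a standard Borel--Cantelli argument, combining the first assertion of Proposition \ref{cfm-quantmix} with the uniqueness statement in Theorem \ref{cf-maxcomp}. For each $n\geq 1$, let $A_n \subset \PP^0_\infty$ denote the event that the initial length-$n$ segment of $\sigma$ fails to be contained, after a prefix of size at most $c_1 \log n$, in a single maximal component of $\GG$. Under the identification of cylinders in $\PP^0_\infty$ with cones $\cone(g)$ from Definition \ref{def-psnmu}, the lift of $\nu$ to $\PP^0_\infty$ projects to the measure on $\PP^0_n$ appearing in Proposition \ref{cfm-quantmix}, so $\nu(A_n) = O(e^{-cn})$; by Remark \ref{rmk-ps} the analogous bound holds for $\widehat{\nu}$ up to bounded Radon--Nikodym derivatives. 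Since $\sum_n e^{-cn}<\infty$, Borel--Cantelli yields a $\nu$-conull set of paths $\sigma$ for which, for every $n\geq N(\sigma)$, the initial length-$n$ segment lies in a single maximal component $C_n=C_n(\sigma)$ past some prefix of length $k_n \leq c_1 \log n$.

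The next step is to promote this family of length-$n$ statements into a single eventual component $C(\sigma)$. I would take $N(\sigma)$ large enough that $c_1 \log n \leq n/2$ for $n \geq N(\sigma)$. For any $n_1<n_2$ with both at least $N(\sigma)$, the length-$n_1$ segment is a prefix of the length-$n_2$ segment, and the stretch from position $\max(k_{n_1},k_{n_2})\leq c_1\log n_2 \leq n_1$ to position $n_1$ is a single directed path lying simultaneously in $C_{n_1}$ and $C_{n_2}$. Theorem \ref{cf-maxcomp} asserts that any directed path in $C(\GG)$ is contained in at most one maximal component; hence $C_{n_1}=C_{n_2}=:C$. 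The intervals $[k_n,n]$ for $n\geq N(\sigma)$ then cover a set of the form $[k_{N(\sigma)},\infty)$, so every vertex of $\sigma$ past $k_{N(\sigma)}$ lies in $C$, i.e.\ $\SSS^i(\sigma)\in \PP_\infty(C)$ for all $i\geq k_{N(\sigma)}$. Uniqueness of $C$ is built into the same invocation of Theorem \ref{cf-maxcomp}, and the measure identification \eqref{eq-shift} combined with Remark \ref{rmk-ps} matches the $\nu$-probability that $C(\sigma)=C$ with $\mu(C)$ up to the constant $c$, consistently with the second half of Proposition \ref{cfm-quantmix}.

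The only mildly subtle point is the mismatch between the $n$-dependent prefix length $c_1\log n$ and the fixed ``eventually'' demanded by the statement: the allowed prefixes grow with $n$, but slowly enough that the overlapping-intervals bookkeeping above upgrades them into a genuine uniform tail. Beyond that, the argument is the concatenation of the exponential tail bound, Borel--Cantelli and the at-most-one-component property, which is what justifies calling the corollary ``immediate.''
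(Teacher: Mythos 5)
Your overall strategy --- an exponential failure bound from Proposition \ref{cfm-quantmix}, Borel--Cantelli, and then patching the finite-level components together via the at-most-one-maximal-component assertion in Theorem \ref{cf-maxcomp} --- is the right one, and the Borel--Cantelli step itself is set up correctly (the events $A_n$ are cylinder events, and $\sum_n\nu(A_n)<\infty$). There is, however, an arithmetic slip in the patching step: you write $\max(k_{n_1},k_{n_2})\leq c_1\log n_2\leq n_1$ for \emph{arbitrary} $n_1<n_2\geq N(\sigma)$, but the second inequality fails once $n_2$ exceeds roughly $e^{n_1/c_1}$. The hypothesis $c_1\log n\leq n/2$ for $n\geq N(\sigma)$ only yields $c_1\log n_2\leq n_2/2$, not $\leq n_1$.

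Two clean repairs. (a) Run the overlap comparison only for consecutive indices $n$ and $n+1$: then $\max(k_n,k_{n+1})\leq c_1\log(n+1)\leq (n+1)/2<n$, so the stretch of $\sigma$ from $\max(k_n,k_{n+1})$ to $n$ is a nonempty directed subpath lying in both $C_n$ and $C_{n+1}$, forcing $C_n=C_{n+1}$, and induction gives a common $C$. (b) More economically, bypass the overlap argument altogether: the projection of the entire infinite path $\sigma$ to the component DAG $C(\GG)$ is a directed path in $C(\GG)$, so Theorem \ref{cf-maxcomp} already says $\sigma$ passes through at most one maximal component. Since each $C_n$ (for $n\geq N(\sigma)$) is a maximal component that $\sigma$ visits, they are all equal to that one component $C$; this also delivers the uniqueness claimed in the corollary directly. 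With either fix, your covering observation $\bigcup_{n\geq N(\sigma)}[k_n,n]\supseteq[M,\infty)$ for a suitable $M$ goes through as stated, so the remainder of the proposal is sound. (The closing remark matching $\nu$-mass of the eventual-component event with $\mu(C)$ via \eqref{eq-shift} and Remark \ref{rmk-ps} is a correct aside but is not needed for the corollary itself.)
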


We shall define a $\nu$-full subset $\PP^\prime_\infty\subset \PP^0_\infty$ of paths  starting at the identity element  such that certain mixing conditions are satisfied along all trajectories $\sigma\in \PP^\prime_\infty$ (recall 
that the evaluation map identifies $\sigma \in \PP^0_\infty$ with semi-infinite geodesic words in $\Ga$).
 This leads us to the notion of ordered frequencies.

\subsection{Frequency}\label{sec-freq} In this subsection we shall introduce the notion of ordered frequencies. This  is a refinement of the counting function of Rhemtulla \cite{rhemt}, rediscovered by Brooks \cite{brooks}.
We shall use the technology recalled in Sections \ref{sec-auto} and \ref{sec-cf} to prove below the main technical lemma of this section: ordered frequencies exist along almost every word geodesic ray (Lemma \ref{ofreq-exist}). We shall need some basic facts from the theory of ergodic Markov chains.

\subsubsection{Markov Chain Trajectories}\label{sec-markovpower} 
We refer to \cite{levin-book} for details on mixing in Markov chains. Let $P$ denote the transition matrix of an irreducible  (but not necessarily aperiodic) Markov chain on a finite state space $\s$. Note that we are not assuming reversibility of the Markov chain. Let $d$ denote the period of $P$. 
Let $k,n\in \N$ with $k$ a multiple of $d$, 
$n\gg k$ be fixed and let us also fix $\mathbf{x}=(x_1,x_2,\ldots,x_{k})\in \s^{k}$. Let $\{X^x_n\}_{n\geq 1}$ denote a realization of the chain starting with $x\in \s$. Let $N_{n}(\mathbf{x},x)$ denote the number of positive integers $i \leq \frac{n}{k}-1$ such that $(X^x_{ik+1},\ldots, X^x_{(i+1)k})=\mathbf{x}$. We have the following result. 

\begin{prop}
	\label{p:frequency}
For each $x\in \s$, the following holds almost surely. 	
	For all $k\in d\N$, $\mathbf{x}\in \s^{k}$, and  $\epsilon>0$,  there exist $f(\mathbf{x},x)\geq 0$ (non-random) and $N_0$ (random but finite, depending on $k, \mathbf{x}, \epsilon$) such that 
	we have for all $n\geq N_0$
	$$\frac{N_n(\mathbf{x},x)}{n} \in  (f(\mathbf{x},x)-\epsilon, f(\mathbf{x},x)+\epsilon),$$
	i.e.\
 $$\frac{N_n(\mathbf{x},x)}{n} \to  f(\mathbf{x},x)$$
	almost surely.
\end{prop}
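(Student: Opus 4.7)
The plan is to reinterpret $N_n(\mathbf{x},x)/n$ as a time-average for an auxiliary Markov chain on $\s^k$ and then invoke the ergodic theorem for finite irreducible Markov chains. For fixed $k\in d\N$ and $x\in\s$, define the block chain $U^x_i := (X^x_{ik+1},\ldots, X^x_{(i+1)k})$ for $i\geq 1$. Since the conditional law of $U^x_{i+1}$ given the past depends only on the last coordinate of $U^x_i$ (through a single $P$-step to the next block's first coordinate, followed by $k-1$ more $P$-steps), the sequence $(U^x_i)_{i\geq 1}$ is itself a Markov chain on the space $\mathcal{T}_k$ of valid length-$k$ trajectories of $P$, with transitions $P_U(\mathbf{y}, \mathbf{z}) = P(y_k, z_1)\prod_{j=1}^{k-1} P(z_j, z_{j+1})$.

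Next I use the cyclic decomposition $\s = C_0\sqcup\cdots\sqcup C_{d-1}$ associated with the period $d$. Because $d\mid k$, all blocks $U^x_i$ have their first coordinate in a single cyclic class $C(x)$ determined by the starting state $x$. Restricted to the corresponding invariant sub-state-space $\mathcal{T}_k(x)\subset \mathcal{T}_k$ of trajectories starting in $C(x)$, the block chain $U^x$ is irreducible: for $\mathbf{y},\mathbf{z}\in \mathcal{T}_k(x)$ the aperiodic irreducibility of $P^d$ on each cyclic class of $P$ yields, for all sufficiently large $\ell$, a positive-probability path of length $\ell k +1$ from $y_k$ to $z_1$ in the original chain, which then extends to a positive-probability block-transition from $\mathbf{y}$ to $\mathbf{z}$ in $\ell+1$ block-steps.

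With irreducibility in hand, the ergodic theorem for finite irreducible (possibly periodic) Markov chains applied to the indicator $\ind_{\{\mathbf{x}\}}$ gives, almost surely,
$$\frac{1}{M}\sum_{i=1}^{M}\ind\{U^x_i=\mathbf{x}\}\ \longrightarrow\ \pi_U(\mathbf{x}),$$
where $\pi_U$ is the unique stationary distribution of $U^x$ on $\mathcal{T}_k(x)$. A direct calculation identifies $\pi_U(\mathbf{x}) = d\,\pi(x_1)\prod_{j=1}^{k-1} P(x_j, x_{j+1})$ for $\mathbf{x}\in \mathcal{T}_k(x)$ and $\pi_U(\mathbf{x})=0$ otherwise, with $\pi$ the stationary distribution of $P$. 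Setting $M=\lfloor n/k\rfloor-1$ and noting $N_n(\mathbf{x},x) = \sum_{i=1}^M\ind\{U^x_i=\mathbf{x}\}$, the identity $N_n/n = (N_n/M)(M/n)$ together with $M/n\to 1/k$ gives $N_n(\mathbf{x},x)/n \to f(\mathbf{x},x):=\pi_U(\mathbf{x})/k \geq 0$, which is non-random.

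To upgrade this to the uniform-in-$(k,\mathbf{x},\epsilon)$ statement, note that the assertion is monotone in $\epsilon$, so it suffices to test $\epsilon\in\Q_{>0}$; and the index set $\{(k,\mathbf{x}): k\in d\N,\ \mathbf{x}\in \s^k\}$ is countable, so intersecting the resulting almost-sure events preserves full measure on a single event. The principal substantive step is the verification of irreducibility of $U^x$ on $\mathcal{T}_k(x)$ via the cyclic decomposition and aperiodicity of $P^d$ on cyclic classes; the remainder is a direct application of the standard Markov chain ergodic theorem.
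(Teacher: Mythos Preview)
Your proof is correct and, at its core, uses the same block (vector-valued) chain $U^x_i=(X^x_{ik+1},\dots,X^x_{(i+1)k})$ that the paper uses. The routes diverge after that. The paper proves (Lemma~\ref{l:aperiodic}) that the recurrent components of the block chain are \emph{aperiodic}, then uses exponential convergence in total variation, a decomposition of $N_n$ along arithmetic progressions of gap $L$, stochastic domination by i.i.d.\ Bernoulli variables, and finally Chernoff plus Borel--Cantelli. You instead verify only \emph{irreducibility} of the block chain on the sub-state-space $\mathcal{T}_k(x)$ (via the cyclic decomposition and aperiodicity of $P^k$ on each class) and invoke the ergodic theorem for finite irreducible Markov chains directly, which applies regardless of periodicity. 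This is more economical: you avoid the aperiodicity lemma and the entire Chernoff/Borel--Cantelli machinery, and you obtain the explicit formula $f(\mathbf{x},x)=d\,\pi(x_1)\prod_j P(x_j,x_{j+1})/k$ for free. The paper's approach, on the other hand, yields quantitative (exponential) deviation bounds, which are not needed for the proposition as stated but could matter if one later wanted rates.
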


The proof is standard and uses the fact that associated vector-valued Markov chain, restricted to each of its recurrent component (which is determined by the initial state) is aperiodic and hence mixing. We provide the argument in  Appendix \ref{sec-app} for completeness.

\subsubsection{Ordered Frequency}\label{sec-ofreq}
We now turn to a refinement of the counting function of \cite{rhemt,brooks}.
Let $d$ denote the L.C.M.\ of the periods of
 (the topological Markov chain underlying) the finite state automaton $\GG$ parametrizing a geodesic combing $\L$
of $\Gamma$. (In the proofs below, it will suffice to take $d$ to be 
the L.C.M.\ of the periods of the maximal components.)
 By Corollary \ref{eventuallyunique}, for $\nu-$a.e.\ $\sigma$, there exists  a unique maximal component $C$ such that  $\sigma$  eventually lies in $ \PP_\infty(C)$. Let $\PP_\infty^{\max} \subset \PP^0_\infty$ denote the collection of such semi-infinite geodesic words.

 Let $n$ be a multiple of $d$.
For a geodesic word $w=g_1g_2\cdots g_n$ of length $n$, 
and $\sigma \in \PP_\infty^{\max}$, we shall define a notion of frequency 
of occurrence of $w$ in $\sigma$. Let $\{y_i\}, \, i=0,1,, \cdots$ be the sequence of (ordered equispaced) points on $\sigma$, such that
$e(y_0)=1$ and $d(e(y_i),1)=in$; so that $d(e(y_i),e(y_{i+1}))=n$.

An {\bf ordered occurrence} of $w$ in $\sigma$ is a pair $y_i,y_{i+1}$ such that 
$y_i^{-1}y_{i+1}$ equals $w$ as an ordered word. Let  $n_w ([y_0,y_i])$ be the  number of distinct ordered occurrences of $w$ in $[y_0,y_i] \subset \sigma$.


\begin{defn}\label{def-ofreq} Let $d, \sigma, y_j$ be as above.
The {\bf ordered frequency} of occurrence of $w$ along $\sigma$ is defined to be $$f_w (\sigma):= \lim_{i\to \infty} \frac{n_w([y_0,y_i])}{i},$$ provided the limit exists.
\end{defn}

\begin{rmk}\label{rmk-initialpoint}
Let $C$ be the unique maximal component   $\sigma$  eventually lies in.
Let $j \in \natls$ be the least integer such that $y_j$ onward, 
$\sigma$ lies in $C$. Then, observe that the existence of $f_w (\sigma)$ is equivalent to the existence of
 $\lim_{i\to \infty} \frac{n_w([y_j,y_i])}{i-j}.$ 
This observation will be useful in the proof of Lemma \ref{ofreq-exist} below.
\end{rmk}

\begin{rmk}\label{rmk-ofreq} {\rm
We have made Definition \ref{def-ofreq}  only  for $w$ with length
a multiple of $d$, though the definition per se works for arbitrary $w$. This is to address the fact that maximal components $C$, while irreducible, need not be mixing. The existence of ordered frequencies
(Lemma \ref{ofreq-exist} below) will be important for a coarse-graining argument
in Section \ref{sec-coarseg} to establish the existence of velocity.
}
\end{rmk}

Irreducibility of topological Markov chains corresponding  to  maximal components $C$ give us the following.
\begin{lemma}[Ordered frequencies exist]\label{ofreq-exist}
For $d$ and $\PP_\infty^{\max}$ as above and  for almost all $\sigma=\{x_0,x_1,\ldots,\} \in \PP_\infty^{\max}$ the following holds:
For each geodesic word $w$ of length a multiple of $d$ and each $\epsilon>0$ there exist $f_{w}(\sigma)$ and $N_0=N_0(\sigma, \ep, w)$ such that for all $i\geq N_0$ we have
$$\frac{n_w([y_0,y_i])}{i}\in (f_{w}(\sigma)-\epsilon, f_{w}(\sigma)+\epsilon);$$
that is, the ordered frequency $f_{w}(\sigma)$ exists for all such $w$.	
\end{lemma}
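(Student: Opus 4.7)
The plan is to reduce the existence of ordered frequencies along $\sigma$ to a block-counting law of large numbers for the Markov chain on $\VV(\GG)$ with transition matrix $N$, and then invoke Proposition \ref{p:frequency}. To begin, by Corollary \ref{eventuallyunique}, I restrict to a $\nu$-full subset of $\PP_\infty^{\max}$ on which $\sigma$ enters and remains in a unique maximal component $C=C(\sigma)$ after finitely many steps. Fix such $\sigma$ and let $j=j(\sigma)$ be the smallest index so that the state $v_{jn}$ and all later states of the trajectory lie in $C$. By Remark \ref{rmk-initialpoint}, existence of $f_w(\sigma)$ is equivalent to existence of $\lim_{i\to\infty} n_w([y_j,y_i])/(i-j)$, so I may work on the shifted tail. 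By Lemma \ref{psnmu} and Definition \ref{def-nmu}, the measure $\nu$ on $\PP^0_\infty$ is the law of the Markov chain $(v_0,v_1,\ldots)$ with transition $N$ started from the initial state $1$; conditioned on eventual entry into $C$ and shifted to the first hitting time of $C$, the tail is governed by the restricted chain $N|_C$ started from whichever state of $C$ is first visited. Since $C$ is strongly connected and the length $n$ of $w$ is a multiple of the period of $C$ (by the choice of $d$), the irreducible chain $N|_C$ fits the hypotheses of Proposition \ref{p:frequency} with block-length $k=n$.

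Next, because $\GG$ is a deterministic automaton (each labeled edge from a vertex leads to a unique vertex), each consecutive block of states $\mathbf{x}^{(\ell)} = (v_{jn+\ell n}, v_{jn+\ell n+1}, \ldots, v_{jn+(\ell+1)n})$ uniquely determines the word spelled along the corresponding length-$n$ segment of $\sigma$. Hence the number of ordered occurrences of $w$ in $[y_j,y_{j+M}]$ equals the number of indices $0\le \ell<M$ for which $\mathbf{x}^{(\ell)}$ lies in a (prescribed, finite) collection of length-$n$ vertex blocks in $C$ whose edge labels read $w$. Applying Proposition \ref{p:frequency} to $N|_C$ with $k=n$, for each starting state $x\in C$ and each such admissible block $\mathbf{x}$, the normalized count converges almost surely to a deterministic limit $f(\mathbf{x},x)$; summing over the finitely many admissible blocks yields the existence of $f_w(\sigma)$, with the quantitative $\epsilon$-estimate inherited directly from Proposition \ref{p:frequency}. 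Finally, since $\VV(\GG)$ is finite and the set of geodesic words of length a multiple of $d$ is countable, a countable intersection over $w$ and rationals $\epsilon$ gives a single $\nu$-full event on which the conclusion holds for all admissible $w$ simultaneously.

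The main obstacle is justifying the identification, in the first step, of the law of the tail of $\sigma$ under $\nu$ (conditioned on eventual entry into $C$) with the Markov chain $N|_C$ started from the first hitting state, so that Proposition \ref{p:frequency} applies verbatim. This is not immediate from Lemma \ref{psnmu}, since $\nu$ is built from $N$ starting at the non-stationary initial state $1$, and one must verify that the strong Markov property at the hitting time of $C$, together with absorption in $C$, produces precisely $N|_C$ and not some biased kernel. Once this is granted---essentially because rows of $N$ indexed by a recurrent state in $C$ place no mass outside $C$---the remainder is a clean application of the finite-state irreducible Markov chain theory packaged in Proposition \ref{p:frequency}.
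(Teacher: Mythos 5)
Your proof is correct and takes essentially the same route as the paper's: reduce to the tail in the unique maximal component $C$ via Corollary \ref{eventuallyunique} and Remark \ref{rmk-initialpoint}, identify the law of that tail with the irreducible restricted chain $N|_C$ via Lemma \ref{psnmu}, and apply the block law of large numbers in Proposition \ref{p:frequency} with $k=|w|$. The strong-Markov concern you flag at the end is not a genuine obstacle---it is resolved exactly as your own parenthetical indicates, since rows of $N$ indexed by states of the maximal component place no mass outside $C$, and the paper does not belabor this point either.
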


\begin{proof}  By Theorem \ref{cf-maxcomp}, the Markov chain $N$ (Definition \ref{def-nmu}) restricted to $C$ is irreducible as $C$ is maximal. Note also that maximal components corresponding to $N$ are maximal for the topological Markov chain $M$. Further, Lemma \ref{psnmu} shows that the law of $\sigma \in (\PP^0_\infty, \nu)$ is the same as the law of trajectories given by the Markov chain $N$.
	
	 Let $|w| = k$, where $k$ is a multiple of $d$.  Hence, the associated vector-valued Markov chain of $k-$tuples is mixing.  Let  $w=g_1\cdots g_k \in \PP_k$, where $g_i$'s are generators of $G$.  The ordered frequency $f_w (\sigma)$  equals the frequency of occurrence of the $k-$tuple $(g_1,\cdots, g_k)$ from the state space $\s^k$ by Proposition \ref{p:frequency}. Since, the vector-valued Markov chain of $k-$tuples is mixing, it follows from Proposition \ref{p:frequency} and Remark \ref{rmk-initialpoint} 
	 that there exists a 
full measure subset of $ \PP_\infty(C)$ for which $f_w (\sigma)$ exists.	
\end{proof}

Henceforth we fix $\PP^\prime_\infty\subset \PP^0_\infty$ to satisfy the conclusion of Lemma \ref{ofreq-exist}. Also, let $\pG' = \bbar{ e}(\PP^\prime_\infty)$ be the $\nu-$full subset of $\pG$ obtained as the image of $\PP^\prime_\infty$.  Since $e(\PP_\infty^\prime) \subset \pG$ has full measure with respect to the Patterson-Sullivan measure $\nu$ (Corollary \ref{eventuallyunique}), we have the following:

\begin{cor}\label{cor-fullmrefreq}
For $\nu-$a.e.\ $\xi \in \pG$, there exists a geodesic ray $\sigma= [1, \xi)\in \PP^\prime_\infty$.
\end{cor}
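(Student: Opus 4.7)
The plan is to deduce the corollary from Lemma~\ref{ofreq-exist} together with the fact that the boundary evaluation map $\bbar{e}\colon \PP^0_\infty \to \pG$ is surjective with uniformly bounded fibers (Lemma~\ref{ct-fsa}), and that the measure $\nu$ on path space (via Definition~\ref{def-psnmu}) is compatible with the Patterson-Sullivan measure $\nu$ on $\pG$.

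First I would observe that $\bbar{e}$ is surjective: every $\xi \in \pG$ admits at least one word geodesic ray from the identity converging to $\xi$, and by definition such a ray is recorded as an element of $\PP^0_\infty$ (here we use that the combing language $\L$ provides a unique geodesic representative in $G$ for each group element, and that the continuous extension $\bbar{e}$ exists). Next, consider the set
\[
E := \pG \setminus \bbar{e}(\PP^\prime_\infty).
\]
If $\xi \in E$, then by surjectivity there is some $\sigma \in \PP^0_\infty$ with $\bbar{e}(\sigma) = \xi$, and necessarily $\sigma \in \PP^0_\infty \setminus \PP^\prime_\infty$. Hence $\bbar{e}^{-1}(E) \subset \PP^0_\infty \setminus \PP^\prime_\infty$.

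Now I would invoke Lemma~\ref{ofreq-exist}, which (together with Corollary~\ref{eventuallyunique}) gives $\nu(\PP^0_\infty \setminus \PP^\prime_\infty) = 0$ for the measure $\nu$ lifted to path space. Consequently
\[
\bbar{e}_* \nu(E) = \nu\bigl(\bbar{e}^{-1}(E)\bigr) \leq \nu(\PP^0_\infty \setminus \PP^\prime_\infty) = 0.
\]
Finally I would appeal to the compatibility of the two $\nu$'s: by Definition~\ref{def-psnmu}, the measure $\nu$ on cylinder sets in $\PP^0_\infty$ agrees with the $\nu$-measure of boundaries of cones $\cone(g) \subset \pG$. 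Since cone boundaries generate the Borel $\sigma$-algebra of $\pG$, the pushforward $\bbar{e}_* \nu$ coincides with (or is in the same measure class as) the Patterson-Sullivan measure $\nu$ on $\pG$; in particular $\nu(E) = 0$. Therefore $\bbar{e}(\PP^\prime_\infty)$ is $\nu$-full in $\pG$, which is exactly the required statement.

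The argument is essentially bookkeeping once Lemma~\ref{ofreq-exist} is in hand; the only subtle point is making sure the measure $\nu$ one uses on $\PP^0_\infty$ really does push forward to the Patterson-Sullivan measure on $\pG$. This is the main step to verify carefully, and it follows from the identification of cylinders with cone boundaries in Section~\ref{sec-cf} combined with the bounded-fiber property in Lemma~\ref{ct-fsa}, so it should not pose a genuine obstacle.
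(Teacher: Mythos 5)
Your proof is correct and follows the same route as the paper: $\PP^\prime_\infty$ is a $\nu$-full subset of the path space by Lemma~\ref{ofreq-exist} and Corollary~\ref{eventuallyunique}, and its image under $\bbar{e}$ is $\nu$-full in $\pG$ via the identification of the path-space measure with the Patterson--Sullivan measure through cone boundaries (Definition~\ref{def-psnmu}, Lemma~\ref{ct-fsa}). The paper simply asserts this in a single sentence before stating the corollary; you have spelled out the same bookkeeping in more detail.
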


\section{Approximating FPP geodesics}\label{sec-wqg} 
The aim in this section is to develop another technical tool needed for the proof of Theorem \ref{thm-vexists}. Recall that $x_{n}$ is a point at distance $n$ from the identity element $1$ along some fixed word geodesic along some boundary direction, and our objective is to understand $\E T(1,x_n)$. The basic idea is to prove that $T(1,x_{n})$ can be approximated by a sum of many independent random variables. To this end we shall consider FPP geodesics between $x, y \in [1,\xi)$ constrained to lie in $N_B([x,y])$. We shall show 
 (see Theorem \ref{t:approxqg} below for a precise statement) that FPP geodesic lengths between  $x, y \in \Gamma$ can be well approximated by the weight of the smallest weight path  joining $x, y$ in $N_B([x,y])$ for large $B$. By the Morse Lemma \ref{lem-morse} it is irrelevant which geodesic one chooses. Notice that if the support of $\rho$ is bounded away from $0$ and $\infty$, this is almost trivial (see Lemma \ref{lem-bddaway}), but for more general $\rho$ one needs to work a bit more.

Fix a point $x_{n}$ with $d(1,x_n)=n$, and a geodesic $[1,x_n]$. For $B\in \N$, let $T_{B}(1,x_{n})$ denote the weight of the smallest weight path in $\Ga$ joining $1$ and $x_{n}$ that does not exit $N_{B}([1,x_n])$. The main result in this section is the following. 

\begin{theorem}
	\label{t:approxqg}
	Given $\epsilon>0$, there exist $B=B(\epsilon)\in \N$ and $c=c({\epsilon}), C=C(\epsilon)>0$ such that for all $n\in \N$, all $x_{n}$ with $d(1,x_{n})=n$, and all choices of $[1,x_{n}]$ we have 
	$$\P\bigg(T_{B}(1,x_{n})\geq T(1,x_{n})+\epsilon n\bigg)\leq Ce^{-cn}.$$ 
\end{theorem}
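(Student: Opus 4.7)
The plan is to prove the stronger statement $\Upsilon(1, x_n) \subset N_B([1, x_n])$ with probability $1 - Ce^{-cn}$, which gives $T_B = T$ on this event. The obstruction to applying the Morse Lemma directly is that $\rho$ may put mass near $0$, so $\Upsilon$ is not \emph{a priori} a quasi-geodesic in the word metric. I would bypass this by a truncation argument. Fix $\epsilon > 0$, choose $\alpha > 0$ small and $M > 0$ large (both depending on $\epsilon$), and define truncated weights $\hat\omega(e) := \max(\min(\omega(e), M), \alpha) \in [\alpha, M]$, with $\hat T, \hat T_B$ the associated FPP quantities.

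The truncated model is easy to handle: since $\hat\omega \in [\alpha, M]$, any $\hat\omega$-FPP geodesic $\hat\Upsilon$ between points $u, v$ satisfies $|\hat\Upsilon| \le \hat T(u, v)/\alpha \le M\, d(u, v)/\alpha$ (the upper bound from using the word geodesic from $u$ to $v$), so every such geodesic is a $(M/\alpha, 0)$-quasi-geodesic in the word metric. The Morse Lemma (Lemma \ref{lem-morse}) then forces $\hat\Upsilon(1, x_n) \subset N_B([1, x_n])$ for $B = \kappa(M/\alpha, \delta)$, yielding $\hat T_B = \hat T$ deterministically. This fixes the value of $B$.

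To compare truncated and original quantities, write $\omega - \hat\omega = (\omega - M)_+ - (\alpha - \omega)_+$, which gives, for any path $\gamma$,
\[\ell_{\hat\omega}(\gamma) - \ell_\omega(\gamma) \le \alpha|\gamma|, \qquad \ell_\omega(\gamma) - \ell_{\hat\omega}(\gamma) \le \sum_{e \in \gamma}(\omega(e) - M)_+.\]
Feeding these into the optimality estimates $\hat T \le \ell_{\hat\omega}(\Upsilon)$ and $T_B \le \ell_\omega(\hat\Upsilon_B)$, and using $\hat T_B = \hat T$, I obtain
\[T_B \le T + \alpha\,|\Upsilon| + \sum_{e \in \hat\Upsilon_B}(\omega(e) - M)_+.\]
It therefore suffices to show each of the two error terms is at most $\epsilon n / 2$ with probability $1 - Ce^{-cn}$.

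For $\alpha|\Upsilon|$: the absence of an atom of $\rho$ at $0$, combined with a Chernoff bound and a union bound over the $\le |S|^L$ edge-paths of length $L$ starting at $1$, yields a $\lambda > 0$ such that the minimum weight of an $L$-edge path from $1$ exceeds $\lambda L$ with probability $1 - e^{-c_1 L}$; combined with the upper bound $T \le (\mu + 1)n$ w.h.p.\ (Theorem \ref{t:subexp} on the word geodesic), this forces $|\Upsilon| \le Kn$ w.h.p.\ for $K := (\mu + 1)/\lambda$, and one picks $\alpha < \epsilon/(2K)$. For the sum over $\hat\Upsilon_B$: the deterministic bound $|\hat\Upsilon_B| \le Mn/\alpha$ (from $\hat T_B \le Mn$ and $\hat\omega \ge \alpha$) restricts candidates to paths of length at most $Mn/\alpha$, and Theorem \ref{t:iidtail} provides per-path concentration $\P(\sum_{e \in \gamma}(\omega(e) - M)_+ \ge \epsilon' L) \le e^{-c_2(M) L}$, with $c_2(M) \to \infty$ as $M \to \infty$. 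The main obstacle is coordinating the two truncation parameters: small $\alpha$ is needed to shrink the first term but inflates the candidate-path length bound $Mn/\alpha$, forcing $M$ to be large enough that $c_2(M)$ dominates $(M/\alpha)\log|S|$ and the union bound closes. This is precisely where the sub-Gaussian hypothesis \eqref{e:subgau} (rather than merely exponential tails) is essential, consistent with the paper's remark that \eqref{e:subgau} is invoked only in this theorem.
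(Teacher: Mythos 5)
Your approach is structurally identical to the paper's: the two-sided truncation $\hat\omega = \max(\min(\omega,M),\alpha)$, the Morse-lemma argument showing that $\hat\omega$-geodesics stay in $N_B([1,x_n])$ (so $\hat T_B = \hat T$), and the decomposition of the error into a low-truncation term $\alpha|\Upsilon|$ and a high-truncation term $\sum_{e\in\hat\Upsilon_B}(\omega(e)-M)_+$ — all of this matches the paper (compare with the events $\cA,\cA',\cB$ there, with $\epsilon'$ playing the role of your $\alpha$).

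However, there is a genuine gap in the last step. You bound $|\hat\Upsilon_B|$ by the \emph{deterministic} quantity $Mn/\alpha$, and then claim that the union bound over paths of that length closes because "$c_2(M)$ dominates $(M/\alpha)\log|S|$." This cannot work: for paths of length up to $L_{\max}=Mn/\alpha$, covering the event $\{\sum_{e\in\gamma}(\omega(e)-M)_+\geq \epsilon n/2\}$ forces you to apply Theorem \ref{t:iidtail} at threshold parameter $\tilde\epsilon = \epsilon n/(2L_{\max}) = \epsilon\alpha/(2M)$, which shrinks like $1/M$. Tracing through the proof of Theorem \ref{t:iidtail}, the decay rate $c(M,\tilde\epsilon)$ is of order $M\tilde\epsilon$, hence remains \emph{bounded} as $M\to\infty$ when $\tilde\epsilon\sim 1/M$. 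Alternatively, if you keep $\tilde\epsilon$ fixed so that $c_2(M)\sim M$, the entropy $(M/\alpha)\log|S|$ also grows linearly in $M$, and whether the exponent wins is decided by whether $\alpha$ exceeds a fixed constant (roughly $\log|S|$ divided by the sub-Gaussian rate), which contradicts the requirement $\alpha < \epsilon/(2K)$ when $\epsilon$ is small. In short: increasing $M$ inflates the deterministic path-length bound at the same linear rate as it improves the concentration, so the circularity does not close.

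The fix — which is exactly what the paper does via the event $\cA'$ — is to replace the deterministic bound by a high-probability bound $|\hat\Upsilon_B| \le R'n$ with $R'$ depending on $\alpha$ and $\rho$ but \emph{not} on $M$. Concretely, $\hat T_B = \hat T \le \ell_{\hat\omega}([1,x_n]) \le \ell_{\omega}([1,x_n]) + \alpha n$, and by Theorem \ref{t:subexp} the word-geodesic weight $\ell_\omega([1,x_n])$ is $\le 2n\E X$ with probability $1-e^{-cn}$; combining with $\hat\omega\ge\alpha$ gives $|\hat\Upsilon_B| \le (2\E X+\alpha)n/\alpha =: R'n$ on this event. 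Now the union bound is over paths of length $\le R'n$ with $R'$ fixed once $\alpha$ is, and Theorem \ref{t:iidtail} applied at fixed threshold $\tilde\epsilon = \epsilon/(2R')$ gives a decay rate $c(M)\to\infty$ that eventually dominates $R'\log|S|$. Once this adjustment is made, your argument matches the paper's and is correct.
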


Observe that it suffices to prove Theorem \ref{t:approxqg} only for $n$ sufficiently large, and we shall take $n$ to be sufficiently large in the proof without explicitly mentioning it every time. Before proceeding with the proof of Theorem \ref{t:approxqg},
we start with the following simple, deterministic, test case:
\begin{lemma}\label{lem-bddaway}
	Given $\delta, K$, there exists $B$ such that the following holds.
	Suppose that $(X,d)$ is a $\delta-$hyperbolic graph  and
	that $\rho$ is supported in $[\frac{1}{K}, K]$. Then for all $\omega \in \omegap$ and $x,y \in X$ $$[x,y]_\omega \subset N_B([x,y]).$$
\end{lemma}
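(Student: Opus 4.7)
The plan is to exploit the two-sided bound $\omega(e)\in[1/K,K]$ to show that every FPP geodesic is automatically a quasi-geodesic in the word metric, with quasi-isometry constants depending only on $K$. Once this is done, the Morse Lemma (Lemma \ref{lem-morse}) immediately yields a uniform neighborhood $B=B(\delta,K)$ containing the FPP geodesic, so there is no probability in the argument at all.

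Concretely, I would let $\gamma=[x,y]_\omega$, parameterize it by its vertices $\gamma(0)=x,\gamma(1),\dots,\gamma(N)=y$ where $N$ is the number of edges, and compare $\omega$-length to combinatorial length on every subpath. For $0\le s\le t\le N$, the subpath of $\gamma$ from $\gamma(s)$ to $\gamma(t)$ has combinatorial length $t-s$ and hence $\omega$-length in $[(t-s)/K,(t-s)K]$. This subpath is itself an $\omega$-geodesic between its endpoints (otherwise splicing in a strictly shorter path would contradict the minimality of $\gamma$), so $T(\gamma(s),\gamma(t))\ge (t-s)/K$. On the other hand, a word geodesic $[\gamma(s),\gamma(t)]$ has combinatorial length $d(\gamma(s),\gamma(t))$, so its $\omega$-length is at most $K\cdot d(\gamma(s),\gamma(t))$, giving $T(\gamma(s),\gamma(t))\le K\cdot d(\gamma(s),\gamma(t))$. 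Chaining the two inequalities yields
$$\tfrac{t-s}{K^2}\ \le\ d(\gamma(s),\gamma(t))\ \le\ t-s,$$
which exactly says that the map $i\mapsto\gamma(i)$ is a $(K^2,0)$-quasi-isometric embedding, i.e., $\gamma$ is a $(K^2,0)$-quasi-geodesic in the word metric.

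Applying the Morse Lemma in the $\delta$-hyperbolic space $(X,d)$ to the pair consisting of $\gamma$ (a $(K^2,0)$-quasi-geodesic) and any word geodesic $[x,y]$ (a $(1,0)$-quasi-geodesic) with common endpoints produces a constant $B=B(\delta,K)$ with $\gamma\subset N_B([x,y])$, which is the claim. I do not expect any real obstacle here: the whole content is the deterministic comparison of $\omega$-length and combinatorial length, and the hyperbolic geometry does the rest. The lemma is really a warm-up whose hypotheses are far too strong, and the subsequent Theorem \ref{t:approxqg} is where the work lies, since one must deal with both the lack of an upper bound on edge weights and the possibility of edges with weight arbitrarily close to $0$; the strategy there will be to run an analogous quasi-geodesic argument after truncating large weights and then control the truncation error probabilistically via Theorem \ref{t:iidtail}.
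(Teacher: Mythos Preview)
Your argument is correct and is essentially the same as the paper's: the paper observes that the identity map from $(X,d)$ to $(X,d_\omega)$ is $K$-bi-Lipschitz and then invokes the Morse Lemma, which is exactly what your direct quasi-geodesic computation unpacks (you obtain constant $K^2$ rather than $K$ because you parameterize by combinatorial length instead of $d_\omega$-arclength, but this is immaterial). Your closing remarks about the role of this lemma relative to Theorem~\ref{t:approxqg} are also on point.
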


\begin{proof}
	Let $d_\om$ denote the metric on $X$ induced by $\om \in \omegap$. Then the identity map from $X$ to itself gives a $K-$bi-Lipschitz map from $(X,d)$ to $(X,d_\om)$. The Lemma is now an immediate consequence of the Morse Lemma \ref{lem-morse}.
\end{proof}

The remainder of the proof of Theorem \ref{t:approxqg} is a truncation argument which has little to do with the hyperbolicity assumption and works for FPP on any bounded degree connected graph. The first lemma we need shows that the (word) length of the FPP-geodesic between two points at distance $n$ is $O(n)$ with exponentially small failure probability. This is a rather standard result; analogous results have been proved in the Euclidean case in \cite{Kes86} and in the hyperbolic graph context in \cite[Section 2]{benjamini-tessera}.

\begin{lemma}
	\label{l:geodlen}
	There exist $n_0\in \N$, $R<\infty$ and $c>0$ (depending both on $\rho$ and $|S|$) such that  for all pairs  of vertices $(u,v)$ in $\Gamma$ with $d(u,v)\geq n_0$ we have 
	$$ \P\bigg(\ell(\Upsilon(u,v))\geq R d(u,v)\bigg)\leq e^{-cd(u,v)}.$$
	Further, $c\geq c_1\sqrt{R}$ for some $c_1>0$ for $R$ sufficiently large.
\end{lemma}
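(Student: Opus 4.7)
My plan is a truncated union bound, essentially independent of the hyperbolicity hypothesis; the argument uses only that $\Gamma$ has bounded degree $|S|$, together with the no--atom and sub-exponential tail assumptions on $\rho$. Write $n = d(u,v)$ and $\mu := \int x\,d\rho(x)$. The high-level idea is: (i) upper bound the FPP passage time $T(u,v)$ by the weight along a fixed word geodesic, and (ii) show that every self-avoiding path of length at least $Rn$ from $u$ has weight exceeding that bound with high probability. Since $\Upsilon(u,v)$ is self-avoiding (weights are positive, almost surely) and realizes the infimum, any path of length $\geq Rn$ with weight $> T(u,v)$ is disqualified, forcing $\ell(\Upsilon(u,v)) < Rn$.

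For step (i), fix any word geodesic $\gamma_{0}$ from $u$ to $v$; its $\omega$-length is a sum of $n$ i.i.d.\ $\rho$-distributed weights, and Theorem \ref{t:subexp} (with $\delta=1$) gives $\P(\ell_{\omega}(\gamma_{0}) > 2\mu n) \leq e^{-c_{0}n}$ for some $c_{0}>0$. Since $T(u,v)\leq \ell_{\omega}(\gamma_{0})$, on this high probability event $T(u,v)\leq 2\mu n$. For step (ii), I apply a Chernoff bound in the lower tail direction. For any fixed self-avoiding path $\gamma$ of length $\ell$ and any $\lambda>0$,
\[
\P\!\left(\ell_{\omega}(\gamma)\leq 2\mu n\right) \;\leq\; e^{2\lambda\mu n}\phi(\lambda)^{\ell}, \qquad \phi(\lambda):=\int_{0}^{\infty} e^{-\lambda x}\,d\rho(x).
\]
Because $\rho$ has no atom at $0$ (Assumption (ii)), dominated convergence yields $\phi(\lambda)\to 0$ as $\lambda\to\infty$, so I can fix $\lambda_{0}$ with $\phi(\lambda_{0})\leq \tfrac{1}{2|S|}$. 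The number of self-avoiding paths of length $\ell$ starting at $u$ is at most $|S|^{\ell}$, so a union bound gives
\[
\P\!\left(\exists\,\gamma\text{ from }u\text{ of length }\ell\text{ with }\ell_{\omega}(\gamma)\leq 2\mu n\right) \;\leq\; (|S|\phi(\lambda_{0}))^{\ell}\,e^{2\lambda_{0}\mu n} \;\leq\; 2^{-\ell}\,e^{2\lambda_{0}\mu n}.
\]

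Summing over $\ell\geq Rn$ gives a geometric series bounded by $2\cdot 2^{-Rn}e^{2\lambda_{0}\mu n} = \exp\!\bigl(-(R\log 2 - 2\lambda_{0}\mu)n+O(1)\bigr)$. Intersecting with the event $\{T(u,v)\leq 2\mu n\}$ from step (i), and choosing $R$ large enough that $R\log 2 - 2\lambda_{0}\mu > 0$, the failure probability is at most $e^{-cn}$ for some $c>0$. On the complement, no self-avoiding path of length $\geq Rn$ from $u$ can have weight $\leq 2\mu n \geq T(u,v)$, so in particular $\Upsilon(u,v)$ itself must have length $<Rn$. As $R$ grows, the rate $c$ produced this way grows like $R\log 2 - O(1)$, which is linear in $R$ and hence trivially dominates $c_{1}\sqrt{R}$ for $R$ sufficiently large, yielding the final assertion. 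The only mildly delicate point is the $\phi(\lambda)\to 0$ step, which is where the no-atom hypothesis enters essentially; beyond that the argument is a clean union bound that does not see the geometry of $\Gamma$ at all.
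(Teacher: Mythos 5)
Your argument for the main inequality is correct and takes essentially the same route as the paper: both proofs combine an upper bound on $T(u,v)$ with a union bound over long self-avoiding paths and a lower-tail large-deviation estimate for sums of i.i.d.\ nonnegative weights. The difference is only in the packaging of that lower-tail estimate: the paper declares edges ``good'' or ``bad'' according to a threshold $\eta$ and then runs a Bernoulli Chernoff bound on the count of bad edges, while you apply the exponential Markov inequality directly via $\phi(\lambda)=\int e^{-\lambda x}\,d\rho(x)$. Your version is cleaner and makes the role of the no-atom hypothesis more transparent (it is exactly what forces $\phi(\lambda)\to 0$), so for the first statement of the lemma your write-up is fine.

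However, your proof of the final assertion, that one may take $c\geq c_1\sqrt{R}$ for $R$ sufficiently large, has a genuine gap. You compute the rate only from the union-bound term, getting $R\log 2 - 2\lambda_0\mu - O(1/n)$, but you forget to intersect with the step-(i) event $\{T(u,v)\leq 2\mu n\}$, whose failure probability is $e^{-c_0 n}$ with $c_0$ a \emph{fixed} constant not depending on $R$. The overall failure probability is therefore at most $e^{-c_0 n} + 2e^{-(R\log 2 - 2\lambda_0\mu)n}$, and the dominant term as $R\to\infty$ is $e^{-c_0 n}$: the rate saturates at $c_0$ and does not grow with $R$. The fix is to let the comparison threshold grow with $R$, as the paper does: replace $2\mu n$ by $\sqrt{R}\,n$. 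Then step (i) gives rate $\geq c_1'\sqrt{R}$ by the ``Further'' part of Theorem \ref{t:subexp} (taking $\delta\asymp\sqrt{R}/\mu$), and your Chernoff bound in step (ii) becomes $\P(\ell_\omega(\gamma)\leq\sqrt{R}\,n)\leq e^{\lambda_0\sqrt{R}\,n}\phi(\lambda_0)^\ell$, so the union bound over $\ell\geq Rn$ yields rate $R\log 2 - \lambda_0\sqrt{R}\gtrsim R$ for $R$ large, and the minimum of the two rates is $\gtrsim\sqrt{R}$ as required.
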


\begin{proof}
	Fix $\delta>0$ (to be chosen arbitrarily small later) and choose $\eta\in (0,1/2)$ such that $\rho([0,\eta))\leq \delta$ (this can be done as there is no atom at $0$). Call an edge $e$ \textbf{good} if the weight of $e$ is at least $\eta$ and \textbf{bad} otherwise. Observe that the weight of a path is at least $\eta $ times the number of good edges in the path. Observe that by our assumption on $\rho$ and Theorem \ref{t:subexp}, it follows, by choosing $R\gg \int_{0}^{\infty} x~d\rho$ that 
	$$\P\bigg(T(u,v) \geq \sqrt{R}d (u,v)\bigg) \leq e^{-cd(u,v)}$$ 
	for some $c>0$. Also, $c\geq c_1\sqrt{R}$ for some $c_1>0$ for $R$ sufficiently large. 
	
	Hence, to prove the lemma, it suffices to prove that all paths between $u$ and $v$ with word length larger than $R d(u,v)$ has $\om-$length larger than $\sqrt{R}d(u,v)$ with exponentially small (in $d(u,v)$) failure probability. Noticing that the number of (self-avoiding) paths of length $j$ starting from $u$ is bounded above  by $|S|^{j}$, the above probability is bounded above  by $$\sum_{j=R d(u,v)+1}^{\infty}  |S|^{j}\P(A_j)$$ where $A_j$ denotes the event that a self avoiding path $\gamma$ of length $j$ has weight smaller than $\sqrt{R}d(u,v)$. Now observe that $\P(A_j)$ is further bounded above  by the probability that the number of bad edges on $\gamma$ is at least $j-\frac{\sqrt{R}d(u,v)}{\eta}$. Now choose $R$ sufficiently large so that for all $j>R d(u,v)$, we have $j-\frac{\sqrt{R}d(u,v)}{\eta}> \frac{j}{2}$. Observe that number of bad edges on $\gamma$ is a sum of i.i.d.\ Bernoulli variables with expectation $\delta j$. By choosing $\delta$ sufficiently small and using Chernoff inequality \ref{t:chernoff}, it follows that $$\P(A_{j})\leq e^{-cj \log (1/2\delta)}$$ where the constant $c$ is absolute (i.e., does not depend on $j$ or $\delta$). Now by choosing $\delta$ sufficiently small this term decays sufficiently fast to kill the entropy term $|S|^j$ and hence we get  $$\sum_{j=R d(u,v)+1}^{\infty}  |S|^{j}\P(A_j) \leq e^{-cR d(u,v)}.$$
	The exponent here also can be made arbitrarily large by making $R$ large,  thus completing the proof. 
\end{proof}

Lemma \ref{l:geodlen} has the following immediate corollary that we shall use in Section \ref{sec-linvar}.

\begin{cor}
\label{l:expgeodlen}
There exists $C'>0$ such that for each $n$ and for each $x_{n}$ with $d(1,x_{n})=n$ we have $\E [\ell(\Upsilon(1,x_n))]\leq C'n$.
\end{cor}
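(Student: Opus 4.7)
The plan is to derive the $O(n)$ bound on $\E[\ell(\Upsilon(1,x_n))]$ by integrating the tail bound supplied by Lemma~\ref{l:geodlen}, crucially using the reinforcement that the exponential rate $c$ there grows like $\sqrt{R}$ for large $R$. Since $\ell(\Upsilon(1,x_n))$ denotes the (word) length, i.e. the number of edges, of the FPP geodesic, it is a non-negative random variable and hence
\[
\E[\ell(\Upsilon(1,x_n))] \;=\; \int_0^{\infty} \P\bigl(\ell(\Upsilon(1,x_n)) \geq t\bigr)\, dt \;=\; n\int_0^{\infty} \P\bigl(\ell(\Upsilon(1,x_n)) \geq Rn\bigr)\, dR
\]
after the substitution $t = Rn$.

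First I would assume $n \geq n_0$ so that Lemma~\ref{l:geodlen} is available. Choose $R_0$ large enough that Lemma~\ref{l:geodlen} applies with the strengthened estimate $c \geq c_1\sqrt{R}$ whenever $R \geq R_0$. Split the integral at $R_0$: on $[0,R_0]$ use the trivial bound $\P \leq 1$, contributing $n R_0$; on $[R_0,\infty)$ invoke Lemma~\ref{l:geodlen} to get
\[
n\int_{R_0}^{\infty} \P\bigl(\ell(\Upsilon(1,x_n)) \geq Rn\bigr)\, dR \;\leq\; n\int_{R_0}^{\infty} e^{-c_1 \sqrt{R}\, n}\, dR \;\leq\; n\int_{R_0}^{\infty} e^{-c_1 \sqrt{R}}\, dR,
\]
where the last inequality uses $n \geq 1$. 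The remaining integral is a finite absolute constant (substitute $u=\sqrt R$ to see $\int 2u e^{-c_1 u}\,du < \infty$). Adding the two pieces yields $\E[\ell(\Upsilon(1,x_n))] \leq C' n$ for $n \geq n_0$.

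Finally, for the finitely many values $n < n_0$ a uniform bound is immediate: the FPP geodesic is almost surely finite, one can again integrate the tail bound (applied at the smallest admissible scale, or replaced by a crude union bound over paths of a given word length combined with Chernoff as in the proof of Lemma~\ref{l:geodlen}) to see $\E[\ell(\Upsilon(1,x_n))]$ is finite for each such $n$, and then enlarge $C'$ to absorb the finitely many exceptional cases. The main obstacle, and the place where a naive attempt would fail, is in the tail integral: if one had only a fixed exponential decay rate $c$ independent of $R$, the integral $\int e^{-cn}\,dR$ would need a different argument since the rate does not improve with $R$; it is precisely the strengthened decay $c \gtrsim \sqrt{R}$ stated in Lemma~\ref{l:geodlen} that makes the integrand decrease fast enough in $R$ for the bound $\int_{R_0}^\infty e^{-c_1\sqrt R n}\,dR$ to be $O(1)$ uniformly in $n$.
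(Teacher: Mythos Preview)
Your proposal is correct and is precisely the natural way to flesh out what the paper leaves implicit: the paper states this as an ``immediate corollary'' of Lemma~\ref{l:geodlen} without supplying any argument, and your layer-cake integration of the tail bound $\P(\ell(\Upsilon(1,x_n))\geq Rn)\leq e^{-c_1\sqrt{R}\,n}$ is exactly the intended mechanism. Your observation that the growth $c\gtrsim\sqrt{R}$ (rather than a fixed rate) is what makes the integral converge uniformly in $n$ is on point, and your handling of the finitely many small values $n<n_0$ is adequate.
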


We now move towards the truncation argument and the proof of Theorem \ref{t:approxqg}. Let $\epsilon>0$ be fixed and let $R$ be as in Lemma \ref{l:geodlen}. Let us fix $0<\epsilon'<\frac{\epsilon}{2R}$ and $M=M(R,\epsilon)>0$ to be chosen sufficiently large later. For $\omega\in (\Omega, \P)$ let us define $\omega'\in \Omega$ by setting, for all edges $e\in \Ga$, 
\begin{enumerate}
	\item $\omega'(e)=\omega(e)$ if $\omega(e)\in [\epsilon',M]$;
	\item $\omega'(e)=\epsilon'$ if $\omega(e)\leq \epsilon'$;
	\item $\omega'(e)=M$  if $\omega(e)\geq M$.
\end{enumerate} 
The main idea is to use the fact that the geodesic $[1,x_{n}]_{\omega'}$, in the environment $\omega'$,  must lie within a bounded neighborhood of $[1,x_n]$. We then show that for the right choice of $M$ and $\epsilon'$, the geodesics $[1,x_n]_{\omega}$ and $[1,x_n]_{\omega'}$ are close in length  except for a very small measure set of $\omega$'s. We now make this heuristic precise.   

\begin{proof}[Proof of Theorem \ref{t:approxqg}]
	Let $\epsilon'$ be fixed as above and let $M$ be fixed sufficiently large to be specified later. Let $B=B(\epsilon',M)$, given by Lemma \ref{lem-bddaway} be such that for all $\omega\in \Omega$, we have that $[1,x_n]_{\omega'}$ is contained in $N_{B}([1,x_{n}])$. Note that $[1,x_n]_{\omega'}$ is not necessarily unique but the above conclusion holds for all choices of $[1,x_n]_{\omega'}$ by Lemma \ref{lem-bddaway}. 
	
	Let $X'(e)$ denote the random variable define by $X'(e)(\omega)=\omega'(e)$. Observe that, trivially, $\E X'(e)\leq \E X(e)+\epsilon'$. Set $R'= \frac{2(\E X(e)+\epsilon')}{\epsilon'}$, and let $\gamma^B(\omega)$ (resp.\ $\gamma^B(\omega')$) denote the path that attains weight $T_{B}(1,x_n)(\omega)$ (resp.\ $T_{B}(1,x_n)(\omega')$). Notice that by our choice of $B$, we have $\gamma^B(\omega')=[1,x_n]_{\omega'}$.  Let us define the following events ({recall that $R$ is the constant from Lemma \ref{l:geodlen}}): 
	$$\cA=\{\omega\in \Omega: \ell([1,x_n]_{\omega})\leq Rn\};$$
	$$\cA'=\{\omega\in \Omega: \ell([1,x_n]_{\omega'})\leq R'n\}.$$
	$$\cB=\left\{\omega\in \Omega: \sum_{e\in [1,x_n]_{\omega'}} (\omega(e)-\omega'(e))_{+} \leq \frac{\epsilon n}{2}\right\}.$$
	
	We shall first show that, on $\cA\cap \cA' \cap \cB$, we have $T_{B}(1,x_n)\leq T(1,x_n)+\epsilon n$. Observe that for any $\omega\in \cA$, we have that 
	\begin{equation}
	\label{e:com1}
	d_{\omega}(1,x_n) \geq\ell_{\omega'}([1,x_n]_{\omega})-Rn\epsilon' \geq d_{\omega'}(1,x_n)-\frac{\epsilon n}{2}
	\end{equation}
	by our choice of $\epsilon'$. Further, we have, for all $\omega\in \cA' \cap \cB$ 
	\begin{equation}
	\label{e:compare2}
	\ell_{\omega}(\gamma^{B}(\omega))\leq \ell_{\omega}([1,x_n]_{\omega'}) \leq d_{\omega'}(1,x_n)+\frac{\epsilon n}{2}.
	\end{equation}
	Comparing \eqref{e:com1} and \eqref{e:compare2}, we get that for all $\omega\in \cA\cap \cA' \cap \cB$ 
	$$\ell_{\omega}(\gamma^{B}(\omega))\leq  d_{\omega}(1,x_n) +\epsilon n,$$
	as desired.
	
	It remains now to provide an appropriate lower bound for $\P(\cA\cap \cA' \cap \cB)$. Note first that Lemma \ref{l:geodlen} gives $\P(\cA^c)\leq e^{-cn}$ for some $c>0$. Next observe that in the event that $\ell_{\omega'}([1,x_n])\leq 2n \E[X'(e)]$,  one obtains that $\cA'$ holds using the definition of $R'$. Using Theorem \ref{t:subexp} we get $\P((\cA')^c)\leq e^{-cn}$ for some $c>0$. 
	
	To complete the proof of the theorem it now suffices to show that 
	\begin{equation}
	\label{e:claim}
	\P(\cA'\cap \cB^c)\leq e^{-cn}
	\end{equation}
	for some $c>0$.

	For this, simply note that 
	$\cA'\cap \cB^c \subseteq \cB''$
	where $\cB''$ is the event 
	$$\cB''=\left\{\omega\in \Omega: \sup_{\gamma: \ell(\gamma)\leq R'n}\sum_{e\in \gamma} (\omega(e)-\omega'(e))_{+} \geq \frac{\epsilon n}{2}\right\}$$
	where the supremum is taken over all self-avoiding paths $\gamma$ from $1$ to $x_n$ that have length $\leq R'n$. Notice now that $(\omega(e)-\omega'(e))_{+}=(\omega(e)-M)_{+}$. Further, for each fixed self-avoiding path $\gamma$ from $1$ to $x_n$ of length $\leq R'n$ we have, using Theorem \ref{t:iidtail}, for $M$ sufficiently large,
	$$\P\left(\sum_{e\in \gamma} (X(e)-M)_{+} \geq \frac{\epsilon n}{2}\right) \leq e^{-cn},$$
	where $c=c(M)$ can be made arbitrarily large by making $M$ arbitrarily large.
	There are at most $|S|^{R'n}$  self avoiding paths of length bounded by $R' n$. By choosing $M$ appropriately large and taking a union bound over such paths we  conclude that $\P(\cB'')\leq e^{-cn}$ for some $c>0$ which completes the proof of \eqref{e:claim} and hence the theorem. 
\end{proof}

\begin{rmk}{\rm
Notice that the proof of Theorem \ref{t:approxqg} is above is the only place in this paper where we have used the sub-Gaussian tail hypothesis \eqref{e:subgau}. Clearly the above proof is not optimal and the conditional can easily be relaxed. For example, it is not too hard to see that \eqref{e:claim} could be deduced easily if instead of \eqref{e:subgau}, we assumed that $\rho$ has sub-exponential tails together with the \textbf{bounded mean residual life property}, i.e., if $\rho$ has unbounded support then 
	\begin{equation}
	\label{e:mrl}
	\sup_{M>0} \big(\rho([M,\infty))\big)^{-1} \int_{M}^{\infty} x ~d\rho(x)<\infty.
	\end{equation}
	However, as already mentioned, we are not trying to get optimal hypotheses in this paper so we shall not discuss this in more detail.}
\end{rmk}

\section{Velocity}\label{sec-vel}
Recall that the Patterson-Sullivan measure on $\pG$ is denoted as $\nu$. The aim of this Section is to prove the following (see Definition \ref{def-velo} below for the precise definition of velocity):

\begin{theorem}\label{thm-vexists}
	For $\nu$-a.e.\ $\xi \in \pG$,  the velocity $v(\xi)$ in the direction of $\xi$ exists. Further,  $v(\xi)$ is constant $\nu$-almost everywhere in $\pG$.
\end{theorem}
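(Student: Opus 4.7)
The strategy is a coarse-graining of the word geodesic ray $[1,\xi)$ into long blocks, reducing the asymptotics of $T(1,x_n)$ to an ergodic-type average over block types that converges thanks to Lemma \ref{ofreq-exist}; the constancy will follow from ergodicity of the $G$-action on $(\pG,\nu)$.

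Pick $\xi$ in the $\nu$-full set for which some lift $\sigma=[1,\xi)\in \PP^\prime_\infty$ eventually lies in a single maximal component (Corollary \ref{eventuallyunique}) and along which all ordered frequencies of length-$kd$ geodesic words exist (Lemma \ref{ofreq-exist}). Given $\epsilon>0$, set $B=B(\epsilon)$ from Theorem \ref{t:approxqg}, fix a large integer $k$, and put $L=kd$. Partition $\sigma$ by the equispaced points $y_i:=x_{iL}$. For each geodesic word $w$ of length $L$ let $Z^w$ denote the distribution of the FPP passage time between the endpoints of $w$ restricted to the $B$-neighborhood of the word geodesic representing $w$; by translation invariance of the edge-weight law this depends only on $w$. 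Set $\tau_i:=T_B(y_i,y_{i+1})$, so that $\tau_i\sim Z^{w_i}$, where $w_i=y_i^{-1}y_{i+1}$.

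Since $L\gg B,\delta$, the $B$-neighborhoods of non-adjacent blocks are disjoint by the thin-tube property of $\delta$-hyperbolic geodesics, so the $\tau_i$'s are $2$-dependent. Grouping them by type and combining Lemma \ref{ofreq-exist} with the concentration bound Theorem \ref{t:subexp} gives, almost surely,
\[
\frac{1}{N}\sum_{i=0}^{N-1}\tau_i\;\xrightarrow[N\to\infty]{}\;\sum_{|w|=L}f_w(\sigma)\,\E Z^w \;=:\; L\cdot v_{L,B}(\sigma).
\]
Concatenating in-tube block-optimal paths yields the upper bound $T(1,x_{NL})\le\sum_i\tau_i$. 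Conversely, Theorem \ref{t:approxqg} gives $T_B(1,x_{NL})\le T(1,x_{NL})+\epsilon NL$ except on an event of probability $\le Ce^{-cNL}$, and any path realizing $T_B(1,x_{NL})$ splits at its successive crossings of the bottleneck disks near the $y_i$'s into pieces each confined to $N_{B+O(\delta)}([y_i,y_{i+1}])$, whose weights bound the $\tau_i$'s below up to an $O(B)$ overhead per interface. Combining these, on the high-probability event,
\[
\Big|\sum_{i=0}^{N-1}\tau_i - T(1,x_{NL})\Big|\;\le\; \epsilon NL + O(BN).
\]
Dividing by $NL$, letting $N\to\infty$ along $\sigma$ and then $\epsilon\to 0$ with $B,L\to\infty$ compatibly, forces $T(1,x_{NL})/(NL)$ to converge almost surely to a deterministic limit $v(\xi)$ (the $v_{L,B}(\sigma)$ must therefore stabilize as $L,B\to\infty$); simple monotonicity handles the interpolation over $n\not\in L\Z$, and uniform integrability coming from the sub-Gaussian tail of $\rho$ upgrades the a.s.\ convergence to $\E T(1,x_n)/n\to v(\xi)$ as required.

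For constancy, observe that $v$ is $G$-invariant: for $g\in G$ the ray $[1,g\xi)$ is the $G$-translate of $[1,\xi)$, and since the edge-weight distribution is $G$-invariant, $v(g\xi)=v(\xi)$ whenever both exist. Ergodicity of $(\pG,\nu)$ under the $G$-action, a classical property of the Patterson-Sullivan measure on a hyperbolic group, then forces the $G$-invariant measurable function $v$ to be $\nu$-a.s.\ constant. The main technical obstacle is the lower bound displayed in the third paragraph: verifying carefully, via the hyperbolic thin-tube estimate and the Morse lemma, that the optimal $T_B$-path decomposes cleanly into block contributions with only $O(B)$ interface overhead, despite possibly wiggling near the bottleneck vertices $y_i$.
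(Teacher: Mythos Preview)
Your approach is essentially the paper's: coarse-grain $[1,\xi)$ into blocks of length $L$, invoke Lemma \ref{ofreq-exist} for the block-type frequencies, use Theorem \ref{t:approxqg} to pass between $T$ and $T_B$, and sandwich $T_B(1,x_{NL})$ between $\sum_i \tau_i$ and a lower bound coming from the disconnection property of the bottleneck balls (the paper's Remark \ref{rmk-sep}). The constancy argument via ergodicity of the $G$-action is identical to Lemma \ref{lowerupperconstant}.

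Two points of divergence are worth flagging. First, you aim for almost sure convergence of $T(1,x_{NL})/(NL)$ and then invoke uniform integrability to recover the expectation limit, whereas the paper works in expectation throughout (Proposition \ref{coarse-grainedCexists} and Theorem \ref{cvelexists}); since $v(\xi)$ is by definition $\lim_n \E T(1,x_n)/n$, the expectation route is shorter and avoids the extra SLLN/concentration layer. Second, your phrase ``$O(B)$ overhead per interface'' is imprecise: the FPP-cost of connecting the bottleneck crossing point back to $y_i$ is a \emph{random} variable (a maximum of passage times over a ball of word-radius $O(B)$), not a deterministic constant. The paper addresses exactly this via Lemma \ref{approxvel}, which bounds the \emph{expected} overhead by $|S|^B B\int x\,d\rho(x)$ and thus makes the sandwich work cleanly in expectation; in your almost-sure framework you would need an additional law of large numbers for these interface costs. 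You correctly identify this as the crux, but the fix is to replace the deterministic $O(B)$ by a bounded-expectation random overhead and either pass to expectations immediately (as the paper does) or control the sum of overheads by a separate SLLN.
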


Recall from Definition \ref{def-fpp} that the random variable 
$T(x,y)$ is  defined as $$T(x,y)(\om)=d_\om (x,y)$$
for $\om \in \omegap$, where $$d_\om (x,y) := \inf_\gamma l_\om(\gamma)$$
is the length of an $\om-$geodesic between $x, y$. We first show a law of large numbers result for approximate passage times defined below.

\subsection{Approximate velocity}\label{sec-appvel}
We  recall from Section \ref{sec-wqg}, the  $B-$neighborhood versions of the random variable $T(x,y)$.
For $x, y \in G$, define $d_{\om,B} (x,y) := \inf_\gamma l_\om(\gamma)$, where $\gamma$ ranges over all paths from $x$ to $y$ contained in 
$N_B([x, y])$. Recall that the $B-$passage time $T_B(x, y)$ from $x$ to $y$ is a random variable defined on $\omegap$ by the following:
$$T_B(x,y)(\om)=d_{\om,B} (x,y).$$
We shall use $T_B(x,y)$ as  a random variable on $\Omega$, and $d_{\om,B}$ as a function
on $G \times G$ (hence the two different pieces of notation).

Similarly, for subsets $U, V$ of $N_B([x, y])$ the $B-$passage time between $U, V$ will be denoted as $T_B(U, V)$ when $x, y$ are understood.
The expected  $B-$passage time from $x$ to $y$ is then given by
$$\E T_B(x,y):= \int_{\Omega} T_B(x,y)(\om)\, d\P.$$

For the following definition, let us fix $\xi\in \pG, B>4\delta$ and a word geodesic $[1,\xi)=\{1=x_0,x_1,\ldots, x_{n}, \ldots\}$ such that $d(1,x_{n})=n$ and $x_{n}\to \xi$.

\begin{defn}\label{def-velo}	
We define the upper (resp. lower) $B-$velocity   in the direction of $\xi$ to be 
$$\bbar{v_B}(\xi) := \limsup_{n \to \infty} \frac{\E T_B(1, x_n)}{n},  $$ (resp.)  
$$\lbar{v_B} (\xi) := \liminf_{n \to \infty} \frac{\E T_B(1, x_n)}{n}.$$ 
	
If $\bbar{v_B} (\xi) = \lbar{v_B} (\xi)$, we say that the $B-$velocity $v_B(\xi)$   in the direction of $\xi$  exists and equals $\bbar{v_B} (\xi) = \lbar{v_B} (\xi)$.
	
For $\xi \in \pG$, $B >0, \ep >0$, we say that the $(B, \ep)-$velocity   in the direction of $\xi$ exists if $$\bbar{v_B} (\xi) - \lbar{v_B} (\xi) \leq \ep.$$
	
For $\xi \in \pG$, we define the upper (resp.\ lower) velocity  in the direction of $\xi$ to be $$\bbar{v} (\xi) := \limsup_{n \to \infty} \frac{\E(T(1, x_n))}{n},$$ (resp.) 
$$\lbar{v} (\xi) := \liminf_{n \to \infty} \frac{\E(T(1, x_n))}{n}.$$ 
If $\bbar{v} (\xi) = \lbar{v} (\xi)$, we say that the velocity $v(\xi)$   in the direction of $\xi$  exists and equals $\bbar{v} (\xi) = \lbar{v} (\xi)$.
\end{defn}

\begin{rmk}
\label{r:well-defined}{\rm
Observe that a priori the quantities $v_{B}(\xi)$, $v(\xi)$ etc.\ defined above need not be well defined as they might depend on the choice of the word geodesic ray $[1,\xi)$. However, by considering two choices $\{1=x_0,x_1,\ldots, x_{n},\ldots\}$ and $\{1=y_0,y_1,\ldots, y_{n},\ldots\}$ of word geodesic ray from $1$ in the direction $\xi$ it follows that $|T(1,x_n)-T(1,y_n)|\leq T(x_n,y_n)$. Since $d(x_n,y_{n})$ is uniformly bounded by $2\delta$ (see the proof of \cite[Theorem 1.13]{bh-book} for instance), it follows that $n^{-1}\E T(x_n,y_n)\to 0$ and hence $v$ is indeed well-defined. The same argument works for $v_B$ since $B > 4 \delta$.}
\end{rmk}

\begin{rmk}
	{\rm Velocity usually refers to the inverse of the quantity in Definition \ref{def-velo}. We are following the convention from \cite{hm} where it is termed speed. In \cite{Kes93} the same quantity is called the ``time constant".}
\end{rmk}

We collect some basic properties of $B-$velocity.
\begin{prop}\label{approxbycnbhd} For a.e.\ $\xi \in \pG$, 
	\begin{enumerate}
		\item $\lim_{B\to \infty}\bbar{v_B} (\xi) = \bbar{v} (\xi).$
		\item  $\lim_{B\to \infty}\lbar{v_B} (\xi) = \lbar{v} (\xi).$
	\end{enumerate}
\end{prop}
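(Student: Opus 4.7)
The plan is to deduce the proposition directly from the deterministic inequality $T_B(1,x_n)\geq T(1,x_n)$ combined with the quantitative approximation Theorem \ref{t:approxqg}. The easy direction is immediate: since $T_B$ is an infimum over a strictly smaller family of paths, we have $T_B(1,x_n)\geq T(1,x_n)$ pointwise in $\omega$, hence
$\bar v_B(\xi)\geq \bar v(\xi)$ and $\underline{v_B}(\xi)\geq \underline{v}(\xi)$. Thus the work lies entirely in the matching upper bounds.

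For the upper bounds, fix $\epsilon>0$ and let $B=B(\epsilon)$ be the constant furnished by Theorem \ref{t:approxqg}. Set $D_n:=T_B(1,x_n)-T(1,x_n)\geq 0$ and let $\mathcal{G}=\{D_n\leq \epsilon n\}$ be the ``good'' event. Theorem \ref{t:approxqg} gives $\P(\mathcal{G}^c)\leq Ce^{-cn}$. Splitting the expectation,
\begin{equation*}
\E[D_n] \;=\; \E[D_n\mathbf{1}_{\mathcal{G}}] + \E[D_n\mathbf{1}_{\mathcal{G}^c}] \;\leq\; \epsilon n + \E[D_n\mathbf{1}_{\mathcal{G}^c}].
\end{equation*}
To control the second term I apply Cauchy--Schwarz, which gives $\E[D_n\mathbf{1}_{\mathcal{G}^c}]\leq \sqrt{\E[D_n^2]\,\P(\mathcal{G}^c)}$. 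The crux then is to show that $\E[D_n^2]=O(n^2)$. Since $0\leq D_n\leq T_B(1,x_n)$, and $T_B(1,x_n)$ is dominated by the $\omega$-length of the word geodesic $[1,x_n]$ itself (which visibly lies in $N_B([1,x_n])$), we have $T_B(1,x_n)\leq \sum_{i=1}^n X_{e_i}$, a sum of $n$ i.i.d.\ samples from $\rho$. The sub-Gaussian hypothesis \eqref{e:subgau} then forces $\E[D_n^2]\leq C'n^2$. Combining, $\E[D_n\mathbf{1}_{\mathcal{G}^c}]=O(n e^{-cn/2})=o(n)$.

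Therefore $0\leq n^{-1}\E[D_n] \leq \epsilon + o(1)$. Writing $n^{-1}\E[T_B(1,x_n)] = n^{-1}\E[T(1,x_n)] + n^{-1}\E[D_n]$ and using the elementary identities $\limsup(a_n+b_n)\leq \limsup a_n + \limsup b_n$ and $\liminf(a_n+b_n)\leq \liminf a_n + \limsup b_n$, we obtain $\bar v_B(\xi)\leq \bar v(\xi)+\epsilon$ and $\underline{v_B}(\xi)\leq \underline{v}(\xi)+\epsilon$. Since $B(\epsilon)\to\infty$ as $\epsilon\downarrow 0$ and $B\mapsto T_B$ is monotone, sending $B\to\infty$ establishes both (1) and (2) (and indeed for every $\xi$, not merely a.e.\ $\xi$, by Remark \ref{r:well-defined} ensuring the quantities are well defined independently of the choice of word geodesic ray).

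The only conceptual step is the bookkeeping argument for the rare-but-potentially-large bad event; the rest is a routine application of Theorem \ref{t:approxqg}. The ``main obstacle,'' such as it is, lies in making sure that the second moment of $T_B$ is controlled with at most polynomial (quadratic) growth in $n$, so that pairing against an exponentially small $\P(\mathcal{G}^c)$ via Cauchy--Schwarz kills the contribution of the bad event; this is immediate from dominating $T_B$ by the random length of a fixed word geodesic.
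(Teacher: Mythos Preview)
Your proof is correct and follows essentially the same route as the paper's: both deduce the expectation bound $n^{-1}(\E T_B(1,x_n)-\E T(1,x_n))\leq \epsilon$ from Theorem \ref{t:approxqg} together with a moment control on $T_B$, and the proposition is immediate from there. The paper's proof is terse (it simply cites Theorem \ref{t:approxqg} and the ``obvious fact'' $\sup_n n^{-1}\E T_B(1,x_n)<\infty$), whereas you spell out the passage from a tail-probability bound to an expectation bound via Cauchy--Schwarz and a second-moment estimate; your observation that the conclusion actually holds for every $\xi$ (not merely a.e.) is also correct, since nothing in the argument uses any measure-theoretic restriction on $\xi$.
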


\begin{proof}
	It follows from Theorem \ref{t:approxqg} (and the obvious fact $\sup_{n}\frac{1}{n}\E(T_B(1, x_n)) <\infty$) that  given $\ep > 0$, there exists $B_0$ such that for all $B \geq B_0$, $$\frac{1}{n}\big ( {\E(T_B(1, x_n))}-{\E(T(1, x_n))}\big ) \leq \ep.$$
	The given Proposition follows immediately.
\end{proof}

We shall need a basic theorem from Patterson-Sullivan theory.

\begin{theorem}\label{thm-psergodic} \cite{coornert-pjm}
	Let $G$ be a hyperbolic group and let $(\pG,\nu)$ denote its boundary equipped with a Patterson-Sullivan measure. Then the $G-$action on 
	$(\pG,\nu)$  is ergodic.
\end{theorem}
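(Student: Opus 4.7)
The plan is to give a Hopf/Sullivan-style ergodicity proof, which in the hyperbolic setting reduces to two geometric inputs about $\nu$ — the shadow lemma and a quasi-conformality transformation rule under $G$ — combined with a Lebesgue density-point argument.

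First I would set up the shadow lemma. For $R$ sufficiently large and $g\in G$, define the shadow $\mathcal{O}_R(g)\subset\partial G$ to be the set of $\xi$ such that some geodesic ray from $1$ to $\xi$ passes within distance $R$ of $g$. The combinatorial construction of $\nu$ recalled in Section \ref{sec-cf}, together with Proposition \ref{uniform=ps} (which says that the Patterson-Sullivan mass of cones is comparable to the uniform mass on $G_n$), gives the shadow estimate $\nu(\mathcal{O}_R(g))\asymp \lambda^{-|g|}$ with constants independent of $g$. A standard consequence is that the family $\{\mathcal{O}_R(g):g\in G\}$ is a Vitali-type basis for the visual topology on $\partial G$ and $(\partial G,d_v,\nu)$ is doubling, so the Lebesgue differentiation theorem applies with respect to this basis.

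Second I would establish quasi-conformality of $\nu$ under the $G$-action: for every $h\in G$ the measure $h_*\nu$ is absolutely continuous with respect to $\nu$ and
\[
\frac{d h_*\nu}{d\nu}(\xi)\asymp \lambda^{-\beta_\xi(h)},
\]
where $\beta_\xi(h)$ is the Busemann cocycle (defined coarsely via the Gromov inner product) and the multiplicative constants depend only on $\delta$. This follows from the defining formula for $\nu$ together with the left-invariance of the word metric on $\Gamma$ and the hyperbolic inequality relating $d(h^{-1}x,y)$ to $d(x,y)-\beta_\xi(h)$ for $x$ close to a geodesic toward $\xi$. The upshot I need is: if $\xi\in\mathcal{O}_R(g)$, then $\tfrac{d(g^{-1})_*\nu}{d\nu}(\xi)\asymp \lambda^{|g|}$, so $g^{-1}$ expands $\mathcal{O}_R(g)$ to a set $g^{-1}\mathcal{O}_R(g)$ of $\nu$-measure bounded uniformly away from $0$ (in fact equal, up to a shadow complement, to $\partial G\setminus\mathcal{O}_R(g^{-1})$).

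Now for ergodicity, suppose $A\subset\partial G$ is Borel, $G$-invariant, with $\nu(A)>0$; I want to show $\nu(\partial G\setminus A)=0$. By the Lebesgue density theorem in the doubling space $(\partial G,\nu)$ applied to the shadow basis, $\nu$-a.e. $\xi_0\in A$ is a density point: for $g_n$ ranging over the points of a geodesic ray $[1,\xi_0)$ we have $\nu(A\cap\mathcal{O}_R(g_n))/\nu(\mathcal{O}_R(g_n))\to 1$. Push forward by $g_n^{-1}$: by $G$-invariance, $g_n^{-1}(A\cap\mathcal{O}_R(g_n))\subset A$, and by the quasi-conformal rule the ratio $\nu(g_n^{-1}(A\cap\mathcal{O}_R(g_n)))/\nu(g_n^{-1}\mathcal{O}_R(g_n))$ differs from the original density-point ratio by a bounded multiplicative factor. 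Since $\nu(g_n^{-1}\mathcal{O}_R(g_n))\geq c_0>0$, this gives $\nu(A)\geq c_0(1-o(1))$. Repeating the argument using density points inside $g_n^{-1}(A\cap\mathcal{O}_R(g_n))$ and iterating, or equivalently using an almost-cover of $\partial G$ by shadows centered at points arbitrarily deep along $[1,\xi_0)$, upgrades this to $\nu(A)=\nu(\partial G)$, which is ergodicity.

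The main obstacle is the quasi-conformality step with uniform constants: the Busemann cocycle on a hyperbolic group is only coarsely defined, and one must control the Radon-Nikodym derivative up to a multiplicative constant depending only on the hyperbolicity constant $\delta$ and the parameters of the shadow lemma. Once this is in place, the rest is a routine Vitali density-point iteration; note that the argument does not use the Markov-chain description of $\nu$ in any essential way, only the shadow estimate $\nu(\mathcal{O}_R(g))\asymp\lambda^{-|g|}$ that is already recorded in the references cited in Section \ref{sec-cf}.
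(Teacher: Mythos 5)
The paper does not prove Theorem~\ref{thm-psergodic}; it is quoted from \cite{coornert-pjm}, and your argument is precisely the standard Sullivan--Hopf ergodicity proof given there: the shadow lemma $\nu(\mathcal{O}_R(g))\asymp\lambda^{-|g|}$, the quasi-conformal transformation rule for $\nu$ via the Busemann cocycle, and a Lebesgue density-point argument along a geodesic ray. The only step that should be tightened is the closing ``iterate/almost-cover'' sentence: what one actually shows is that $\nu\bigl(\partial G\setminus g_n^{-1}\mathcal{O}_R(g_n)\bigr)=O(\lambda^{-R})$ uniformly in $n$ (the complement is itself comparable to a shadow of a point at distance $\asymp R$ from the identity), so that the density-point computation yields $\nu(A^c)\le C\lambda^{-R}$ for every fixed $R$; letting $R\to\infty$ then gives $\nu(A^c)=0$, with no further iteration needed.
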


\begin{lemma}\label{lowerupperconstant}
	For a.e.\ $\xi \in \pG$, and every $B>0$, 
	\begin{enumerate}
		\item $\bbar{v_B} (\xi)$ is constant. 
		\item $\lbar{v_B} (\xi)$ is constant.
		\item $\bbar{v} (\xi)$ is constant.
		\item $\lbar{v} (\xi)$ is constant.
	\end{enumerate}	
\end{lemma}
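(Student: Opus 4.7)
The strategy is to apply Theorem~\ref{thm-psergodic}: since the $G$-action on $(\pG, \nu)$ is ergodic, any Borel function on $\pG$ that is $G$-invariant modulo $\nu$-null sets is $\nu$-a.e.\ constant, so it suffices to establish $G$-invariance for each of the four quantities $\bbar{v_B}, \lbar{v_B}, \bbar{v}, \lbar{v}$. Fix $g \in G$ and $\xi \in \pG$, together with word geodesic rays $[1,\xi) = \{x_n\}_{n\ge 0}$ and $[1, g\xi) = \{y_n\}_{n\ge 0}$. The translated ray $g[1,\xi) = \{g x_n\}_{n\ge 0}$ is a word geodesic from $g$ to $g\xi$, and Lemma~\ref{asymptosamept} implies that $[1, g\xi)$ and $g[1,\xi)$ are asymptotic: for $n$ large, there is $n' \in [n - |g|, n + |g|]$ with $d(y_n, g x_{n'}) \le C_1 = C_1(g, \delta)$.

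For $\bbar{v}$ and $\lbar{v}$ I would apply the triangle inequality
$$|T(1, y_n) - T(g, g x_{n'})| \le T(1, g) + T(g x_{n'}, y_n),$$
take expectations, and use (i) translation invariance of $\P$, which gives $\E T(g, g x_{n'}) = \E T(1, x_{n'})$, and (ii) the bound $\E T(u, v) \le \mu\, d(u, v)$ with $\mu = \E X_e$ (obtained by bounding the passage time by the weight of a word geodesic), to arrive at $|\E T(1, y_n) - \E T(1, x_{n'})| \le \mu(|g| + C_1)$. Dividing by $n$, using $n'/n \to 1$, and taking $\limsup$ and $\liminf$ yields $\bbar{v}(g\xi) \le \bbar{v}(\xi)$ and $\lbar{v}(g\xi) \ge \lbar{v}(\xi)$; the reverse inequalities follow from the same argument with $g \mapsto g^{-1}$. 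Ergodicity then gives (3) and (4).

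For $\bbar{v_B}$ and $\lbar{v_B}$ I would run the same argument at the level of paths: concatenate $[1, g]$, the minimizing $B$-path $\gamma$ realizing $T_B(g, g x_{n'})$, and $[g x_{n'}, y_n]$ to form a candidate path from $1$ to $y_n$. By the Morse lemma (Lemma~\ref{lem-morse}), the geodesic segment $[g, g x_{n'}]$ lies within Hausdorff distance $C_0 = C_0(g, \delta)$ of $[1, y_n]$, so the concatenated path sits in $N_{B + C_0}([1, y_n])$, yielding
$$\bbar{v_{B+C_0}}(g\xi) \le \bbar{v_B}(\xi) \quad \text{and, by symmetry,} \quad \bbar{v_{B+C_0}}(\xi) \le \bbar{v_B}(g\xi),$$
and similarly for $\lbar{v_B}$. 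Combined with the monotonicity $B' \ge B \Rightarrow \bbar{v_{B'}} \le \bbar{v_B}$ (more paths are admitted in larger tubes) and the convergence $\bbar{v_B} \downarrow \bbar{v}$ of Proposition~\ref{approxbycnbhd}, these shifted inequalities can be iterated and pinched to give $G$-invariance of $\bbar{v_B}, \lbar{v_B}$ for each fixed $B > 4\delta$; ergodicity then gives (1) and (2).

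\emph{Main obstacle.} The principal difficulty is that the $B$-neighborhood $N_B([u, v])$ is not $G$-equivariant: $N_B([1, y_n])$ and $N_B([g, g x_{n'}])$ differ by a Hausdorff shift of size $C_0(g, \delta)$, so the path comparison above incurs an additive cost $B \mapsto B + C_0$ in the neighborhood parameter. Converting the resulting loss-bearing inequalities into exact equality $\bbar{v_B}(g\xi) = \bbar{v_B}(\xi)$ at each fixed $B$ (rather than merely in the limit $B \to \infty$) is the delicate step; one must exploit the fact that hyperbolicity forces the two geodesic segments to be within $O(\delta)$ (rather than $O(|g|)$) away from bounded end-pieces, so the loss in the neighborhood parameter contributes only an $o(n)$ correction to $\E T_B(1, y_n)$ that disappears in the $\limsup$/$\liminf$.
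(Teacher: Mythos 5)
Your high-level strategy --- establish $G$-invariance and invoke ergodicity of the $G$-action on $(\pG,\nu)$ (Theorem~\ref{thm-psergodic}) --- is exactly the paper's. Your detailed argument for items (3) and (4), comparing $T(1,y_n)$ with $T(g, gx_{n'})$ via the triangle inequality, using translation invariance of $\P$, and absorbing the $O_g(1)$ correction in the $\limsup/\liminf$, is correct and is a careful version of what the paper compresses into one sentence (``the formulae for $[o,g]\cup g\cdot[o,\xi)$ versus $[o,g\xi)$ differ by only finitely many terms'').

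For items (1) and (2), however, the obstacle you flag is genuine, and the fix you sketch does not close it. Away from a bounded prefix the segments $[g,gx_{n'}]$ and $[1,y_n]$ are within $4\delta$ of one another (Lemma~\ref{asymptosamept}), so a minimizer $\gamma$ for $T_B(g,gx_{n'})$ can only be placed in $N_{B+C_0}([1,y_n])$ for some $C_0\asymp\delta$, not in $N_B([1,y_n])$; the additive shift $B\mapsto B+C_0$ in the neighborhood \emph{parameter} is a constant, not an $o(n)$ correction to the passage time. You cannot ``pull $\gamma$ in'' by $C_0$ without changing its FPP-length by an amount that could be of order $n$, since $\gamma$ may spend a linear fraction of its length in the outer annulus. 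What this argument actually yields is the one-sided chain
$\bbar{v_{B+C_0}}(g\xi)\le\bbar{v_B}(\xi)$ and, by symmetry, $\bbar{v_{B+C_0}}(\xi)\le\bbar{v_B}(g\xi)$;
combined with the monotonicity $\bbar{v_{B+C_0}}\le\bbar{v_B}$, these inequalities do not pinch to $\bbar{v_B}(g\xi)=\bbar{v_B}(\xi)$ at any fixed $B$. Letting $B\to\infty$ recovers $G$-invariance only of $\bbar{v}$ and $\lbar{v}$, i.e.\ items (3) and (4). For what it is worth, the paper's own proof is equally terse on this point and does not address the non-equivariance of $N_B([x,y])$ either; and items (3) and (4) alone suffice to deduce Theorem~\ref{thm-vexists} if one reorders the logic (Theorem~\ref{cvelexists} gives $\bbar{v_B}(\xi)=\lbar{v_B}(\xi)$ a.e., Proposition~\ref{approxbycnbhd} lets $B\to\infty$ to get $\bbar{v}(\xi)=\lbar{v}(\xi)$ a.e., and items (3)--(4) force the common value to be a.e.\ constant). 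So the gap in (1)--(2) is a real gap in your proposal --- and arguably in the paper --- but it is not load-bearing for the main theorem.
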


\begin{proof}
	Each of the functions $\bbar{v_B} (\xi)$, $\lbar{v_B} (\xi)$, $\bbar{v} (\xi)$ and $\lbar{v} (\xi)$ are invariant under the $G-$action. This is because the formulae for  $[o,g] \cup g.[o,\xi)$ versus $[o, g.\xi)$ differ by only finitely many terms. Ergodicity of the $G-$action on $\pG$ (Theorem \ref{thm-psergodic}) furnishes the conclusion.
\end{proof}

The next lemma allows us to approximate $B-$passage times between $x, y$ by the $B-$passage times between the balls $N_B(x), N_B(y)$ provided
$d(x,y) \gg 1$.

\begin{lemma}\label{approxvel}
	For all $B>0$, and $\ep >0$, there exists $M \geq 0$ such that for all $n \geq M$, if $d(x,y) =n$,  then $$\frac{\E T_B(x,y) - \E T_B (N_B(x), N_B(y))}{n} \leq \ep.$$
\end{lemma}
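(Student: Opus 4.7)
The plan is to show that $\E T_B(x,y)$ and $\E T_B(N_B(x), N_B(y))$ differ by at most an $O(1)$ constant depending only on $B$ (and the passage time distribution $\rho$), independent of $n = d(x,y)$. Dividing by $n$ then gives the claim for $M$ sufficiently large.

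First I would fix $x, y$ and let $u^{*}, v^{*}$ denote the (random, a.s.\ unique) pair of endpoints in $N_B(x) \times N_B(y)$ achieving $T_B(N_B(x), N_B(y))$, and let $\gamma^{*}$ be the corresponding minimizing path in $N_B([x,y])$. Since $N_B(x) \subset N_B([x,y])$ and $N_B(y) \subset N_B([x,y])$ (because $x, y \in [x,y]$), I can prepend to $\gamma^{*}$ any path from $x$ to $u^{*}$ lying inside $N_B(x)$ and append any path from $v^{*}$ to $y$ lying inside $N_B(y)$. The concatenation is a valid path from $x$ to $y$ staying in $N_B([x,y])$, so
\[
T_B(x, y) \; \leq \; T_B(N_B(x), N_B(y)) \; + \; T^{\rm loc}_{N_B(x)}(x, u^{*}) \; + \; T^{\rm loc}_{N_B(y)}(v^{*}, y),
\]
where $T^{\rm loc}_{U}(\cdot,\cdot)$ denotes first passage time computed using only edges inside $U$. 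Conversely, $T_B(N_B(x), N_B(y)) \leq T_B(x,y)$ trivially, so it suffices to control the expectation of the two local passage terms.

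For the control, I would use the crude deterministic bound
\[
T^{\rm loc}_{N_B(x)}(x, u^{*}) \;\leq\; \max_{u \in N_B(x)} T^{\rm loc}_{N_B(x)}(x, u) \;\leq\; \sum_{e \in E(N_B(x))} \omega(e),
\]
since $N_B(x)$ is connected and so one can reach any $u$ by a path whose weight is at most the sum of all edge weights in $N_B(x)$. The cardinality $|E(N_B(x))|$ is bounded by $|S|^{B+1}$, hence finite and depending only on $B$ and $|S|$. By the sub-Gaussian assumption on $\rho$ (or even just the first-moment assumption), the expected sum of these edge weights is a constant $C = C(B, \rho)$. By the translation invariance of the edge law under the $G$-action, the same bound applies to the $y$-side with the same constant. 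Therefore
\[
\E\bigl[ T_B(x,y) - T_B(N_B(x), N_B(y)) \bigr] \;\leq\; 2C(B, \rho),
\]
uniformly in $x, y$. Taking $M := \lceil 2C(B,\rho)/\ep \rceil$ yields the claim for every pair with $d(x,y) = n \geq M$.

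There is no serious obstacle here: the lemma is essentially a boundary-correction estimate, and the only input beyond bookkeeping is the translation invariance of the edge distribution and the finiteness of $|N_B(x)|$, together with the first moment of $\rho$. The sub-Gaussian hypothesis is not needed; a single finite moment suffices.
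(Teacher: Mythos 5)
Your proof is correct and takes essentially the same route as the paper: both start from the deterministic sandwich
\[
T_B(N_B(x),N_B(y)) \le T_B(x,y) \le T_B(N_B(x),N_B(y)) + (\text{boundary terms at }x) + (\text{boundary terms at }y),
\]
and then bound the expected boundary contributions by an $n$-independent constant depending only on $B$, $|S|$, and the first moment of $\rho$. (The paper bounds $\E\max_{u\in N_B(x)} T(x,u)$ by summing $\E T(x,u)\le B\int x\,d\rho$ over the at most $|S|^B$ vertices of $N_B(x)$, whereas you bound it by the expected total edge weight of $N_B(x)$; these are interchangeable.) Your closing observation that only a first moment of $\rho$ is needed here is also consistent with the paper.
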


\begin{proof}
	We start with the following deterministic inequality (for every $\om \in \omegap$):
	$$T_B (N_B(x), N_B(y))(\om)\leq T_B(x,y)(\om)\leq  T_B (N_B(x), N_B(y)) (\om)+ \max_{u\in N_B(x)} T(x,u)(\om)+\max_{v\in N_B(y)} T(y,v)(\om).$$
	The first inequality above is obvious. We turn to the second inequality. Let $u\in N_{B}(x)$ and $v\in N_{B}(y)$ be such that $T_{B}(\{u\}, \{v\})=T_B (N_B(x), N_B(y))$.   Consider the path from $x$ to $y$ obtained by concatenating the restricted geodesics between $x$ and $u$, $u$ and $v$, $v$ and $y$.  Then
	$$T_B(x,y)(\om)\leq  T_{B}(\{u\}, \{v\})(\om)+ T(x,u)(\om)+ T(y,v)(\om).$$
	(We caution the reader here that by $T_{B}(\{u\}, \{v\})$ we mean the passage time between $u, v$ when restricted to 	$N_B([x, y])$.
	
	Therefore, to prove the lemma, it would suffice to show the existence of a uniform (not depending on $d(x,y)$) upper bound on 
	$$\E \max_{u\in N_B(x)} T(x,u)+\E \max_{v\in N_B(y)} T(y,v).$$
	We shall only bound the first term above, the second term is bounded by an identical argument.
	
	Observe that $\max_{u\in N_B(x)} T(x,u) \leq \sum_{u\in N_{B}(x)} T(x,u)$ and further that for any $u\in N_B(x)$ we have $\E T(x,u)\leq d(x,u)\int x~d\rho(x) \leq B \int x~d\rho(x)$. Observe that $|N_B(x)|\leq |S|^{B}$ (recall that $S$ is the generating set of $\Ga$). Hence
	$$ \E \max_{u\in N_B(x)} T(x,u) \leq |S|^B B \int x~d\rho(x) $$
	which is independent of $d(x,y)$ completing the proof of the lemma.
\end{proof}

Note that the proof of Lemma \ref{approxvel} furnishes a  bound on 
the numerator occurring in LHS of its statement, in particular shows that the numerator is uniformly bounded in $n$ for a fixed $B$. However, for the applications (see Corollary \ref{cor-coarseexists} below), given $B$, we shall choose $n \gg B$ and use Lemma \ref{approxvel} in the form stated. In particular, for $B$ fixed, the expression in the LHS of Lemma \ref{approxvel} goes to zero at rate $\frac{1}{n}$ as $n\to \infty$.

\begin{rmk}\label{rmk-sep}{\rm 
	Recall that we have assumed that $G$ is $\delta-$hyperbolic. Also, assume
	that $B \geq 2\delta$, so that $N_B([1,\xi))$ is $\delta-$quasiconvex.
	There exist $\eta$ (depending only on $\delta$) and $i_0$ 
	(depending only on $B, \delta$)
	such that 
	for all $i\geq i_0$, the $(B+\eta)-$ neighborhood of $x_i$ disconnects $N_B([1,\xi))$, i.e.\
	paths  in $N_B([1,\xi))$ from $1$ to $\xi$ necessarily go through the $(B+\eta)-$balls about $x_i$ (see \cite[Section 4.2]{mitra-endlam} for instance). 
Now take an ordered sequence of points $y_0=1, y_1, y_2, \cdots$ on $[1,\xi)$ such that $d(y_i, y_{i+1}) > 4(B+\eta)$. Then the above disconnection property of  $(B+\eta)-$ neighborhoods shows that any path $\gamma \subset N_B([1,\xi))$ can be decomposed into connected pieces whose interiors lie either in
$ N_B([1,\xi))\setminus ( \cup_i N_{(B+\eta)} (y_i))$ or entirely inside some
$N_{(B+\eta)} (y_i)$.

Note that $N_{B+\eta}(z)$ has cardinality bounded by $a^{B+\eta}$ for some $a>0$ depending only on $\Gamma$, and its diameter is $2(B+\eta)$. By group-invariance, the expected passage time from any point in $N_{B+\eta}(z)$ to any other is bounded in terms of $B, \eta$  and the passage time distribution $\rho$. The number of connected pieces in the above decomposition that "backtrack", i.e.\ begin and end on the same 
$N_{(B+\eta)} (y_i)$ is thus bounded by $a^{B+\eta}$ and for any such piece, the expected passage time is bounded in terms of $B, \eta,\rho$.}
\end{rmk}

  Lemma \ref{approxvel} along with the above observation will be useful in understanding the behavior of  $\om-$geodesics constructed as a concatenation of segments that travel from $N_B(y_i)$ to  $N_B(y_{i+1})$,
where $\{y_i\}$ is a suitable coarse-graining of $[1,\xi)$. 

\subsection{Coarse-graining and existence of velocity}\label{sec-coarseg}

\begin{defn}\label{def-coarsegrain} 
	For $n \in \natls$ and $[1,\xi)$ a geodesic ray , let $y_0,y_1, y_2, \cdots$ be the sequence of points on 
	$[1,\xi)$ with $y_0=1$ and $d(y_i,1) = ni$. Let $$\E T([1,\xi),B,n, i):=\E(T_B(y_i, y_{i+1}))$$ denote the expected $B-$passage time from $y_i$ to $y_{i+1}$ along $[1,\xi)$.
	The {\bf coarse-grained  $(B, 0)$ velocity at scale $n$} along $[1,\xi)$ is said to exist if the limit  $$\lim_{m\to \infty} \frac{1}{m} \sum_{i=0}^{m-1} \E T([1,\xi),B,n, i)$$ of   Cesaro averages of the sequence $\{\E T([1,\xi),B,n, i)\}_i$ exists, and is then defined as $$v_B([1,\xi),n):=\frac{1}{n}\lim_{m\to \infty} \frac{1}{m} \sum_{i=0}^{m-1}  \E T([1,\xi),B,n, i).$$ 
	
	More generally, let $$\bbar{v_B}([1,\xi),n):= \frac{1}{n}\limsup_{m\to \infty} \frac{1}{m} \sum_{i=0}^{m-1}  \E T([1,\xi),B,n, i),$$ 
	and $$\underline{v_B}([1,\xi),n):=\frac{1}{n}\liminf_{m\to \infty} \frac{1}{m} \sum_{i=0}^{m-1}  \E T([1,\xi),B,n, i).$$ 
	The {\bf coarse-grained  $(B, \ep)$ velocity at scale $n$} along $[1,\xi)$ is said to exist if $\bbar{v_B}([1,\xi),n)-\underline{v_B}([1,\xi),n ) \leq \ep$.
\end{defn}

\begin{prop}\label{coarse-grainedCexists} Let $d$ and $\PP_\infty^\prime$ be as obtained from Lemma \ref{ofreq-exist}. Given $B\gg 1$ (large) and $\ep>0$ (small) there exists $M$ such that for all $n \in d\, \natls \cap [M,\infty)$ and  for all $\sigma = [1,\xi) \in \PP_\infty^\prime$, the
	coarse-grained  $(B, \ep)$ velocity at scale $n$ in direction $\xi$ exists.
\end{prop}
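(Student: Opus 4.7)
The plan is to leverage the group invariance of the FPP model together with the existence of ordered frequencies from Lemma \ref{ofreq-exist}. Fix $\sigma = [1,\xi) \in \PP^\prime_\infty$ and a scale $n \in d\,\natls$, and let $y_i$ and $w_i := y_i^{-1} y_{i+1}$ be as in Definition \ref{def-coarsegrain}. The first step is to observe that left-multiplication by $y_i^{-1}$ is a label-preserving graph automorphism of $\Gamma$ which carries the segment $[y_i, y_{i+1}] \subset [1,\xi)$ onto the unique geodesic $[1, w_i]$ spelled out by the word $w_i$, and hence identifies $N_B([y_i, y_{i+1}])$ with $N_B([1, w_i])$. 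Since the edge weights are i.i.d., this yields the distributional equality $T_B(y_i, y_{i+1}) \stackrel{d}{=} T_B(1, w_i)$, whence
$$\E T([1,\xi),B,n,i) = \phi_B(w_i)$$
for a function $\phi_B : W_n \to [0,\infty)$ on the (finite) set $W_n$ of geodesic words of length $n$.

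Given this reduction, the Cesaro average rewrites as
$$\frac{1}{m} \sum_{i=0}^{m-1} \E T([1,\xi),B,n,i) \;=\; \sum_{w \in W_n} \phi_B(w) \cdot \frac{n_w([y_0, y_m])}{m}.$$
Since $n \in d\,\natls$ and $\sigma \in \PP^\prime_\infty$, Lemma \ref{ofreq-exist} guarantees $\frac{n_w([y_0, y_m])}{m} \to f_w(\sigma)$ as $m \to \infty$ for every $w \in W_n$. Because $W_n$ is finite (of size at most $|S|^n$) and $\phi_B(w)$ is bounded for fixed $n, B$, one interchanges the finite sum with the limit to conclude that the Cesaro limit exists and equals $\sum_{w \in W_n} \phi_B(w) f_w(\sigma)$. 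In particular $\bbar{v_B}([1,\xi),n) = \underline{v_B}([1,\xi),n)$, so the coarse-grained $(B,0)$ velocity at scale $n$ exists for every $\sigma \in \PP^\prime_\infty$ and every $n \in d\,\natls$, which is strictly stronger than the $(B,\epsilon)$ statement claimed. Any threshold $M$ (say $M = d$) then makes the proposition hold.

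The only delicate point is the group-invariance reduction in the first step: it is essential that $N_B$ is taken with respect to the specific geodesic sub-segment $[y_i, y_{i+1}] \subset [1,\xi)$ so that its translate lies in bijection with $N_B([1, w_i])$; this is essentially definitional but must be noted. Modulo this care, the real content of the proposition is fully absorbed by the ordered-frequency result Lemma \ref{ofreq-exist}. The hypotheses $B \gg 1$, $\epsilon > 0$, and the threshold $M$ in the statement appear to be present for compatibility with how the proposition is invoked in the proof of Theorem \ref{thm-vexists} (where one combines this with Proposition \ref{approxbycnbhd} and Lemma \ref{approxvel}, both of which need $B$ large), rather than reflecting any obstruction in the present argument.
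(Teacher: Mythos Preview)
Your argument is correct and captures the essential content: by group invariance, $\E T_B(y_i,y_{i+1})$ depends only on the geodesic word $w_i = y_i^{-1}y_{i+1}$, so the Cesaro average is a finite linear combination of the empirical frequencies $n_w([y_0,y_m])/m$, and Lemma~\ref{ofreq-exist} forces convergence. This is exactly the paper's equations \eqref{eq-freqnb}--\eqref{eq-freq}, and you correctly observe that this yields the $(B,0)$ velocity, stronger than what is claimed.

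The paper's proof, however, establishes more than the proposition states: it proves a \emph{sandwiching inequality} showing that $\frac{1}{mn}\E T_B(1,y_m)$ lies between $\frac{1}{mn}\sum_i \E T_B(y_i,y_{i+1})$ and $\frac{1}{mn}\sum_i \E T_B(N_B(y_i),N_B(y_{i+1})) - \frac{B''}{n}$, using Lemma~\ref{approxvel} and Remark~\ref{rmk-sep}. This sandwiching is precisely what is invoked in the proof of Theorem~\ref{cvelexists} (``It therefore follows from the sandwiching argument of Proposition~\ref{coarse-grainedCexists}\ldots''), and it is the reason the hypotheses $B\gg 1$, $\ep>0$, and the threshold $M$ appear. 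You correctly intuited that these are there for downstream use rather than for the proposition itself; but be aware that if you supply only your streamlined proof, the paper's later reference to ``the sandwiching argument of Proposition~\ref{coarse-grainedCexists}'' would dangle, and you would need to supply the inequalities $\E T_B(1,y_m)\le \sum_i \E T_B(y_i,y_{i+1})$ and \eqref{eq-minus} separately when proving Theorem~\ref{cvelexists}.
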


\begin{proof}
	Recall that $\pG' \subset \pG$ denotes the $\nu-$full subset of $\pG$ obtained as the image of $\PP^\prime_\infty$ under the evaluation map
	(Corollary \ref{cor-fullmrefreq}).

	Given $B, \ep$ as in the statement of the Proposition, 
	there exists $M \geq 0$ by Lemma \ref{approxvel}, such that for all $n \geq M$, if $d(x,y) =n$,  then 
	
	\begin{equation}\label{eq-approxvel}
	\frac{\E T_B(x,y) - \E T_B (N_B(x), N_B(y))}{n} \leq \frac{\ep}{2}.
	\end{equation}

	We next construct a coarse-graining of $[1,\xi)$ at scale $n$, i.e.\ let $y_0,y_1, y_2, \cdots$ be the sequence of points on 
	$[1,\xi)$ with $y_0=1$ and $d(y_i,1) = ni$.
	Then we have (see Figure \ref{f:tb})
	\begin{equation*}
	\E T_B(1,y_m) \leq   \sum_{i=0}^{m-1}\E T_B(y_i,y_{i+1})= \sum_{i=0}^{m-1} \E T([1,\xi),B,n, i)
	\end{equation*}
For a lower bound of $\E T_B(1,y_m)$, observe the following. From Remark \ref{rmk-sep}, the $(B+\eta)-$balls about $y_i$ disconnect
	$N_B([1,\xi))$ for all $i$, provided the coarse-graining scale $n$ is large enough. Hence there exists $B'$ (depending on $B, \eta$) such that the path attaining the weight $T_{B}(1,y_m)$ has disjoint (across $i$) sub-paths $\gamma_{i}$ contained in $N_{B}(y_{i},y_{i+1})$ with endpoints $u_i$, $v_{i}$ contained in $N_{B+\eta}(y_{i})$ and $N_{B+\eta}(y_{i+1})$ respectively, and such that there exist paths from $y_i$ to $u_{i}$ and $v_i$ to $y_{i+1}$ contained in $N_{B}(y_{i},y_{i+1})$ with word length bounded above by $B'$. (This statement follows from the last part of Remark \ref{rmk-sep}.)  Notice that, by arguing as in Lemma \ref{approxvel} it follows that the expected $\omega$-length of the maximum over  all paths of length $\leq B'$ from $y_{i}$ is at most $B''$ for some $B''$ independent of $i$. This implies that the expected length of the path between $u_i$ and $v_{i}$ described above is at least $\E T_B(N_B(y_i),N_B(y_{i+1}))-2B''$. It therefore follows that


		\begin{equation}\label{eq-minus}
	\E T_B(1,y_m) \geq   \sum_{i=0}^{m-1}\E T_B(N_B(y_i),N_B(y_{i+1})) -2mB''.
	\end{equation}

	\begin{figure}[h]
	\centering
	\includegraphics[width=\textwidth]{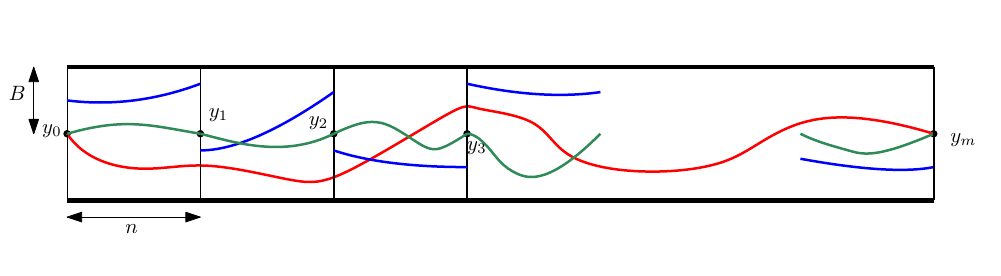}
	\caption{Approximating $T_{B}(1,y_{m})$ as in the proof of Proposition \ref{coarse-grainedCexists}. The red path denotes the path attaining $T_{B}(1,y_{m})$, i.e., the smallest FPP-length path connecting $y_0=1$ and $y_{m}$ that is contained in $N_{B}([y_0,y_m])$. The blue segments denote the paths attaining $T_{B}(N_{B}(y_i), N_{B}(y_{i+1}))$ and the green segments denote the paths attaining $T_{B}(y_{i},y_{i+1})$. Clearly, the blue segments are shorter (in FPP-length) than the green segments, and the FPP length of the red curve is sandwiched between the sum of lengths of the green segments and the blue segments.}
	\label{f:tb}
	\end{figure}

	Thus, $\frac{1}{mn}\E T_B(1,y_m)$ is sandwiched between the quantities
	$\frac{1}{mn}\sum_{i=0}^{m-1}\E T_B(y_i,y_{i+1})$ and $\big(\frac{1}{mn}\sum_{i=0}^{m-1}\E T_B(N_B(y_i),N_B(y_{i+1}))-\frac{B''}{n}\big)$.
	By  \eqref{eq-approxvel}, the difference between the latter two quantities is at most $(\frac{\ep}{2}+\frac{B''}{n})$.
	To prove the Proposition, it therefore suffices to show that the Cesaro averages
	$\frac{1}{m}\sum_{i=0}^{m-1}\E T_B(y_i,y_{i+1})$ and $\frac{1}{m}\sum_{i=0}^{m-1}\E T_B(N_B(y_i),N_B(y_{i+1}))$ converge as $m \to \infty$.
	
	To prove these convergence statements, we invoke
	Lemma \ref{ofreq-exist}: for every geodesic word $w$ of length $n$ and $\sigma = [1,\xi) \in \PP_\infty^\prime$, the ordered frequency $f_w (\sigma)$ exists.  
	Hence, using group invariance,
	
	\begin{equation}\label{eq-freqnb}
	\frac{1}{m}\sum_{i=0}^{m-1}\E T_B(N_B(y_i),N_B(y_{i+1})) \rightarrow
	\sum_{|w|=n} f_w (\sigma) \E T_B(N_B(1),N_B(e(w)))
	\end{equation}  as $m \to \infty$, and 
	
	\begin{equation}\label{eq-freq}
	\frac{1}{m}\sum_{i=0}^{m-1}\E T_B(y_i,y_{i+1}) \rightarrow
	\sum_{|w|=n} f_w (\sigma) \E T_B(1,e(w))
	\end{equation} 
	as $m \to \infty$.
	This completes the proof of the Proposition.
\end{proof}

Since $\ep \to 0$ as $n \to \infty$ in Lemma \ref{approxvel}, this gives
us the following immediate Corollary:

\begin{cor}\label{cor-coarseexists}
	For all $B > 0$ and all $\sigma = [1,\xi) \in \PP_\infty^\prime$, $$\bbar{v_B}([1,\xi),n)-\underline{v_B}([1,\xi),n ) \rightarrow 0$$ as $n \to \infty$.
\end{cor}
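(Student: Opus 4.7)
The plan is to derive Corollary \ref{cor-coarseexists} from Proposition \ref{coarse-grainedCexists} by allowing the approximation parameter $\epsilon$ to vary with $n$ and shrink to zero.

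The first step is to recast Lemma \ref{approxvel} in a quantitative form where $\epsilon$ depends on $n$: setting $\epsilon(n) := \inf\{\epsilon > 0 : n \geq M(B,\epsilon)\}$, we have $\epsilon(n) \to 0$ as $n \to \infty$, and Lemma \ref{approxvel} yields
$$\frac{\E T_B(x,y) - \E T_B(N_B(x), N_B(y))}{n} \leq \epsilon(n) \quad \text{whenever } d(x,y) = n.$$

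The second step is to re-run the argument of Proposition \ref{coarse-grainedCexists} with $\epsilon(n)$ in place of the fixed $\epsilon$. That argument combines (i) the per-pair estimate just stated, (ii) the disconnection cost $B''/n$ arising from the $(B+\eta)$-ball disconnection property of Remark \ref{rmk-sep}, and (iii) the existence of ordered frequencies along $\sigma \in \PP_\infty^\prime$ (Lemma \ref{ofreq-exist}, invoked through equations \eqref{eq-freq} and \eqref{eq-freqnb}) to produce a sandwich of the form
$$\bbar{v_B}([1,\xi),n) - \underline{v_B}([1,\xi),n) \leq \epsilon(n) + \frac{2B''}{n}.$$
Letting $n \to \infty$ through multiples of $d$, both terms on the right vanish and the corollary follows.

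There is no substantive technical obstacle: Proposition \ref{coarse-grainedCexists} has already done the heavy lifting, and the corollary is a clean limiting statement obtained by tracking the $\epsilon$-dependence. The only small care is to verify that the error bounds in the proof of the proposition are genuinely additive and linear in $\epsilon$, rather than hidden inside a ``for sufficiently small $\epsilon$'' threshold of the form $\epsilon < \epsilon_0$; this is evident from inspection of \eqref{eq-approxvel}, \eqref{eq-minus}, \eqref{eq-freqnb} and \eqref{eq-freq}. The interpretation of ``$n \to \infty$'' as ``$n \to \infty$ through $d\natls$'' is inherited from Proposition \ref{coarse-grainedCexists} and is implicit in the statement of the corollary.
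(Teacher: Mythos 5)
Your proof is correct and takes essentially the same approach as the paper, which derives the corollary as an immediate consequence of Proposition \ref{coarse-grainedCexists} by letting $\ep\to 0$ as $n\to\infty$ (allowed by Lemma \ref{approxvel}). One small remark: the $2B''/n$ term in your claimed sandwich is superfluous---it is a constant shift in $m$ that does not affect $\limsup_m - \liminf_m$, and the ordered-frequency convergence \eqref{eq-freq} in the proposition's proof already forces $\bbar{v_B}([1,\xi),n) = \underline{v_B}([1,\xi),n)$ exactly for $n\in d\natls$---but since both terms vanish, your conclusion is unaffected.
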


The main technical theorem of this section is the following:
\begin{theorem}\label{cvelexists}
	For every  $B>4 \delta$ and  $\xi \in \pG'$, the $B-$velocity ${v_B}(\xi))$  in the direction of $\xi$ exists.
\end{theorem}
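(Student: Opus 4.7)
The plan is to leverage the coarse-graining machinery from Proposition \ref{coarse-grainedCexists} along $[1,\xi)$ at each scale $n \in d\N$ (large), and then promote the resulting subsequential control at positions $\{x_{mn}\}$ to control at all positions $\{x_k\}$ by a concatenation argument using the nesting of initial segments of the ray.

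Fix $B > 4\delta$, $\xi \in \pG'$, and a word geodesic $[1,\xi) = \{1=x_0, x_1, \ldots\}$. I would first set $y_i := x_{in}$ for $n$ a large multiple of $d$. By the proof of Proposition \ref{coarse-grainedCexists}, combined with Lemma \ref{ofreq-exist} through \eqref{eq-freq} and \eqref{eq-freqnb}, the Cesaro limits
$$\alpha_n := \lim_{m\to\infty}\frac{1}{m}\sum_{i=0}^{m-1}\E T_B(y_i,y_{i+1}),\qquad \beta_n := \lim_{m\to\infty}\frac{1}{m}\sum_{i=0}^{m-1}\E T_B(N_B(y_i),N_B(y_{i+1}))$$
exist, and the sandwich derived from \eqref{eq-minus} reads
$$\frac{\beta_n - B''}{n}\ \leq\ \liminf_{m\to\infty}\frac{\E T_B(1, x_{mn})}{mn}\ \leq\ \limsup_{m\to\infty}\frac{\E T_B(1, x_{mn})}{mn}\ \leq\ \frac{\alpha_n}{n},$$
while Lemma \ref{approxvel} forces $(\alpha_n-\beta_n)/n \to 0$ as $n\to\infty$.

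For the interpolation step, I would write $k = mn + r$ with $0\leq r < n$ and concatenate an optimal $B$-restricted path from $1$ to $x_{mn}$ with the word-geodesic segment $[x_{mn}, x_k] \subset [1,\xi)$. Since $[1, x_{mn}] \subset [1, x_k]$ are nested initial segments of the same ray, one has $N_B([1, x_{mn}]) \subset N_B([1, x_k])$, and the concatenated path is a valid path from $1$ to $x_k$ contained in $N_B([1, x_k])$. Taking expectations, with $\bbar{X}$ denoting the mean of $\rho$ (finite by \eqref{e:subgau}),
$$\E T_B(1, x_k)\ \leq\ \E T_B(1, x_{mn}) + n\bbar{X},$$
and a symmetric concatenation from $x_k$ to $x_{(m+1)n}$ yields $\E T_B(1, x_{(m+1)n})\leq \E T_B(1, x_k) + n\bbar{X}$. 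Dividing by $k$, using $mn\leq k\leq (m+1)n$, and letting $k\to\infty$, I obtain
$$\bbar{v_B}(\xi)\ \leq\ \frac{\alpha_n}{n},\qquad \underline{v_B}(\xi)\ \geq\ \frac{\beta_n - B''}{n}.$$
Consequently $\bbar{v_B}(\xi) - \underline{v_B}(\xi) \leq (\alpha_n - \beta_n + B'')/n \to 0$ as $n\to\infty$, giving $\bbar{v_B}(\xi) = \underline{v_B}(\xi)$, as required.

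The main subtlety is the interpolation step, since $T_B$ depends on the particular $B$-neighborhood being used; the key geometric observation that makes this painless is the nesting of initial segments of $[1,\xi)$, so that concatenating with a short geodesic segment keeps the resulting path inside $N_B([1, x_k])$ without any need to enlarge $B$.
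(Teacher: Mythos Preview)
Your proof is correct and follows essentially the same approach as the paper: both invoke the sandwich inequalities from Proposition~\ref{coarse-grainedCexists} (via the Cesaro limits guaranteed by Lemma~\ref{ofreq-exist}) at each coarse-grained scale $n\in d\N$, then interpolate to intermediate indices by concatenating with short segments of $[1,\xi)$, and finally send $n\to\infty$ using Lemma~\ref{approxvel}. Your interpolation step, which exploits the nesting $N_B([1,x_{mn}])\subset N_B([1,x_k])$ of neighborhoods of initial segments, is slightly more carefully justified than the paper's corresponding triangle-type inequality $|\E T_B(1,x_{N_1})-\E T_B(1,x_{N_2})|\leq \E T_B(x_{N_1},x_{N_2})$, but the underlying idea is the same (and note the constant should be $2B''$ rather than $B''$, per \eqref{eq-minus}).
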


\begin{proof}
It suffices, by Remark \ref{r:well-defined} to consider a geodesic ray $\sigma\in \PP^{\prime}_{\infty}$ in the direction $\xi \in \pG'$. Let us denote
$\sigma=\{1=x_0,x_1,\ldots, x_{N}, \ldots\}$. Recall that by definition of $\PP^{\prime}_{\infty}$, we have that for all $n\in d\N$ as above and $\epsilon>0$, there exists $N_0\in \N$ such that the following holds.
For all $N\geq N_0$ (such that $N$ a multiple of $n$), the fraction of ordered occurrences of each geodesic word $w$ with $|w|=n$ in the first $N$-length segment of $\sigma$ is in $[f_{w}(\sigma)-\epsilon, f_{w}(\sigma)+\epsilon]$.


 It therefore follows from the sandwiching argument of Proposition \ref{coarse-grainedCexists} that given $n\in \natls$ sufficiently large and $\ep>0$, there exists $N_0'$ such that for all $N\geq N_0'$ (again $N$ is a multiple of $n$) we have 
$$\frac{\E T_B(1,x_N)}{N} \in \big[\underline{v_B}([1,\xi),n)-\ep,\bbar{v_B}([1,\xi),n)+\ep\big].$$
Observe now that for $N_1,N_2\in \N$ with $|N_1-N_2|\leq n$ we also have that 
$$|\E T_{B}(1,x_{N_1})-\E T_{B}(1,x_{N_2})|\leq \E T_{B}(x_{N_1,X_{N_2}})\leq cn$$
for some absolute constant $c>0$. This implies that 
$$\frac{\E T_B(1,x_N)}{N} \in \big[\underline{v_B}([1,\xi),n)-\ep,\bbar{v_B}([1,\xi),n)+\ep\big]$$
holds for all $N$ larger than some $N''_{0}$, not merely the multiples of $n$.
Using Corollary \ref{cor-coarseexists}, this implies (by taking $n$ arbitrarily large) immediately that $\{\frac{\E T_B(1,x_N)}{N}\}_{N}$ is Cauchy, and hence 
$$v_{B}(\xi):=\lim_{N\to \infty} \frac{\E T_B(1,x_N)}{N}$$ 
exists, thus completing the proof.
\end{proof}

We are now ready to prove Theorem \ref{thm-vexists}.

\begin{proof}[Proof of Theorem \ref{thm-vexists}]
	Theorem \ref{cvelexists} shows that for all $B>0$, 
	${v_B}(\xi)$  in the direction of $\xi$ exists for a.e.\ $\xi \in (\pG,\nu)$. Hence by Lemma \ref{lowerupperconstant}, 
	${v_B}(\xi)$ is constant for a.e.\ $\xi \in (\pG,\nu)$.
	Letting $B \to \infty$, Theorem \ref{thm-vexists} is now an immediate 
	consequence of Proposition \ref{approxbycnbhd}.
\end{proof}

It is clear that $v(\xi)$ is upper bounded by $\int x d\rho(x)$. One can also show, along the lines of Lemma \ref{l:geodlen}, that $v(\xi)$ is bounded away from $0$ {uniformly in $\xi$}. This is done in the following lemma. 

\begin{lemma}
\label{l:positive}
There exist $\epsilon=\epsilon(\rho,\Ga)>0$ and $c=c(\epsilon)>0$ such that for any $n\in \N$ and any $x_n\in \Ga$ with $d(1,x_{n})=n$ we have
$$\P(T(1,x_n)\leq \epsilon n) \leq e^{-cn}.$$
\end{lemma}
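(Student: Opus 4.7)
The plan is to apply the same good edge versus bad edge dichotomy used in the proof of Lemma \ref{l:geodlen}, but run in the opposite direction: there, one argues that paths of small combinatorial length are unlikely to have enough bad edges to be FPP-short; here, one argues that any path from $1$ to $x_n$ is forced to have combinatorial length at least $n$, so for its $\omega$-weight to be small, most of its edges must be bad. Since having many bad edges along a fixed path is exponentially unlikely, the desired bound on $T(1,x_n)$ will follow after a union bound over paths.

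Concretely, I would fix a small $\delta>0$ (to be determined in terms of $|S|$) and, using that $\rho$ has no atom at $0$, pick $\eta=\eta(\delta)>0$ with $\rho([0,\eta))\leq \delta$. Call an edge \emph{bad} if its weight lies in $[0,\eta)$ and \emph{good} otherwise; badness of distinct edges is independent with $\P(\text{bad})\leq \delta$. Set $\epsilon:=\eta/4$. If $\gamma$ is a self-avoiding edge-path from $1$ to $x_n$ of combinatorial length $j$ with $\ell_\omega(\gamma)\leq \epsilon n=\eta n/4$, each good edge contributes at least $\eta$, so $\gamma$ carries at most $n/4$ good edges; since $d(1,x_n)=n$ forces $j\geq n$, $\gamma$ must contain at least $j-n/4\geq j/2$ bad edges. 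By the Chernoff bound (Theorem \ref{t:chernoff}) applied to the $\mathrm{Bin}(j,\delta)$ variable counting bad edges on $\gamma$, this event has probability at most $(4\delta)^{j/2}$.

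To finish, one unions over all self-avoiding paths of each length starting at $1$; there are at most $|S|^j$ such paths of length $j$, so
\begin{equation*}
\P\bigl(T(1,x_n)\leq \epsilon n\bigr)\;\leq\;\sum_{j\geq n}|S|^j (4\delta)^{j/2}\;=\;\sum_{j\geq n}\bigl(2|S|\sqrt{\delta}\bigr)^j.
\end{equation*}
Choosing $\delta$ small enough (depending only on $|S|$) that $2|S|\sqrt{\delta}\leq 1/2$, the right-hand side is bounded by $2^{-(n-1)}$, giving the claim with $c=\log 2$. The only delicate point is that the entropy $|S|^j$ from the count of length-$j$ paths must be beaten by the Chernoff factor $(4\delta)^{j/2}$; this is handled by taking $\delta$ (and hence $\eta$ and $\epsilon$) sufficiently small, with no obstruction since $\rho$ has no atom at $0$. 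Note in particular that this argument uses neither any moment assumption on $\rho$ nor the sub-Gaussian hypothesis \eqref{e:subgau}; only absence of an atom at $0$ is required.
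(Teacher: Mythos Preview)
Your proof is correct and follows essentially the same good/bad edge argument as the paper: both fix a threshold $\eta$ with $\rho([0,\eta))$ small, observe that a short-weight path must carry many bad edges, bound this by a binomial tail, and union over paths. The only organizational difference is that the paper first invokes Lemma~\ref{l:geodlen} to truncate to paths of combinatorial length at most $Rn$ before applying the binomial tail bound, whereas you sum directly over all lengths $j\geq n$, exploiting that the tail bound $(4\delta)^{j/2}$ decays fast enough in $j$ to beat the $|S|^j$ entropy without any prior truncation. Your route is marginally more self-contained (it does not cite Lemma~\ref{l:geodlen}), but the underlying mechanism is identical.
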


\begin{proof}
By Lemma \ref{l:geodlen}, by choosing $R$ sufficiently large, it suffices to show that with probability at least $1-e^{-cn}$, each path $\gamma$ of length $\leq Rn$ connecting $1$ and $x_{n}$ satisfies $\ell_{\omega}(\gamma)\geq \epsilon n$. There are at most $D^{Rn}$ many such paths. Hence it suffices to show that for $\epsilon$ sufficiently small, the probability that any fixed path of length between $n$ and $Rn$ has $\omega$-length $\leq \epsilon n$ has probability upper bounded by $e^{-c(\epsilon)n}$ and $c(\epsilon)$ can be made arbitrarily large by making $\epsilon$ arbitrarily small. It suffices to prove the above statement for a fixed path of length $n$. Let $a(\epsilon)$ denote the probability the the weight of a specific edges is $\leq \sqrt{\epsilon}$. The probability described above can be upper bounded by $\P(\mbox{Bin}(n,a(\epsilon))\geq (1-\sqrt{\epsilon})n)$, and the desired conclusion follows, as in Lemma \ref{l:geodlen}, using Chernoff's inequality and noting that $a(\epsilon)\to 0$ as $\epsilon\to 0$.  
\end{proof}

It follows from Lemma \ref{l:positive} that $v(\xi)$ is uniformly bounded away from $0$.

\subsection{Special cases and examples}\label{sec-eg}

Let $g \in G$ be of infinite order. Then $g$ acts by North-South dynamics on $\pG$ with an attracting fixed point
(denoted $g^\infty$) and a repelling fixed point
(denoted $g^{-\infty}$). Note that the repelling fixed point of $g$ is the attracting fixed point of $g^{-1}$ and vice versa.

\begin{defn}\label{def-pole}
	An attracting point of an infinite order element in $\pG$ is called a pole.
\end{defn}

Note that the set of poles is countable; hence of measure zero with respect to the Patterson-Sullivan measure.
\begin{lemma}\label{velatpoles}
	For every pole $\xi$, the	velocity $v(\xi)$ exists.
\end{lemma}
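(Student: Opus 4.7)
The plan is to exploit the periodic structure supplied by the infinite-order element $g$ with attracting fixed point $\xi = g^\infty$, replacing the word geodesic ray along which the velocity is defined by the algebraically defined orbit $\{g^n\}_{n \geq 0}$, where subadditivity applies directly thanks to group invariance of the edge-weight distribution.

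First I would recall the standard fact that an infinite-order element of a hyperbolic group is loxodromic: the translation length
$$\tau(g) := \lim_{n \to \infty} \frac{d(1, g^n)}{n}$$
exists and is strictly positive, and the map $n \mapsto g^n$ is a quasi-geodesic ray converging to $\xi$. By the Morse lemma (Lemma \ref{lem-morse}), the orbit $\{g^n\}_{n \geq 0}$ therefore lies within a uniformly bounded $C$-neighborhood of any fixed word geodesic ray $[1, \xi)$.

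Next I would use left-invariance of $\P$ (the left $G$-action on $\Ga$ preserves the law of the edge weights) together with the trivial bound $T(1, g^{m+n}) \leq T(1, g^m) + T(g^m, g^{m+n})$ and the fact that $T(g^m, g^{m+n})$ has the same distribution as $T(1, g^n)$ to derive
$$\E T(1, g^{m+n}) \leq \E T(1, g^m) + \E T(1, g^n).$$
Since $\E T(1, g) \leq |g| \int x \, d\rho(x) < \infty$, Fekete's subadditivity lemma yields the existence of
$$\alpha := \lim_{n \to \infty} \frac{\E T(1, g^n)}{n} = \inf_n \frac{\E T(1, g^n)}{n}.$$

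Finally, I would transfer this limit to the quantity defining $v(\xi)$. Let $\{x_n\}$ be the points on $[1, \xi)$ with $d(1, x_n) = n$, and for each $n$ choose $m = m(n)$ minimizing $|d(1, g^m) - n|$. Since $d(1, g^m) = m\tau(g) + O(1)$, we have $m(n)/n \to 1/\tau(g)$; and since $g^{m(n)} \in N_C([1,\xi))$ while $x_n \in [1,\xi)$, the distance $d(x_n, g^{m(n)})$ is bounded independently of $n$. As in the proof of Lemma \ref{approxvel}, the expected passage time between vertices at bounded word-distance is uniformly bounded, so
$$\bigl| \E T(1, x_n) - \E T(1, g^{m(n)}) \bigr| \leq \E T(x_n, g^{m(n)}) = O(1).$$
Dividing by $n$ and letting $n \to \infty$ gives
$$\frac{\E T(1, x_n)}{n} = \frac{\E T(1, g^{m(n)})}{m(n)} \cdot \frac{m(n)}{n} + O(1/n) \longrightarrow \frac{\alpha}{\tau(g)},$$
so $v(\xi) = \alpha/\tau(g)$ exists. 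The only nontrivial geometric input is the loxodromicity of $g$ (positive translation length, quasi-axis close to $[1,\xi)$); everything else is a clean application of Fekete's lemma combined with left-invariance of the percolation environment.
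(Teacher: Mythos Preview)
Your argument is correct and follows essentially the same strategy as the paper: use that $\{g^n\}$ is a quasi-geodesic converging to $\xi$, apply group-invariance to obtain subadditivity of $\E T(1,g^n)$, and conclude. The paper invokes Kingman's subadditive ergodic theorem where you use Fekete's lemma --- for the expectation limit defining $v(\xi)$, Fekete is all that is needed, so your choice is the cleaner one. You are also more careful than the paper in explicitly carrying out the transfer from the orbit $\{g^n\}$ to the points $x_n$ on the word geodesic ray (the paper only sketches this step); your argument there via the Morse lemma and bounded word-distance is fine, though the intermediate claim $d(1,g^m)=m\tau(g)+O(1)$ is slightly stronger than what you actually use and could be bypassed by working directly with nearest-point projections onto $[1,\xi)$.
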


\begin{proof}
	This is a reprise of an analogous argument in the Euclidean case (see e.g.\ \cite[Theorem A]{Kes93})  and we only sketch the argument. Let $\xi=g^\infty$.
	The sequence $\{g^n\}$ defines a quasigeodesic by the classification of isometries of a hyperbolic space \cite{gromov-hypgps}.  Clearly,
	$$\E T(1, g^{m+n}) \leq \E T(1,g^{m})+ \E T(g^{m}, g^{m+n})=\E T(1,g^{m})+ \E T(1, g^{n}),$$ where the last equality follows by group-invariance. The Lemma is now a consequence of Kingman's sub-additive ergodic theorem.
\end{proof}

An approach to proving Theorem \ref{thm-vexists} for more general directions using the subadditive ergodic theorem is provided in Appendix \ref{s:set}.

\subsubsection{FPP on $\Z$}\label{sec-fppz}
We  show first that  FPP on  Cayley graphs of $\Z$ with respect to different generating sets, but   with the same passage time distribution, may exhibit different speeds along the same direction.
This will be an ingredient for a counterexample in Section \ref{sec-free2} below. Consider the following two Cayley graphs of $\Z$: $\Ga_1=\Ga_1(\Z, \pm 1)$ and $\Ga_2=\Ga_2(\Z, \pm 1, \pm 2)$. Consider FPP on $\Ga_1$ and $\Ga_2$ with the same continuous passage time distribution $\rho$ with mean $\mu\in (0,\infty)$. let $a_{\rho}\in [0,\infty)$ and $b_{\rho}\in (0,\infty]$ denote the infimum and the supremum of support of $\rho$.

To distinguish between the two graphs, we shall denote the corresponding passage times by $T^{\Ga_1}(\cdot, \cdot)$ and $T^{\Ga_2}(\cdot, \cdot)$ respectively. Clearly $\frac{\E T^{\Ga_1}(0,n)}{n}\to \mu$ as $n\to \infty$. Notice that $d^{\Ga_2}(0,2n)=n$ and hence the following lemma gives an example illustrating the claim above. 

\begin{lemma}
	\label{l:pm2}
	If $2a_{\rho}<b_{\rho}$, then 
	$$\lim_{n\to \infty} \frac{\E T^{\Ga_2}(0,2n)}{n}<\mu.$$
\end{lemma}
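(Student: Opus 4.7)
The plan is to exhibit an explicit ``zigzag'' comparison path from $0$ to $2n$ in $\Ga_2$ whose expected FPP-length is strictly less than $n\mu$, and then appeal to subadditivity to deduce the limit statement. The intuition: in $\Ga_2$ one can bypass the direct $\pm 2$ edge between $2k$ and $2k+2$ by routing through $2k+1$ via two $\pm 1$ edges, choosing whichever option is shorter; when $2a_\rho < b_\rho$, the detour is genuinely cheaper on an event of positive probability.

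First, I would set up the comparison. For $k=0,1,\dots,n-1$, let $X_k$ denote the weight of the edge $\{2k,2k+2\}$, and let $Y_k,Z_k$ denote the weights of the edges $\{2k,2k+1\}$ and $\{2k+1,2k+2\}$ respectively. Across all $k$ these $3n$ edges are pairwise distinct, so under $\rho^{\otimes E}$ the family $\{X_k,Y_k,Z_k\}$ is i.i.d.\ with law $\rho$. Concatenating, for each block $[2k,2k+2]$, the cheaper of the direct edge and the two-edge detour produces a legal path from $0$ to $2n$ of total weight $\sum_{k=0}^{n-1}\min(X_k,Y_k+Z_k)$. Hence
\begin{equation}\label{e:pm2upper}
\E T^{\Ga_2}(0,2n) \leq n\cdot \E[\min(X,Y+Z)],
\end{equation}
where $X,Y,Z$ are independent with law $\rho$.

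Second, I would show that the hypothesis $2a_\rho<b_\rho$ forces $\E[\min(X,Y+Z)]<\mu$. Writing $\min(X,Y+Z)=X-(X-Y-Z)_+$, it suffices to prove $\P(X>Y+Z)>0$. Choose $\ep>0$ small enough that $2a_\rho+3\ep<b_\rho$. By definition of $a_\rho$ as the infimum of the support and $b_\rho$ as its supremum, $\P(Y\leq a_\rho+\ep)>0$ and $\P(X\geq 2a_\rho+3\ep)>0$. Independence of $X,Y,Z$ then gives
$$\P(X>Y+Z)\geq \P(X\geq 2a_\rho+3\ep)\cdot \P(Y\leq a_\rho+\ep)^2 >0,$$
so $\E[(X-Y-Z)_+]>0$ and therefore $\E[\min(X,Y+Z)]<\mu$.

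Third, I would handle the existence of the limit via subadditivity. The triangle inequality and translation-invariance (the group $\Z$ acts by isometries on $\Ga_2$ and the weight distribution is translation-invariant in law) give, for $a_n:=\E T^{\Ga_2}(0,2n)$,
$$a_{m+n}\leq a_m+\E T^{\Ga_2}(2m,2m+2n)=a_m+a_n.$$
Fekete's lemma yields $\lim_{n\to\infty} a_n/n=\inf_n a_n/n$, and passing \eqref{e:pm2upper} to the limit gives
$$\lim_{n\to\infty}\frac{\E T^{\Ga_2}(0,2n)}{n}\leq \E[\min(X,Y+Z)]<\mu,$$
as required. There is no real obstacle here; the only point requiring attention is the bookkeeping that the $3n$ edges in the zigzag are genuinely distinct so that \eqref{e:pm2upper} uses independence correctly, and this is immediate from the block decomposition.
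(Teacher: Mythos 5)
Your proof is correct and follows essentially the same approach as the paper: both compare $T^{\Ga_2}(0,2n)$ against the blockwise $\min(X_k,Y_k+Z_k)$ zigzag path and then show $\E[\min(X,Y+Z)]<\mu$ via positivity of $\P(X>Y+Z)$, which follows from $2a_\rho<b_\rho$. The one small addition you make is an explicit appeal to subadditivity and Fekete's lemma to justify that the limit exists, a point the paper leaves implicit.
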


\begin{proof}
	Observe that
	$$T^{\Ga_2}(0,2n)\leq \sum_{i=1}^{n} \min(X_i, Y_{2i-1}+Y_{2i})$$
	where $X_{i}$ and $Y_{i}$ are independent families of i.i.d. variables with distribution $\rho$. Therefore, it suffices to show that 
	$$\E[X_{i}-\min(X_i, Y_{2i-1}+Y_{2i})]>0.$$
	Clearly if $2a_{\rho}<b_{\rho}$, by assumption of continuity of $\rho$, there exists $\delta<\frac{b_{\rho}-2a_{\rho}}{4}$ such that $\P(Y_{i}\in [a_{\rho},a_{\rho}+\delta])\geq \delta$ and $\P(X_{i}\in [b_{\rho}-\delta,b_{\rho}])\geq \delta$. It therefore follows that 
	the non-negative variable $Z_{i}:=X_{i}-\min(X_i, Y_{2i-1}+Y_{2i})$ satisfies $\P(Z_i\geq \delta)\geq \delta^{3}$ and hence $\E Z_{i}\geq \delta^{4}$, completing the proof of the lemma. 
\end{proof}

\subsubsection{FPP on the free group}\label{sec-free2} We next give an example to show that there exists a hyperbolic group $G$ and a generating set $S$ such that 
\begin{enumerate}
	\item there exist $\xi_1, \xi_2 \in \pG$ such that $v(\xi_1), v(\xi_2)$ exist {\it but are unequal.}
	\item there exists $\xi \in \pG$ such that $v(\xi)$ does not exist.
\end{enumerate}

Let $G=F_2=\langle a, b\rangle$ be the free group on 2 generators. Fix $S=\{a^{\pm 1}, b^{\pm 1}, b^{\pm 2}\}$ to be the generating set and let $\Ga = \Ga(G,S)$. Let $\xi_1 = a^\infty, \, \xi_2 =  b^\infty$.
By Lemma \ref{velatpoles},  $v(\xi_1), v(\xi_2)$ exist.
By Lemma \ref{l:pm2}, $v(\xi_1) > v(\xi_2)$.\\

We now construct a direction $\xi \in \pG$ such that $v(\xi)$ does not exist. Let $w=a^{m_1}b^{n_1}a^{m_2}b^{n_2}\cdots$ be an infinite reduced word such that $m_i, n_i$ are defined recursively (as a tower function) by

\begin{enumerate}
	\item $m_1=1$,
	\item $n_i = 2^{2^{m_i}}$, for $i \geq 1$, 
	\item $m_{i+1} = 2^{2^{n_i}}$, for $i \geq 1$.
\end{enumerate}

Let $\xi \in \pG$ denote the boundary point represented by $w$.
Let $u_k, v_k$ denote the finite subwords of $w$ given by 
$$u_k=a^{m_1}b^{n_1}a^{m_2}b^{n_2}\cdots a^{m_k},$$ and
$$v_k=a^{m_1}b^{n_1}a^{m_2}b^{n_2}\cdots a^{m_k}b^{n_k}$$
Since the sequence $\{m_1,n_1,m_2,n_2, \cdots, m_k,n_k, \cdots \}$ grows like the tower function, the length of $u_k$ (resp. $v_k$) is dominated by $m_k$ (resp. $n_k$).  Further, since every vertex of 
$\Ga$ disconnects it, we have (by group-invariance),
$$\frac{T(1,u_k)}{T(1, a^{m_k})} \to 1, $$ and 
$$\frac{T(1,v_k)}{T(1, b^{n_k})} \to 1 $$ as $k \to \infty$.
Hence,  $$v(1,u_k) \to v(a^\infty) = v(\xi_1)$$ and 
$$v(1,v_k) \to v(b^\infty) = v(\xi_2)$$ as $k \to \infty$.
Since $v(\xi_1) \neq v(\xi_2)$, $v(\xi)$ does not exist.

\subsubsection{Graphs quasi-isometric to a tree}\label{sec-free3}  We modify the examples in Section \ref{sec-free2} above to show that the conclusions of Theorem \ref{thm-vexists} break down completely if we only require that the graph $\Ga$ is quasi-isometric to a Cayley graph $\Ga(G,S)$ of a hyperbolic group
(instead of being isometric to the latter). \\

\noindent {\bf Different velocities in different directions:} As in Section \ref{sec-free2} above, let $G = F_2=\langle a, b\rangle$ be the free group on 2 generators and let {$S=\{ a, b, a^{-1}, b^{-1}\}$.} Let $\Ga_a$ be the sub-tree
of $\Ga = \Ga(G,S)$ whose vertex set consists of group elements that can be represented by reduced words starting with $a$. We modify the Cayley graph 
$\Ga = \Ga(G,S)$ only on the sub-graph $\Ga_a$ of $\Ga$
 by introducing additional edges on $\Ga_a$ corresponding to generators 
 $\{a^{\pm 1}, a^{\pm 2}, b^{\pm 1}, b^{\pm 2} \}$. Let $\Ga'$ be the result of
modifying $\Ga$ thus. Note that $\Ga_a$ is a {quasi-isometrically} embedded subset
of $\Ga'$; hence the boundary $\partial \Ga_a$ of $\Ga_a$ (thought of as a 
hyperbolic metric space) embeds in the boundary $\partial \Ga'$ of $\Ga'$
(again regarded  as a 
hyperbolic metric space). {Note that there is a natural identification between $\partial \Gamma$ and $\partial
	\Gamma'$; so $\partial \Gamma_a$ may simply be regarded as a subset of
	both.}
 Then, the argument of Lemma \ref{l:pm2}
(and using the notation there) ensures that
$\forall \, \xi \in \partial \Ga_a \subset \partial \Ga'$, $v(\xi)$ exists,
and is less than $v(\xi')$, $\forall \, \xi' \in (\partial \Ga'
\setminus \partial \Ga_a)$. Note further that due to homogeneity of 
$\Ga_a$ and $(\Ga'
\setminus  \Ga_a)$ respectively,
 $v(\xi)$ is constant on $ \partial \Ga_a$; and 
$v(\xi')$  is constant on $ (\partial \Ga'
\setminus \partial \Ga_a)$. This provides an example where $v(\xi)$ is defined
for all $\xi \in \partial \Ga'$, but assumes different values on disjoint
positive measure subsets of $\partial \Ga'$.\\

\noindent {\bf Non-existence of velocity in any direction:} We now modify the example of Section \ref{sec-free2}(2) to construct a graph quasi-isometric to the regular 4-valent tree  $\Ga = \Ga(G,S)$ above, so that $v(\xi)$ does not exist in any direction. Construct $m_i, n_i \in \natls$ as in Section \ref{sec-free2} Modify $\Ga$ in each annulus
$$A_i = (N_{n_i} (1) \setminus N_{m_i} (1))$$ by introducing edges according to generators  $\{a^{\pm 1}, a^{\pm 2}, b^{\pm 1}, b^{\pm 2} \}$.
Let $\Ga''$ denote the modified graph. Then the argument in \ref{sec-free2}
shows that the velocity in any direction keeps oscillating between two distinct constants. Hence $v(\xi)$ does not exist in any direction $\xi\in 
\partial \Ga''$.

\section{Direction of $\om$-geodesic rays}\label{sec-dir}
Recall again our basic set-up: $G$ is a hyperbolic group and $\Ga = \Ga(G,S)$ is a Cayley graph with respect to a finite symmetric generating set. In Definition \ref{def-fpp} we have defined $\om-$geodesics between $x, y \in \Ga$. We would like to extend this notion to $\om-$geodesic rays. The Gromov boundary (resp.\ compactification) of $\Ga$ is denoted as $\pG$
(resp. $\hhat{\Gamma}$) (since $\pG$ is independent of the generating set $S$ we have not used $\partial \Gamma$ to denote the boundary of $\Ga$, using the suggestive notation $\pG$ instead).

\begin{defn}\label{def-fppray} 
	For  $\om \in \omegap$, a semi-infinite (resp.\ bi-infinite) path $\sigma_\omega$ is said to be an {\bf $\om-$geodesic ray (resp.\ a bi-infinite
	 $\om-$geodesic)} if every finite subpath of   $\sigma_\omega$ is an $\om-$geodesic.
	
	A path $\sigma$ is said to {\bf accumulate on} $\xi \in \pG$ if there exist $v_n \in  \sigma$ such that $v_n \to \xi$ as $n \to \infty$. 
\end{defn}

\noindent {\bf Defining directions of $\om-$geodesic rays:} We briefly motivate the notion of direction of an $\om-$geodesic ray. In Euclidean space $\R^n$, a direction $u$ is identified with an element of $T_I(0) \subset T_0(\R^n)$, the unit tangent sphere at $0 \in \R^n$ contained in the tangent space $T_0(\R^n)$ to $\R^n$ at $0$. Since tangent spaces are not available in our setup, we have to interpret $T_I(0)$ appropriately for $\Ga$. The exponential map 
from $T_0(\R^n)$ to $\R^n$ is a diffeomorphism sending  $tu$ (with $t \in \R_+$ and $u \in T_I(0)$) to the unique geodesic in $\R^n$ starting at $0$ and in the direction given by $u$.
This allows us to identify   $T_I(0)$ to the boundary $\partial \R^n$ given by asymptote classes of geodesics as in Lemma \ref{asymptosamept}. We shall thus parametrize directions of $\om-$geodesic rays in $\Ga$ by points $\xi \in \pG$. To this end we make the following definition. 

\begin{defn}\label{def-dir}
	An $\om-$geodesic ray $\sigma_\omega$ accumulating on $\xi\in \pG$ {\bf has direction $\xi$} if its only accumulation point  in $\pG$ is $\xi$.
\end{defn}

The main objective of this section is to establish that Definition \ref{def-dir} is indeed a natural definition, every $\omega$-geodesic ray has a direction (Theorem \ref{t:direxists}) and there exist $\om$-geodesic rays in each direction $\xi\in \pG$. (Theorem \ref{dirnexists}). The analogous statements for Euclidean lattices (where direction, as usual, is parametrized by points on the unit sphere) is due to Newman \cite{New95}, where it is proved under additional curvature assumptions on the limiting shape of random balls in dimension two. Recent partial progress without these assumptions has been made in \cite{H08, DH14, DH17} and a related result in terms of the Busemann functions have been established in \cite{AH16}. The hyperbolic geometry allows these results to go through without such assumptions in our case.

We shall say that  a sequence $x_n$ in $\Ga$ satisfies $x_n \to \infty$ as $n\to \infty$ if $d(x_n,1) \to \infty$ as $n\to \infty$. The main idea is to observe that if an $\omega$-geodesic ray starting from $o$ has two distinct accumulation points then they must have points $x_n,y_n\to \infty$ on them such that the word geodesic $[x_{n},y_{n}]$ passes through a bounded neighborhood of $o$. This would force $[x_n,y_n]_{\omega}$ to also almost surely pass through finite neighborhoods of $o$; which will lead to a contradiction. The first step of making this argument precise is the following proposition.


\begin{prop}\label{bt}
Let $ C \geq 0$ and $o \in \Gamma$ be given. Then  for a.e.\ $\om \in \omegap$, there exists $R_\om >0$ such that the following holds:\\
For every sequence $x_n, y_n \to \infty$ such that $[x_{n},y_n]$ passes through $N_C(o)$ (i.e., some word geodesic between $x_n$ and $y_n$ passes through $N_C(o)$), the $\om$-geodesic $[x_n,y_n]_{\om}$ passes through the  $R_\om-$neighborhood of $o$.
\end{prop}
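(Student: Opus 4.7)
The plan is to reduce Proposition \ref{bt} to the Morse stability lemma (Lemma \ref{lem-morse}). I would show that, almost surely in $\omega$, every FPP geodesic between pairs $(x,y)$ satisfying the hypothesis is a quasi-geodesic in the word metric of $\Ga$ with quasi-isometry constants depending only on $\omega$. Granting this, Lemma \ref{lem-morse} forces $[x,y]_\omega$ to lie in a $\kappa_\omega$-neighborhood of the word geodesic $[x,y]$, and the hypothesis $[x,y]\cap N_C(o)\ne\emptyset$ then yields $[x,y]_\omega\cap N_{C+\kappa_\omega}(o)\ne\emptyset$, so we take $R_\omega = C + \kappa_\omega$.

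The key input is a uniform upper bound on the word length of FPP geodesics in terms of word distance. For a single pair $(x,y)$ with $[x,y]\cap N_C(o)\ne\emptyset$, Lemma \ref{l:geodlen} gives $\P(\ell(\Upsilon(x,y))\ge K\,d(x,y))\le e^{-c(K)d(x,y)}$ with $c(K)\ge c_1\sqrt{K}$ for large $K$. The number of pairs $(x,y)$ with $[x,y]\cap N_C(o)\ne\emptyset$ and $d(x,y)=n$ is $O(\lambda^n)$ by Theorem \ref{cf-maxcomp} (using that the through-$N_C(o)$ constraint forces $d(x,o)+d(o,y)\le n+2C$, collapsing one of the two factors of $\lambda^{m_i}$). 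Choosing $K$ large enough that $c(K)>\log\lambda$ makes the Borel-Cantelli sum $\sum_n O(\lambda^n)e^{-c(K)n}$ convergent, so almost surely $\ell(\Upsilon(x,y))\le K\,d(x,y)$ for all but finitely many pairs of the required form. An analogous argument applied to the concatenation $[x,o]\cup[o,y]$ via Theorem \ref{t:subexp} yields the matching upper bound $T(x,y)\le 2\mu\,d(x,y)$.

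To upgrade the word-length bound on the whole path $[x,y]_\omega$ into a quasi-geodesic property of all its subpaths, I would use that any subpath of an FPP geodesic is itself the FPP geodesic between its endpoints (a consequence of uniqueness of FPP geodesics under the continuity hypothesis on $\rho$), and then run the same Borel-Cantelli estimate for the sub-pairs $(u,v)$ on the relevant FPP geodesics. This makes $[x,y]_\omega$ into a $(K_\omega,\epsilon_\omega)$-quasi-geodesic, and Lemma \ref{lem-morse} then bounds the Hausdorff distance between $[x,y]_\omega$ and $[x,y]$ by the $\omega$-dependent constant $\kappa_\omega=\kappa(\delta,K_\omega,\epsilon_\omega)$, completing the proof.

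The main obstacle is the uniformity of the sub-pair word-length bound: sub-pairs $(u,v)$ on an FPP geodesic are not a priori constrained to satisfy the through-$N_C(o)$ condition, so the favorable $O(\lambda^n)$ pair count is not immediate. I would handle this by observing that sub-pairs with word distance at least a slowly growing function of the endpoint distance (e.g., $\ge C\log n$) satisfy the required inequality with failure probability summable against the full growth rate $\lambda^{2n}$ of pair counts, while sub-pairs of smaller word distance contribute at most bounded (in fact logarithmic in $n$) Hausdorff deviation, which is then absorbed into the additive $\epsilon_\omega$ appearing in the Morse constant.
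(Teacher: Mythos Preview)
Your strategy---show that every FPP geodesic between the relevant pairs is a word-metric quasi-geodesic with $\omega$-dependent constants, then invoke the Morse lemma---is natural, but the subpair step does not close as written, and this is a genuine gap rather than a technicality.

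The difficulty is quantitative. Once you know $\ell([x,y]_\omega)\le K\,d(x,y)=Kn$, the vertices of $[x,y]_\omega$ lie in a ball of radius $O(Kn)$ about $o$, so the sub-pairs $(u,v)$ you must control range over a set of size $\lambda^{O(Kn)}$ (you cannot restrict to pairs on the random path in advance, since the path itself depends on $\omega$). For a sub-pair with $d(u,v)=m$, Lemma~\ref{l:geodlen} gives failure probability $e^{-c(K')m}$. If $m$ is only of order $C\log n$ as you allow, this is merely polynomially small in $n$, which cannot beat the exponential-in-$n$ entropy of sub-pairs; the Borel--Cantelli sum diverges. Nor does the second clause help: for sub-pairs with $d(u,v)<C\log n$ you have no a priori bound on the word length of the corresponding sub-arc of $[x,y]_\omega$, so the claimed ``logarithmic Hausdorff deviation'' is unjustified. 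In short, the endpoint bound $\ell([x,y]_\omega)\le Kn$ does not by itself constrain the path to stay near $[x,y]$, and the attempt to bootstrap to the quasi-geodesic property runs into an exponential-versus-polynomial mismatch.

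The paper sidesteps this by \emph{not} proving that FPP geodesics are quasi-geodesics. Instead it combines two almost-sure events---a uniform linear upper bound on $\ell_\omega$ for paths through $N_C(o)$, and a uniform linear lower bound on $\ell_\omega$ for paths staying within a cone around $o$---with a purely geometric fact (Lemma~\ref{bt-lemma}): any path from $x$ to $y$ that avoids $N_{100R}(o)$ contains a sub-arc whose word length is \emph{exponentially} larger (in $R$) than the word distance between its endpoints. This exponential gain is exactly what your counting argument lacks; it comes from the divergence of geodesics in hyperbolic spaces, not from a tail bound. The contradiction is then obtained by comparing the $\omega$-length of that long sub-arc (large, by the lower bound) with the $\omega$-length of the short word-geodesic alternative through $o$ (small, by the upper bound). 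If you want to repair your argument, the missing ingredient is precisely this exponential-divergence input.
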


The proof of this proposition adapts the proof of \cite[Theorem 1.3]{benjamini-tessera}, except that we do not assume $x_{n},y_{n}$ to lie on a fixed Morse geodesic passing though $o$, thus requiring some additional work.

For $A \subset \Gamma$ and  $p \in \Gamma$, define a {\bf nearest-point projection} of $p$ onto $A$ as $\Pi (p) = a$ if $d(p,a)= d(p,A)$.
Nearest-point projections onto geodesics (or more generally quasiconvex sets) are coarsely well-defined in the following sense (see the proof of \cite[Theorem 1.13, p405]{bh-book} and \cite[Lemma 2.20]{mahan-split}):
\begin{lemma}\label{lem-npp-coarse}
	Let $x \in \Gamma$ and $u, v \in \oxi$ be such that $d(x,\oxi)= d(x,u) = d(x,v)$.
	Then $ d(u,v) \leq 2 \delta$. More generally, given $\kappa>0$, there exists $C_0$ such that if $A \subset \Gamma$ is  $\kappa-$quasiconvex,
	then for any $x \in \Gamma$ and $u, v \in A$ with $d(x,A)= d(x,u) = d(x,v)$, we have $ d(u,v) \leq C_0$.
\end{lemma}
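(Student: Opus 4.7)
The plan is to exploit $\delta$-thinness of geodesic triangles combined with the nearest-point hypothesis. First I would handle the geodesic case, which is the clean model; the quasiconvex case is then essentially a perturbation of the same argument, absorbing $\kappa$ into the constant.

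For the geodesic case, write $r = d(x,[o,\xi)) = d(x,u) = d(x,v)$, and let $m$ be the midpoint of the subsegment of $[o,\xi)$ between $u$ and $v$. Since $m \in [o,\xi)$, the nearest-point assumption forces $d(x,m) \geq r$. Apply $\delta$-hyperbolicity to the geodesic triangle with vertices $x,u,v$ (with sides $[x,u]$, $[x,v]$, and $[u,v] \subset [o,\xi)$): the point $m$ lies within $\delta$ of $[x,u] \cup [x,v]$. Without loss of generality there is some $p \in [x,u]$ with $d(m,p) \leq \delta$. Since $p$ sits on the geodesic $[x,u]$, one has $d(x,p) = r - d(p,u)$, while $d(p,u) \geq d(m,u) - \delta = \tfrac12 d(u,v) - \delta$. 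Chaining these inequalities with $d(x,m) \leq d(x,p) + \delta$ and $d(x,m) \geq r$ yields an upper bound on $d(u,v)$ in terms of $\delta$ alone. This is the main computation; the arithmetic will give a constant of the form $c\delta$, which is sharpened to $2\delta$ by choosing the closest of $u,v$ to the hyperbolic ``center'' of the triangle (or by using the standard insize estimate).

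For the quasiconvex case, let $A$ be $\kappa$-quasiconvex and repeat the same construction with $u,v \in A$ and $m$ the midpoint of a geodesic $[u,v]$. Quasiconvexity gives a point $m' \in A$ with $d(m,m') \leq \kappa$, hence $d(x,m) \geq d(x,m') - \kappa \geq r - \kappa$. Running the same triangle-thinness argument as above, now with the slightly weaker lower bound $r - \kappa$ on $d(x,m)$, gives $d(u,v) \leq C_0$ for some $C_0 = C_0(\delta,\kappa)$, which is exactly the asserted statement.

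The only real obstacle is bookkeeping: getting the sharp $2\delta$ bound in the geodesic case (as opposed to, say, $4\delta$) requires a more careful choice of which side of the triangle $m$ projects to, or an application of the insize lemma for $\delta$-thin triangles. Since the lemma is only used qualitatively in the sequel (the constant $C_0$ feeds into later estimates but its precise value is irrelevant), I would be content to record a possibly non-sharp constant; the subsequent applications only require the existence of \emph{some} uniform $C_0$ depending on $\delta$ and $\kappa$.
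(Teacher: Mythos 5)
Your argument is correct, and it is in fact the standard textbook argument for this fact; the paper itself does not supply a proof but instead cites \cite[Theorem 1.13, p.~405]{bh-book} and \cite[Lemma 2.20]{mahan-split}, so there is no ``paper proof'' to compare against step-by-step.

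Two small remarks on the computation. First, running your chain of inequalities literally gives
\[
r \;\le\; d(x,m) \;\le\; d(x,p)+\delta \;=\; r - d(p,u)+\delta \;\le\; r - \tfrac12 d(u,v) + 2\delta,
\]
hence $d(u,v)\le 4\delta$, not $2\delta$. This is not an error in your reasoning: with the $\delta$-slim definition of hyperbolicity the paper actually uses (Definition~2.3, each side in the $\delta$-neighborhood of the union of the other two), $4\delta$ is what the standard argument yields, and the sharper constant $2\delta$ in the lemma statement would require either the $\delta$-thin (tripod/insize) normalization of $\delta$ or a separate sharpening. You flag exactly this and correctly observe it is immaterial: the only place the explicit constant $2\delta$ is invoked downstream is the thickening $H_i = N_{2\delta}(H_i^e)$ in Definition~\ref{def-hyperplane}, which could equally well be $N_{4\delta}$ without affecting any later statement. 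Second, in the quasiconvex case your estimate $d(x,m) \ge r-\kappa$ is exactly right (since $[u,v]\subset N_\kappa(A)$ and $d(x,A)=r$), and the same computation then gives $d(u,v)\le 4\delta+2\kappa$, which is a perfectly good $C_0(\delta,\kappa)$. So the proposal is a complete proof, modulo the inessential factor of $2$ in the first constant which you already acknowledged.
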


 We shall need the following geometric lemma, similar in content (and proof) to Lemma 2.2 and Lemma 3.1 of \cite{benjamini-tessera}.

\begin{lemma}\label{bt-lemma}
Let $\Ga$ be as above. Then there exist $A, C', R_0 > 0$ such that 
for all $R \geq R_0$ the following holds. Let $[x,y] \subset \Gamma$ be a geodesic and $o \in [x,y]$. Let $\sigma$ be a path joining $x, y$ such that $\sigma \cap N_{100R} (o) = \emptyset$. Then there exist $u, v \in \sigma$ such that $d(u,[x,y]) = R = d(v,[x,y])$, $[u,v] \cap N_{A} (o) \neq \emptyset$.
Further, 
$u, v$ satisfy the following properties in addition. Let $\sigma_{uv}$ be the subpath of $\sigma$ between $u, v$ and let $\Pi$ denote nearest-point projection onto $[x,y]$. Then $\Pi(u), \Pi(v) \in [x,y]$ lie on opposite sides
of $o$ with $d(\Pi(u),o) \geq 99R$,  $d(\Pi(v),o) \geq 99R$. Also,
\begin{enumerate}
\item \large{$\frac{\ell (\sigma_{uv})}{2R+d(\Pi(u), \Pi(v))} \to \infty$} as $R \to \infty$.
\item $d(o, \sigma_{uv}) \leq C' \ell (\sigma_{uv})$.
\end{enumerate}
\end{lemma}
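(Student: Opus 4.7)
I would parametrize $\sigma$ by its vertex sequence $\sigma(0)=x,\ldots,\sigma(L)=y$ and track two quantities: the integer-valued function $f(i):=d(\sigma(i),[x,y])$ (changing by $\pm 1$ per step) and the arc-length position $\pi(i)\in[0,d(x,y)]$ of a nearest-point projection of $\sigma(i)$ onto $[x,y]$. A standard consequence of $\delta$-hyperbolicity is the existence of $C_1=C_1(\delta)$ with $|\pi(i+1)-\pi(i)|\leq C_1$. Writing $t_o$ for the arc-length position of $o$, the hypothesis $\sigma\cap N_{100R}(o)=\emptyset$ forces $d(x,o), d(y,o)\geq 100R$, so $t_o,\, d(x,y)-t_o\geq 100R$. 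A discrete intermediate value argument then produces an index $i^*$ with $|\pi(i^*)-t_o|\leq C_1$, and combined with $d(\sigma(i^*),o)\geq 100R$ this yields $f(i^*)\geq 99R$ once $R\geq R_0(\delta)$ is sufficiently large.

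Next, I would let $i^-$ be the largest $i\leq i^*$ with $\pi(i)\leq t_o-99R$, and $i^+$ the smallest $i\geq i^*$ with $\pi(i)\geq t_o+99R$. A reverse triangle inequality using $d(\sigma(i^\pm),o)\geq 100R$ and $d(\Pi(\sigma(i^\pm)),o)\in[99R,\,99R+C_1]$ gives $f(i^\pm)\geq R-C_1$. A careful adjustment---walking a bounded number of steps along $\sigma$ and exploiting integer-valuedness of $f$ together with the slow motion of $\pi$---produces nearby indices $a\leq i^*\leq b$ with $f(a)=f(b)=R$, with $\Pi(\sigma(a)),\Pi(\sigma(b))$ still on opposite sides of $o$, and with $d(\Pi(\sigma(a)),o),\, d(\Pi(\sigma(b)),o)\in[99R,\,99R+O(1)]$. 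Setting $u:=\sigma(a)$ and $v:=\sigma(b)$ realizes the initial assertions of the lemma, with the additional control $d(\Pi(u),\Pi(v))=O(R)$.

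The remaining claims then follow cleanly. The intersection $[u,v]\cap N_A(o)\neq\emptyset$ is a consequence of the $2\delta$-slim quadrilateral property applied to $u,\Pi(u),\Pi(v),v$: the middle side (a sub-arc of $[x,y]$ containing $o$) is much longer than the two lateral sides of length $R$, forcing a point of $[u,v]$ within $A:=2\delta+O(1)$ of $o$. For property (1), since $[u,v]$ enters $N_A(o)$ while the sub-path $\sigma_{uv}$ avoids $N_{100R}(o)$, the exponential divergence of geodesics in Gromov-hyperbolic spaces (see e.g.~\cite{bh-book}) yields $\ell(\sigma_{uv})\geq c_\delta\lambda^{100R-A}$ for some $\lambda=\lambda(\delta)>1$; since $2R+d(\Pi(u),\Pi(v))=O(R)$, the ratio blows up exponentially as $R\to\infty$. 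Property (2) is then immediate from the crude bound $d(o,\sigma_{uv})\leq d(o,u)\leq d(o,\Pi(u))+R=O(R)$ against the exponential lower bound on $\ell(\sigma_{uv})$; any fixed $C'>0$ works provided $R_0$ is taken large enough.

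The principal obstacle is the second paragraph: enforcing simultaneously the equalities $f(u)=f(v)=R$, the two-sidedness of $\Pi(u),\Pi(v)$ relative to $o$, and the quantitative control $d(\Pi(u),o),d(\Pi(v),o)\in[99R,\,99R+O(1)]$. The naive choices (first/last time $f=R$, or first/last crossing by $\pi$ of the threshold $t_o\mp 99R$) each enforce only a proper subset of these constraints. The interleaving argument requires tracking both $f$ and $\pi$ jointly in a neighborhood of $i^\pm$, using the unit-motion of $f$ against the at-most-$C_1$-motion of $\pi$; the slack built into the statement (``$\geq 99R$'' rather than equality, and freedom to enlarge $R_0$) absorbs the resulting $O(1)$ constants.
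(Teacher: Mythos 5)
Your overall strategy matches the paper's: locate a point on $\sigma$ projecting near $o$, walk to the exit/re-entry points of $N_R([x,y])$, use the triangle inequality for the distance bounds, and finish with exponential divergence. The substantive difference—and where the gap lies—is in how $u,v$ are produced. The paper takes $w\in\sigma$ with $\Pi(w)=o$ and sets $u$ to be the last exit of $\sigma$ from $N_R([x,y])$ before $w$ (and $v$ the first re-entry after). This makes $d(u,[x,y])=R$ hold \emph{by construction}, $d(\Pi(u),o)\geq 99R$ drops out of the triangle inequality, and then $\ell(\sigma_{uv})\geq d(\Pi(u),\Pi(v))e^{\alpha R}$ (the refined estimate in the paper's Lemma~\ref{lem-expdivlength}) gives items (1) and (2) \emph{without} any upper bound on $d(\Pi(u),\Pi(v))$. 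You instead fix the projection position first, choosing $i^{\pm}$ so that $\pi$ lands within $O(1)$ of $t_o\mp 99R$; this is designed to deliver $d(\Pi(u),\Pi(v))=O(R)$ so a cruder exponential bound $\ell(\sigma_{uv})\geq ce^{\alpha(100R-A)}$ suffices, but it only gives $f(i^{\pm})\geq R-C_1$, not the required equality.

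The "careful adjustment" you sketch to then hit $f=R$ exactly is the genuine gap, and it is not an $O(1)$-constant issue that slack absorbs. Since $f$ is integer-valued but can wander away from $R$ for arbitrarily long, the first/last index near $i^-$ with $f=R$ need not be a bounded number of steps from $i^-$. Worse, at \emph{any} index $a$ with $f(a)=R$ the hypothesis $d(\sigma(a),o)\geq 100R$ forces $|\pi(a)-t_o|\geq 99R$, so $\pi(a)$ is repelled entirely out of the interval $(t_o-99R,\,t_o+99R)$; which side it lands on is then a global property of the path, not something that local moves near $i^-$ can steer. In particular, one cannot simultaneously enforce $f(a)=R$, $\pi(a)$ on the $x$-side, and $\pi(a)\in[t_o-99R-O(1),\,t_o-99R]$—the last of these (needed for your $d(\Pi(u),\Pi(v))=O(R)$ bound) is what fails. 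The cleanest repair is to abandon the $O(R)$ upper bound: define $u$ as the last exit of $N_R([x,y])$ before the first index at which $\pi$ crosses $t_o$ (and $v$ symmetrically), which gives $f(u)=R$ for free, and then use the refined estimate $\ell(\sigma_{uv})\geq d(\Pi(u),\Pi(v))e^{\alpha R}$ so that item (1) reads $\ell(\sigma_{uv})/(2R+d(\Pi(u),\Pi(v)))\geq e^{\alpha R}/2\to\infty$ regardless of how large $d(\Pi(u),\Pi(v))$ is. (Note also that the "opposite sides" claim needs a word in either proof; taking $w$ to be the \emph{first} point of $\sigma$ whose projection reaches $o$, so that everything before $u$ projects to the $x$-side, is one clean way to secure it.)
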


The proof uses basic hyperbolic geometry and is postponed to Appendix \ref{sec-app}.
We are now ready to prove Proposition \ref{bt}.

\begin{proof}[Proof of Proposition \ref{bt}]
For the purpose of this proof, let us define $\mu:=\int xd\rho(x)$. Let $\Omega_1=\Omega_1(T)$ denote the set of all configurations such that $$\sup_{\gamma} (\ell_{\om}(\gamma)-T\mu \ell(\gamma)) <\infty$$
where the supremum is taken over all paths $\gamma$ passing through $N_C(o)$. We claim that, if $T$ is sufficiently large, $\P(\Omega_1(T))=1$.  
Indeed, since $N_C(o)$ is finite, it suffices to consider separately the paths passing through each vertex in $N_C(o)$. Clearly, there are at most $n|S|^{n}$ many paths of length $n$ passing through a fixed $o_1\in N_C(o)$ and by choosing $T$ sufficiently large and using Theorem \ref{t:subexp} it follows that the probability of any such path $\gamma$ satisfying  $\ell_{\om}(\gamma)\ge T\mu \ell (\gamma)$ is at most $e^{-2|S|n}$. Taking a union bound over all paths of length $n$ passing though $o_1$, followed by a union bound over all choices of $o_1$ and an application of Borel-Cantelli lemma completes the proof of the claim. From now on, we shall fix a full measure subset $\Omega_1$ satisfying the above property. 

Next, let $\Omega_2=\Omega_2(\epsilon)$ be the set of all configurations such that $$\inf_{\gamma} (\ell_{\om}(\gamma)-\epsilon \ell(\gamma)) >-\infty$$ where the infimum is taken over all paths $\gamma$ such that $d(o,\gamma) \leq C' \ell(\gamma)$ (where $C'$ is as in Lemma \ref{bt-lemma}). By the same argument as in the proofs of Lemma \ref{l:geodlen} and Lemma \ref{l:positive} (cf.\ \cite[Lemma 2.5]{benjamini-tessera}) there exists $\epsilon>0$ sufficiently small such that $\P(\Omega_2)=1$. Let us fix a full measure subset $\Omega_2$ satisfying the above property. 

We now show that  the full measure subset $\Omega':=\Omega_1 \cap \Omega_2$ satisfies the hypothesis in the statement of the proposition. We argue by contradiction. Suppose for $\omega\in \Omega'$, there exist $R_{n}\uparrow \infty$ and two sequences $x_n, y_n \to \infty$ such that $[x_{n},y_n]$ passes through $N_C(o)$ but $[x_n,y_n]_{\omega}$ avoids $N_{(100R_n+C)}(o)$. 

Let $\Pi_n$ denote nearest point projection onto $[x_n, y_n]$ and $o_n \in N_C(o) \cap [x_n,y_n]$. Since $N_C(o)$ is finite, we can assume after passing to a subsequence if necessary that $o_n = o'$ for all $n$. Hence, 
$[x_n,y_n]_{\omega}$ avoids $N_{(100R_n)}(o')$ for all $n$.
By Lemma \ref{bt-lemma}, there exist $A, C' \geq 0$ (depending only on $\delta$)  and $u_{n}, v_{n}$ on $[x_n,y_n]_{\omega}$ such that the following hold.
\begin{enumerate}
\item $d(u_n,[x_n,y_n]) = R_n = d(v_n,[x_n,y_n])$, 
\item $\Pi_n(u_n), \Pi_n(v_n) \in [x_n,y_n]$ lie on opposite sides
of $o$ with $d(\Pi_n(u_n),o') \geq 99R_n$,  $d(\Pi_n(v_n),o') \geq 99R_n$,
	\item \large{$\frac{\ell ([u_n,v_n]_{\omega})}{2R_n+d(\Pi_n(u_n), \Pi_n(v_n))} \to \infty$} as $n \to \infty$. 
	
	\normalsize
	In fact,  for some $\alpha>0$ depending only on the hyperbolicity constant $\delta$,
	\large $\ell([u_n,v_n]_{\omega}) \geq d(\Pi_n(u_n), \Pi_n(v_n))e^{\alpha R_n}$ 
	\item $d(o', [u_n,v_n]_{\omega}) \leq C' \ell ([u_n,v_n]_{\omega})$.
\end{enumerate}

Using the definition of $\Omega_2$ we can therefore write 
$$ \ell_\om([u_n,v_{n}]_{\omega}) \geq \epsilon \ell([u_n,v_{n}]_{\omega}) -r_1(\omega) \geq \epsilon B_{n} (2R_n+d(\Pi_n(u_n), \Pi_n(v_n)))-r_1(\omega)$$
where, $B_{n}\to \infty$ as $n\to \infty$.

The path between $u_n$ and $v_{n}$ obtained by concatenating $[u_n,\Pi_n(u_n)]$, $[\Pi_n(u_n),\Pi_n(v_n)]$ and $[\Pi_n(v_n),v_n]$  passes though $o'$ (by Property 2 above). Using the definition of $\Omega_1$ we also have 
$$ \ell_\om([u_n,v_{n}]_{\omega}) \leq  T\mu (2R_n+d(\Pi_n(u_n), \Pi_n(v_n))))+r_2(\omega).$$
Notice that $r_1$ and $r_2$ do not depend on $n$. Comparing these inequalities we get 
$(\epsilon B_{n}-T \mu) d(\Pi_n(u_n), \Pi_n(v_n)))\leq 2 T\mu R_{n}+r_1(\omega)+r_2(\omega)$, which is a contradiction for large enough $n$, since $d(\Pi_n(u_n), \Pi_n(v_n))\geq R_n$ and $B_{n}\to \infty$ as $n\to \infty$.
\end{proof}


We can now prove that almost surely all $\omega$-geodesic rays have direction. 

\begin{theorem}
\label{t:direxists}
For $o\in \Gamma$, for a.e. $\omega \in \omegap$, all $\om$-geodesic rays starting at $o$ {have} a direction $\xi\in \pG$.
\end{theorem}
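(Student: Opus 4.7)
The plan is a contradiction argument driven by Proposition \ref{bt}. Suppose some $\omega$-geodesic ray $\sigma_\omega=(o=v_0,v_1,v_2,\ldots)$ accumulates at two distinct boundary points $\xi_1\ne \xi_2$ of $\pG$. I will interlace subsequences $x_k:=v_{n_k}\to \xi_1$ and $y_k:=v_{m_k}\to \xi_2$ with $n_k<m_k$, use $\xi_1\ne\xi_2$ to show that all word-geodesics $[x_k,y_k]$ pass through a common bounded neighborhood $N_C(o)$, invoke Proposition \ref{bt} to conclude that each $\omega$-geodesic $[x_k,y_k]_\omega$ enters a fixed ball $N_{R_\omega}(o)$, and then derive a contradiction from the observation that, as subpaths of $\sigma_\omega$, these $\omega$-geodesics leave every bounded ball for $k$ large. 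Existence of at least one accumulation point is automatic from compactness of the Gromov compactification together with $v_i\to\infty$.

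The full-measure event $\Omega^\ast$ on which I work is the intersection of: (i) the event that $\omega$-geodesics between every pair of vertices of $\Gamma$ are unique (a countable intersection of a.s.\ events, valid by the no-atom hypothesis on $\rho$); and (ii) the event that the conclusion of Proposition \ref{bt} holds at $o$ for every positive integer $C$. On $\Omega^\ast$, any $\omega$-geodesic ray is automatically self-avoiding: if $v_i=v_j$ for some $i<j$, the nontrivial subpath $(v_i,\ldots,v_j)$ would provide a second $\omega$-geodesic between $v_i$ and itself, contradicting (i). Local finiteness of $\Gamma$ together with self-avoidance then forces $d(o,v_i)\to \infty$.

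Now fix $\omega\in\Omega^\ast$ and suppose for contradiction that some $\sigma_\omega$ accumulates at distinct $\xi_1,\xi_2\in \pG$. A standard interlacing produces indices $n_k<m_k$ with $v_{n_k}\to \xi_1$ and $v_{m_k}\to \xi_2$; set $x_k:=v_{n_k}$, $y_k:=v_{m_k}$. Since $\xi_1\ne\xi_2$, the boundary Gromov product $\langle\xi_1,\xi_2\rangle_o$ is finite, and a standard consequence of $\delta$-hyperbolicity is that $\langle x_k, y_k\rangle_o$ stays bounded in $k$. Combined with the usual estimate $d(o, [x_k,y_k]) \leq \langle x_k,y_k\rangle_o + O(\delta)$, this furnishes a single integer $C$ (depending on $\omega$ through $\xi_1,\xi_2$ but not on $k$) such that every word-geodesic $[x_k,y_k]$ meets $N_C(o)$. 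Applying Proposition \ref{bt} with this $C$ (legitimate because $\omega\in\Omega^\ast$) yields $R_\omega>0$ such that $[x_k,y_k]_\omega\cap N_{R_\omega}(o)\ne\emptyset$ for every $k$. By uniqueness on $\Omega^\ast$, however, the $\omega$-geodesic $[x_k,y_k]_\omega$ coincides with the subpath $(v_{n_k},v_{n_k+1},\ldots,v_{m_k})$ of $\sigma_\omega$. Since $d(o,v_i)\to \infty$, there is $N$ with $d(o,v_i)>R_\omega$ for all $i>N$; taking $k$ large enough that $n_k>N$, this subpath lies entirely outside $N_{R_\omega}(o)$, the required contradiction.

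The one genuine subtlety, which I expect to be the main obstacle, is that the scale $C$ at which the word-geodesics $[x_k,y_k]$ enter a neighborhood of $o$ depends on $\langle \xi_1,\xi_2\rangle_o$, and hence on $\omega$ through the particular ray under consideration. This prevents a single application of Proposition \ref{bt} from doing the job and is what forces the intersection over all $C\in \N$ in the definition of $\Omega^\ast$; thankfully a countable intersection of full-measure events is still full measure. Everything else in the argument is standard hyperbolic geometry combined with uniqueness of $\omega$-geodesics.
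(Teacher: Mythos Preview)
Your proof is correct and follows essentially the same approach as the paper's: both arguments use Proposition~\ref{bt} to show that two distinct accumulation points would force the $\omega$-geodesic subpaths $[x_k,y_k]_\omega$ to return to a fixed ball $N_{R_\omega}(o)$, contradicting the fact that a self-avoiding ray eventually leaves every bounded set. The only cosmetic difference is organizational---you build the full-measure event $\Omega^\ast$ upfront by intersecting over $C\in\N$, whereas the paper fixes $C'$, proves the statement for that $C'$, and then lets $C'\to\infty$ at the end; your formulation of the contradiction (the subpath eventually lies outside $N_{R_\omega}(o)$) is the contrapositive of the paper's (infinitely many returns to a finite set force self-intersection).
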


\begin{proof}
	Since an $\om-$geodesic ray necessarily visits infinitely many $x \in \Gamma$, it has some accumulation point $\xi \in \pG$.
	
Let $C'\geq 0$ be fixed. We first show that almost surely there does not exist an $\om$-geodesic ray starting at $o$ such that it has two accumulation points $\xi,\eta \in \pG$ with $\langle\xi,\eta\rangle_{o} \leq C'$. Observe that there exists $C\geq 0$ (depending only on $C'$ and $\delta$) with the following property: if such an $\omega$-geodesic ray $\sigma$ existed then there would be points $x_{n},y_{n}\in \sigma$ with $x_n,y_n\to \infty$ such that (i) $y_{n}$ belongs to the restriction of $\sigma$ between $o$ and $x_n$ (ii) $[x_n,y_n]$ passes through $ N_C(o)$ (since we can choose $C = C' + 2\delta$, \cite[Chapter III.H.1]{bh-book}). Let $\Omega'=\Omega'(C)$ be as in Proposition \ref{bt}. By Proposition \ref{bt}, for all $\omega\in \Omega'$, $\sigma$ returns infinitely often to $N_C(o)$, implying that $\sigma$ is self-intersecting, a contradiction. Letting $C' \to \infty$ completes the proof of the theorem. 
\end{proof}

Next we show that for every $\xi\in \pG$, there is an $\omega$-geodesic ray with direction $\xi$.

\begin{theorem}\label{dirnexists}
	Fix $\xi \in \partial G$ and $x_{n}\in \Ga$ such that $x_{n}\to \xi$. For a.e.\ $\om \in \omegap$ the sequence of $\om-$geodesics $[o,x_n]_\om$ from $o$ to $x_n$ converges (up to subsequence) to an $\om-$geodesic ray $[o,\xi)_\om$ having direction $\xi$.
\end{theorem}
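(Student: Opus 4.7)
The plan has two steps: first, extract a subsequential limit $\sigma_\omega$ of $[o, x_n]_\omega$ and verify it is an $\omega$-geodesic ray; second, show this ray has direction $\xi$, which is where the actual work lies.

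For the first step, I would use a standard diagonal extraction: since $\Gamma$ has bounded degree and $\ell([o, x_n]_\omega) \geq d(o, x_n) \to \infty$, pigeonholing successively on the edges incident to $o, v_1, v_2, \ldots$ yields a subsequence $n_k$ and a semi-infinite vertex sequence $\sigma_\omega = (o = v_0, v_1, v_2, \ldots)$ whose length-$m$ prefix agrees with the first $m{+}1$ vertices of $[o, x_{n_k}]_\omega$ for all $k \geq k_0(m)$. Any finite subpath of $\sigma_\omega$ is then a subpath of an $\omega$-geodesic, hence itself an $\omega$-geodesic, so $\sigma_\omega$ is an $\omega$-geodesic ray; and since $\sigma_\omega$ is self-avoiding and infinite in a locally finite graph, it must accumulate at some $\eta \in \pG$.

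For the second step, Theorem \ref{t:direxists} ensures that almost surely $\sigma_\omega$ has a \emph{unique} accumulation point $\eta \in \pG$, and I argue by contradiction that $\eta \neq \xi$ is impossible. Pick $z_m \in \sigma_\omega$ with $d(o, z_m) \to \infty$, so $z_m \to \eta$; for each $m$ choose $k(m)$ large enough that $z_m$ lies on $[o, x_{n_{k(m)}}]_\omega$, and set $y_m := x_{n_{k(m)}}$, so $y_m \to \xi$. By almost sure uniqueness of $\omega$-geodesics, the subpath of $[o, y_m]_\omega$ from $z_m$ onward equals $[z_m, y_m]_\omega$. Since $\eta \neq \xi$ implies $\langle \eta, \xi\rangle_o < \infty$, hyperbolicity forces the word geodesic $[z_m, y_m]$ to pass within a fixed constant $C$ of $o$ for all $m$ large; then Proposition \ref{bt} supplies a random $R_\omega < \infty$ and vertices $p_m \in [z_m, y_m]_\omega$ with $d(o, p_m) \leq R_\omega$.

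To close the contradiction, I would estimate $d_\omega(o, p_m)$ in two ways. For $m$ large, $d(o, z_m) > R_\omega \geq d(o, p_m)$, so $p_m \neq z_m$ and $p_m$ lies strictly after $z_m$ on $[o, y_m]_\omega$; by uniqueness the initial segment from $o$ to $p_m$ equals $[o, p_m]_\omega$ and passes through $z_m$, giving
$$d_\omega(o, p_m) \;=\; d_\omega(o, z_m) + d_\omega(z_m, p_m) \;\geq\; d_\omega(o, z_m).$$
On the other hand, $p_m$ ranges over the finite ball $N_{R_\omega}(o)$, so $d_\omega(o, p_m) \leq M_\omega := \max_{d(o, p) \leq R_\omega} d_\omega(o, p) < \infty$ almost surely. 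Meanwhile, a union bound over the at most $|S|^n$ vertices at word distance $n$ from $o$, applied to Lemma \ref{l:positive} with $\epsilon$ small enough that the decay rate dominates $\log|S|$, together with Borel--Cantelli, yields that almost surely $d_\omega(o, z) \geq \epsilon d(o, z)$ for all $z$ with $d(o, z)$ sufficiently large; hence $d_\omega(o, z_m) \to \infty$, contradicting the uniform bound $M_\omega$. The main obstacle I anticipate is arranging all the relevant almost-sure statements (uniqueness of $\omega$-geodesics, the geometric stability of Proposition \ref{bt}, and the uniform linear lower bound on $d_\omega$) on a single full-measure event; once this is done, the interplay between hyperbolic geometry and probabilistic stability forces $\eta = \xi$.
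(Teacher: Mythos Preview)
Your proposal is correct and follows essentially the same strategy as the paper: extract a subsequential limit, assume it accumulates at some $\eta\neq\xi$, use hyperbolicity to force the word geodesic $[z_m,y_m]$ near $o$, apply Proposition~\ref{bt} to pin the $\omega$-geodesic near $o$, and derive a contradiction. The only notable difference is in the final step: where you invoke Lemma~\ref{l:positive} with a union bound and Borel--Cantelli to show $d_\omega(o,z_m)\to\infty$, the paper simply observes that $z_m\in[o,p_m]_\omega$ with $p_m$ ranging over the finite ball $N_{R_\omega}(o)$, so infinitely many distinct $z_m$ would lie on the finite union $\bigcup_{o'\in N_{R_\omega}(o)}[o,o']_\omega$ --- a direct pigeonhole that avoids the extra probabilistic input.
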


\begin{proof} The proof is similar to the previous one. Fixing $C'\geq 0$ we shall show that for a.e. $\omega$, any sub-sequential limit $[o,\xi)_{\om}$ of $[o,x_n]_\om$ (sub-sequential limits exist by a compactness argument) cannot accumulate at $\eta\in \pG$ with $\langle \xi, \eta \rangle _o \leq C'$. Observe again, that there exists $C\geq 0$ with the following property: if $[o,\xi)_{\om}$ had such an accumulation point, then (if necessary, passing to a subsequence), there will be points $y_n\to \infty$ such that $y_n\in [o,x_n]_{\om}$ and $[x_n,y_n]$ passes through $N_C(o)$ (choosing $C = C' + 2\delta$, as in the proof of Theorem \ref{t:direxists}). Note that the difference from the previous case is that $x_{n}$ does not necessarily lie on $[o,\xi)_{\om}$. Nevertheless, by considering $\omega\in \Omega'=\Omega'(C)$ given in Proposition \ref{bt} we see that $[y_n,x_n]_{\om}$ must pass through a point $z_n$ in a finite ($R_{\om}$) neighborhood of $o$. Since $y_{n}\in [o,x_n]_{\omega}$ it follows that $y_{n}\in [o,z_n]_{\omega}$ which implies that there exist infinitely many points on the finite union of $\om$-geodesics  $\cup_{o'\in N_{R_{\omega}}(o)}[o,o']_{\om}$, a contradiction. As before we finish the proof by taking $C'\to \infty$.
\end{proof}

\section{Coalescence}\label{sec-coalesce}
We shall prove in this section that for FPP on $\Ga$, semi-infinite geodesics in a fixed direction almost surely coalesce. This question is of  fundamental  importance in FPP on $\Z^d$, with progress being made under curvature assumptions \cite{New95, LN96} and with more recent progress using Busemann functions in \cite{H08, DH14, DH17, AH16}. Some of the finer questions have in recent years been addressed in the exactly solvable models of exponential directed last passage percolation on $\Z^2$ \cite{C11, FP05, Pim16, BSS17B, BHS18} but most of the fundamental questions remain open for FPP with general weights, even on $\Z^d$. For spaces with negative curvature, asymptotic coalescence is a folklore expectation due to the thin triangles condition of Gromov. For FPP on Cayley graphs of hyperbolic groups we shall establish this:  semi-infinite geodesics in the same boundary direction almost surely coalesce.

\subsection{Hyperplanes and their properties}\label{sec-hypgeoprel}
In this subsection we deduce some of the basic estimates from hyperbolic geometry that will be needed to prove coalescence.
Let $\xi \in \pG$ be a boundary point and let $[1,\xi)=\{1=x_0,x_1,\ldots,\}$
be a geodesic ray from $1$ to $\xi$.

\begin{defn}\label{def-hyperplane}
	Fix $\xi$, $[1,\xi)=\{1=x_0,x_1,\ldots,\}$ as above.
	For any $x_i \in [1,\xi)$ with $i>2 \delta$, define the {\bf elementary hyperplane} $H_i^e(=H_i^e(\xi))$ through $x_i$ as follows:
	$$H_i^e:= \{x \in \Ga \mid d(x,\oxi)= d(x,x_i) \}.$$
	Finally define the {\bf  hyperplane} $H_i(=H_i(\xi))$ through $x_i$ to be the 
	$2 \delta-$neighborhood of $H_i^e$: \\ $$H_i=N_{2\delta}(H_i^e).$$
	(See Figure \ref{hyperplane} below.)
\end{defn}

\begin{figure}[h]
	\centering

	\includegraphics[height=7cm]{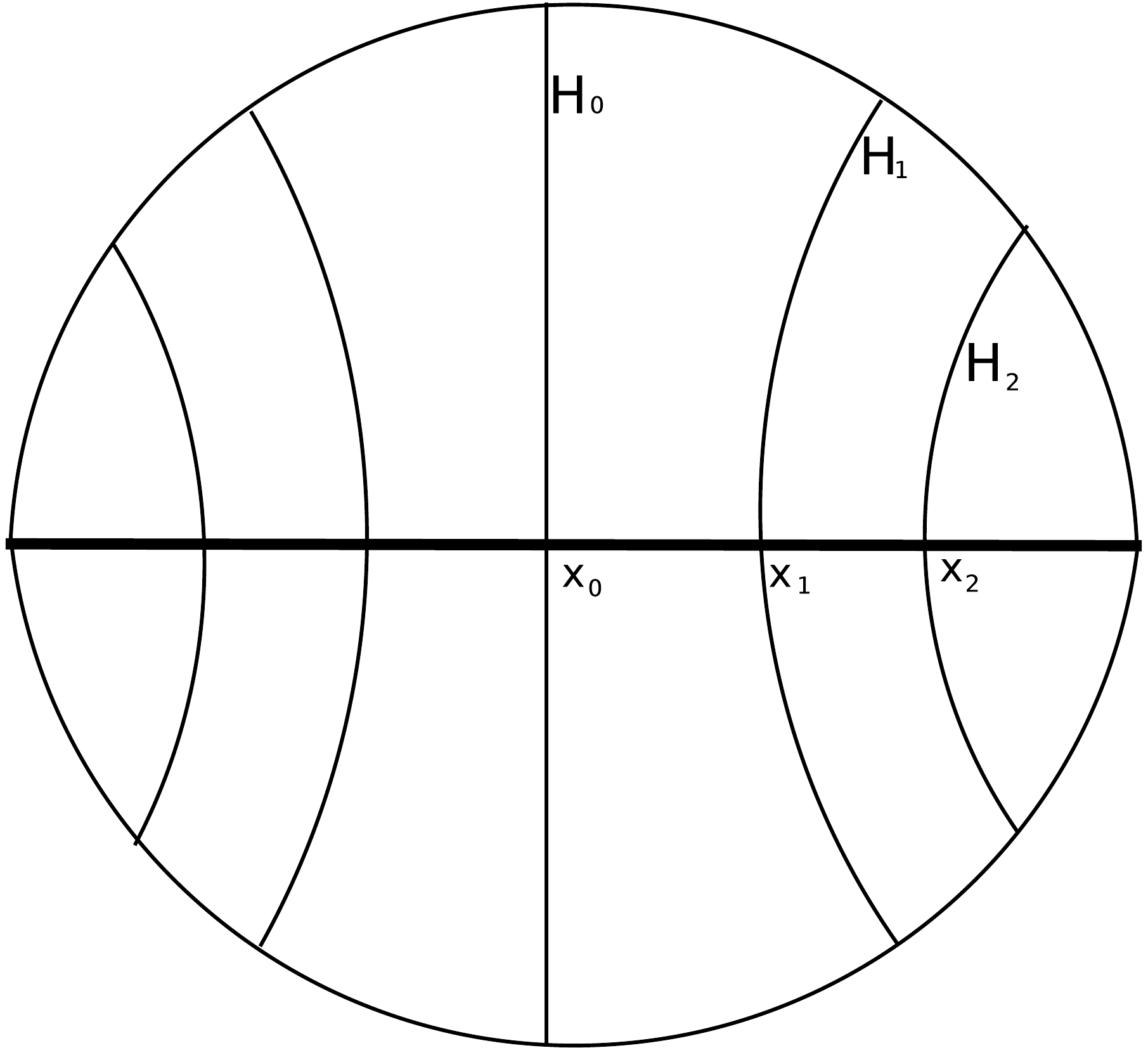}
	\bigskip
	
	\caption{Hyperplanes in the hyperbolic plane $\Hyp^2$ and their exponential divergence}
	\label{hyperplane}
\end{figure}

Equivalently, if $\Pi$ denotes nearest-point projection onto $\oxi$, $H_i^e = \Pi^{-1}(x_i)$ (see Lemma \ref{lem-npp-coarse}). 

\begin{rmk}{\rm
	The reason for thickening $H_i^e$ to $H_i$ in Definition \ref{def-hyperplane} above is to take care of the fact that nearest-point projections are only coarsely well-defined in the sense of Lemma \ref{lem-npp-coarse}.	}	
\end{rmk}

{	Define $H_i^{e+} = \cup_{j\geq i} H_j$, $H_i^{e-} = \Gamma \setminus H_i^{e+}$. Finally, define the {\bf half-space} $H_i^{+}$ (resp.\ $H_i^{-}$) to be  a
$4\delta-$neighborhood of $H_i^{e+}$ (resp.\ $H_i^{e-}$).} We record some of the properties of  hyperplanes. The following Lemma says that hyperplanes are quasiconvex and separate $\Ga$. Thus the resulting half-spaces may be regarded as nested.

\begin{lemma}\label{lem-hyperplane-props}
	For all $\xi$ and $i$, hyperplanes $H_i$ are $\delta-$quasiconvex.
	
	Hyperplanes $H_i$ separate $\Ga$, i.e.\ for half-spaces
	$H_i^+, H_i^-$ as above, 
	\begin{enumerate}
		\item[(a)] any two points of $H_i^+$ (resp.\ $H_i^-$) that can be connected by a path in $\Ga$ can be joined by a path lying in $H_i^+\cup H_i$ (resp.\ $H_i^-\cup H_i$),
		\item[(b)] any path from  $H_i^+$ to $H_i^-$ intersects $H_i$.
		\item[(c)] There exists $C_1$ depending on $\delta$ alone such that for all $k\geq C_1$,
		$H_{i+k} \subset H_i^+$, $H_{i+k}^+ \subset H_i^+$, $H_{i-k} \subset H_i^-$, $H_{i-k}^- \subset H_i^-$.
	\end{enumerate}
\end{lemma}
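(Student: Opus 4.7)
The plan is to let the nearest-point projection $\Pi:\Ga\to[1,\xi)$, which by Lemma \ref{lem-npp-coarse} is well-defined up to a bounded ambiguity, orchestrate all three items, and to verify them in the order stated.

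For (1), I would first show the elementary level set $H_i^e=\Pi^{-1}(x_i)$ is coarsely quasiconvex. Given $u,v\in H_i^e$ and any $z\in[u,v]$, thin-triangle estimates applied to the geodesic triangles $[u,x_i,z]$ and $[v,x_i,z]$ bound $d(\Pi(z),x_i)$ by a constant depending only on $\delta$, so $[u,v]\subset N_{O(\delta)}(H_i^e)$. Thickening $H_i^e$ by $2\delta$ to form $H_i$ then yields $\delta$-quasiconvexity of $H_i$ after a short bookkeeping of the constants.

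For separation, I would define $H_i^+:=\{y\in\Ga\setminus H_i:\Pi(y)=x_j\text{ for some }j>i\}$ and $H_i^-$ symmetrically. The crucial input is the standard fact (see e.g.\ \cite[Chapter III.H]{bh-book}) that nearest-point projection onto a geodesic in a $\delta$-hyperbolic graph is coarsely Lipschitz: there exists $K=K(\delta)$ such that $d(y,y')\leq 1$ implies $d(\Pi(y),\Pi(y'))\leq K$. For (b), on any path $y_0,\ldots,y_m$ with $y_0\in H_i^+$ and $y_m\in H_i^-$, the integer sequence $j_t$ defined by $\Pi(y_t)=x_{j_t}$ changes by at most $K$ per step and must transition from $>i$ to $<i$, so at some step $|j_t-i|\leq K$; provided the $2\delta$-thickening dominates $K$, this puts $y_t\in H_i$. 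Part (a) follows by combining (b) with the quasiconvexity of $H_i$ from step (1), which lets one reroute inside $H_i^+\cup H_i$ whenever the original path enters the forbidden region.

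For (c), any $z\in H_{i+k}$ has $\Pi(z)=x_j$ with $|j-(i+k)|\leq 2\delta$ by the definition of the thickening together with Lemma \ref{lem-npp-coarse}, so for $k\geq C_1:=K+4\delta$ we get $j>i+K$, forcing $z\notin H_i$ and $z\in H_i^+$. An analogous projection-valued argument, applied to any $w\in H_{i+k}^+$, gives $H_{i+k}^+\subset H_i^+$, and the negative side is symmetric. The main obstacle is keeping the constants honest: the thickening in Definition \ref{def-hyperplane} must simultaneously absorb the projection ambiguity of Lemma \ref{lem-npp-coarse} and the coarse-Lipschitz constant $K=K(\delta)$, so that the assignment $y\mapsto\Pi(y)$ partitions $\Ga\setminus H_i$ cleanly into $H_i^+\sqcup H_i^-$. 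If $2\delta$ is insufficient, one replaces it by a slightly larger $\delta$-dependent thickening; this rescaling does not affect any later argument in Section \ref{sec-coalesce}.
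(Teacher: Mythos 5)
Your overall architecture (nearest-point projection $\Pi$ organizing all three items, half-spaces defined via the projection index, an intermediate-value argument for (b)) parallels the paper's, though the paper defines $H_i^{\pm}$ as thickenings of $\cup_{j\geq i}H_j$ and its complement, which makes $H_{i+k}^{+}\subset H_i^{+}$ in (c) a trivial containment of unions. The genuine gap is that you repeatedly infer ``$y$ lies in (a thickening of) $H_i$'' from ``$\Pi(y)$ is within a bounded index of $x_i$'': in the quasiconvexity step you bound $d(\Pi(z),x_i)$ and then write ``so $[u,v]\subset N_{O(\delta)}(H_i^e)$,'' and in (b) you deduce ``$|j_t-i|\leq K$ \ldots puts $y_t\in H_i$.'' This implication is false. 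The hyperplanes $H_j^e$ and $H_i^e$ diverge exponentially away from $\oxi$; already in $\mathbb H^2$, a point of $H_{i+1}^e$ at distance $R$ from $\oxi$ is at distance roughly $R$ from $H_i^e$. Enlarging the $\delta$-dependent thickening, as you propose at the end, cannot repair this, because no thickening depending only on $\delta$ contains $H_{i+1}^e$ in a neighborhood of $H_i^e$.

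For quasiconvexity the paper's route avoids this entirely: every point of $[u,x_i]$ (and of $[v,x_i]$) still has $x_i$ as a nearest point on $\oxi$, so these whole geodesics lie in $H_i^e$, and $\delta$-thinness of the triangle $(u,v,x_i)$ puts $[u,v]$ directly in $N_\delta(H_i^e)$; no detour through $\Pi(z)$ is needed. For (b), the missing observation is that at the step $t$ where the projection index jumps from $j_t<i$ to $j_{t+1}\geq i$, the concatenation $[y_t,x_{j_t}]\cup[x_{j_t},x_{j_{t+1}}]\cup[x_{j_{t+1}},y_{t+1}]$ is a uniform quasigeodesic (the gate property of nearest-point projection, as in Lemma \ref{lem-expdiv}), whence $d(y_t,\oxi)+d(y_{t+1},\oxi)+(j_{t+1}-j_t)\leq d(y_t,y_{t+1})+O(\delta)=1+O(\delta)$. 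Thus both $y_t$ and $y_{t+1}$ lie within $O(\delta)$ of $\oxi$, hence within $O(\delta)$ of $x_i$ and of $H_i^e$. With that added, your intermediate-value argument, and the remainder of the proposal including (c), go through.
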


\begin{proof}
	Let $\Pi: \Gamma \to \oxi$ denote nearest-point projection.	Let $x \in H_i^e$.
	Then $\Pi(x)=x_i$.
	Every point  $y \in [x,x_i]$ satisfies $\Pi(y) = x_i$, and hence 
	$[x,x_i] \subset H_i^e$. For any $u, v \in H_i$, $[u,v] \subset N_\delta ([u,x_i]\cup [v,x_i]) \subset  N_\delta (H_i)$, i.e.\ 
	$H_i^e$ is $\delta-$quasiconvex. The first assertion follows (since a $2\delta-$neighborhood of a $\delta-$quasiconvex set is $\delta-$quasiconvex). 
	The same argument  establishes  quasiconvexity of 
$H_i^{e+}$ and $H_i^{e-}$.
	
	Now, let $\gamma$ be any path in $\Gamma$ joining $u, v \in H_i^+$. Let $\Pi'$ denote nearest-point projection onto $H_i$. By Lemma \ref{lem-npp-coarse},  nearest point projections onto hyperplanes are coarsely well defined. Projecting every maximal subpath
	of $\gamma$ contained in $H_i^-$ (if any) onto $H_i$ using  $\Pi'$ gives us a new path $\gamma'$ joining $u, v$ and contained in $H_i^+$, proving Item (a).
	
	Let $\sigma$ be a path in $\Gamma$ joining $u \in H_i^-$ to $v \in H_i^+$.
	Then $\Pi(u) = x_j$ for some $j<i$ and  $\Pi(v) = x_k$ for some $k>i$.
	Since nearest-point projections are coarsely Lipschitz \cite[Lemma 3.2]{mitra-trees}, it follows (essentially by continuity) that there exists
	$w \in \sigma$ such that  $\Pi(z) = x_i$, i.e.\ $w \in H_i$.  Item (b) follows.
	
	Item (c) now follows from the second assertion of Lemma \ref{lem-expdiv} below, which implies that $[x_i, x_{i+D}]$ is coarsely the shortest path between $H_i, H_{i+D}$ whenever $D$ is large enough.
\end{proof}

The following is a general fact 	\cite[Chapter 3.H.1]{bh-book}
(see also \cite[Lemma 3.1]{mitra-trees} and Figure \ref{hyperplane} above).

\begin{lemma}[Exponential divergence of hyperplanes]\label{lem-expdiv}
	For $D$ sufficiently large and any $i \in \N$, $H_i, H_{i+D}$ diverge exponentially, i.e.\ there exist $R_0 \geq 0$ and $C,\alpha > 0$ such that
	for $u \in H_i$ and $v \in H_{i+D}$ with $d(u, x_i) \geq R \geq R_0$ and 
	$d(v, x_{i+D}) \geq R \geq R_0$, any path joining $u, v$ lying outside the
	open $R-$neighborhood of $\oxi$ has length at least $CDe^{\alpha R}$.
	
	Further, $[u,x_i]\cup [x_i, x_{i+D}]\cup [x_{i+D},v]$ is a $(1,4\delta)-$quasigeodesic lying in a $4\delta-$neighborhood of $[u,v]$.
\end{lemma}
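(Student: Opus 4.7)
My plan is to reduce both assertions to two standard facts of hyperbolic geometry. The first is the nearest-point projection inequality: if $\Pi$ denotes nearest-point projection onto a geodesic $\beta$, then $d(u, w) \geq d(u, \Pi(u)) + d(\Pi(u), w) - 2\delta$ for every $u$ and every $w \in \beta$. The second is the classical exponential divergence estimate in $\delta$-hyperbolic spaces (\cite[Chapter III.H, Proposition 1.6]{bh-book}): for any continuous path $\gamma$ between endpoints $a, b$, every point of the geodesic $[a, b]$ is within distance $\delta \log_2 \ell(\gamma) + O(1)$ of the image of $\gamma$; equivalently, a point of $[a, b]$ at distance $\geq K$ from $\gamma$ forces $\ell(\gamma) \geq 2^{K/\delta - O(1)}$.

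I would start with the ``Further'' (quasigeodesic) claim, which is the easier part. Since $u \in H_i = N_{2\delta}(\Pi^{-1}(x_i))$, one has $\Pi(u) \in N_{2\delta}(x_i)$, and analogously $\Pi(v) \in N_{2\delta}(x_{i+D})$. Two applications of the projection inequality (first to $u$ with $w = x_{i+D}$, then exploiting that the geodesic $[u, v]$ passes coarsely through $x_{i+D}$ by a slim-triangles argument) will yield
\[
d(u, v) \geq d(u, x_i) + d(x_i, x_{i+D}) + d(x_{i+D}, v) - O(\delta),
\]
while the triangle inequality gives the reverse. This shows $[u, x_i] \cup [x_i, x_{i+D}] \cup [x_{i+D}, v]$ is a $(1, C\delta)$-quasigeodesic for some absolute $C$; after tuning constants one obtains the stated $(1, 4\delta)$-bound, and the Morse Lemma \ref{lem-morse} places it in a $4\delta$-neighborhood of $[u, v]$.

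For the main exponential divergence claim I would proceed by a slicing argument. Fix any path $\gamma$ from $u$ to $v$ outside the open $R$-neighborhood of $[1, \xi)$. Using the constant $C_1$ from Lemma \ref{lem-hyperplane-props}(c), subdivide $[x_i, x_{i+D}]$ by points $x_i = y_0, y_1, \ldots, y_k = x_{i+D}$ spaced $C_1$ apart, so that $k \geq D/(2C_1)$; the corresponding hyperplanes $H_{y_0}, \ldots, H_{y_k}$ are then pairwise nested and disjoint, so the separation property (Lemma \ref{lem-hyperplane-props}(b)) forces $\gamma$ to cross each $H_{y_j}$. Pick $w_j \in \gamma \cap H_{y_j}$ in the order they occur along $\gamma$, and let $\gamma_j$ denote the sub-arc of $\gamma$ between $w_j$ and $w_{j+1}$. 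Applying the quasigeodesic claim (with $D$ replaced by $C_1$) to the pair $(w_j, w_{j+1})$ and invoking Morse stability will place the midpoint $m_j$ of $[y_j, y_{j+1}]$, which lies on $[1, \xi)$, within some $\kappa = \kappa(\delta)$ of the geodesic $[w_j, w_{j+1}]$. Since $\gamma_j$ avoids the open $R$-neighborhood of $[1, \xi)$, this produces a point of $[w_j, w_{j+1}]$ at distance at least $R - \kappa$ from $\gamma_j$, and the exponential divergence estimate then yields $\ell(\gamma_j) \geq Ce^{\alpha R}$ for constants $C, \alpha > 0$ depending only on $\delta$. Summing gives $\ell(\gamma) \geq k C e^{\alpha R} \geq C' D e^{\alpha R}$, valid whenever $D \geq 2C_1$, which is exactly the ``$D$ sufficiently large'' hypothesis. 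The principal technical ingredient, and the hardest part to execute cleanly, is the classical exponential divergence estimate; the remainder is careful bookkeeping with nearest-point projections and Morse stability.
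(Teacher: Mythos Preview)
The paper does not supply a proof of this lemma: it is introduced as ``a general fact'' with references to \cite[Chapter III.H.1]{bh-book} and \cite[Lemma 3.1]{mitra-trees}. Your proposal therefore fills in what the paper leaves to the literature, and the plan is sound. The two ingredients you isolate---the nearest-point projection inequality for the quasigeodesic claim, and \cite[Proposition III.H.1.6]{bh-book} applied to each slice for the exponential bound---are exactly the right ones, and the slicing is essential to recover the linear factor $D$ (a single application to the whole path would only give $Ce^{\alpha R}$).

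Two points worth tightening. First, you invoke Lemma \ref{lem-hyperplane-props}(c) to obtain nested disjoint hyperplanes, but in the paper part (c) of that lemma is itself deduced from the second assertion of Lemma \ref{lem-expdiv}. This is not a genuine circularity, since you prove the quasigeodesic assertion first and independently of (c); but you should either say so explicitly, or bypass (c) and derive disjointness of well-spaced hyperplanes directly from the coarse well-definedness of $\Pi$ (Lemma \ref{lem-npp-coarse}). Second, the claim that the crossing points $w_j \in \gamma \cap H_{y_j}$ can be taken in order along $\gamma$ deserves a line: defining $w_j$ inductively as the first point on $\gamma$ after $w_{j-1}$ lying in $H_{y_j}$ works, because $w_{j-1} \in H_{y_{j-1}} \subset H_{y_j}^-$ while $v \in H_{y_j}^+$, so separation forces a crossing after $w_{j-1}$. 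With these two clarifications the argument is complete.
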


\begin{defn}\label{def-regionbt} For $C_1$ as in Lemma \ref{lem-hyperplane-props} and {a positive integer} $D\geq C_1$,  the {\bf region  $\B_{i,D}$ between $H_{i}$ and $H_{i+D}$} is defined to be 
\begin{equation}
\label{e:bidef}
\B_{i,D} = 
	H_i\cup	(H_i^+\cap H_{i+D}^-)\cup H_{i+D}.
\end{equation}	

\end{defn}

Let $a \in H_i$ and $b \in H_{i+D}$.
Let $\gamma$  be a path from $a$ to $b$ in the region $\B_{i,D}$ between hyperplanes. By Lemma \ref{lem-expdiv}, $[a,x_i]\cup[x_i, x_{i+D}]\cup [ x_{i+D},b]$ is a $(1,4\delta)-$quasigeodesic lying in a $4\delta-$neighborhood of the geodesic
$[a,b]$.  Then there exist $x_i', x_{i+D}'$ on $[a,b]$ such that $d(x_i, x_i') \leq 4 \delta$ and $d(x_{i+D}, x_{i+D}') \leq 4 \delta$.
Thus nearest point projections (of $x \in \Ga$) onto $[a,b]$ and $[a,x_i]\cup[x_i, x_{i+D}]\cup [ x_{i+D},b]$ lie in a uniformly bounded neighborhood of each other.

For each $z\in \gamma$, let $\Pi(z)$ denote a  nearest point projection onto  $[a,x_i]\cup[x_i, x_{i+D}]\cup [ x_{i+D},b]$. We assume below that $\Pi$ is  surjective (instead of being only coarsely so) to avoid cluttering the discussion.
Let $u$ be the last point on $\gamma$  projecting to $x_i$ and
let $v$ be the  first point projecting to $x_{i+D}$. {Let $R$ be a positive integer.}
Suppose that $\gamma$ lies outside the $R-$neighborhood of $[x_i, x_{i+D}]$. Equivalently, we might as well assume that $\gamma \cap N_R (\oxi) =\emptyset$.
(Strictly speaking, this is only coarsely true, but since $R$ will be assumed to be much larger than $\delta$, this will not affect our estimates below.)  
 Also, without loss of generality, assume that $\gamma \setminus N_R(H_i \cup H_{i+D})$ is connected (else, choose a connected component of
 $\gamma \setminus N_R(H_i \cup H_{i+D})$ joining $ N_R(H_i)$ to  $N_R( H_{i+D})$).
Let $u_1\in \gamma$  be the last 
point on $\gamma$ before $u$ such that $d(u_1,[a,b])=R$. Let 
$a_1 = \Pi(u_1)$. Then $a_1 \in [a,x_i]$. Similarly, let  $v_1\in \gamma$  be the first
point on $\gamma$ after $v$ such that $d(v_1,[a,b])=R$.
Let 
$b_1 = \Pi(v_1)$. Then $b_1 \in [x_{i+D},b]$. See Figure \ref{expdiv} below.

\begin{figure}[h]
	\centering

	\includegraphics[height=7cm]{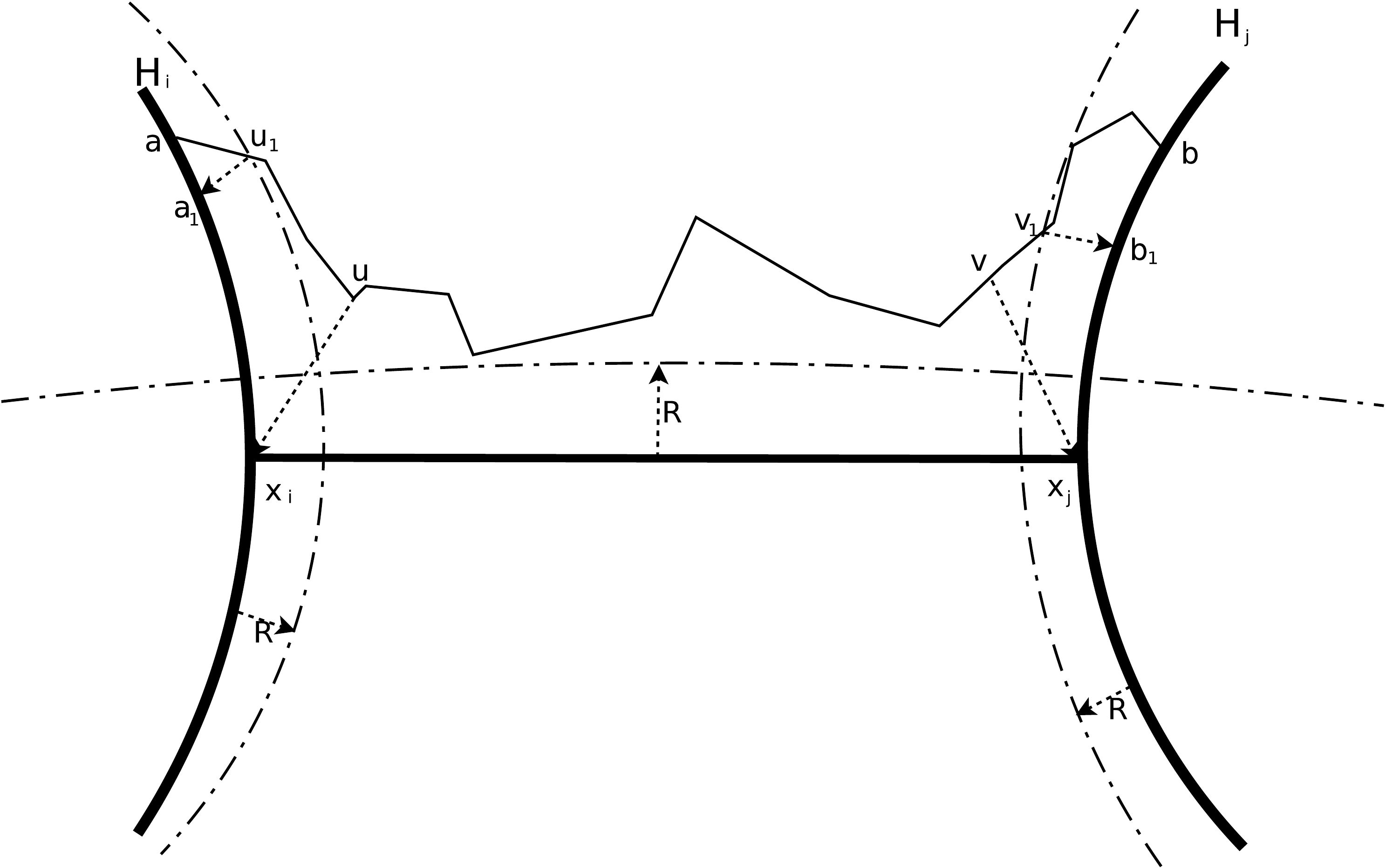}
	\bigskip
	
	\caption{Exponential divergence of paths away from geodesics.}
	\label{expdiv}
\end{figure}

Let $\gamma'$ be the subpath of $\gamma$ between $u_1$ and $v_1$.
By exponential divergence of geodesics \cite[p. 412-413]{bh-book} we have the following:

\begin{lemma}\label{lem-expdivlength}
	There exist $R_0 \geq 0$ and $\alpha > 0$ depending only on $\delta$ such that for $R \geq R_0$, $D \geq R_0$, $\gamma \cap N_R (\oxi) =\emptyset$
	and $\gamma', u_1, v_1, a_1, b_1$ as above,
	$$l(\gamma') \geq \bigg(d(a_1,x_i)+d(x_i, x_{i+D}) + d(x_{i+D},b_1)\bigg)e^{\alpha R}.$$
\end{lemma}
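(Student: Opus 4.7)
The plan is to apply the exponential-divergence-of-geodesics principle in hyperbolic spaces (as recorded e.g.\ in \cite[Chapter III.H.1]{bh-book}) not directly to $[a,b]$ but to the close-by $(1,4\delta)$-quasigeodesic $Q:=[a,x_i]\cup[x_i,x_{i+D}]\cup[x_{i+D},b]$ furnished by the second assertion of Lemma \ref{lem-expdiv}. Writing $\Pi$ for nearest-point projection onto $Q$, the setup gives $\Pi(u_1)=a_1$, $\Pi(u)=x_i$, $\Pi(v)=x_{i+D}$, $\Pi(v_1)=b_1$ (up to the uniformly bounded coarse ambiguity from Lemma \ref{lem-npp-coarse}). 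Thus $\gamma'$ naturally splits, via $u$ and $v$, into three consecutive subpaths $\gamma_1'$, $\gamma_2'$, $\gamma_3'$ whose $\Pi$-images cover the sub-arcs $[a_1,x_i]$, $[x_i,x_{i+D}]$, $[x_{i+D},b_1]$ of $Q$ respectively.

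First I would verify that each $\gamma_j'$ avoids $N_{R-C}$ of the corresponding sub-arc of $Q$, where $C=C(\delta)$. For the middle piece $\gamma_2'$ this is immediate from the hypothesis $\gamma\cap N_R(\oxi)=\emptyset$, since $[x_i,x_{i+D}]\subset\oxi$. For $\gamma_1'$, the definition of $u_1$ as the \emph{last} point before $u$ with $d(\,\cdot\,,[a,b])=R$ ensures $\gamma_1'$ lies outside $N_R([a,b])$; combined with $[a,x_i]\subset N_{4\delta}([a,b])$ (from the Morse Lemma applied to $Q$), this yields $\gamma_1'\cap N_{R-4\delta}([a,x_i])=\emptyset$. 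The argument for $\gamma_3'$ is symmetric.

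Next I would apply the exponential-divergence principle separately to each pair $(\gamma_j',\text{sub-arc})$: a path lying outside an $(R-C)$-neighborhood of a geodesic whose endpoints project to points at distance $L$ on the geodesic has length at least $L\,e^{\alpha(R-C)}$ for some $\alpha=\alpha(\delta)>0$ and all $R\geq R_0(\delta)$. Summing the three resulting inequalities and then absorbing the $e^{-\alpha C}$ into a slight decrease of $\alpha$ (by enlarging $R_0$ if necessary) gives
\[
\ell(\gamma') \geq \bigl(d(a_1,x_i)+d(x_i,x_{i+D})+d(x_{i+D},b_1)\bigr)e^{\alpha R},
\]
which is the desired bound.

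The main difficulty is purely a matter of bookkeeping the $O(\delta)$ additive errors so that (i) the three subpaths $\gamma_j'$ each provably stay outside a neighborhood of radius $R-O(\delta)$ around their respective sub-arcs of $Q$, and (ii) the cut points $u,v$ really do partition the $\Pi$-image of $\gamma'$ into the three advertised sub-arcs, despite $\Pi$ being only coarsely well-defined. Both rely only on the fact that nearest-point projection onto a quasiconvex set in a $\delta$-hyperbolic space is coarsely Lipschitz with constants depending only on $\delta$, so no new ideas are required beyond the Morse Lemma \ref{lem-morse} and Lemma \ref{lem-hyperplane-props} already in hand.
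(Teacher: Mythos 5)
The paper offers no proof of Lemma \ref{lem-expdivlength}: it is introduced with ``By exponential divergence of geodesics \cite[p.~412-413]{bh-book} we have the following,'' i.e.\ it is treated as a known consequence of the standard divergence estimate. Your argument supplies the proof the authors implicitly delegate to the literature, and it is the natural one: cut $\gamma'$ at $u$ and $v$ into three arcs, match each arc to the sub-segment $[a_1,x_i]$, $[x_i,x_{i+D}]$, $[x_{i+D},b_1]$ of the $(1,4\delta)$-quasigeodesic $Q$, verify each arc avoids an $(R-O(\delta))$-neighborhood of its segment (the middle one from $\gamma\cap N_R(\oxi)=\emptyset$, the outer two from the definitions of $u_1,v_1$ and $Q\subset N_{4\delta}([a,b])$), and then apply the exponential-divergence estimate to each piece and add. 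That gives exactly the stated bound after shrinking $\alpha$ to absorb $e^{-O(\alpha\delta)}$. This is a correct, self-contained route where the paper just points at \cite{bh-book}.

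Two bookkeeping points worth making explicit when you flesh it out. First, the ``refined'' divergence estimate you invoke (length $\geq L\,e^{\alpha R}$ with the \emph{linear} factor $L$ in the projected distance) is not the single-ball statement of \cite[III.H.1.6]{bh-book}; it is obtained from it by placing $\sim L$ disjoint balls of radius $R-O(\delta)$ along the segment and observing that your arc must cross a ball-boundary-to-ball-boundary region $\sim L$ times, each contributing $\gtrsim e^{\alpha R}$; that summing argument should appear. Second, the step ``the definition of $u_1$ ensures $\gamma_1'$ lies outside $N_R([a,b])$'' requires $d(u,[a,b])\geq R$, whereas the hypothesis only gives $d(u,\oxi)\geq R$, hence $d(u,[a,b])\geq R-O(\delta)$ via $Q\subset N_{O(\delta)}([a,b])$; so the correct conclusion is $\gamma_1'\cap N_{R-O(\delta)}([a,b])=\emptyset$ (and likewise for $\gamma_3'$), with the $O(\delta)$ deficit again absorbed into $\alpha$ and $R_0$. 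You flag this class of error at the end, but the $u$-endpoint issue is the specific one that actually needs addressing, since as written the claim ``$\gamma_1'$ lies outside $N_R([a,b])$'' is not quite justified.
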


\subsection{Coalescence of $\om-$geodesic rays}\label{sec-coalesce-omega}
We now make the necessary definitions for $\om-$geodesics rays.

Recall the following from Definitions \ref{def-fppray} and \ref{def-dir}.
	For $\xi \in \partial G$ and $\omega\in \Omega$, a semi-infinite path $\gamma=\{x_{i}\}_{i\in \N}$ is called a semi-infinite $\omega$-geodesic (or an $\omega$-geodesic ray) in direction $\xi$ if every finite segment of $\gamma$ is an $\omega$-geodesic between the respective endpoints,  $x_{n}\to \xi$, and further for  any sequence of points $y_n$ on $\gamma$ with $d(y_n,1) \to \infty$, we have $y_n \to \xi$. We restate Theorem \ref{dirnexists} as follows for ready reference within this section:

\begin{lemma}
	\label{l:siexist}
	For each $o\in \Gamma$ and $\xi\in \partial G$,  for a.e. $\omega\in \Omega$ there exists a $\omega$-geodesic ray started at $o$ in direction $\xi$.
\end{lemma}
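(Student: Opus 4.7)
The plan is essentially to invoke Theorem \ref{dirnexists}, which already provides exactly this statement. Since the lemma is a restatement, I would simply cite Theorem \ref{dirnexists} and, for the reader's convenience within this section, recall the structure of the argument in a sentence or two.

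More concretely, I would choose any sequence $x_n \in \Gamma$ with $x_n \to \xi$, and consider the sequence of finite $\omega$-geodesics $[o,x_n]_\omega$, which is almost surely well defined by our standing assumption that $\rho$ has no atoms. Since $\Gamma$ is locally finite and each initial segment of these geodesics lies in a bounded region, a standard diagonal/compactness argument extracts a subsequential limit $[o,\xi)_\omega$, which is necessarily an $\omega$-geodesic ray from $o$ (every finite subpath is a pointwise limit of $\omega$-geodesics, hence an $\omega$-geodesic itself). So the existence of \emph{some} $\omega$-geodesic ray starting at $o$ is immediate.

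The non-trivial content, exactly as in Theorem \ref{dirnexists}, is verifying that this limiting ray has direction $\xi$ in the sense of Definition \ref{def-dir}, i.e.\ that $\xi$ is its unique boundary accumulation point. This is the step that uses Proposition \ref{bt}: if the limiting ray accumulated at some $\eta \neq \xi$, then for $C \geq 2\delta + \langle\xi,\eta\rangle_o$ there would be points $y_n\to\infty$ on $[o,x_n]_\omega$ (for $n$ large) with $[y_n,x_n]$ passing through $N_C(o)$, which by Proposition \ref{bt} forces $[y_n,x_n]_\omega$ to re-enter the finite ball $N_{R_\omega}(o)$ almost surely. Since $y_n$ itself lies on $[o,x_n]_\omega$, this contradicts the fact that a subpath of an $\omega$-geodesic is itself an $\omega$-geodesic (so cannot backtrack). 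Letting $C\to\infty$ rules out all possible secondary accumulation points.

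The main obstacle, which was already handled in the proof of Theorem \ref{dirnexists}, is the control provided by Proposition \ref{bt}: namely, that $\omega$-geodesics are forced through bounded neighborhoods of $o$ whenever their word-geodesic counterparts are. Once this is granted, the rest is a short compactness plus contradiction argument, and no further probabilistic input is needed here. I would therefore present the proof of Lemma \ref{l:siexist} in one line as an immediate consequence of Theorem \ref{dirnexists}.
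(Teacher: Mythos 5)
Your proposal matches the paper exactly: Lemma \ref{l:siexist} carries no separate proof, being introduced by the sentence ``We restate Theorem \ref{dirnexists} as follows for ready reference within this section,'' so presenting it as an immediate consequence of Theorem \ref{dirnexists} is precisely what is intended. One small point on your recap of that theorem's argument: the contradiction there is a finiteness count (infinitely many distinct $y_n$ forced onto the finite union $\cup_{o'\in N_{R_\omega}(o)}[o,o']_\omega$), not a ``cannot backtrack'' principle --- $\omega$-geodesics are perfectly free to leave and re-enter word-metric balls.
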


The next lemma asserts that for each fixed direction and starting point, the geodesic ray is almost surely unique. We shall prove it  and Theorem \ref{coalesce}) below together.

\begin{lemma}
	\label{l:siunique}
	For each $\xi\in \partial G$ and each $o\in \Ga$, for a.e. $\omega\in \Omega$, there exists a unique $\omega$-geodesic ray (denoted $[0,\xi)_{\omega}$) starting from $o$ in direction $\xi$.
\end{lemma}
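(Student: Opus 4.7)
The plan is to derive Lemma \ref{l:siunique} as a short consequence of the coalescence result (Theorem \ref{coalesce}) together with the almost sure uniqueness of finite $\omega$-geodesics between prescribed endpoints, which was noted just after Definition \ref{def-fpp}. I first record the preliminary observation that for $\P$-a.e.\ $\omega$, any two $\omega$-geodesic rays $\gamma_1,\gamma_2$ starting at $o$ intersect in a connected initial segment based at $o$. Indeed, if $p\in \gamma_1\cap \gamma_2$, then the subsegments of $\gamma_1$ and $\gamma_2$ from $o$ to $p$ are both $\omega$-geodesics between the same pair of vertices, so by the unique $\omega$-geodesic property (which is a.s.\ simultaneous over all of the countably many pairs in $V\times V$) these two subsegments must coincide.

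With this reduction in hand, suppose for contradiction that on an event of positive probability there exist two distinct $\omega$-geodesic rays $\gamma_1\neq \gamma_2$ from $o$ in direction $\xi$. By the observation above, $\gamma_1$ and $\gamma_2$ are eventually edge-disjoint. Fix any auxiliary point $o'\neq o$ and, using Lemma \ref{l:siexist}, pick an $\omega$-geodesic ray $\gamma'$ from $o'$ in direction $\xi$. Applying Theorem \ref{coalesce} to the pairs $(\gamma_1,\gamma')$ and $(\gamma_2,\gamma')$ shows that $\gamma'$ shares all but finitely many edges with each of $\gamma_1$ and $\gamma_2$. Combining the two statements, $\gamma_1$ and $\gamma_2$ themselves share all but finitely many edges, contradicting the eventual disjointness just established.

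The main obstacle, and the reason uniqueness is inseparable from coalescence, is that Theorem \ref{coalesce} must apply to \emph{every} pair consisting of one ray from $o$ and one ray from $o'$, not merely to a distinguished canonical pair. I would handle this by arranging the coalescence proof so that its hyperplane-based output is the following quantitative statement: for every sufficiently large $i$, on an event of probability at least $1-e^{-ci}$, every $\omega$-geodesic ray from $o$ in direction $\xi$ crosses the hyperplane $H_i(\xi)$ of Definition \ref{def-hyperplane} through a single common vertex, and likewise for rays from $o'$. The exponential divergence of hyperplanes (Lemma \ref{lem-expdivlength}), combined with the sub-exponential concentration inequalities of Section \ref{sec-concn} applied to passage times across the finite region $\B_{i,D}$, should yield this: two rays that split on opposite sides of a hyperplane and fail to recombine inside $\B_{i,D}$ must incur an exponentially (in the neighborhood scale) large excess length that no $\omega$-minimizer can afford. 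A Borel--Cantelli argument along $\{H_i\}$ then simultaneously produces coalescence of rays from distinct base-points and uniqueness of rays from a single base-point.
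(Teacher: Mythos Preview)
Your first two paragraphs try to derive uniqueness from Theorem~\ref{coalesce}, but as you yourself observe, this is circular: the statement of Theorem~\ref{coalesce} already uses the notation $[o_j,\xi)_\omega$, which presupposes Lemma~\ref{l:siunique}. The paper handles this by proving both results in parallel from the common input Proposition~\ref{p:positive}, so you are right that the two are inseparable.

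The real gap is in your third paragraph, where you propose to arrange the coalescence machinery so that with probability at least $1-e^{-ci}$ all $\omega$-geodesic rays from $o$ in direction $\xi$ cross $H_i(\xi)$ at a single common vertex. The paper does \emph{not} prove anything like this, and it is not clear how one would: the mechanism that forces all such rays to meet (Lemmas~\ref{l:cptsupport}, \ref{l:zerosupport}, \ref{l:infinitysupport}) works by requiring the environment on a fixed region to take a rather specific form, which happens only with some uniform positive probability $\beta>0$, not with probability tending to~$1$. The exponential divergence of hyperplanes and the concentration inequalities you cite are used inside the proofs of those lemmas to control various error events, but they do not upgrade the conclusion to a $1-e^{-ci}$ statement.

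What the paper actually does (Proposition~\ref{p:positive}) is produce, for each $i$, an event $B_i$ depending only on the edges in $N_{D/10}(\B_{iD,D})$, with $\P(B_i)\geq \beta$, on which every pair of $\omega$-geodesic rays from $o$ (or $o_2$) in direction $\xi$ meet on $[x_{iD},x_{(i+1)D}]$. Because the regions $N_{D/10}(\B_{2iD,D})$ are pairwise disjoint, the events $\{B_{2i}\}$ are independent, and the second Borel--Cantelli lemma gives $\P(\limsup B_{2i})=1$. On this almost sure event any two rays from $o$ in direction $\xi$ intersect infinitely often, contradicting your correct opening observation. Note that this argument is entirely self-contained at $o$: the auxiliary point $o'$ and the detour through Theorem~\ref{coalesce} are unnecessary once Proposition~\ref{p:positive} is stated for \emph{all} pairs of rays from the given basepoint.
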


We now state the main theorem of this section. \footnote{{Due to an oversight, the statement of this theorem was formulated imprecisely in the previous versions of this article including the published version. In particular, it was not explicitly made clear in the statement that, as mentioned in the introduction (and as will be clear from the proof), coalescence will hold for each fixed $\xi\in \pG$ and the almost sure event will depend on $\xi$. In fact, the result cannot be true in general for all $\xi$ simultaneously, as shown recently in \cite{BM24}.}}

\begin{theorem}\label{coalesce} For any fixed direction $\xi\in \pG$, for $\P$-a.e.\ $\om\in \Omega$, $o_1, o_2 \in G$, $\om-$geodesic rays $[o_1,\xi)_\om$ and $[o_2,\xi)_\om$ coalesce.
\end{theorem}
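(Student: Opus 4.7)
The plan is to produce, for a.e.\ $\omega$, infinitely many \emph{pinch-points}---vertices of the word geodesic $[1,\xi)$ through which both $\omega$-geodesic rays $\gamma_j := [o_j,\xi)_\omega$ (for $j=1,2$) are forced to pass. Given two consecutive pinch-points $v,v'$, the restriction of each $\gamma_j$ to $[v,v']$ is a finite $\omega$-geodesic between the same endpoints, so by a.s.\ uniqueness of finite $\omega$-geodesics (continuity of $\rho$) these restrictions coincide; iterating, $\gamma_1$ and $\gamma_2$ coincide past the first pinch-point, giving coalescence. Lemma \ref{l:siunique} drops out as the special case $o_1=o_2$.

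The geometric engine is the hyperplane structure from Section \ref{sec-hypgeoprel}. Since $\gamma_j\to \xi$, each ray crosses $H_i$ for every sufficiently large $i$. The first step, analogous in spirit to Proposition \ref{bt}, is to show via Lemma \ref{lem-expdivlength} and the sub-Gaussian concentration of Theorem \ref{t:subexp} that for every $\eta>0$ there exists $R_0=R_0(\eta)$ such that on an event $\mathcal{G}_\eta$ of probability at least $1-\eta$, the entry point $y_i^{(j)}$ of $\gamma_j$ into $H_i$ lies in the word-ball $B(x_i,R_0)$ simultaneously for all sufficiently large $i$ and both $j$. The point is that a word-distance-$R$ excursion away from $[1,\xi)$ inside a slab incurs an $e^{\alpha R}$-multiplicative length penalty (Lemma \ref{lem-expdivlength}), which is incompatible with the linear upper bound on $\E T(1,x_n)$. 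Fix such an $R_0$ and choose $D$ large compared to the hyperbolicity constant and $R_0$.

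Pick a well-spaced sequence of slabs $\B_{i_k,D}$ with $i_{k+1}-i_k\geq 3D$, so that events measurable with respect to weights in distinct slabs are independent. Define the good event $A_k$ to be the event that, inside $\B_{i_k,D}$, every path joining $B(x_{i_k},R_0)\cap H_{i_k}$ to $B(x_{i_k+D},R_0)\cap H_{i_k+D}$ has $\omega$-length strictly larger than the central segment $[x_{i_k},x_{i_k+D}]\subset [1,\xi)$. Such a configuration is produced by placing small $\omega$-weights on the edges of $[x_{i_k},x_{i_k+D}]$ and large weights on a finite collection (of size depending only on $D$ and $R_0$) of alternative edges in a bounded neighborhood; paths that leave this neighborhood are dispatched by Lemma \ref{lem-expdivlength}. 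Hence $\P(A_k)\geq p(D,R_0)>0$ uniformly in $k$, and independence of the $A_k$ together with the second Borel--Cantelli lemma yields infinitely many $A_k$ a.s. On $\mathcal{G}_\eta$, for each such $k$ the restriction of each $\gamma_j$ to $\B_{i_k,D}$ is an $\omega$-geodesic with endpoints in the prescribed balls, hence must coincide with $[x_{i_k},x_{i_k+D}]$, producing the desired pinch-points. Letting $\eta\to 0$ completes the proof.

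The principal obstacle is the localization step for entry points into hyperplanes: convergence of $\gamma_j$ to $\xi$ in the visual topology does not by itself bound $d(y_i^{(j)},x_i)$, so one must combine the exponential divergence estimate with sub-Gaussian tail bounds on $\omega$-lengths of competing paths and a careful union bound over possible excursion endpoints on successive hyperplanes. Once this localization is in hand, the remaining construction of the good events and the passage from pinch-points to coalescence via uniqueness of finite $\omega$-geodesics is a clean application of independence and Borel--Cantelli.
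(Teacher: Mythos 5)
Your strategy of producing pinch-points in well-separated slabs via independence and Borel--Cantelli and then invoking uniqueness of finite $\omega$-geodesics is exactly the architecture of the paper's proof (Proposition \ref{p:positive} $\Rightarrow$ Lemma \ref{l:siunique} and Theorem \ref{coalesce}). However, your first step---the uniform localization of the entry points $y_i^{(j)}$ into the hyperplanes $H_i$---is where the argument breaks down, and this is precisely the step the paper engineers around.

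You claim that for every $\eta>0$ there is a \emph{single} radius $R_0(\eta)$ such that with probability $\geq 1-\eta$ one has $d(y_i^{(j)},x_i)\leq R_0$ simultaneously for all sufficiently large $i$. Taking $\eta\to 0$, this amounts to the assertion that $\limsup_i d(y_i^{(j)},x_i)<\infty$ almost surely. For $\rho$ with support bounded away from $0$ and $\infty$ this is true by the Morse Lemma \ref{lem-morse}, but when $0$ lies in the support or the support is unbounded it is false. In a fixed slab, the probability that the crossing strays distance $\geq R$ from $[1,\xi)$ is bounded below by a positive constant that does not tend to $0$ as $i\to\infty$ (Lemma \ref{l:sidetoside} only gives $\P(\cD^c)\leq 1/4$, uniformly in $i$, for a \emph{fixed} $d$; it does not give summable decay). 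Since the events in well-separated slabs are essentially independent, the second Borel--Cantelli lemma forces excursions of any prescribed size to occur in infinitely many slabs, so $\limsup_i d(y_i^{(j)},x_i)=\infty$ a.s.\ and no $R_0(\eta)$ exists. The paper sidesteps this by \emph{not} asking for a priori control of the crossing points: the good events $B_i$ are constructed so that \emph{every} path crossing the slab, with arbitrary entry point $u\in H_{iD}$ and exit point $v\in H_{(i+1)D}$, is forced onto the central segment. The union bound over the unboundedly many possible $(u,v)$ succeeds because the word-distance of $(u,v)$ from $(x_{iD},x_{(i+1)D})$ is paid for by the exponential-length penalty of Lemma \ref{lem-expdivlength}, which then beats the entropy (this is the content of Lemma \ref{l:sidetoside}).

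There is a secondary, more local error in the conclusion you draw from the good event: you assert that on $A_k$, the restriction of $\gamma_j$ to the slab ``must coincide with $[x_{i_k},x_{i_k+D}]$.'' It cannot, since its endpoints are the entry and exit points $u,v$, which need not equal $x_{i_k}$ and $x_{i_k+D}$; a path between $u$ and $v$ cannot be equal to a path between different endpoints. What the construction actually yields (and what suffices) is that the restricted $\omega$-geodesic traverses at least $\tfrac{3D}{5}$ of the $D$ edges of $[x_{i_k},x_{i_k+D}]$; since both rays do this and $\tfrac{3D}{5}+\tfrac{3D}{5}>D$, pigeonhole forces a common vertex, which is the pinch-point you need, and continuity of $\rho$ then gives coincidence from that vertex onward within the slab and, by iteration, coalescence.
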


As mentioned before Lemma \ref{l:siunique} and Theorem \ref{coalesce} will be proved together. First we record the following simple geometric fact which will be useful throughout this section. Let $[1,\xi)=\{1=x_0,x_1,\ldots,\}$ denote a geodesic ray that corresponds to the direction $\xi$. For $i\geq 0$ and $D\in \N$, let $H_{iD}$ denote the hyperplane perpendicular to $[1,\xi)$ at $x_{iD}$. Then for $D$ sufficiently large, any $o\in \Ga$, and for a.e. $\omega \in \omegap$, every $\omega$-geodesic ray from $o$ to $\xi$ must cross $H_{iD}$ for all sufficiently large $i$
(see Lemma \ref{lem-hyperplane-props}). 



Before starting with the formal arguments let us present briefly the idea of the proof. Fix a direction $\xi\in \partial G$. Let us define the (almost surely well-defined) random object $\Upsilon_1$ (resp.\ $\Upsilon_2$) by setting $\Upsilon_1(\omega)=[o_1,\xi)_{\omega}$ (resp.\ $\Upsilon_2(\omega)=[o_2,\xi)_{\omega}$).  Without loss of generality we shall assume that $o_1=1$. Let us fix, as above, a geodesic ray $[1,\xi)=\{1=x_0,x_1,\ldots,\}$. For $D\in \N$, recall that $H_{iD}$ denotes the hyperplane perpendicular to $[1,\xi)$ at $x_{iD}$. Without loss of generality we shall also assume that $o_2$ is such that the projection of $o_2$ to $[1,\xi)$ is $1$; for if the projection to $\oxi$ is given by $\Pi(o_2) = x_i$, then we simply translate $x_i$ to $1$ and proceed with the argument below. It will be clear that the same proof can  be easily modified to include the more general case. Consider now the FPP geodesics $\Upsilon_1$ and $\Upsilon_2$. Notice that by Lemma \ref{lem-hyperplane-props}, if $D$ is sufficiently large, then for each $i\in \Z_{\geq 0}$, and for $j=1,2$, $\Upsilon_{j}$ must have a last exit from $H_{iD}$. We shall denote it as $U_j=U_j(i,D)$, where the dependence on $i$ and $D$ will be suppressed when clear from context. Further, there is a first entry point into $H_{(i+1)D}$ following $U_{j}$ which we shall denote as $V_{j}$. Let $\Upsilon_{1,i}$ (resp.\ $\Upsilon_{2,i}$) denote the restriction of $\Upsilon_1$ (resp.\ $\Upsilon_2$) between $U_1$ and $V_1$ (resp.\ $U_2$ and $V_2$). Notice that $\Upsilon_{1,i}$ (resp.\ $\Upsilon_{2,i}$) is the FPP geodesic between $U_1$ and $V_1$ (resp.\ $U_2$ and $V_2$) that is also further restricted to lie in the region $\B_{iD,D}$ ({as defined in \eqref{e:bidef}}). 

The basic idea behind the proof of Theorem \ref{coalesce} is to show that $\Upsilon_{1,i}$ and $\Upsilon_{2,i}$ intersect with probability bounded below independently of $i$. Theorem \ref{coalesce} (and also Lemma \ref{l:siunique}) will follow from the next proposition.



\begin{prop}
	\label{p:positive}
	For $D=D(\rho,\Ga)$ sufficiently large, there exists $\beta=\beta(D,\Ga, \rho)>0$ independent of $i$ such that for each $i\in \N$ there exists an event $B_{i}$ depending only on the configuration restricted to $N_{D/10}(\B_{iD,D})$ with the following properties:
	\begin{enumerate}
	\item[(i)] $\P(B_{i}) \geq \beta.$
	\item[(ii)] For all sufficiently large $i$, on $B_{i}$, every pair of $\omega$-geodesics rays started from either $1$ or $o_2$ in the direction $\xi$ intersect on $[x_{iD},x_{(i+1)D}]$.
	\end{enumerate}
\end{prop}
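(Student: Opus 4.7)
The plan is to build an event $B_i$, measurable with respect to the weights in the slab $\B_{iD,D}$, which creates a cheap corridor along the word geodesic $L_i := [x_{iD}, x_{(i+1)D}]$ and an expensive barrier immediately around it. Together with a high-probability control on long paths, this will force any FPP-geodesic ray in direction $\xi$ to traverse the middle sub-segment $S_i := [x_{iD+D/4}, x_{(i+1)D - D/4}]$ when passing through $\B_{iD,D}$, yielding the required coalescence on $L_i$.

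Fix $D$ large, to be tuned in terms of $\rho$ and $\delta$. Let $r = r(\delta)$ be a constant such that the tube $T_i := N_r(L_i) \cap \B_{iD,D}$ has cardinality depending only on $D$ and $\delta$. I take $B_i := A_1 \cap A_2 \cap A_3$, where $A_1 := \{\omega(e) \le \eta \text{ for every edge } e \in L_i\}$, $A_2 := \{\omega(e) \ge M \text{ for every edge } e \in T_i \setminus L_i\}$, and $A_3 := \{\ell_\omega(\gamma) \ge \epsilon_0\, \ell(\gamma) \text{ for every self-avoiding path } \gamma \subset N_{D/10}(\B_{iD,D}) \text{ with } \ell(\gamma) \ge D\}$, with $\eta$ small, $M$ large, and $\epsilon_0$ small, all to be tuned. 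Since $A_1$ and $A_2$ depend on disjoint finite edge sets of cardinality depending only on $D, \delta$, each has positive probability depending only on $D, \delta, \rho$. For $A_3$, a union bound over the at most $|S|^\ell$ self-avoiding paths of word length $\ell$ combined with sub-Gaussian concentration (as in the proofs of Lemma \ref{l:geodlen} and Lemma \ref{l:positive}) gives $\P(A_3^c) \le e^{-cD}$. Because $A_1$ is decreasing while $A_2 \cap A_3$ is increasing, and they have disjoint edge support, independence (or FKG, Theorem \ref{t:FKG}) yields $\P(B_i) \ge \beta(D, \delta, \rho) > 0$, uniformly in $i$.

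For the geometric content, fix $\omega \in B_i$, and let $\Upsilon$ be any FPP-geodesic ray from $o_1$ or $o_2$ in direction $\xi$. Since $\Upsilon$ has direction $\xi$, it must cross both $H_{iD}$ and $H_{(i+1)D}$ (Lemma \ref{lem-hyperplane-props}); let $U \in H_{iD}$ be its last exit from $H_{iD}$ and $V \in H_{(i+1)D}$ the first entry thereafter. By subpath optimality, $\Upsilon_{UV}$ is the global FPP-geodesic from $U$ to $V$. I claim $\Upsilon_{UV}$ contains every vertex of $S_i$. First, $\Upsilon_{UV} \subset T_i$: otherwise, Lemma \ref{lem-expdivlength} forces $\ell(\Upsilon_{UV}) \gtrsim D\, e^{\alpha r}$, so $A_3$ gives $\ell_\omega(\Upsilon_{UV}) \ge \epsilon_0 D\, e^{\alpha r}$, contradicting the upper bound $\ell_\omega(\Upsilon_{UV}) \le \ell_\omega(P^*) \le \eta D + C(d(U, x_{iD}) + d(V, x_{(i+1)D}))$ obtained from the candidate $P^* := [U, x_{iD}]_{\text{word}} \cdot L_i \cdot [x_{(i+1)D}, V]_{\text{word}}$ using $A_1$ on $L_i$ and Theorem \ref{t:subexp} on the end-segments (together with a-priori control $d(U, x_{iD}), d(V, x_{(i+1)D}) = o(e^{\alpha r})$ coming from the direction-$\xi$ assumption on $\Upsilon$). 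Second, with $\Upsilon_{UV} \subset T_i$, any vertex $x_k \in S_i$ skipped by $\Upsilon_{UV}$ forces a local detour through $T_i \setminus L_i$, replacing one $L_i$-edge of weight $\le \eta$ by at least two edges of weight $\ge M$; since $\eta \ll M$, this strictly increases $\omega$-length, contradicting optimality. Hence $S_i \subset \Upsilon_{UV}$, so the geodesics from $o_1$ and $o_2$ intersect on $S_i \subset [x_{iD}, x_{(i+1)D}]$.

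The principal obstacle is the confinement step: the comparison against $P^*$ requires taming the $\omega$-weights on the end-segments $[U, x_{iD}]_{\text{word}}$ and $[V, x_{(i+1)D}]_{\text{word}}$, which $B_i$ does not directly prescribe. This is where $A_3$ does double duty — alongside Theorem \ref{t:subexp} — and where the direction-$\xi$ hypothesis on $\Upsilon$ is used to control $d(U, x_{iD})$ and $d(V, x_{(i+1)D})$. The parameters $\eta, M, \epsilon_0, D$ must be tuned compatibly (roughly $M \to \infty$, $\eta \to 0$, $\epsilon_0 \to 0$, $D \to \infty$ at matched rates) so that both straying far from $L_i$ and skipping vertices of $S_i$ are strictly costlier than using the cheap corridor.
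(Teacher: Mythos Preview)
Your argument has a genuine gap in the confinement step, and it is precisely the ``principal obstacle'' you flag at the end. The hyperplanes $H_{iD}$ and $H_{(i+1)D}$ are unbounded, so the last-exit point $U\in H_{iD}$ (and similarly $V$) can sit arbitrarily far from $x_{iD}$. Your claimed ``a-priori control $d(U,x_{iD})=o(e^{\alpha r})$ coming from the direction-$\xi$ assumption'' is not valid: having direction $\xi$ only says that all accumulation points of $\Upsilon$ equal $\xi$; it gives no uniform bound on how far $\Upsilon$ strays from $[1,\xi)$ at any fixed height, and certainly not a bound measurable with respect to the edges in $N_{D/10}(\B_{iD,D})$. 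Consequently your upper bound $\ell_\omega(P^*)\le \eta D + C\bigl(d(U,x_{iD})+d(V,x_{(i+1)D})\bigr)$ fails on two counts: (a) the distances $d(U,x_{iD})$, $d(V,x_{(i+1)D})$ are uncontrolled, and (b) the invocation of Theorem~\ref{t:subexp} for the constant $C$ is a tail bound, not a pointwise one --- to make it part of $B_i$ you would need to control weights along word geodesics from \emph{every} $u\in H_{iD}$ to $x_{iD}$, which is infinitely many edges and cannot hold with positive probability. Your construction essentially recovers only the case where the Morse Lemma already confines $\Upsilon$ to a bounded tube (support of $\rho$ in $[a,b]$ with $0<a<b<\infty$); in the remaining cases you have no mechanism to get $\Upsilon_{UV}$ near $x_{iD}$ at all.

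The paper resolves this by splitting on the support of $\rho$ and, in the two noncompact-support cases, introducing events $\cD,\cD'$ (Lemma~\ref{l:sidetoside}) which assert that \emph{every} restricted FPP geodesic from $H_{iD}$ to a nearby hyperplane enters a fixed bounded tube $N_d([x_{iD},x_j])$. The key point is that $\cD$ has high probability by combining exponential divergence (Lemma~\ref{lem-expdivlength}) with the geodesic-length bound (Lemma~\ref{l:geodlen}): a geodesic that avoided the tube would have word length exponential in $d$, which is ruled out with overwhelming probability summable over all possible start/end heights along the hyperplanes. This is the missing idea in your argument. Two smaller issues: your $A_1$, $A_2$ may each have probability zero when the support of $\rho$ excludes small (resp.\ large) values, which is why the paper treats the three support cases separately; and your positivity argument for $\P(B_i)$ is shaky since $A_3$ depends on edges in $L_i$ and is increasing while $A_1$ is decreasing, so FKG goes the wrong way (independence also fails for the same reason).
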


Postponing the proof of Proposition \ref{p:positive}, let us first quickly show how this lemma implies Lemma \ref{l:siunique} and Theorem \ref{coalesce}.

\begin{proof}[Proof of Lemma \ref{l:siunique}]
Observe that if two distinct $\omega$-geodesic rays intersect infinitely often, it implies that there exist multiple $\om$-geodesics between some pairs of vertices. By continuity of the passage time distribution $\rho$, $[u,v]_{\omega}$ is unique for each $u,v\in \Ga$ and for a.e $\omega$. Now, on this probability one event, if there exist distinct $\omega$-geodesic rays started from $o$ in direction $\xi$ then they cannot intersect infinitely often. Observe on the other hand the events $\{B_{2i}:i\geq 1\}$ as in Proposition \ref{p:positive} depend on disjoint sets of edges, hence are independent, which in turn using the Borel-Cantelli Lemma implies $\limsup_{i\to \infty} B_{i}:={\cap_{n=1}^{\infty} \cup_{i\ge n} B_{i}}$ is an almost sure event. On this almost sure event, distinct $\om$-geodesic rays from $o$ in direction $\xi$, if exist, must intersect infinitely often, leading to a contradiction. This completes the proof of the lemma.
\end{proof}

\begin{proof}[Proof of Theorem \ref{coalesce}]
As in the above proof, notice that $\limsup_{i\to \infty} B_{i}$ is an almost sure event. This in particular, implies, recalling the notation from the discussion preceding Proposition \ref{p:positive}, that  almost surely, for infinitely many $i$, $\Upsilon_{1,i}\cap\Upsilon_{2,i}\neq \emptyset$ and in particular, $\Upsilon_1$ and $\Upsilon_2$ almost surely intersect. By Lemma \ref{l:siunique} (actually this is merely a consequence of continuity of passage time distribution) it follows that if $\Upsilon_1$ and $\Upsilon_2$ intersect, they must coalesce (almost surely), completing the proof of the theorem.
\end{proof}


It remains to prove Proposition \ref{p:positive}. The proof is divided into three cases depending on the support of the passage time distribution $\rho$:
\begin{enumerate}
	\item[Case  (i):] $\rho$ has support $\subseteq [a,b]$ with $0<a<b<\infty$, 
	\item[Case\, (ii):] the passage time can take arbitrarily small values, but not arbitrarily large values,
	\item[Case\, (iii):] the passage time can also take arbitrarily large values.
\end{enumerate}

\noindent {\bf Case (i): Support of $\rho$ is bounded away from $0$ and $\infty$:}
This is the easiest case. Assume without loss of generality that the support is $[a,b]$ with $0<a<b<\infty$. By continuity of the passage time distribution this implies some mass around $a$ as well as $b$ (this is what we really need). Now, by the Morse Lemma \ref{lem-morse}, there exists $R$, depending on $a$, $b$  such that  $\Upsilon_1$ and $\Upsilon_2$ are contained in $N_{R}([1,\xi))$ and $N_{R}([o_2,\xi))$ respectively. Further, since $d(o_2,1)=d(o_2, \oxi)$, it follows that $[o_2,1]\cup \oxi$ is a quasigeodesic with constant depending only on $\delta$ (see \cite[Lemma 3.1]{mitra-trees} for instance) and hence we can assume, by increasing $R$ if necessary, that $\Upsilon_2$ is contained in  $N_{R}([o_2,1]\cup \oxi)$.
Hence there exists $c$ depending only on $a, {b}, \delta$ such that for each $i > cd(o_2,1) + c$, and any $\om$-geodesic ray from either $1$ or $o_2$ 
in the direction $\xi$ will have a sub-segment contained in $N_{R}([x_{iD},x_{(i+1)D}])$ with starting and ending points contained in $N_{R}(\{x_{iD}\})$ and $N_{R}(\{x_{(i+1)D}\})$ respectively. Proposition \ref{p:positive} in this case will follow from Lemma \ref{l:cptsupport} below.


\begin{lemma}
	\label{l:cptsupport}
	Suppose $\rho$ has support $[a,b]$ bounded away from $0$ and $\infty$, and let $R$ be as above. Then there exists $D=D(R,\Ga)$ sufficiently large, $\beta=\beta(D,R,\rho, \Ga)>0$ and an event $B_i$ for each $i$ depending only on the edges in $N_{R}([x_{iD},x_{(i+1)D}])$ with $\P(B_{i})\geq \beta$, such that for all $\omega \in B_{i}$ we have the following: for each $u\in N_{R}(\{x_{iD}\})$ and $v\in N_{R}(\{x_{(i+1)D}\})$, and a path $\gamma$ between $u$ and $v$ contained in $N_{R}([x_{iD}, x_{(i+1)D}])$, there exists a path $\gamma'$ between $u$ and $v$ that intersects $[x_{iD},x_{(i+1)D}]$ in at least $\frac{3D}{5}$ edges and $\ell_{\omega}(\gamma')\leq \ell_{\omega}(\gamma)$.  
\end{lemma}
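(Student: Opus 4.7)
The plan is to tailor the weights inside the slab $N_R([x_{iD},x_{(i+1)D}])$ so that it is cheaper to travel along the central geodesic than to stay off it. Fix $\epsilon\in(0,(b-a)/10)$ (to be shrunk at the end), and let $B_i$ be the event that every edge of $[x_{iD},x_{(i+1)D}]$ has weight in $[a,a+\epsilon]$, while every other edge in $N_R([x_{iD},x_{(i+1)D}])$ has weight in $[b-\epsilon,b]$. Since $\rho$ is continuous with support $[a,b]$, both $\rho([a,a+\epsilon])$ and $\rho([b-\epsilon,b])$ are strictly positive, and the number of edges in $N_R([x_{iD},x_{(i+1)D}])$ is bounded by a constant depending only on $R,D$ and $\Gamma$; hence $\P(B_i)\geq \beta>0$ with $\beta$ independent of $i$ by group-invariance of the edge-weight law.

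Given a path $\gamma$ from $u\in N_R(x_{iD})$ to $v\in N_R(x_{(i+1)D})$ inside $N_R([x_{iD},x_{(i+1)D}])$, let $k$ denote the number of edges of $\gamma$ lying on $[x_{iD},x_{(i+1)D}]$. If $k\geq 3D/5$, take $\gamma'=\gamma$. Otherwise I define $\gamma'$ as the concatenation of a word geodesic from $u$ to $x_{iD}$ (of word length $\leq R$), the full central segment $[x_{iD},x_{(i+1)D}]$ (contributing all $D$ of its edges), and a word geodesic from $x_{(i+1)D}$ to $v$ (length $\leq R$). The two side pieces are contained in $N_R(x_{iD})\cup N_R(x_{(i+1)D})\subset N_R([x_{iD},x_{(i+1)D}])$, so $B_i$ controls every one of their weights, giving
\[\ell_\omega(\gamma')\leq 2Rb + D(a+\epsilon),\]
and by construction $\gamma'$ contains all $D\geq 3D/5$ edges of the central geodesic.

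It remains to verify $\ell_\omega(\gamma')\leq \ell_\omega(\gamma)$ when $k<3D/5$. Since $u$ and $v$ are at word distance at least $D-2R$, the path $\gamma$ has at least $D-2R-k>2D/5-2R$ edges off the central geodesic, each of weight at least $b-\epsilon$ on $B_i$, so $\ell_\omega(\gamma)\geq ka + (D-2R-k)(b-\epsilon)$. Combining this with the upper bound on $\ell_\omega(\gamma')$, a short rearrangement shows that $\ell_\omega(\gamma')\leq\ell_\omega(\gamma)$ reduces, up to corrections of order $\epsilon$, to the deterministic condition $D(b-a)>10Rb$. Choosing $D=D(R,a,b,\Gamma)$ large enough and then $\epsilon=\epsilon(D,R,a,b)$ small closes the argument.

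The only genuine obstacle is this last inequality: the gain of order $D(b-a)$ accumulated by walking on the central geodesic must absorb the $O(Rb)$ detour cost incurred at each end in reaching the word geodesic from $u$ and $v$. Since $b>a$ by the non-triviality and continuity of $\rho$, this is always achievable by taking $D$ sufficiently large. Everything else, namely the finiteness of the edge set defining $B_i$, the positivity of $\beta$, and the containment of the detours inside the slab, is immediate.
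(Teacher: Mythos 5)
Your proof is correct and takes essentially the same approach as the paper's. The one cosmetic difference: the paper also forces the edges on the short side-geodesics $[u,x_{iD}]$ and $[x_{(i+1)D},v]$ to have low weight (its set $S$), which gives the slightly cleaner upper bound $(a+\epsilon)(D+2R)$ on $\ell_\omega(\gamma')$; you instead force all off-axis edges high, absorbing the $O(Rb)$ detour cost into the $D(b-a)$ gain. Both choices close for $D$ large, and your variant requires one fewer case distinction in the bookkeeping, so it is a perfectly legitimate (arguably marginally simpler) rendering of the same argument.
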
 

\begin{proof}
	Let $\epsilon>0$ be such that $a+\epsilon< a+10\epsilon <b-\epsilon$ and $\rho([a,a+\epsilon]), \rho([b-\epsilon,b])>0$. The geodesic subsegment of $[1,\xi)$ between $x_{iD}$ and $x_{(i+1)D}$ will be denoted by $[x_{iD},x_{(i+1)D}]$ as usual.
	Let $S=S_{i}$ be the union of 
	\begin{enumerate}
	\item the set of all edges on  $[x_{iD},x_{(i+1)D}]$, 
	\item the set of all edges that lie on some geodesic joining $x_{iD}$ to some point $u\in N_{R}(\{x_{iD}\})$, 
	\item and the set of all edges that lie on some geodesic joining $x_{(i+1)D}$ to some point $v\in N_{R}(\{x_{(i+1)D}\})$.
	\end{enumerate}
	 Let $T:=N_{R}([x_{iD},x_{(i+1)D}])\setminus S$. Let $B_{i}$ denote the event
	$$B_{i}:=\{\omega\in \Omega: \omega(e)\leq a+\epsilon~\forall e\in S,~\omega(e)\geq b-\epsilon~\forall e\in T\}.$$
	Clearly, there exists $\beta=\beta(D,R,\rho, \Ga)>0$ such that
	$\P(B_{i})\geq \beta$. Fix $\omega\in B_{i}$.  Consider $u\in N_{R}(\{x_{iD}\})$ and $v\in N_{R}(\{x_{(i+1)D}\})$, and a path $\gamma$ between $u$ and $v$ contained in $N_{R}([x_{iD}, x_{(i+1)D}])$ that has an intersection with $[x_{iD},x_{(i+1)D}]$ of cardinality less than $\frac{3D}{5}$. For $D$ sufficiently large ($D\gg R$) it follows that  $\gamma$ must intersect $T$ in at least $\frac{3D}{10}$ many edges.  This is because the number of edges in $N_{R}(\{x_{iD}\})$ or $N_{R}(\{x_{(i+1)D}\})$ can be made much smaller than $D$ by choosing $D\gg R$ (e.g. $D\sim e^{aR}$ for $a\gg 1$). Again by choosing $D$ sufficiently large (depending on $R$ and some fixed $\epsilon/2>\epsilon'>0$) we can ensure that   $\ell(\gamma)\geq (1-\epsilon')D$ (since the number of edges of $\gamma$ in  $N_{R}(\{x_{iD}\}$ or $N_{R}(\{x_{(i+1)D}\})$ can be made much smaller than $D$ by choosing $D\gg R$ as above). By definition of $B_{i}$ it follows that, $\ell_{\omega}(\gamma)\geq \frac{3D}{10}(b-\epsilon)+a(\frac{7D}{10}-\epsilon'D)$. Consider now the path $\gamma'$ obtained by concatenating $[u,x_{iD}]$, $[x_{iD},x_{(i+1)D}]$ and $[x_{(i+1)D},v]$. Clearly by definition of $B_{i}$, $\ell_{\omega}(\gamma')\leq  (a+\epsilon)(D+2R)$. Again, by choosing $D$ sufficiently large depending on $R,\epsilon$ and $\epsilon'$ we can ensure $\ell_{\omega}(\gamma')\leq \ell_{\omega}(\gamma)$, as desired.
\end{proof}

We shall prove statements similar to Lemma \ref{l:cptsupport} for the cases (ii) and (iii) as well. However, in those cases we are not afforded the luxury of knowing that the paths $\Upsilon_1$ and $\Upsilon_2$ are contained in some $R$ neighborhood of a fixed word geodesic. Hence we shall consider all FPP geodesics from some point in $H_{iD}$ to some point in $H_{(i+1)D}$ and show that on a positive probability event all those geodesics must spend sufficient time on the word geodesic segment $[x_{iD},x_{(i+1)D}]$. Although the statement of this lemma is identical for both of the cases, the proofs are slightly different and hence we shall state the results separately. \\

\noindent {\bf Case (ii): Support of $\rho$ is bounded away from  $\infty$ but not from $0$:}
We start with the case where $0$ is contained in the support of $\rho$. We have the following analogue of Lemma \ref{l:cptsupport} in this case. 

\begin{lemma}
	\label{l:zerosupport}
	Suppose that the support of $\rho$ contains 0 and is bounded away from  $\infty$.  Then there exists $D=D(\Ga)$ sufficiently large, $\beta=\beta(D,\rho, \Ga)>0$ and an event $B_i$ for each $i$ depending only on the edges in $N_{D/10}(\B_{iD,D})$ with $\P(B_{i})\geq \beta$, such that for all $\omega \in B_{i}$ we have the following: for each $u\in H_{iD}$ and $v\in H_{(i+1)D}$, and a path $\gamma$ from $u$ to $v$ contained in the region $\B_{iD,D}$, there exists a path $\gamma'$ between $u$ and $v$ that intersects $[x_{iD},x_{(i+1)D}]$ in at least $\frac{3D}{5}$ edges and $\ell_{\omega}(\gamma')\leq \ell_{\omega}(\gamma)$.  
\end{lemma}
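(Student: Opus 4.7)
The plan is to adapt the proof of Lemma \ref{l:cptsupport}, exploiting that $\rho$ is continuous with both $0$ and $b$ in its support to force low weights on $[x_{iD}, x_{(i+1)D}]$ and high weights on a tubular neighborhood of it. Concretely, for small $\epsilon, \epsilon' > 0$ to be chosen, I would define $B_i$ as the event that (a) every edge on $[x_{iD}, x_{(i+1)D}]$ has weight at most $\epsilon/D$, and (b) every edge in $N_{D/10}([x_{iD}, x_{(i+1)D}]) \setminus [x_{iD}, x_{(i+1)D}]$ has weight at least $b - \epsilon'$. Since $\Ga$ is locally finite and $[x_{iD}, x_{(i+1)D}]$ contains $D$ edges, the set $N_{D/10}([x_{iD}, x_{(i+1)D}])$ is finite, so both conditions constrain only finitely many edges. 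By continuity of $\rho$ together with $0, b$ being in its support, both $\rho([0, \epsilon/D])$ and $\rho([b - \epsilon', b])$ are positive, giving $\P(B_i) \geq \beta(D, \rho, \Ga) > 0$ independently of $i$ by group invariance.

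To verify the conclusion I would fix $\omega \in B_i$, $u \in H_{iD}$, $v \in H_{(i+1)D}$, and a path $\gamma$ from $u$ to $v$ in $\B_{iD,D}$ with $|\gamma \cap [x_{iD}, x_{(i+1)D}]| < 3D/5$, and take $\gamma' = [u, x_{iD}] \cup [x_{iD}, x_{(i+1)D}] \cup [x_{(i+1)D}, v]$ using word geodesics for the outer pieces. When $\gamma$ is contained in $N_{D/10}([x_{iD}, x_{(i+1)D}])$, the situation mirrors Lemma \ref{l:cptsupport}: at least $2D/5 - O(\delta)$ edges of $\gamma$ lie in the shell $N_{D/10}([x_{iD}, x_{(i+1)D}]) \setminus [x_{iD}, x_{(i+1)D}]$ and each contributes weight $\geq b - \epsilon'$, so $\omega(\gamma) \geq 2bD/5 - O(\epsilon' D + 1)$; meanwhile $u, v$ lie within $D/10 + O(\delta)$ of $x_{iD}, x_{(i+1)D}$ respectively, so the deterministic bound $\omega(e) \leq b$ yields $\omega(\gamma') \leq bD/5 + O(\delta b) + \epsilon$. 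For $D$ sufficiently large and $\epsilon, \epsilon'$ sufficiently small this gives $\omega(\gamma') \leq \omega(\gamma)$ as required.

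The main obstacle is the complementary case when $\gamma$ exits $N_{D/10}([x_{iD}, x_{(i+1)D}])$, where edges outside the shell carry uncontrolled weight, possibly arbitrarily close to $0$. The plan is to reinforce $B_i$ by a concentration-type event: every self-avoiding path of word length $L \geq L_0$ contained in $\B_{iD,D} \setminus N_{D/10}([x_{iD}, x_{(i+1)D}])$ has $\omega$-length at least $c_1 L$. By continuity of $\rho$ one can choose $c_1$ small enough that $-\log \rho([0, c_1]) > \log|S|$, after which Theorem \ref{t:subexp} combined with a union bound over the $\leq |S|^L$ paths of length $L$ from any fixed starting vertex is summable in $L$; combined with Lemma \ref{lem-expdivlength}, which forces any excursion outside the shell to have word length $\Omega(D e^{\alpha D/10})$, the concentration bound makes the $\omega$-weight of such excursions dominate the weight of $\gamma'$. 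The three constraints in $B_i$ are supported on essentially disjoint edge sets (core, shell, exterior), so independence (or the FKG inequality, Theorem \ref{t:FKG}) preserves $\P(B_i) \geq \beta > 0$.
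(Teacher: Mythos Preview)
Your approach has a genuine gap in the ``exterior'' case. The concentration event you propose --- that every self-avoiding path of length $L\geq L_0$ contained in $\B_{iD,D}\setminus N_{D/10}([x_{iD},x_{(i+1)D}])$ has $\omega$-length at least $c_1 L$ --- has probability zero. The region $\B_{iD,D}\setminus N_{D/10}([x_{iD},x_{(i+1)D}])$ is infinite (the hyperplanes $H_{iD}$, $H_{(i+1)D}$ are unbounded), so one can find infinitely many pairwise edge-disjoint self-avoiding paths of length $L_0$ far out in this region; by the second Borel--Cantelli lemma some of them almost surely have $\omega$-length below $c_1 L_0$. Your union bound over ``$\leq |S|^L$ paths of length $L$ from any fixed starting vertex'' forgets the union over the infinitely many starting vertices.

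Even if you anchor the concentration event to paths meeting the (finite) boundary of $N_{D/10}([x_{iD},x_{(i+1)D}])$, a second obstruction remains. Your competitor $\gamma' = [u,x_{iD}]\cup[x_{iD},x_{(i+1)D}]\cup[x_{(i+1)D},v]$ has $\omega$-weight of order $b\,(R_u+R_v)$, where $R_u=d(u,x_{iD})$ and $R_v=d(v,x_{(i+1)D})$ can be arbitrarily large since $u\in H_{iD}$ and $v\in H_{(i+1)D}$ are unconstrained. On the exterior portion your concentration bound only yields $\ell_\omega \geq c_1\cdot(\text{word length})$ with $c_1\ll b$ necessarily (you need $-\log\rho([0,c_1])>\log|S|$), so for large $R_u+R_v$ you cannot conclude $\ell_\omega(\gamma)\geq\ell_\omega(\gamma')$. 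The paper circumvents this by \emph{not} replacing the whole path: it introduces events $\cD,\cD'$ (proved in Lemma~\ref{l:sidetoside}) guaranteeing that $\omega$-geodesic crossings between nearby auxiliary hyperplanes must enter $N_d([1,\xi))$ for a \emph{fixed} $d$ independent of $D$. This produces points $w_1,w_2$ on $\gamma$ within distance $d$ of the central segment; the competitor $\gamma'$ is obtained from $\gamma$ by rerouting only the portion between $w_1$ and $w_2$ through $[x_{iD},x_{(i+1)D}]$, while the long tails from $u$ to $w_1$ and from $w_2$ to $v$ are left untouched and cancel in the comparison $\ell_\omega(\gamma')\leq\ell_\omega(\gamma)$.
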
 

\begin{proof}
	Let us set $j=(i+0.01)D, j'=(i+0.99)D, k=(i+0.02)D, k'=(i+0.98)D$ and assume without loss of generality that these are all integers (taking $D$ a large enough multiple of 100). Let $S=S_{i,d}$ denote the set of all edges contained in $N_{d}([x_{iD},x_j]) \cup N_{d}([x_{j'},x_{(i+1)D}]) \cup [x_{iD},x_{(i+1)D}]$. 
	We call the edges in $\B_{k, 0.96D}$ (i.e., the edges in the region between $H_{k}$ and $H_{k'}$) except those on $[1,\xi)$ \textbf{unspecified edges}. Let us now define the following events. Let $\cC=\cC(\epsilon):=\{X(e)\leq \epsilon \, \, \forall e\in S\}$. Let $\cD=\cD(d)$ (resp.\ $\cD'$) denote the event that all $\om-$geodesics between pairs of points, one each in $H_{iD}$ and $H_{j}$ (resp.\ one each in $H_{j'}$ and $H_{(i+1)D}$) that are also contained in $\B_{iD,D}$ enter $N_{d}([x_{iD},x_{j}])$ (resp.\ $N_{d}([x_{j'},x_{(i+1)D}]$). Finally, let $\EE=\EE(\epsilon)$ denote the event that the total weight of unspecified edges along any path from a point in $N_{d}([x_{iD},x_{j}])$ to a point in $N_{d}([x_{j'},x_{(i+1)D}])$ that intersects $[x_{iD},x_{(i+1)D}]$ in less than $\frac{3D}{5}$ edges is at least $2 \epsilon D$. 
	
	We claim that, for appropriate choices of $d,\epsilon$ and for $D$ sufficiently large, $B_{i}:=\cC\cap \cD \cap \cD' \cap \EE$ satisfies the condition in the statement of the the lemma. Clearly, for $D$ sufficiently large compared to $d$, $B_{i}$ depends only on the edges in $N_{D/10}(\B_{iD,D})$, as required. First let us fix $\omega \in B_{i}$. Consider a path $\gamma$ as in the statement of the lemma. Clearly, without loss of generality we can replace, in $\gamma$, the last crossing of $H_{iD}$ to $H_{j}$ and the last crossing of $H_{j'}$ to $H_{(i+1)D}$ by the $\omega$-geodesics between the respective endpoints. With a minor abuse of notation we shall also call this path $\gamma$. If $\gamma$ intersects $[x_{iD},x_{(i+1)D}]$ in at least $\frac{3D}{5}$ edges then there is nothing to prove, so let us suppose the contrary. By definition of $\cD$ and $\cD'$ there exist points $w_1\in \gamma \cap N_{d}([x_{iD},x_{j}])$  and $w_2\in \gamma \cap N_{d}([x_{j'},x_{(i+1)D}]$. Let $w'_1$ and $w'_2$ denote the projections of $w_1$ and $w_2$ onto $[1,\xi)$ respectively. Let $\gamma'$ be the path obtained from $\gamma$ by replacing the segment between $w_1$ and $w_2$ (call it $\gamma_{*}$) by the path obtained by concatenating $[w_1,w'_1]$, $[w'_1,w'_2]$ (i.e., the restriction of $[1,\xi)$ between $w'_1$ and $w'_2$ ) and $[w'_2,w_2]$. Clearly it suffices to show that $\ell_{\omega}([w_1,w'_1])+\ell_{\omega}([w'_1,w'_2])+\ell_{\omega}([w'_2,w_2]) \leq \ell_{\omega}(\gamma_{*})$. By the definition of $\cC$ the left hand side above is at most $2\epsilon D$ (for $D$ sufficiently large compared to $d$) whereas by definition of $\EE$, the right hand side is bounded below by $2\epsilon D$. Hence the event $B_i$ defined above satisfies the conclusion of the lemma. 
	
	It remains to show a lower bound for $\P(B_{i})$. Observe first that for $D$ sufficiently large the events $\cC\cap \cD\cap \cD'$ and $\EE$ depend on disjoint sets of edges (Lemma \ref{lem-hyperplane-props}) and hence $\P(B_i)= \P(\EE)\P(\cC\cap \cD \cap \cD')$. Observe also that both $\cC$ and $\cD \cap \cD'$ are decreasing events in the weight configuration on $S$ and hence by the FKG inequality (Theorem \ref{t:FKG}), we have $\P(\cC\cap \cD \cap \cD')\geq \P(\cC)\P(\cD\cap \cD')$. By our assumption on $\rho$, for each $\epsilon>0$, there exists $\beta_1= \beta_1(D,d,\epsilon, \rho)>0$ such that  $\P(\cC)\geq \beta_1$. Hence to complete the proof, it suffices to show that $\P(\EE)$ and $\P(\cD\cap \cD')$ are also both bounded away from $0$. This is done below in Lemma \ref{l:middle1} and Lemma \ref{l:sidetoside}, invoking which the proof is completed.
\end{proof}

\begin{lemma}
	\label{l:middle1}
	In the set-up above, there exists $\epsilon>0$ such that $\P(\EE)\geq \frac{1}{2}$ for all $D$ sufficiently large.
\end{lemma}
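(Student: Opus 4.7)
My plan is to combine a deterministic geometric reduction with a Chernoff-type lower-tail estimate, and then close a union bound over paths that exploits the exponential divergence of hyperplanes to tame the infiniteness of $H_k$.

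First I would argue that on the event $\EE^{c}$ any offending path $\gamma$ must contain a subpath $\gamma^{*}$ that lies entirely in $\B_{k,0.96D}$, starts on $H_k$, ends on $H_{k'}$, and inherits the constraint of using fewer than $3D/5$ edges on $[x_{iD}, x_{(i+1)D}]$. This uses Lemma \ref{lem-hyperplane-props} together with the fact that for $d\ll D$ the regions $N_{d}([x_{iD},x_{j}])$ and $N_{d}([x_{j'},x_{(i+1)D}])$ lie strictly on opposite sides of $\B_{k,0.96D}$. Since $d(H_k, H_{k'}) \geq 0.96D - O(\delta)$, such a $\gamma^{*}$ has length at least $0.96D - O(\delta)$, and hence carries at least $n_0 := 0.35 D$ unspecified edges. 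A second geometric input, essential for the union bound, is that if the $H_k$-endpoint $u$ of $\gamma^{*}$ lies at distance $r$ from $x_k$, then the standard nearest-point-projection estimate in $\delta$-hyperbolic spaces (essentially the second part of Lemma \ref{lem-expdiv}) gives $|\gamma^{*}| \geq r + 0.96D - O(\delta)$.

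Next I would exploit continuity of $\rho$ at $0$ to get a lower-tail Chernoff bound of the following form: for any prescribed rate $\alpha > 0$ there exists $\epsilon_0 = \epsilon_0(\alpha) > 0$ such that for i.i.d.\ $\omega_i \sim \rho$, $\P(\sum_{i=1}^{n} \omega_i < \epsilon_0 n) \leq e^{-\alpha n}$ for every $n$. The reason is that on this event at least $n/2$ of the $\omega_i$ must lie in $[0, 2\epsilon_0]$, and a Chernoff bound on the resulting Binomial with mean $n\rho([0, 2\epsilon_0])$ delivers the claim, with the exponent diverging as $\epsilon_0\to 0$. Choosing $\epsilon$ small enough that $2\epsilon D \leq \epsilon_0 n_0$, I would then apply this to each candidate $\gamma^{*}$ with $n \geq n_0$ unspecified edges to obtain
\[
\P\Big(\sum_{e \in \gamma^{*},\, \text{unsp}} \omega(e) < 2\epsilon D\Big) \leq e^{-\alpha n}.
\]

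The last step is the union bound. Enumerating subpaths $\gamma^{*}$ by the distance $r = d(u, x_k)$ of the $H_k$-endpoint and by length $L$ (with the constraints $L \geq r + 0.96D - O(\delta)$ and $n \geq L - 3D/5$), there are at most $C^{r}$ choices of $u$ for some $C = C(\Gamma)$ coming from ball growth, and at most $|S|^{L}$ self-avoiding paths of length $L$ from $u$. The resulting double sum
\[
\sum_{r \geq 0} C^{r} \sum_{L \geq r + 0.96D - O(\delta)} |S|^{L} \, e^{-\alpha(L - 3D/5)}
\]
factors into two nested geometric series and evaluates (up to constants) to $|S|^{0.96D} e^{-0.36\alpha D}$, provided $\alpha > \max\{(8/3)\log|S|,\, \log(C|S|)\}$. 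Both thresholds are met by taking $\epsilon_0$ (and hence $\epsilon$) small, and the bound tends to $0$ as $D \to \infty$, yielding $\P(\EE) \geq 1/2$ for $D$ large. The main obstacle I anticipate is precisely the divergence of the naive union bound over the infinitely many starting points on $H_k$; it is resolved by the hyperbolic lower bound $|\gamma^{*}| \geq r + 0.96D$ from the geometric step, which supplies the extra Chernoff decay needed to absorb the $C^{r}$ entropy of $H_k$.
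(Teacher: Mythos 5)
Your proof is correct and follows essentially the same approach as the paper: reduce to the maximal crossing subpath $\gamma^*$ of a putative offending path inside $\B_{k,0.96D}$, observe that it carries a linear number of unspecified edges, apply a lower-tail Chernoff bound whose rate can be made arbitrarily large by shrinking $\epsilon$, and close with a union bound over paths. Your handling of the union bound is in fact more careful than the paper's terse assertion that there are at most $|S|^h$ such subpaths of length $h$: you correctly address the infiniteness of $H_k$ by stratifying the starting vertex $u$ according to $r = d(u,x_k)$ and invoking the hyperbolic estimate $\ell(\gamma^*) \geq r + 0.96D - O(\delta)$ to show that the extra $C^r$ entropy is absorbed by the Chernoff decay, which is precisely the gap a reader might otherwise worry about in the paper's sketch.
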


\begin{proof}
	The proof will use essentially the same argument as in the proof of Lemma \ref{l:geodlen} and Lemma \ref{l:positive}. For any path $\gamma'$ satisfying  the condition in the definition of $\EE$, let $\gamma'_*$ denote a maximal subpath contained in the region $\B_{k,0.96D}$. Clearly, $\ell(\gamma'_{*})\geq 0.95D$ and the number of unspecified edges in $\gamma'_{*}\geq \ell(\gamma'_{*})-0.7 D$ (since the fraction of edges of $\gamma'$ in $N_{d}([x_{iD},x_j]) \cup N_{d}([x_{j'},x_{(i+1)D}])$ can be made arbitrarily small by choosing $D\gg d$). For all $h\geq 0.95D$, and a $\gamma'_*$ as above with $\ell_{\gamma'_{*}}=h$; it follows by choosing $\epsilon$ sufficiently small and using Theorem \ref{t:chernoff} as in the proof of Lemma \ref{l:positive}, that the probability that the total weight of unspecified edges in $\gamma'_{*}$ is $\leq 2\epsilon D$ is  bounded above by $e^{-c(\epsilon)h}$ where $c$ can be made arbitrarily small by making $\epsilon$ sufficiently small. Taking a union bound over all $\gamma'_*$ with $\ell(\gamma'_{*})=h$ (at most $|S|^{h}$ in number where $S$ is the generating set for $\Gamma$) and then taking a union bound over all $h\geq 0.95D$ the result follows.
\end{proof}

\begin{lemma}
	\label{l:sidetoside}
	In the set-up of proof of Lemma \ref{l:zerosupport}, there exists $d_0$ (independent of $D$) such that for all $d\geq d_0$ and all $D$ sufficiently large we have $\P(\cD \cap \cD')\geq \frac{1}{2}$.
\end{lemma}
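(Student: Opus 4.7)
The plan is to show that $\P(\cD), \P(\cD') \geq 3/4$ once $d$ exceeds some absolute $d_0$ and $D$ is taken large enough; a simple union bound on complements then yields $\P(\cD \cap \cD') \geq 1/2$. By symmetry we only treat $\cD$. Suppose $\cD$ fails: there exist $u \in H_{iD}$, $v \in H_j$ and an $\omega$-geodesic $\gamma_{u,v}$ from $u$ to $v$ in $\B_{iD,D}$ disjoint from $N_d([x_{iD},x_j])$. Set $n_u := d(u,x_{iD})$, $n_v := d(v,x_j)$, and introduce the deterministic competitor
\[
\gamma_0(u,v) := [u,x_{iD}] \cup [x_{iD},x_j] \cup [x_j,v],
\]
of word length $L^0_{u,v} := n_u + 0.01D + n_v$. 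By Lemma \ref{lem-expdivlength}, once $d$ exceeds the threshold $R_0$ there and $D$ is large enough, every path from $u$ to $v$ avoiding $N_d([x_{iD},x_j])$ has word length at least $L^1_{u,v} := L^0_{u,v}\, e^{\alpha d}$.

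For the upper concentration, Theorem \ref{t:subexp} applied with a large multiplier $K>1$ gives, for each fixed pair $(u,v)$,
\[
\P\bigl(\ell_\omega(\gamma_0(u,v)) > K\mu L^0_{u,v}\bigr) \leq e^{-c(K)\, L^0_{u,v}},
\]
with $c(K) \to \infty$ as $K \to \infty$. Choose $K$ so that $c(K) > 2\log|S| + 1$. Since at most $|S|^{n_u+n_v}$ pairs $(u,v)$ realize the prescribed distances and $L^0_{u,v} \geq 0.01D$, summing over $n_u, n_v \geq 0$ gives
\[
\P\bigl(\exists u,v:\; \ell_\omega(\gamma_0(u,v)) > K\mu L^0_{u,v}\bigr) \leq e^{-c_1 D}
\]
for some $c_1>0$. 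For the lower bound we adapt the Chernoff-based argument in the proof of Lemma \ref{l:positive}: letting $T_{u,v}^{\mathrm{avoid}}$ denote the minimum $\omega$-length over all paths in $\B_{iD,D}\setminus N_d([x_{iD},x_j])$ from $u$ to $v$, and partitioning by word length while using that there are at most $|S|^L$ self-avoiding paths of length $L$ out of $u$, we obtain, for each fixed $(u,v)$ and small $\epsilon>0$,
\[
\P\bigl(T_{u,v}^{\mathrm{avoid}} < \epsilon L^1_{u,v}\bigr) \leq \sum_{L \geq L^1_{u,v}} |S|^L e^{-c(\epsilon) L}.
\]
Choosing $d$ so large that $c(\epsilon)\, e^{\alpha d} > 2\log|S| + 1$, summing in $L$ and union-bounding in $(u,v)$ produces a bound $e^{-c_2 D}$ for some $c_2>0$.

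Fix $d_0 \geq R_0$ so that additionally $\epsilon\, e^{\alpha d_0} > K\mu + 1$. Then on the intersection of the two good events above, which has probability at least $1 - e^{-c_3 D}$, every pair $(u,v)$ satisfies
\[
\ell_\omega(\gamma_{u,v}) \leq \ell_\omega(\gamma_0(u,v)) \leq K\mu L^0_{u,v} < \epsilon\, e^{\alpha d} L^0_{u,v} = \epsilon L^1_{u,v} \leq T_{u,v}^{\mathrm{avoid}}.
\]
Hence $\gamma_{u,v}$ cannot be an avoiding path: it must enter $N_d([x_{iD},x_j])$, which is exactly $\cD$. The symmetric argument on the other side of the slab yields $\P(\cD') \geq 1 - e^{-c_3 D}$, so $\P(\cD \cap \cD') \geq 1/2$ for all $D$ sufficiently large.

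The principal technical difficulty is the uniformity over the infinitely many pairs $(u,v) \in H_{iD} \times H_j$: this is overcome by matching the exponential count $|S|^{n_u+n_v}$ of pairs at distances $(n_u,n_v)$ against the decay rates in the concentration inequalities, with the exponential gap $e^{\alpha d}$ from Lemma \ref{lem-expdivlength} doing the crucial work for the lower bound. Both events constructed are measurable with respect to edges in a $d$-bounded neighborhood of $\B_{iD,D}$, hence lie inside $N_{D/10}(\B_{iD,D})$ once $D$ is large compared to $d$, as required by the ambient set-up of Lemma \ref{l:zerosupport}.
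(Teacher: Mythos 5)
Your overall architecture (upper concentration for a competitor through $[x_{iD},x_j]$, lower tail for avoiding paths, union bound over endpoints, then over $\cD,\cD'$) is reasonable, but there is a genuine gap in the key geometric input. You assert that every path from $u\in H_{iD}$ to $v\in H_j$ avoiding $N_d([x_{iD},x_j])$ has word length at least $L^1_{u,v}=L^0_{u,v}\,e^{\alpha d}$, citing Lemma~\ref{lem-expdivlength}. That lemma does not give this. It bounds only the length of the \emph{subpath} $\gamma'$ between the exit/entry points $u_1,v_1$ of the $d$-neighborhood, and the lower bound is $\bigl(d(a_1,x_{iD})+d(x_{iD},x_j)+d(x_j,b_1)\bigr)e^{\alpha d}$, where $a_1=\Pi(u_1)$, $b_1=\Pi(v_1)$ sit \emph{between} $x_{iD},x_j$ and the endpoints. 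These quantities can be as small as $O(d)$, regardless of $n_u,n_v$; the segments of $\gamma$ from $u$ to $u_1$ and from $v_1$ to $v$, of word length $\approx n_u,n_v$, are \emph{not} stretched by $e^{\alpha d}$. The correct lower bound for the total word length of an avoiding path is of the form $(n_u-O(d))+(0.01D+O(d))e^{\alpha d}+(n_v-O(d))$, not $(n_u+0.01D+n_v)e^{\alpha d}$.

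This breaks your final comparison: with $\ell_\omega(\gamma_0)\leq K\mu L^0_{u,v}$ and $T^{\mathrm{avoid}}\geq \epsilon\bigl[(n_u+n_v)+0.01D\,e^{\alpha d}\bigr]$ (up to $O(d)$ errors), you would need $K\mu(n_u+n_v)<\epsilon(n_u+n_v)$ as $n_u+n_v\to\infty$, which is impossible since $K\mu>\epsilon$. In other words, the exponential gain from the detour only improves the constant in front of the fixed ``width'' $0.01D$, and is swamped by the linearly growing contributions of the tails $[u,x_{iD}]$, $[x_j,v]$, which appear symmetrically in both your competitor and the avoiding path and cannot be beaten by a multiplicative factor argument. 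The paper's proof avoids this by never comparing passage times of the \emph{full} paths from $u$ to $v$: it decomposes the event according to the positions of the exit points $u_1,v_1$ of the $d$-tube, and applies Lemma~\ref{l:geodlen} to the $\omega$-geodesic $\Upsilon(u_1,v_1)$, whose word length must be $\gtrsim e^{\alpha d}\,d(u_1,v_1)$ on the bad event. Both the word length and the relevant passage time are then expressed in terms of $d(u_1,v_1)$ alone, so the problematic contributions from $n_u,n_v$ never enter. To repair your argument you would need to likewise localize to the segment between $u_1$ and $v_1$, which essentially recovers the paper's decomposition into the events $\cF_{j_1,j_2}\subseteq\cg_{j_1,j_2}$.
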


\begin{proof}
	Observe first that for $D$ sufficiently large, $\cD$ and $\cD'$ depend on disjoint sets of edges (by Lemma \ref{lem-expdiv}), and hence are independent. So it suffices to show that $\P(\cD), \P(\cD')\geq \frac{3}{4}$. We shall only show this lower bound for $\P(\cD)$, the corresponding argument for $\cD'$ is identical.  
		In fact we shall show that $\P(\cD^c)\leq \frac{1}{4}$. For this, notice that in $\cD^{c}$ there must exist a geodesic from $H_{iD}$ to $H_{j}$ that does not intersect $N_{d}([x_{iD},x_{j}])$ where $d$ is sufficiently large to be appropriately chosen later. 
	
	We adapt the notation from Lemma \ref{lem-expdivlength} as follows. We replace $R$ in Lemma \ref{lem-expdivlength}  by $d$ here (as it will be determined later). For any points $x_p, x_l$ on the word geodesic $[1,\xi)$, the half-planes through them will be denoted by $H_p, H_l$ as usual. For a path $\gamma$  from $H_p$ to $H_l$,   $a(\gamma)$ (resp.\ $b(\gamma)$) will denote the initial (resp.\ terminal) point on
	$H_p$ (resp.\ $H_l$). Further,  suppose $\gamma$ lies outside the $d-$neighborhood of $[x_p, x_l]$, i.e.\ $\gamma \cap N_d (\oxi) =\emptyset$. We shall use nearest-point projection ({in the word metric}) $\Pi$ onto $[a(\gamma),x_p]\cup [x_p, x_l] \cup [x_l,b(\gamma)]$. Since $[a(\gamma),x_p]\cup [x_p, x_l] \cup [x_l,b(\gamma)]$ is  $4\delta-$close to the  geodesic $[a(\gamma),b(\gamma)]$ (see the  discussion before Lemma \ref{lem-expdivlength}), we shall for the purposes of this proof, not distinguish between $[a(\gamma),x_p]\cup [x_p, x_l] \cup [x_l,b(\gamma)]$ and $[a(\gamma),b(\gamma)]$.
	Let $u=u(\gamma)$ be the last point on $\gamma$  projecting to $x_p$ and 
	let $v=v(\gamma)$ be the  first point projecting to $x_l$. Let $u_1=u_1(\gamma)\in \gamma$  be the last 
	point on $\gamma$ before $u$ such that {$d(u_1,[a(\gamma),b(\gamma)])= d$. (Here, we are assuming that
		$d$ is an integer, and hence such a point $u_1$ must exist.)}
	 Let 
	$a_1(\gamma) = \Pi(u_1)$. Thus $a_1 (\gamma)\in [a(\gamma),x_p]$. Similarly, let  $v_1=v_1(\gamma)\in \gamma$  be the first
	point on $\gamma$ after $v$ such that $d(v_1,[a(\gamma),b(\gamma)])= d$. 
	Let 
	$b_1(\gamma) = \Pi(v_1)$; thus $b_1 \in [x_l,b(\gamma)]$.
	
	We shall specialize now to the case {$p=iD$ and $l=j=(i+0.01)D$ (where $p, l$ are defined in the beginning of the previous paragraph, and $j$ is as in the proof of Lemma \ref{l:zerosupport})}.
	For $j_1,j_2\geq 0$, let $\cF_{j_1,j_2}$ denote the event that  a $\omega$-geodesic $\gamma$ as above exists such that $d(a_1(\gamma),x_{iD})\in [j_1d, (j_1+1)d]$ and {$d(b_1(\gamma),x_{j})\in [j_2d, (j_2+1)d]$}. Clearly,
	$$\P(\cD^c)\leq \sum_{j_1,j_2\geq 0} \P(\cF_{j_1,j_2}).$$
	
	We want to approximate the event $\cF_{j_1,j_2}$ by taking a union bound over all possible locations of $u_1=u_1(\gamma)$ and $v_1=v_1(\gamma)$ as described above. Observe that, for a path $\gamma$ satisfying the conditions in the definition of $\cF_{j_1,j_2}$, by definition $a_1(\gamma)\in \big(N_{(j_1+1)d}(x_{iD})\setminus N_{j_1d-1}(x_{iD})\big)$ and $b_1(\gamma)\in \big( N_{(j_2+1)d}(x_{j})\setminus  N_{j_2d-1}(x_{j})\big)$. Therefore, $u_1(\gamma)\in \big(N_{(j_1+2)d}(x_{iD})\setminus N_{(j_1-1)d-1}(x_{iD})\big)\cap N_{d}(H_{iD})$ and $v_1(\gamma)\in \big(N_{(j_2+2)d}(x_{j})\setminus N_{(j_2-1)d-1}(x_{j})\big)\cap N_{d}(H_{j})$ where the set $N_{(j_k-1)d-1}(\cdot)$ is interpreted as the empty set for $j_{k}=0,1$. Now, let
 $\cg_{j_1,j_2}$ denote the event that there exist $u_1\in \big(N_{(j_1+2)d}(x_{iD})\setminus N_{(j_1-1)d-1}(x_{iD})\big)\cap N_{d}(H_{iD})$  and $v_1\in \big( N_{(j_2+2)d}(x_{j})\setminus  N_{(j_2-1)d-1}(x_{j})\big)\cap N_{d}(H_{j})$, we have $\ell(\Upsilon(u_1,v_1))\geq \frac{1}{2}e^{\alpha d} d(u_1,v_1)$ where $\alpha$ is as in Lemma \ref{lem-expdivlength}. Now for $d$ sufficiently large and $D$ sufficiently large depending on $d$, it follows that for all $u_1,v_1$ as above we have $d(u_1,v_1)\geq \frac{1}{2}(j_1+j_2+4)d$
	(by Lemma \ref{lem-expdiv}) and also  $\ell([a_1,x_{iD}]\cup [x_{iD},x_{j}]\cup [x_{j},b_1]) \geq \frac{1}{2}d(u_1,v_1)$ (again by Lemma \ref{lem-expdiv}). Using Lemma \ref{lem-expdivlength}, it follows that for such choices of $d$ and $D$ we have $\cF_{j_1,j_2}\subseteq \cg_{j_1,j_2}$ and hence $\P(\cF_{j_1,j_2})\leq \P(\cg_{j_1,j_2})$. Observe now that the total number of pairs of $u_1,v_1$ as above is bounded above  by $|S|^{(j_1+j_2+4)d}$ and hence using Lemma \ref{l:geodlen} we get $\P(\cg_{j_1,j_2}) \leq  |S|^{(j_1+j_2+4)d} e^{-c(j_1+j_2+4)d}$
 and by choosing $d$ (and hence $e^{\alpha d}$) sufficiently large we can make the $c$ above arbitrarily large uniformly in $j_1,j_2\ge 0$. It follows that for suitable choices of $d$ and $D$ we get $\P(\cg_{j_1,j_2})\leq e^{-c(j_1+j_2+4)d}$ for some $c>0$. Summing over all $j_1,j_2\geq 0$ we get the desired upper bound on $\P(\cD^c)$. This completes the proof of the lemma. 
\end{proof}

\noindent {\bf Case (iii): Support of $\rho$ is noncompact:}
Let us now turn to case (iii), where $X(e)$ can take arbitrarily large values. The corresponding lemma in this case is the following. 

\begin{lemma}
	\label{l:infinitysupport}
	Suppose the support of $\rho$ is non-compact. Then there exist $D=D(\Ga,\rho)$ sufficiently large, $\beta=\beta(D,R,\rho, \Ga)>0$ and an event $B_i$ for each $i$ depending only on the edges in $N_{D/10}(\B_{iD,D})$with $\P(B_{i})\geq \beta$ such that for all $\omega \in B_{i}$ we have the following: for each $u\in H_{iD}$ and $v\in H_{(i+1)D}$, and a path $\gamma$ from $u$ to $v$ contained in the region $\B_{iD,D}$, there exists a path $\gamma'$ between $u$ and $v$ that intersects $[x_{iD},x_{(i+1)D}]$ in at least $\frac{3D}{5}$ edges and $\ell_{\omega}(\gamma')\leq \ell_{\omega}(\gamma)$. 
\end{lemma}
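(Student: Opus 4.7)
The plan is to adapt the proof of Lemma~\ref{l:zerosupport} with only minor modifications to handle the fact that $\rho$ may have infinite support from above and that $a:=\inf\operatorname{supp}(\rho)$ may be strictly positive. The key observation is that the Case~(ii) argument nowhere actually uses boundedness of $\rho$ from above: the concentration steps in Lemmas~\ref{l:middle1} and \ref{l:sidetoside} invoke only the sub-Gaussian tail hypothesis via Theorem~\ref{t:subexp} and Lemma~\ref{l:geodlen}. Setting $\mu:=\int x\,d\rho(x)$, we have $\mu>a$ strictly because $\rho$ has no atoms, and this strict inequality is what ultimately drives the weight comparison.

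I would keep the same geometric decomposition of $\B_{iD,D}$ as in Lemma~\ref{l:zerosupport}, but with a small parameter $\eta>0$ to be chosen: set $j:=(i+\eta)D$, $k:=(i+2\eta)D$, $k':=(i+1-2\eta)D$, $j':=(i+1-\eta)D$, and let $S=S_{i,d}$ be the union of $N_d([x_{iD},x_j])$, $N_d([x_{j'},x_{(i+1)D}])$, and $[x_{iD},x_{(i+1)D}]$. Redefine $\cC:=\{\omega: X(e)\le a+\epsilon\ \forall e\in S\}$ for a small $\epsilon>0$; since $\rho([a,a+\epsilon])>0$ (no atoms), we get $\P(\cC)\ge\beta_1>0$ depending on $D,d,\epsilon,\rho,\Ga$. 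Define $\cD,\cD'$ verbatim as in Lemma~\ref{l:zerosupport} and verify $\P(\cD\cap\cD')\ge 1/2$ by the same exponential-divergence argument as Lemma~\ref{l:sidetoside}, which requires only Lemmas~\ref{lem-expdiv}, \ref{lem-expdivlength}, and \ref{l:geodlen} (all valid under sub-Gaussian hypotheses). Define $\EE$ to be the event that along every self-avoiding path $\sigma$ across the middle slab that intersects $[1,\xi)$ in fewer than $3D/5$ edges, the total weight of unspecified edges of $\sigma$ is at least $(1-\delta')\mu$ times their number; by Theorem~\ref{t:subexp} and a geometric union bound over self-avoiding paths (convergent because Chernoff's exponent exceeds $\log|S|$ once $\delta'$ is small), $\P(\EE)\ge 1/2$ for $D$ sufficiently large. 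Since $\cC$ and $\cD\cap\cD'$ are both decreasing in the weights on $S$, FKG (Theorem~\ref{t:FKG}) controls $\P(\cC\cap\cD\cap\cD')$; and $\EE$ depends on the disjoint set of unspecified edges, so $\P(B_i)\ge\beta>0$ uniformly in $i$.

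The deterministic path comparison proceeds as in Lemma~\ref{l:zerosupport}: for $\omega\in B_i$ and $\gamma$ from $u\in H_{iD}$ to $v\in H_{(i+1)D}$ with fewer than $3D/5$ edges on the word geodesic, locate $w_1\in\gamma\cap N_d([x_{iD},x_j])$ and $w_2\in\gamma\cap N_d([x_{j'},x_{(i+1)D}])$ using $\cD,\cD'$, project to $w_1',w_2'\in[1,\xi)$, and form the replacement $\gamma'_{\mathrm{new}}:=[w_1,w_1']\cup[w_1',w_2']\cup[w_2',w_2]$. All edges of $\gamma'_{\mathrm{new}}$ lie in $S$ and there are at most $D+2d$ of them, so by $\cC$, $\ell_\omega(\gamma'_{\mathrm{new}})\le(a+\epsilon)(D+2d)$. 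The subtle point relative to Case~(ii) is that in Case~(iii) the pure Chernoff bound on the unspecified part of $\gamma_*$ alone no longer beats the replacement cost when $a>0$; instead one uses the hybrid bound
\[
\ell_\omega(\gamma_*)\ \ge\ a\cdot m'\ +\ (1-\delta')\mu\cdot\big[(1-4\eta)D-m'\big],
\]
where $m'\le 3D/5$ is the number of word-geodesic edges of $\gamma$ in the middle slab, combining the trivial lower bound $X(e)\ge a$ on specified edges with Chernoff on unspecified edges. Since the right-hand side is linear in $m'$ with negative slope when $(1-\delta')\mu>a$, its minimum over $m'\in[0,3D/5]$ is attained at $m'=3D/5$, giving $\ell_\omega(\gamma_*)\ge[(3/5)a+(2/5-4\eta)(1-\delta')\mu]D$. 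Comparing this to $(a+\epsilon)(D+2d)$ and taking $D$ large, the required inequality reduces to $(2/5)a+\epsilon<(2/5-4\eta)(1-\delta')\mu$, which for $\eta,\delta',\epsilon$ small enough is equivalent to the strict inequality $\mu>a$, holding by the no-atom hypothesis.

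The hard part is precisely this final quantitative comparison: when $\mu-a$ is small, neither the trivial bound $X(e)\ge a$ nor the Chernoff bound $(1-\delta')\mu\cdot N$ alone suffices, and the two must be combined through the hybrid inequality above; the small parameters $\eta,\delta',\epsilon$ must be chosen in sequence depending on the gap $\mu-a$. Once this refined lower bound is in place, the remainder of the argument (probability bound on $B_i$, FKG, forcing intersection of $\omega$-geodesic rays on $[x_{iD},x_{(i+1)D}]$ via the pigeonhole between two paths each having $\ge 3D/5$ of $D$ shared edges) is mechanical and parallel to Case~(ii).
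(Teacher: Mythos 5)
There is a genuine gap in your proposal, and it is precisely at the step you identify as "the hard part." Your event $\EE$ asks that every self-avoiding path across the middle slab have unspecified weight at least $(1-\delta')\mu$ per unspecified edge, and you claim the union bound over self-avoiding paths converges "because Chernoff's exponent exceeds $\log|S|$ once $\delta'$ is small." This is backwards. For a lower-tail deviation $\P(\sum X_i \le (1-\delta')\mu N)\le e^{-c(\delta')N}$, the rate $c(\delta')\to 0$ as $\delta'\to 0$; to dominate the entropy $|S|^h$ of self-avoiding paths of length $h$ one needs $c(\delta')>\log|S|$, which forces $\delta'$ bounded \emph{away} from $0$ (and the threshold depends on $\rho$). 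Note also that Theorem \ref{t:subexp}, which you cite, is an upper-tail bound only; the lower-tail argument in the paper (Lemmas \ref{l:positive}, \ref{l:middle1}) uses Theorem \ref{t:chernoff} on the Bernoulli indicators of small-weight edges, and yields a per-edge lower bound $\epsilon$ that must be taken \emph{small} to survive the union bound. Once $\delta'$ must be bounded away from $0$, there is no guarantee that $(1-\delta')\mu > a$: if $\rho$ is concentrated near its essential infimum $a$, the mean $\mu$ can be arbitrarily close to $a$, and your hybrid inequality $a m' + (1-\delta')\mu\,[(1-4\eta)D-m'] \ge (a+\epsilon)(D+2d)$ fails for $m'$ small.

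The paper's actual proof avoids this by exploiting the non-compactness of the support in a way your proposal does not: it introduces the extra event $\EE'(M)$ forcing all edges in the middle slab within distance $100d$ of $[1,\xi)$ (but not on it) to have weight $\ge M$, with $M$ tunable to be arbitrarily large. Then any competitor path with $<3D/5$ word-geodesic edges either uses $\ge 50d$ of these $M$-boosted edges (cost $\ge 50dM$, which beats $2\mu D$ once $M\gg D\mu/d$), or must leave the $d$-neighborhood of the geodesic and, by exponential divergence (Lemma \ref{lem-expdivlength}), acquire $\ge \frac{1}{1000}e^{\alpha d}D$ unspecified edges — and then the \emph{small} per-edge Chernoff constant of Lemma \ref{l:middle1} suffices because it is multiplied by an exponential in $d$. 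Both $\EE'$ and $\EE''$ are increasing, so FKG gives $\P(\EE'\cap\EE'')>0$. Your decomposition $\cC,\cD,\cD'$ and the final deterministic rerouting step are fine; it is the absence of the $\EE'$-boosting device, and the reliance on a lower-tail Chernoff bound with a rate close to $\mu$, that makes the weight comparison fail.
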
 

\begin{proof}
	We shall use the same notations as in the proof of Lemma \ref{l:zerosupport}. Let $d$ be such that the conclusion of Lemma \ref{l:sidetoside} holds (notice that Lemma \ref{l:sidetoside} did not use the condition that support of $\rho$ contained $0$), and let $\cD, \cD'$ be as before. Let $\EE'=\EE'(M)$ denote the event that for all edges in the region between $H_{k}$ and $H_{k'}$ (except those on $[1,\xi)$) that are within a distance $100d$ of $[1,\xi)$, we have $X(e)\geq M$. Let $\cC'$ denote the set of all $\omega\in \Omega$ such that for all $w_1\in N_{d}([x_{iD},x_{j}])$ and for all $w_2\in N_{d}([x_{j'},x_{(i+1)D}])$ we have $\ell_{\omega}([w_1,w'_1])+\ell_{\omega}([w'_1,w'_2])+\ell_{\omega}([w'_2,w_2]) \leq 2D \E X(e)$ (recall that $w'_1$ {and} $w'_2$ are projections of $w_1$ and $w_2$ respectively onto $[x_{iD},x_{(i+1)D}]$). Finally, let $\EE''$ denote the event that the total weight of unspecified edges (as defined in the proof of Lemma \ref{l:zerosupport}, although they are not all unspecified in the changed context) along any path from a point in $N_{d}([x_{iD},x_{j}])$ to a point in $N_{d}([x_{j'},x_{(i+1)D}]$ that intersects $[x_{iD},x_{(i+1)}D]$ in less than $\frac{3D}{5}$ edges is at least $2D \E X(e)$.
	
	Our claim now is that $B_{i}:=\cC'\cap \cD \cap \cD' \cap \EE'\cap \EE''$ does our job. We first show that the appropriate conditions are satisfied for this event. Fix a path $\gamma$ as in the statement of the lemma. As in the proof of Lemma \ref{l:zerosupport}, we shall assume without loss of generality that in $\gamma$, the last crossing of $H_{iD}$ to $H_{j}$ and the last crossing of $H_{j'}$ to $H_{(i+1)D}$ are the $\omega$-geodesics between the respective endpoints. As before, let $w_1\in \gamma \cap N_{d}([x_{iD},x_{j}])$  and $w_2\in \gamma \cap N_{d}([x_{j'},x_{(i+1)D}])$ and let $w'_1$ and $w'_2$ be their respective projections. By the definition of $\cC'$, it suffices to show that the $\omega$-length of the segment of $\gamma$ between $w_1$ and $w_2$ is at least $2\E X(e)D$, which is guaranteed by the definition of $\EE''$. 
	
	Now to show the lower bound for $\P(B_i)$, observe first that $\cC'\cap \cD\cap \cD'$ and $\EE' \cap \EE''$ depend on disjoint sets of edges for $D$ sufficiently large (Lemma \ref{lem-hyperplane-props}) and hence they are independent.  So it suffices to  bound from below the respective probabilities separately. For $\cC'$ observe that if $D$ is sufficiently large, we have  $\ell([w_1,w'_1])+\ell([w'_1,w'_2])+\ell([w'_2,w_2]) \leq 1.5D$
hence using Theorem \ref{t:subexp} we get that for each fixed $w_1$ and $w_2$ we have $\ell_{\omega}([w_1,w'_1])+\ell_{\omega}([w'_1,w'_2])+\ell_{\omega}([w'_2,w_2]) \leq 2D \E X(e)$ with failure probability at most $e^{-cD}$. Taking a union bound over all pairs of $(w_1,w_2)$ (these are polynomially many in $D$  for a fixed $d$) we get $\P(\cC')\geq \frac{3}{4}$. As mentioned above,  the proof of Lemma \ref{l:sidetoside} remains valid in this set-up also and hence we get $\P(\cC'\cap\cD\cap \cD')\geq \frac{1}{4}$.

As for $\EE '\cap \EE''$, observe that $\P(\EE')\geq \beta_2(M,d,D,\rho)>0$ for each $M$ by our hypothesis on $\rho$. Observe now that any path satisfying the condition in the definition of $\EE''$ must either have $50 d$ many edges that are set to weight at least $ M$ by $\EE'$ or have at least $\frac{1}{1000}e^{\alpha d}D$ many unspecified edges (provided $d$ and $D$ are sufficiently large) by Lemma \ref{lem-expdivlength}. By choosing $M \geq \frac{2 D \E X(e)}{50 d}$, we can ensure that the condition in the definition of $\EE''$ is satisfied in the former case. In the latter case, we repeat the argument in the proof of Lemma \ref{l:middle1}. Observe that if $d$ is chosen sufficiently large, then the proof of Lemma \ref{l:middle1} implies that with probability at least $\frac{1}{2}$, every path $\gamma$ satisfying the conditions in the definition of $\EE''$ with at least $\frac{1}{1000}e^{\alpha d}D$ many unspecified edges will have $\ell_{\omega}(\gamma) \geq 2D \E X(e)$. Finally noticing that $\EE'$ and $\EE''$ are both increasing events in the weight configurations of the unspecified edges, we invoke the FKG inequality (Theorem \ref{t:FKG}) to conclude that $\P(\EE' \cap \EE'')\geq \beta_3>0$. This completes the proof of the lemma. 
\end{proof}

We are now in a position to complete the proof of Proposition \ref{p:positive}.

\begin{proof}[Proof of Proposition \ref{p:positive}]
Depending on the support of $\rho$ consider the event $B_{i}$ defined in Lemma \ref{l:cptsupport}, Lemma \ref{l:zerosupport} or Lemma \ref{l:infinitysupport}, and let $D$ be sufficiently large so that the conclusion of those lemmas hold. The event $B_i$ in each case depends on the configuration restricted to the edges in $N_{D/10}(\B_{iD,D})$ and satisfies the required probability lower bound. Observe that, for all sufficiently large $i$ and on $B_{i}$, every $\omega$-geodesic ray from from either $1$ or $o_2$ (that must cross from {$H_{iD}$} to $H_{(i+1)D}$) must intersect $[x_{iD},x_{(i+1)D}]$ in at least $\frac{3D}{5}$ many edges. Clearly this implies that any two such geodesic ray must intersect on $[x_{iD},x_{(i+1)D}]$ completing the proof of the proposition.
\end{proof}

\section{Linear growth of variance along word geodesics}\label{sec-linvar}
The aim of this section is to use the technology in the previous section to prove that under suitable conditions on the distribution $\rho$ on edges, the variance of the length of the FPP geodesic between points on a semi-infinite  geodesic ray
$[1,\xi)$ in a fixed direction $\xi$ grows linearly with the word distance on $\Gamma$. Fix $\xi\in \partial G$  and a geodesic ray $[1,\xi)=\{x_0=1,x_1,x_2,\ldots\}$  in $\Ga$. The following is the main result
of this section.

\begin{theorem}
	\label{t:linvar}
	In the above set-up, there exist $0<C_1<C_2<\infty$ such that 
	$$ C_1 n\leq \mathrm{Var} (T(1,x_n))\leq C_2 n.$$
\end{theorem}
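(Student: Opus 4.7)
The plan is to prove the upper and lower bounds separately. The upper bound follows from standard Efron--Stein-type arguments adapted from Euclidean FPP, while the lower bound genuinely exploits the coalescence/hyperplane machinery of Section \ref{sec-coalesce}.

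\emph{Upper bound.} Applying Efron--Stein to $T(1,x_n)$ viewed as a function of the independent edge weights $\{X_e\}$, and using the Lipschitz estimate $|T(\omega)-T(\omega^{(e)})|\leq |X_e - X'_e|\cdot\mathbf{1}_{e\in \Upsilon(\omega)\cup\Upsilon(\omega^{(e)})}$ (where $\omega^{(e)}$ denotes the configuration with $X_e$ resampled to an independent copy $X'_e$), one obtains
$$\mathrm{Var}(T(1,x_n)) \;\leq\; \tfrac{1}{2}\sum_e \E\!\left[(X_e-X'_e)^2\bigl(\mathbf{1}_{e\in \Upsilon(\omega)}+\mathbf{1}_{e\in \Upsilon(\omega^{(e)})}\bigr)\right].$$
By symmetry and the independence of $X'_e$ from $(\{X_{e'}\}_{e'\neq e},\Upsilon(\omega))$, combined with a truncation argument handling large edge weights via the sub-Gaussian tail hypothesis (in the spirit of Theorem \ref{t:iidtail}), this quantity is bounded by a constant multiple of $\E[\ell(\Upsilon(1,x_n))]$, which is $O(n)$ by Corollary \ref{l:expgeodlen}.

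\emph{Lower bound.} The starting point is the Hoeffding--ANOVA variance inequality
$$\mathrm{Var}(T(1,x_n)) \;\geq\; \sum_e \mathrm{Var}\bigl(\E[T(1,x_n) \mid X_e]\bigr),$$
a direct consequence of orthogonality in the Hoeffding decomposition over independent coordinates. It thus suffices to exhibit $\Theta(n)$ distinguished edges $\{e_i\}$ with $\mathrm{Var}(\E[T\mid X_{e_i}])\geq c>0$ uniformly in $i$. Let $D$ be as in Proposition \ref{p:positive} and for $i=1,\ldots,\lfloor n/D\rfloor-1$ let $e_i$ be the edge of $[1,\xi)$ between $x_{iD+\lfloor D/2\rfloor}$ and $x_{iD+\lfloor D/2\rfloor+1}$. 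Since the map $t\mapsto T(1,x_n)$ is concave and $1$-Lipschitz in $X_{e_i}$ with derivative $\mathbf{1}_{e_i\in\Upsilon(1,x_n)}$ almost everywhere, dominated convergence gives
$$\frac{d}{dt}\,\E[T(1,x_n)\mid X_{e_i}=t] \;=\; \P\bigl(e_i\in \Upsilon(1,x_n)\,\big|\,X_{e_i}=t\bigr).$$
A refinement $B^\star_i$ of the event $B_i$ from Proposition \ref{p:positive}---depending only on edges in $N_{D/10}(\B_{iD,D})\setminus\{e_i\}$, of probability at least $\beta>0$ uniformly in $i$, and on which every FPP geodesic crossing the slab traverses $e_i$ specifically---yields $\P(e_i\in\Upsilon\mid X_{e_i}=t)\geq\beta$ for all $t$ in a fixed interval $[t_0,t_1]\subset\mathrm{supp}(\rho)$. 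Hence $\E[T\mid X_{e_i}=\cdot]$ grows by at least $\beta(t_1-t_0)$ across $[t_0,t_1]$, which standard quantile-based lower bounds translate into $\mathrm{Var}(\E[T\mid X_{e_i}])\geq c>0$ uniformly in $i$. Summing over the $\Theta(n)$ indices yields $\mathrm{Var}(T(1,x_n))\geq C_1 n$.

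\emph{Main obstacle.} The principal technical task is the construction of $B^\star_i$. The event must simultaneously (i) be independent of $X_{e_i}$, (ii) have probability bounded below uniformly in $i$, and (iii) deterministically pin every FPP geodesic crossing the slab $\B_{iD,D}$ to traverse the specific edge $e_i$, not merely some $3D/5$ fraction of the segment $[x_{iD},x_{(i+1)D}]$. Each of the three support regimes of $\rho$ treated in Lemmas \ref{l:cptsupport}, \ref{l:zerosupport}, and \ref{l:infinitysupport} can be refined by imposing slightly stronger weight constraints on edges adjacent to (but not including) $e_i$, and on the few edges that could constitute a local bypass of $e_i$. The hyperplane-separation and exponential-divergence estimates of Section \ref{sec-hypgeoprel} ensure that any such bypass carries enough additional weight to be ruled out, so the required refinement is largely mechanical once the correct constraint pattern is fixed; nonetheless, verifying it in each of the three support regimes is the bulk of the work.
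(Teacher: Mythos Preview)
Your upper bound via Efron--Stein is essentially what the paper does (it cites Kesten's argument, which is the same martingale/Efron--Stein mechanism), and the reduction to $\E[\ell(\Upsilon(1,x_n))]=O(n)$ is identical.

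For the lower bound your route is genuinely different, and somewhat harder than necessary. The paper does \emph{not} use the single-edge ANOVA inequality $\mathrm{Var}(T)\geq\sum_e\mathrm{Var}(\E[T\mid X_e])$. Instead it builds a filtration $\{\mathcal F_i\}$ where the $i$th increment reveals the weights of \emph{all} edges in the slab $\B_{(2i-1)D,D}$ at once, and writes $\mathrm{Var}(T_n)=\E\sum_i\mathrm{Var}(M_i\mid\mathcal F_{i-1})$ for the associated Doob martingale. Each conditional variance is then bounded below by a coupling: resample the weights on the \emph{entire} segment $[x_{(2i-3)D},x_{(2i-2)D}]$ to be uniformly smaller, and observe that on the event $B_i$ of Proposition~\ref{p:positive} the geodesic already uses at least one edge of that segment, so the passage time drops by a definite amount. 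The point is that Proposition~\ref{p:positive} as stated---forcing the geodesic to hit $3D/5$ of the segment---is already enough, because hitting \emph{some} edge suffices when you are resampling all of them. No refinement to a distinguished edge is needed.

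Your approach, by contrast, commits to a single edge $e_i$ per slab, and then must manufacture an event $B^\star_i$ (independent of $X_{e_i}$) that pins the geodesic to that specific edge. You correctly flag this as the main obstacle, and it is indeed doable in each of the three support regimes by strengthening the weight constraints near the midpoint---but it is extra work that the paper's block decomposition sidesteps entirely. The trade-off: your method gives a slightly more explicit influence-theoretic picture (a linear number of edges each with bounded-below first-order influence), while the paper's method is shorter and uses the coalescence lemmas off the shelf.
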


Because the FPP geodesic $\Upsilon(1,x_{n})$ is expected to remain close to the word geodesic (see Section \ref{sec-wqg}), the linear growth of variance was  conjectured by Benjamini, Tessera and Zeitouni \cite[Question 5]{bz-tightness}, \cite[Section 4]{benjamini-tessera}. Theorem \ref{t:linvar} above proves their conjecture. As already pointed out in the introduction, in contrast, in the Euclidean setting, the variance is expected to grow sub-linearly in all dimensions with exponent strictly less than $1$, but the best known result so far for a general FPP on $\Z^d$ is an upper bound of $O(\frac{n}{\log n})$ of the variance \cite{bks-sublinear} (see also \cite{BR08,DHS14}) . 

Linear growth of variance (or the related behavior of diffusive fluctuations) has often been observed in constrained models of first passage percolation and its variants. Gaussian fluctuations or linear variance have been shown for first passage percolation across thin cylinders, and in certain one-dimensional graphs \cite{CD13,Ahl15} (see also \cite{DPM17,BB17} for similar results and \cite{BG18} for variance bounds for passage times across on-scale cylinders). Unlike these results, for FPP on hyperbolic groups, the FPP-geodesic is not restricted to a uniformly bounded neighborhood of the word geodesic (except for the special case when the support of $\rho$ is bounded away from $0$ and $\infty$). So we cannot consider FPP geodesics restricted to lie in a thin cylinder. The upper bound for the variance in Theorem \ref{t:linvar} will follow from a standard Poinc\'are inequality. For the lower bound, \cite{benjamini-tessera} already speculated that results like
Proposition \ref{bt} could be useful for showing that the variance grows linearly. We shall use for effective and quantitative version of such results, obtained in Section \ref{sec-coalesce}, and show that a linear number of vertices contributed a uniformly positive amount to the variance, thereby getting a lower bound of the matching order. This is philosophically similar to the proof of \cite[Proposition 7.2]{BSS17A}.\\ 

\noindent {\bf Upper bound:}
As  mentioned above, the upper bound in Theorem \ref{t:linvar} will use a standard argument due to Kesten \cite{Kes93},  who used it to obtain a linear upper bound on variance in Euclidean FPP. Kesten's argument is rather general and the proof of (1.13) in \cite{Kes93} shows the following in our set-up. 

\begin{prop}
	\label{p:ub}
	There exists $C>0$ such that $\mathrm{Var} (T(1,x_n))\leq C\E [\ell(\Upsilon(1,x_n))]$.
\end{prop}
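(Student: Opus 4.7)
The plan is to implement the standard Efron--Stein approach to variance bounds that underlies Kesten's original argument in \cite{Kes93}. Enumerate the edges of $\Ga$ as $e_1, e_2, \ldots$ and, for each $k$, introduce an independent ``resampling'' $X'_{e_k}$ with law $\rho$; write $T^{(k)}$ for the passage time $T(1, x_n)$ computed with the weight of $e_k$ replaced by $X'_{e_k}$. The starting point is the Efron--Stein inequality
$$\mathrm{Var}(T(1,x_n)) \leq \tfrac{1}{2}\sum_{k} \E\bigl[(T(1,x_n) - T^{(k)})^2\bigr],$$
which follows from orthogonality of the martingale differences associated with sequentially revealing the edge weights in the order $e_1, e_2, \ldots$.

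The second step is a deterministic path-comparison estimate: changing the weight of a single edge $e_k$ alters the passage time by at most $|X_{e_k} - X'_{e_k}|$, and only if $e_k$ lies on the FPP-geodesic $\Upsilon = \Upsilon(1,x_n)$ for the original configuration or on the corresponding geodesic $\Upsilon^{(k)}$ for the resampled one. Thus
$$(T(1,x_n) - T^{(k)})^2 \leq (X_{e_k} + X'_{e_k})^2 \bigl(\mathbf{1}[e_k \in \Upsilon] + \mathbf{1}[e_k \in \Upsilon^{(k)}]\bigr),$$
and by the symmetry interchanging $X_{e_k}$ and $X'_{e_k}$ the two indicators contribute equally in expectation. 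Hence it suffices to control $\E[(X_{e_k} + X'_{e_k})^2 \mathbf{1}[e_k \in \Upsilon]]$ uniformly by a constant times $\P(e_k \in \Upsilon)$, since summing $\P(e_k \in \Upsilon)$ over $k$ produces $\E[\ell(\Upsilon(1,x_n))]$.

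The main obstacle is that $X_{e_k}$ and the event $\{e_k \in \Upsilon\}$ are correlated, so the bound cannot be obtained merely by factorization. The cross-term and the $(X'_{e_k})^2$ term are harmless: since $X'_{e_k}$ is independent of everything else and the sub-Gaussian hypothesis \eqref{e:subgau} gives $\E[(X'_{e_k})^2] < \infty$, they contribute at most $C\,\P(e_k \in \Upsilon)$. The substantive term is $\E[X_{e_k}^2 \mathbf{1}[e_k \in \Upsilon]]$. Here I would exploit the elementary monotonicity observation that, conditional on the remaining edge weights $\{X_e : e \neq e_k\}$, the indicator $\mathbf{1}[e_k \in \Upsilon]$ takes the form $\mathbf{1}[X_{e_k} \leq S]$ for a nonnegative threshold $S$ measurable with respect to those weights (explicitly, $S$ is the difference between the best passage time avoiding $e_k$ and the best passage time through $e_k$ with weight $0$). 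The problem thereby reduces to the deterministic inequality
$$\int_{0}^{S} x^2 \, d\rho(x) \leq C \cdot \rho([0,S])$$
uniformly in $S \geq 0$, which is immediate: for $S$ in a bounded interval the integrand is bounded so the left-hand side is at most $S^2 \rho([0,S])$, while for large $S$ the integral is uniformly bounded by the (finite) second moment of $\rho$ and $\rho([0,S]) \geq \rho([0,1]) > 0$ by continuity of $\rho$.

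Combining these estimates yields $\E[(T(1,x_n) - T^{(k)})^2] \leq C\,\P(e_k \in \Upsilon)$ for every $k$, and summing over $k$ produces
$$\mathrm{Var}(T(1,x_n)) \leq C \sum_k \P(e_k \in \Upsilon) \;=\; C \cdot \E\bigl[\ell(\Upsilon(1,x_n))\bigr],$$
which is exactly the claimed bound. Coupled with Corollary \ref{l:expgeodlen} this will then deliver the linear-in-$n$ upper half of Theorem \ref{t:linvar}, but for the proposition itself only the chain of inequalities above is required.
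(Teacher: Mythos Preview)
Your proposal correctly implements the Efron--Stein/martingale argument of Kesten \cite{Kes93}, which is precisely what the paper invokes (the paper gives no proof of its own for this proposition, simply citing the proof of (1.13) in \cite{Kes93}). One small correction: your claim that $\rho([0,1])>0$ ``by continuity of $\rho$'' is not justified, since atomlessness does not prevent the support of $\rho$ from lying entirely in $[2,\infty)$, say; but this is harmless, as you may replace the cutoff $1$ by any $M$ with $\rho([0,M])>0$ (such $M$ certainly exists) and the dichotomy $S\leq M$ versus $S>M$ goes through unchanged.
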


The upper bound in Theorem \ref{t:linvar}  follows immediately from Proposition \ref{p:ub} together with Corollary \ref{l:expgeodlen}. \\

\noindent {\bf Lower Bound:}
Fix $n\in \N$. Consider the passage time $T_{n}:=T(1,x_n)$. Let $D=D(\Ga,\rho)$ be such that the conclusion of Lemma \ref{l:cptsupport}, Lemma \ref{l:zerosupport} or Lemma \ref{l:infinitysupport} holds depending on the support of $\rho$. Recall that $\B_{i,k}$ denotes the region between the hyperplanes $H_{i}$ and $H_{i+k}$. For $i\geq 1$, let $\cg_{i}$ denote the $\sigma$-algebra generated by the random variables $\{X(e):e\in \B_{(2i-1)D,D}\}$. Note that when $D$ is sufficiently large, these sets are mutually disjoint (as $i$ varies). Let $\cg_{*}$ denote the $\sigma$-algebra generated by the remaining edge weights, i.e, 
$$\left\{X(e):e\in \Ga \setminus (\cup_{i=1}^{\infty} \B_{(2i-1)D,D})\right\}.$$
We define a filtration $\{\cf_{i}\}_{i\geq 0}$ by setting $\cf_0=\emptyset$, $\cf_1=\cg_{*}$, and for $i\geq 2$, set $\cf_{i}$ to be the $\sigma$-algebra generated by $\cup_{j=2}^{i}\cg_{i-1}\cup \cg_{*}$. 
Consider the Doob Martingale of $T_{n}$ with respect to this filtration $\{\cf_{i}\}_{i\geq 0}$ given by 
$$M_{i}=\E[T_{n}\mid \cf_{i}].$$ It is a standard fact (see, e.g.\  \cite[Section 3.3]{fppsurvey}) using the orthogonality of martingale difference sequences that
\begin{equation}
\label{e:vardecomp}
\mbox{Var} (T_{n})=\E \left[ \sum_{i=1}^{\infty} \mbox{Var}(M_{i}\mid \cf_{i-1})\right].
\end{equation} 
This type of decomposition of variance is a standard method of proving upper and lower bounds of the variance in models of first and last passage percolation. The lower bound in Theorem \ref{t:linvar}  is an immediate consequence of \eqref{e:vardecomp} together with the following lemma. 

\begin{lemma}
	\label{l:positivevar}
	There exists $c=c(\rho,\Ga,D)>0$ such that for each $2\leq i\leq \frac{n}{4D}$ we have 
	$\mathrm{Var}(M_{i}\mid \cf_{i-1})\geq c$.
\end{lemma}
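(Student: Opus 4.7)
The plan is to bound $\mathrm{Var}(M_i \mid \cf_{i-1})$ below by a positive constant directly, via a monotonicity/resampling argument on the slab whose weights are being revealed. Set $\B_i := \B_{(2i-3)D, D}$, the slab whose edge weights are precisely what is revealed when passing from $\cf_{i-1}$ to $\cf_{i}$, and write $\mathrm{Var}(M_i \mid \cf_{i-1}) = \mathrm{Var}_{\omega_{\mathrm{sl}}}(f(\omega_{\mathrm{sl}}))$ where $f(\omega_{\mathrm{sl}}) := \E[T_n \mid \cf_{i-1}, \cg_{i-1} = \omega_{\mathrm{sl}}]$ integrates out $\cg_{\geq i}$, and $\omega_{\mathrm{sl}} \sim \rho^{\otimes N_D}$ is independent of $\cf_{i-1}$ (here $N_D < \infty$ is the number of edges in $\B_i$, depending only on $D$ and $\Ga$). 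Since $\rho$ is continuous and nondegenerate, I would pick $0 \leq a_1 < a < b < b_1 < \infty$ in $\mathrm{supp}(\rho)$ with $p_L := \rho([a_1, a]) > 0$ and $p_H := \rho([b, b_1]) > 0$, and define the positive-probability slab events $E_L := \{\omega_{\mathrm{sl}}(e) \in [a_1, a] \ \forall e \in \B_i\}$ and $E_H := \{\omega_{\mathrm{sl}}(e) \in [b, b_1] \ \forall e \in \B_i\}$, each of probability $\geq \min(p_L, p_H)^{N_D} =: q > 0$.

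The heart of the argument is a deterministic comparison: letting $\omega^a$ (resp.\ $\omega^b$) denote the configuration with every slab weight set to $a$ (resp.\ $b$), I claim that for every realization of $\cg_{\geq i}$,
\[
T_n(\omega^b, \cdot) - T_n(\omega^a, \cdot) \;\geq\; (b-a)\, k(\gamma^b) \;\geq\; (b-a)\, D/2,
\]
where $\gamma^b$ is the FPP-geodesic of $(\omega^b, \cg_{\geq i}, \cf_{i-1})$ and $k(\gamma^b)$ is the number of edges of $\gamma^b$ lying in $\B_i$. The first inequality comes from using $\gamma^b$ as a suboptimal path in the environment $(\omega^a, \cdot)$, since the two environments differ by exactly $(b-a)$ on each slab edge and agree elsewhere. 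For the second, since $2 \leq i \leq n/(4D)$ the slab $\B_i$ topologically separates $1 \in H_{(2i-3)D}^-$ from $x_n \in H_{(2i-2)D}^+$, so by the ``visit-accounting'' of Lemma \ref{lem-hyperplane-props}, $\gamma^b$ must contain at least one full visit entering at $H_{(2i-3)D}$ and exiting at $H_{(2i-2)D}$; the thickened-hyperplane definition together with coarse Lipschitzness of nearest-point projection onto $[1,\xi)$ forces $d(u, v) \geq D - O(\delta) \geq D/2$ for every $u \in H_{(2i-3)D}$, $v \in H_{(2i-2)D}$, provided $D$ is chosen large enough. Taking expectation over $\cg_{\geq i}$ then transfers the comparison to $f(\omega^b) - f(\omega^a) \geq (b-a)D/2 =: \Delta$.

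Monotonicity of $T_n$ in edge weights (hence of $f$) immediately gives $f(\omega_{\mathrm{sl}}) \leq f(\omega^a)$ on $E_L$ and $f(\omega_{\mathrm{sl}}) \geq f(\omega^b) \geq f(\omega^a) + \Delta$ on $E_H$. The elementary bound ``if $Y\leq A$ on an event of probability $\geq q$ and $Y\geq A+\Delta$ on an event of probability $\geq q$, then $\mathrm{Var}(Y)\geq q\Delta^2/4$'' then yields $\mathrm{Var}_{\omega_{\mathrm{sl}}}(f) \geq q(b-a)^2 D^2/16 =: c > 0$, uniformly in $\cf_{i-1}$ and $i$. The main obstacle I anticipate is making the deterministic geometric lower bound $k(\gamma^b) \geq D/2$ fully rigorous; this is a consequence of the quasiconvex-hyperplane infrastructure of Section \ref{sec-hypgeoprel} (coarse Lipschitzness of projection, thickness of hyperplanes, separation of half-spaces) and should be packaged as a short preliminary observation. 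Notably, this direct monotonicity route relies only on the geometric scaffold of the slabs and sidesteps the stronger probabilistic ``positive-probability forcing event'' of Proposition \ref{p:positive}, which underlies but is not needed for the present variance bound.
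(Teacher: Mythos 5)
Your approach has a fatal flaw at its very first probabilistic step. The slab $\B_i = \B_{(2i-3)D,D}$ is an \emph{infinite} set of edges. A hyperplane $H_j^e = \{x : d(x,\oxi) = d(x,x_j)\}$ is, for a non-elementary hyperbolic group, an infinite subset of $\Gamma$ (in a free group with respect to a standard generating set it is an entire subtree hanging off $x_j$; in a surface group it is a coarse geodesic transverse to $\oxi$). Hence the region between two hyperplanes --- which is roughly $\bigcup_{(2i-3)D \le j < (2i-2)D} H_j$ --- contains infinitely many edges, so $N_D = \infty$. Since $\rho$ is continuous and hence $p_L, p_H < 1$, your events $E_L = \{\omega_{\mathrm{sl}}(e) \in [a_1,a] \ \forall e \in \B_i\}$ and $E_H = \{\omega_{\mathrm{sl}}(e) \in [b,b_1] \ \forall e \in \B_i\}$ have probability zero, and the lower bound $q = \min(p_L,p_H)^{N_D}$ is $0$. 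The deterministic comparison $T_n(\omega^b) - T_n(\omega^a) \geq (b-a)D/2$ is correct and relies on a nice topological crossing argument, but it is being applied at configurations $\omega^a, \omega^b$ (and nearby ones) that you cannot access with positive probability.

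This is exactly the difficulty that forces the paper to take a more circuitous route. One cannot set all slab weights to be extreme simultaneously; one can only specify extreme weights on a \emph{finite} set of edges (the paper uses $I = [x_{(2i-3)D}, x_{(2i-2)D}]$, or small neighborhoods of it). But once one has only pinned down a finite region, there is no deterministic guarantee that the FPP-geodesic's crossing of the slab passes near that region --- it might cross the slab very far from the word geodesic, through the "unspecified" part of the slab whose weights you have no control over. The positive-probability events $B_i$ of Proposition \ref{p:positive} (built in Lemmas \ref{l:cptsupport}, \ref{l:zerosupport}, \ref{l:infinitysupport}) are precisely what force the $\omega$-geodesic to cross the slab through (and in fact along a definite length of) the word geodesic segment, and the three-case analysis on the support of $\rho$ handles the various mechanisms by which such a localization can be enforced. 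In Lemma \ref{l:condpositive} one then resamples the weights only on the finite segment $I$, where the geodesic is known (with positive probability) to pass. To repair your argument you would essentially have to reprove this localization of the slab crossing, at which point you have reconstructed the paper's proof.

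Separately, you should also note that even the claim ``the crossing subpath of $\gamma^b$ lies in $\B_i$'' requires the half-space machinery of Lemma \ref{lem-hyperplane-props} applied carefully (the half-spaces $H_j^{\pm}$ overlap and overlap with $H_j$, so "last exit"/"first entry" need some care), but that is a minor gap compared to the infiniteness issue.
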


\begin{proof}
	Let $2\leq i\leq \frac{n}{4D}$ be fixed. We shall divide the set of all edges in $\Ga$ into three parts. Let $S=S_{i}$ denote the set of all edges whose weights generate the $\sigma$-algebra $\cf_{i-1}$. Let $V=V_{i}$ denote the set of all edges in $\cup_{j\geq i} \B_{(2j-1)D,D}$. Finally let $U=U_{i}$ denote the set of all edges in $\Ga \setminus (S_{i}\cup V_{i})$, i.e., the set of all edges in $\B_{(2i-3)D,D}$. We shall write any $\omega\in \Omega$ as $\omega=(\omega_{S}, \omega_{V}, \omega_{U})$ where $\omega_{S}, \omega_{V}$, and $\omega_{U}$ are the restrictions of $\omega$ to $S_{i}, V_{i}$ and $U_{i}$ respectively. Condition on $\cf_{i-1}$ (i.e., fix $\omega_{S}$ for the rest of this proof). Using the standard fact that $\mbox{Var}{X}\geq \frac{1}{4}\E[X-X']^2$ when $X$ and $X'$ have the same distribution and are defined on the same probability space we have 	
$$\mbox{Var}(M_{i}\mid \cf_{i-1}) \geq \frac{1}{4} \int \left( \int (T_n(\omega_{S}, \omega_{V}, \omega_{U})-T_{n}(\omega_{S},\omega_{V},\omega'_{U})~d\omega_{V}\right)^2~d\omega_{U}~d\omega'_{U},$$ 
where $\omega'_{U}$ is an copy of $\omega_{U}$ (i.e., an assignment of $\rho$-distributed i.i.d.\ random weights on the edges $U_{i}$) which need not be independent of $\omega_{U}$. In fact we shall consider an appropriate coupling of $(\omega_{U},\omega'_{U})$ below. To see why the above equation is true, note that for a fixed $\omega_{S}$, $\int (T_n(\omega_{S}, \omega_{V}, \omega_{U})~d\omega_{V}$ and $\int T_{n}(\omega_{S},\omega_{V},\omega'_{U})~d\omega_{V}$ are two coupled copies of $\E[T_{n}\mid \cf_{i-1}]$.

To prove the lemma, it now suffices to show that there exists a coupling of $(\omega_{U},\omega'_{U})$ such that under this coupling there is a subset $\cB$ of configurations $(\omega_{U},\omega'_{U})$ with $\P(\cB)\geq c_1>0$, and on $\cB$
\begin{equation}
\label{e:condpositive2}
\int (T_n(\omega_{S}, \omega_{V}, \omega_{U})-T_{n}(\omega_{S},\omega_{V},\omega'_{U})~d\omega_{V} \geq c_2>0.
\end{equation}
Indeed, \eqref{e:condpositive2} will imply $\mbox{Var}(M_{i}\mid \cf_{i-1}) \geq 
\frac{c_1c_2^2}{4}$, and establish the desired result. We establish \eqref{e:condpositive2} in Lemma \ref{l:condpositive} below, invoking which finishes the proof of Lemma \ref{l:positivevar} and hence that of Theorem \ref{t:linvar}.
\end{proof}

Before establishing \eqref{e:condpositive2}, let us explain the basic idea. Roughly we want to show that once we have fixed $\omega_{S}$ and $\omega_{V}$ there is still enough randomness in the configuration $\omega_{U}$ such that we can ensure that for two coupled copies $(\omega_{U},\omega'_{U})$ with the same marginal we can ensure with uniformly positive probability that $(T_n(\omega_{S}, \omega_{V}, \omega_{U})-T_{n}(\omega_{S},\omega_{V},\omega'_{U})$ is uniformly bounded away from $0$. To show this we use the technology from Sec. \ref{sec-coalesce} to ensure that with a positive probability $\omega_{U}$ can be chosen in such a way that the geodesic $\Upsilon(1,x_{n})$ intersects the segment $[x_{(2i-3)D},x_{(2i-2)D}]$. We then obtain $\omega'_{U}$ from $\omega_{U}$ by resampling the weights on $[x_{(2i-3)D},x_{(2i-2)D}]$ while keeping the other weights fixed. On the positive probability event that each changed co-ordinate in $\omega'_{U}$ is strictly smaller than the corresponding co-ordinate in $\omega$ we get the desired decrease in $T_{n}(\omega_{S},\omega_{V},\omega'_{U})$ compared to $T_n(\omega_{S}, \omega_{V}, \omega_{U})$.  We now make this argument rigorous in the following lemma. 

%

\begin{lemma}
\label{l:condpositive}
In the set-up of proof of Lemma \ref{l:positivevar} we have the following: there exists a coupling $(\omega_{U},\omega'_{U})$ with both marginals $\rho^{\otimes U}$ such that under this coupling there is a subset $\cB$ of configurations $(\omega_{U},\omega'_{U})$ with $\P(\cB)\geq c_1>0$, and on $\cB$
$$\int (T_n(\omega_{S}, \omega_{V}, \omega_{U})-T_{n}(\omega_{S},\omega_{V},\omega'_{U})~d\omega_{V} \geq c_2>0.$$
\end{lemma}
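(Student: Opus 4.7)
The plan is to build a resampling coupling concentrated on the word-geodesic segment $\gamma_\ast := [x_{(2i-3)D}, x_{(2i-2)D}]$ lying inside the $U$-region $\B_{(2i-3)D, D}$, and to invoke Proposition \ref{p:positive} (at scale $(2i-3)D$) to force $\Upsilon(1,x_n)$ to use a linear-in-$D$ number of edges of $\gamma_\ast$. First I would set the coupling as follows: take $\omega_U(e)=\omega'_U(e)$ for every $e\in U\setminus \gamma_\ast$; and for the $D$ edges $e\in \gamma_\ast$ let $(\omega_U(e),\omega'_U(e))$ be independent with marginal $\rho$. Both marginals are $\rho^{\otimes U}$, and every discrepancy is localized to $\gamma_\ast$.

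Next I would re-run the case-by-case construction of Lemma \ref{l:cptsupport} / \ref{l:zerosupport} / \ref{l:infinitysupport} at scale $(2i-3)D$, with one small modification: relax the upper bound imposed on the $\gamma_\ast$-weights by a constant factor (e.g.\ replace ``$\le a+\epsilon$'' by ``$\le a+10\epsilon$'' in case (i), with analogous loosening in the other two cases) while keeping every other constraint. The same exponential-divergence plus FKG argument of Section \ref{sec-coalesce} still produces a positive-probability event $B^\star$ with the property that every path from $H_{(2i-3)D}$ to $H_{(2i-2)D}$ in $\B_{(2i-3)D, D}$ admits an alternative of no greater $\omega$-length which uses at least $3D/5$ edges of $\gamma_\ast$. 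For $D$ large, $N_{D/10}(\B_{(2i-3)D, D})$ is disjoint from every $V$-region, so $B^\star$ depends only on $\omega_U$ and on a thin $\omega_S$-slice inside the adjacent $\cg_\ast$-regions; by a Fubini--Markov step, on a positive-probability subset of $\omega_S$-configurations one has $\P(B^\star\mid \omega_S)\ge \beta>0$. I may harmlessly restrict the conclusion of the lemma to this good subset of $\omega_S$, at the cost of a further multiplicative constant in the variance lower bound (which is absorbed into $c_1 c_2^2/4$).

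I would then define the event $\cB$ as the intersection of (a) the $\omega_U$-part of $B^\star$, (b) $\omega_U(e)\in [a+3\epsilon,a+5\epsilon]$ for every $e\in \gamma_\ast$, and (c) $\omega'_U(e)\in [a,a+\epsilon]$ for every $e\in \gamma_\ast$. Conditions (a)--(c) involve disjoint blocks of edges (because condition (a) controls only $U\setminus \gamma_\ast$ and $\omega_S$-boundary slices), so continuity of $\rho$ and the independence built into the coupling give $\P(\cB\mid \omega_S)\ge c_1>0$. On $\cB$, the shortening conclusion of $B^\star$ together with the a.s.\ uniqueness of $\omega$-geodesics forces $\Upsilon(1,x_n)$ itself to traverse at least $3D/5$ edges of $\gamma_\ast$, \emph{for every $\omega_V$}; using $\Upsilon(1,x_n)$ as a trial path in $\omega'=(\omega_S,\omega_V,\omega'_U)$ gives the pointwise bound
\[
T_n(\omega_S,\omega_V,\omega_U)-T_n(\omega_S,\omega_V,\omega'_U)\;\ge\;\sum_{e\in \gamma_\ast\cap \Upsilon(1,x_n)}\bigl(\omega_U(e)-\omega'_U(e)\bigr)\;\ge\;\tfrac{3D}{5}\cdot 2\epsilon\;=:\;c_2>0,
\]
which survives integration in $\omega_V$ and yields the lemma.

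The main obstacle I anticipate is reconciling the favorable event $B^\star$ with the requirement that $\omega_U|_{\gamma_\ast}$ be bounded \emph{below} (so that $\omega'_U|_{\gamma_\ast}$ has room to be strictly smaller), since the events in Lemmas \ref{l:cptsupport}--\ref{l:infinitysupport} instead pin every $\gamma_\ast$-edge to be as small as possible. Relaxing those bounds by a constant factor as above fixes this, but one must verify that the shortening conclusion still goes through; this amounts to checking that the arguments in Lemmas \ref{l:middle1} and \ref{l:sidetoside} remain valid under the relaxed $\gamma_\ast$-constraint (they do, since those arguments use only that alternative paths accumulate many large or many ``unspecified'' edges, independently of the precise $\gamma_\ast$-values) and, in case (iii), that the large-weight threshold $M$ in Lemma \ref{l:infinitysupport} can be enlarged to dominate the slightly larger $\gamma_\ast$-weights. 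Cases (ii) and (iii) follow by exactly the same recipe, reading the interval $[a, a+10\epsilon]$ as the corresponding interval near the essential infimum of $\rho$.
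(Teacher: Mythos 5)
The broad strategy matches the paper's: resample only on the $D$ edges $I = \gamma_\ast$, use the Section~\ref{sec-coalesce} coalescence events to force the $\omega$-geodesic to traverse $\gamma_\ast$, and thereby engineer a pointwise decrease in passage time. Technically you diverge: rather than re-running Lemmas~\ref{l:cptsupport}--\ref{l:infinitysupport} with relaxed $\gamma_\ast$-bounds, the paper defines a $U^*$-measurable event $\cA(\epsilon)$ via a monotonicity observation --- $\omega_{U^*}\in\cA(\epsilon)$ if, whenever $\omega_I\le a+\epsilon$ on $I$, the geodesic must use $I$, for every $\omega_V$. Since lowering $I$-weights can only favor paths through $I$, the $B_i$-events of those lemmas (which pin the $I$-weights near $a$) automatically force $\omega_{U^*}\in\cA(\epsilon)$ with no extra arithmetic; moreover the paper then only needs the geodesic to use a single $I$-edge (gap $\epsilon''-\epsilon'$), while you extract a gap of order $D$, stronger than necessary. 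Your Fubini--Markov reduction for the thin $\omega_S$-slice touched by the coalescence event is legitimate and in fact slightly more careful than the paper's write-up, which implicitly treats $B_i$ as $\omega_U$-measurable.

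There is, however, a real gap. You fix the window $[a+3\epsilon,a+5\epsilon]$ for $\omega_U|_{\gamma_\ast}$ and invoke continuity of $\rho$ to conclude $\P(\cB\mid\omega_S)\ge c_1>0$. But continuity together with $a$ being the essential infimum only give $\rho((a,a+\delta))>0$ for all $\delta>0$; they do not guarantee $\rho([a+3\epsilon,a+5\epsilon])>0$ for the $\epsilon$ you picked. The mass near $a$ could sit entirely inside $(a,a+\epsilon)$, or $\rho$ could be a Cantor-type measure whose support misses $[a+3\epsilon,a+5\epsilon]$ at your scale, in which case your event $\cB$ has probability zero. You could salvage a fixed window shape by a covering argument over geometric scales, but the cleaner route is the paper's: after fixing $\epsilon$, choose $0<\epsilon'<\epsilon''<\epsilon$ so that both $\rho([a,a+\epsilon'])$ and $\rho([a+\epsilon'',a+\epsilon])$ are positive (possible since $\rho((a,a+\epsilon))>0$), and resample into $[a+\epsilon'',a+\epsilon]$ versus $[a,a+\epsilon']$. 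With that modification your argument closes.
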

	
\begin{proof}

Let $I$ denote the set of edges on $[x_{(2i-3)D},x_{(2i-2)D}]$, and let $U^*=U_{i}\setminus I$. Let $\omega_{U^*}$ (resp.\ $\omega_{I}$) denote the restriction of $\omega_{U}$ to $U^*$ (resp.\ $I$). Let $a\geq 0$ denote the essential infimum of $\rho$, i.e., the smallest point in the support of $\rho$. For $\epsilon>0$, let $\cA(\epsilon)$ denote the set of all possible weight configurations $\omega_{U^*}$ such that in the environment $\omega=(\omega_{S},\omega_{V},\omega_{U^*},\omega_{I})$ the geodesic $\Upsilon(1,x_{n})$ passes through at least one edge in $I$ for all choices of $\omega_{V}$ provided $\omega_{e}\leq a+\epsilon$ for all $e\in I$. If $D$ is sufficiently large and $\beta$ is as in Lemma \ref{l:cptsupport}, Lemma \ref{l:zerosupport} or Lemma \ref{l:infinitysupport} (depending on the support of $\rho$) we have 
	$$ \P(\cA(\epsilon))+D\rho([a,a+\epsilon]) \geq \beta.$$
Indeed, notice that by Lemma \ref{l:cptsupport}, Lemma \ref{l:zerosupport} or Lemma \ref{l:infinitysupport} we have with probability at least $\beta$,
$\omega_{U}$ is such that every geodesic from $H_{(2i-3)D}$ to $H_{(2i-2)D}$ passes through at least one edge in $I$. Further, for each $\omega_{U}$ for which the above happens we either have $\omega_{e}<a+\epsilon$ for some $e\in I$ or we have that the projection $\omega_{U^*}\in \cA(\epsilon)$. Clearly, since $\rho$ does not have an atom at $a$, it follows that by choosing $\epsilon$ sufficiently small we get $\P(\cA(\epsilon))\geq \frac{\beta}{2}$. Since $\rho$ is assumed to be continuous it follows that for each $\epsilon>0$ sufficiently small there exists $0<\epsilon'< \epsilon'' <\epsilon$ such that $h(\epsilon,\epsilon',\epsilon'')=\min\{\rho([a,a+\epsilon']), \rho([a+\epsilon'',a+\epsilon])\}>0$. 

Let us now choose the coupling $(\omega_{U},\omega'_{U})$ as follows: Let $\omega_{U^*}=\omega'_{U^*}$ and let $\omega'_{I}$ denote an independent copy of $\omega_{I}$. Let $\cB=\cB(\epsilon,\epsilon',\epsilon'')$ denote the set of all configurations $(\omega_{U},\omega'_{U})$ such that $\omega_{U^*}=\omega'_{U^*}\in \cA(\epsilon)$, $\omega_{e}\in [a+\epsilon'',a+\epsilon]$ for all $e\in I$ and $\omega_{e}'\in [a,a+\epsilon']$ for all $e\in I$. 

Clearly, for appropriate choices of the parameters we have $\P(\cB(\epsilon,\epsilon',\epsilon''))\geq \frac{\beta h(\epsilon,\epsilon',\epsilon'')^2}{2}=c_1>0$. For $(\omega_{U},\omega'_{U})\in \cB$,  a.e., $\omega_{V}$ (and the fixed choice of $\omega_{S}$) we finally need
to establish:
\begin{equation*}
(T_n(\omega_{S}, \omega_{V}, \omega_{U})-T_{n}(\omega_{S},\omega_{V},\omega'_{U})\geq \epsilon''-\epsilon'=c_2>0.
\end{equation*} 
Indeed, observe that, changing $\omega_{U}$ to $\omega'_{U}$ decreases the length of any path by at least $\ell c_2$ where $\ell$ is the number of edges of $I$ used by the path. By definition of $\cB$, the geodesic must pass through at least one edge of $I$ and hence we get the above equation, completing the proof of the lemma.
\end{proof}

\section{Discussion and Future Directions}
\label{s:future}
We have, in this paper, investigated some of the fundamental questions for first passage percolation on the Cayley graph of a hyperbolic group. In contrast to $\Z^d$, some of the results, e.g., the existence of velocity along almost every direction required much more work owing to the more complicated geometry of the underlying graph, whereas the hyperbolic geometry helped us resolve some other problems that are well-known to be difficult in Euclidean FPP (e.g.\ coalescence of geodesics). To maintain transparency of exposition, we have often worked with sub-optimal arguments and many of our results can possibly be strengthened. We finish with a discussion of some of these and a few of the many remaining open questions. 

First of all, we have worked with a rather strong assumption on the passage time distribution $\rho$, which one should be able to relax to a large extent. It would be interesting to know the optimal conditions on $\rho$ under which the results like Theorem \ref{thm-vexists} or Theorem \ref{t:linvar} hold. In particular, a proof of the existence of velocity (Theorem \ref{thm-vexists}) that directly appeals to the subadditive ergodic theorem (see Appendix \ref{s:set}) is worth investigating as it would likely provide weaker moment conditions needed for the existence of velocity. However, as we need the exponential tail on the passage time distributions for the other results in this paper, we did not pursue this direction here.

Observe also that  Theorem \ref{thm-vexists} only gives a convergence in mean for first passage times along word geodesics to almost every boundary direction. Using Theorem \ref{t:linvar} one can immediately upgrade this to an in probability convergence. Although we did not pursue this direction in this paper, one can fairly easily upgrade this to an almost sure convergence, by either developing stronger concentration inequalities as in \cite{Kes93}, or appealing directly to Kingman's theorem as described in Appendix \ref{s:set} below. 

After establishing the law of large number and order of fluctuations, the next natural question is to ask for the scaling limit for $T(1,x_{n})$ for $x_{n}$ along a word geodesic ray. Comparing with results from \cite{CD13, Ahl15}, one would expect a positive answer to the following question which was conjectured also in \cite{bz-tightness}.

\begin{qn}[Central Limit Theorem]\label{qn-clt} {\rm Does $\frac{T(1,x_{n})-\E T(1,x_{n})}{\sqrt{{\rm Var}(T(1,x_{n}))}}$ converge weakly to a standard Gaussian variable?}
\end{qn}

More on the geometric side of things, by Theorem \ref{thm-vexists}, 
existence and continuity of velocity $v(\xi)$ on $\pG$ implies that $v(\xi)$ is constant for all $\xi \in \pG$, whereas we have also given examples to show that this need not be true in general. It is clear that if $G$ is free of rank $m$ and the generating sets are of the form $\{a_i^{\pm j} \vert i= 1\cdots m; \, j = 1, \cdots k\}$ for some $k \geq 1$, then $v(\xi)$ exists and is  constant
for all $\xi \in \pG$. Are there other examples? Towards this, we propose:
\begin{qn}\label{rigidity} {\rm Find conditions on a hyperbolic group $G$ such that $v(\xi)$ exists and is continuous (and is hence constant) on $\pG$?}
\end{qn}

What we have in mind here is the following: if $B-$neighborhoods of  $[1,\xi)$, $\xi \in \pG$, are all nearly isometric (in a suitable sense), then $v(\xi)$ is constant. Can one deduce restrictions on the geometry of $\Ga$ from 
continuity of velocity $v(\xi)$?

\begin{rmk}[Cayley graphs versus arbitrary hyperbolic graphs]\label{rmk-ref}
In Section \ref{sec-free3}, we have shown that Theorem \ref{thm-vexists} fails dramatically when we replace the Cayley graph of a hyperbolic group by a graph quasi-isometric to it. However, the arguments in Sections 
\ref{sec-dir}, \ref{sec-coalesce}, and \ref{sec-linvar} are purely geometric in nature and do not use group-invariance at any point. In particular, they hold true for any bounded degree hyperbolic graph.  As pointed out at the end of the Introduction, the only difference lies in setting up the statements.  For a Cayley graph, the identity
element is chosen as a preferred base-point, whereas, for an arbitrary
Gromov-hyperbolic graph with uniformly bounded degree, any point can be chosen as a base-point. As the boundary of any hyperbolic graph is independent of the base-point,  the group-independent arguments of Sections \ref{sec-dir}, \ref{sec-coalesce} and \ref{sec-linvar} go through.
\end{rmk}

\appendix
\section{}\label{sec-app}
We provide in this section the postponed proofs of Theorem \ref{t:iidtail}, Proposition \ref{p:frequency} and Lemma \ref{bt-lemma}.

\begin{proof}[Proof of Theorem \ref{t:iidtail}]
	For $j\geq 0$, let us denote $Y_{i,j}=(X_{i}-M)1_{\{{X_{i}\in [2^jM, 2^{j+1}M]}\}}$. Let $\epsilon>0$ be fixed and let us choose $L>0$ sufficiently large. We shall show that by choosing $M=M(\epsilon, L)$ sufficiently large we can ensure 
	\begin{equation}
	\label{e:smallj}
	\P\left(\sum_{i=1}^n Y_{i,j} \geq \epsilon 2^{-(j+1)}n \right) \leq e^{-Ln}
	\end{equation}
	for $j\leq \log n$. Observe that by the sub-Gaussian tails of $X_i$ we have 
	\begin{equation}
	\label{e:bigj}
	\P\left(\sum_{j\geq \log n}\sum_{i=1}^n Y_{i,j} \neq 0 \right) \leq ne^{-cn^2M^2}, 
	\end{equation}
	and hence together with \eqref{e:bigj}, \eqref{e:smallj} completes the proof. 
	Notice now that for a fixed $j\leq \log n$, 
	$$\P\left(\sum_{i=1}^n Y_{i,j} \geq \epsilon 2^{-(j+1)}n \right) \leq \P\left(\sum_{i=1}^n 1(Y_{i,j}>0) \geq \epsilon M^{-1} 4^{-(j+1)}n \right)$$
	Observe that $\sum_{i=1}^n 1(Y_{i,j}>0)$ is a $\mbox{Bin}(n,p_{j,M})$ variable where $p_{j,M}=\P(X_i\in [2^jM,2^{j+1}M]) \leq C_1e^{-c_14^{j}M^2}$. It therefore follows by a Chernoff inequality that for $M$ sufficiently large the above probability is bounded above  by 
	$$\exp\left (-\epsilon M^{-1} 4^{-(j+1)}n (c_14^j M^2-\log C_1+ \log (\epsilon M^{-1} 4^{-(j+1)})) \right).$$ 
	Denoting $q_j:=\epsilon M^{-1} 4^{-(j+1)}$ it follows that the above probability is bounded above  by 
	$$\exp \left(-n \biggl(\frac{cM\epsilon}{8}-\log C_1+ q_j\log q_j\biggr)\right)$$
	and the proof is completed by noting that $x\log x$ is bounded away from $-\infty$ for $x\in [0,1]$ and choosing $M$ sufficiently large.
\end{proof}

\noindent
\textbf{Proof of Proposition \ref{p:frequency}:}
The proof  of Proposition \ref{p:frequency} below is fairly standard  in the theory of finite Markov chains, but we provide it for completeness. For this we need to consider the vector valued Markov chain $\{\mathbf{X}_n\}_{n\geq 1}$ obtained from the Markov chain $\{X_{i}\}_{i\geq 1}$ where $\mathbf{X}_i=(X_{(i-1)k+1},\ldots , X_{ik})$ for $i\geq 1$. Let $P^{(k)}$ denote the transition matrix of its chain. Also let us say an element $\mathbf{x}=(x_1,x_2,\ldots, x_{k})\in \Sigma^{k}$ is admissible if $P(x_i,x_{i+1})>0$ for all $i=1,2,\ldots, k-1$. Notice that if a state $\mathbf{x}$ is not admissible then almost surely the word $\mathbf{x}$ never occurs in the trajectory of the chain $\{X_{i}\}_{i\geq 1}$. We first need the following lemma. 

\begin{lemma}
\label{l:aperiodic}
For $k$ a multiple of $d$, each recurrent components of $P^{(k)}$ is aperiodic. Further, every admissible state in $\Sigma^{k}$ belongs to a recurrent component of $P^{(k)}$.  
\end{lemma}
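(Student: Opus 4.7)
The plan is to reduce the statement to the standard cyclic decomposition of an irreducible periodic Markov chain and transport irreducibility/aperiodicity from $P^d$ to $P^{(k)}$. Let me outline the steps.

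First I would recall the cyclic decomposition of $P$: since $P$ is irreducible with period $d$ on the finite state space $\Sigma$, there is a partition $\Sigma=C_0\sqcup C_1\sqcup\cdots\sqcup C_{d-1}$ so that $P$ sends $C_i$ into $C_{i+1\bmod d}$, and $P^d$ restricted to each $C_i$ is irreducible and aperiodic. I would then observe that since $k$ is a multiple of $d$, the first coordinate of $\mathbf{X}_n$ always stays in the same periodic class throughout. More precisely, for each $i\in\{0,\ldots,d-1\}$, let
$$\Sigma^k_i=\{\,\mathbf{x}=(x_1,\ldots,x_k)\in \Sigma^k\ \text{admissible}:x_1\in C_i\,\},$$
so an admissible block automatically has $x_j\in C_{i+j-1\bmod d}$. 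Since $k$ is a multiple of $d$, the one-step transition of $\mathbf{X}_n$ keeps the class of the first coordinate fixed, hence each $\Sigma^k_i$ is a closed set under $P^{(k)}$, and every admissible state lies in exactly one $\Sigma^k_i$.

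Next I would prove that $P^{(k)}$ restricted to a fixed $\Sigma^k_i$ is irreducible. Given $\mathbf{x},\mathbf{y}\in\Sigma^k_i$, a transition in $n$ vector-steps $\mathbf{x}\to\mathbf{y}$ factors as: the underlying chain $X$ goes from $x_k$ to $y_1$ in $(n-1)k+1$ steps (contributing $P^{(n-1)k+1}(x_k,y_1)$) and then realizes the block $\mathbf{y}$ starting from $y_1$ (contributing $P(y_1,y_2)\cdots P(y_{k-1},y_k)>0$ by admissibility of $\mathbf{y}$). Since $x_k\in C_{i-1}$ and $y_1\in C_i$, the residue condition $(n-1)k+1\equiv 1\pmod d$ is automatic from $k\equiv 0\pmod d$. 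Writing $(n-1)k+1$ as $1$ initial step plus a multiple of $d$, and using irreducibility of $P$ to pass from $x_k$ to some $z\in C_i$ with $P(x_k,z)>0$ together with the fact that $P^d$ is irreducible and aperiodic on $C_i$, one gets $P^{(n-1)k+1}(x_k,y_1)>0$ for all sufficiently large $n$.

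Aperiodicity of $P^{(k)}$ on $\Sigma^k_i$ follows from the same argument applied with $\mathbf{y}=\mathbf{x}$: the set of return times from $\mathbf{x}$ to itself contains all sufficiently large integers, so the period is $1$. Finally, each $\Sigma^k_i$ is a finite set on which $P^{(k)}$ is irreducible; by the standard finite Markov chain fact that every state of an irreducible finite chain is positive recurrent, every state of $\Sigma^k_i$ is recurrent. Hence each admissible state lies in a recurrent component, and each recurrent component (one of the $\Sigma^k_i$) is aperiodic. The main conceptual point—and the only place one has to be careful—is matching the residue classes when lifting paths in $P$ to paths in $P^{(k)}$; the assumption that $k$ is a multiple of $d$ is exactly what makes this bookkeeping go through cleanly.
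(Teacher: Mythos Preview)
Your proof is correct and follows essentially the same approach as the paper's: identify the recurrent components via the periodic classes of the first coordinate, show irreducibility within each by transporting paths from the underlying chain (using that $k\equiv 0\pmod d$ makes the residue bookkeeping work), and deduce aperiodicity from the fact that return times contain all sufficiently large integers. The paper's argument is more terse and phrases the components as ``admissible states starting with $x_i$ in the $d$-periodic orbit of $x_1$'' rather than introducing the cyclic decomposition $C_0,\ldots,C_{d-1}$ explicitly, but the content is the same.
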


\begin{proof} 
Let $\mathbf{x}\in \Sigma^{k}$ denote an admissible state starting with some $x_{1}\in \Sigma^{(k)}$. As $k$ is a multiple of $d$  and chain $P$ is irreducible with period $d$, starting from $\mathbf{x}$ any admissible state in $\Sigma^k$ starting with $x_{i}$ can be reached in $P^{(k)}$ for all $x_{i}$ in the $d$-periodic orbit of $x_1$ in $P$ (i.e., all $x_{i}$ that can be reached from $x_1$ in multiples of $d$ steps). Clearly, these states form a recurrent component, and hence all admissible states belong to one such recurrent component. It now remains to show that the recurrent components are all aperiodic. For this, notice that since the chain $P$ has period $d$, and $k$ is a multiple of $d$, it follows that for all sufficiently large $i$,
$$\P(X_{ik+1}=x_1\mid (X_1,X_2,\ldots, X_{k})=\mathbf{x})>0.$$
This immediately shows that $\P(\mathbf{X}_{i}=\mathbf{x}\mid \mathbf{X}_1=\mathbf{x})>0$ for all $i$ sufficiently large, thus showing that the component of $\mathbf{x}$ is aperiodic, as desired.
\end{proof}

We can now complete the proof of Proposition \ref{p:frequency}.

\begin{proof}[Proof of Proposition \ref{p:frequency}]
As $k$ is a multiple of $d$, clearly if $\mathbf{x}$ is not admissible or does not start with a state in the $d$-periodic orbit of $x$ (in $P$), $N_{n}(\mathbf{x},x)=0$ and there is nothing to prove. So let us assume the contrary. Let $\underline{\mathbf{\pi}}$ denote the unique invariant measure of the irreducible aperiodic recurrent component of $P^{(k)}$ containing $\mathbf{x}$. We shall show that the conclusion of the proposition holds with $(\mathbf{x},x)=\underline{\mathbf{\pi}}(\mathbf{x})$ (notice that the reason to retain the dependence on $x$ is that the starting state $x$ uniquely determines which recurrent component a particular realization of the vector valued chain $\{\mathbf{X}_n\}_{n\geq 1}$ belongs to). Let $x$ and $\mathbf{x}$ be as above and let $\epsilon>0$ be fixed. We shall show that 
	\begin{equation}
	\label{e:toshow}
	\limsup_{n\to \infty} |\frac{N_n(x_1,x)}{n}-\underline{\mathbf{\pi}}(\mathbf{x})|\leq \epsilon
	\end{equation}
	almost surely, which gives the desired result. 
	
	It is a standard result from the theory of (not necessarily reversible) finite Markov chains  that for every irreducible aperiodic finite Markov chain with transition matrix $Q$ and stationary distribution $\pi$ there exists $\alpha=\alpha(Q)<1$ such that for all $x$ in the state space we have
	\begin{equation}
	\label{e:TV}
	||Q^t(x,\cdot)-\pi(\cdot)||_{TV} \leq \alpha^{t},
	\end{equation} 
	where $||\, . \,||_{TV}$ denotes the total variation norm. Let us denote by $Q$ the restriction of $P^{(k)}$ to the recurrent component of $\mathbf{x}$ and choose $L$ sufficiently large so that $L\alpha^{L} \leq \frac{\epsilon}{2}$. 
	Observe that 
	$$N_{n}(\mathbf{x},x)=\sum_{i=1}^{L} N_{n,i}(\mathbf{x},x)$$ where 
	$$N_{n,i}(\mathbf{x},x):=\sum_{j=0}^{\frac{n}{L}-1} 1(\mathbf{X}_{jL+i}=\mathbf{x}).$$
	Next, by \eqref{e:TV}, for each fixed $i$,  and for $j\geq 1$, the indicator functions $1(\mathbf{X}_{jL+i}=\mathbf{x})$ are stochastically dominated above and below by a family of i.i.d.\ Bernoulli random variables with parameters $\underline{\mathbf{\pi}}(\mathbf{x})+\frac{\epsilon}{2L}$ and $\underline{\mathbf{\pi}}(\mathbf{x})-\frac{\epsilon}{2L}$ respectively. A Chernoff inequality (Theorem \ref{t:chernoff}) now yields that 
	$$\P\left( \bigg\vert \frac{1}{n}N_{n,i}(\mathbf{x},x)-\frac{\underline{\mathbf{\pi}}(\mathbf{x})}{L}\bigg\vert \geq \frac{\epsilon}{L}\right)\leq e^{-c(\epsilon)n/L}.$$
	Summing over all $i$ and  applying the Borel-Cantelli Lemma now yields \eqref{e:toshow} which completes the proof of the proposition.
%
\end{proof}

%
%


\begin{proof}[Proof of Lemma \ref{bt-lemma}]
	Though $\Pi$ is only coarsely Lipschitz and coarsely surjective with  uniformly bounded constants depending
	only on $\delta$, we shall assume below that $\Pi$ is, in fact, surjective and continuous for ease of exposition.
	Let $w \in \sigma$ be such that $\Pi(w) = o$. Consider the $R-$neighborhood $N_R([x,y])$ of $[x,y]$. Let $u$ denote the last point on $\sigma$ {\it before} $w$ where $\sigma$ exits $N_R([x,y])$ so that $d(u, \Pi(u)) = R$. 
	Similarly, let $v$ be the  first point {\it after} $w$ where  
	$\sigma$ re-enters $N_R([x,y])$ so that $d(v, \Pi(v)) = R$. Since 
	$\sigma \cap N_{100R} (o) = \emptyset$, it follows from the triangle inequality that $\Pi(u), \Pi(v) \in [x,y]$ lie on opposite sides
	of $o$ with $d(\Pi(u),o) \geq 99R$,  $d(\Pi(v),o) \geq 99R$.

	We shall show that
	$u,v$ satisfy the properties required by the Lemma.
	For $R$ large enough $[u,\Pi(u)]\cup [\Pi(u),\Pi(v)] \cup [\Pi(v) \cup v]$ (concatenated in this order) forms a $(1,4\delta)-$quasigeodesic (see \cite[Chapter III.H.1]{bh-book} or \cite[Lemma 3.1]{mitra-trees} or Lemma \ref{lem-expdiv} below). Hence there exists $A>0$ (depending only on $\delta$)
	such that $[u,v] \cap N_{A} (o) \neq \emptyset$.

	Using the standard fact that paths leaving large neighborhoods of a geodesic are exponentially inefficient in hyperbolic space (\cite[Chapter III.H.1]{bh-book} or Lemma \ref{lem-expdiv}) we have that there exist $R_0>0, \alpha>0$ depending only on the hyperbolicity constant $\delta$ of $\Gamma$,
	such that 
	$\ell(\sigma_{uv}) \geq d(\Pi(u), \Pi(v))e^{\alpha R}$ for $R \geq R_0$. Item (1) of the Lemma follows immediately. 
	
	Next, $d(o, \sigma_{uv}) \leq  R + d(\Pi(u),\Pi(v))$. Choosing $R_0$ large enough, it follows from the previous paragraph that there exists $C'$ depending only on $\delta$ such that for $R \geq R_0$,
	$d(o, \sigma_{uv}) \leq C' \ell(\sigma_{uv})$, proving Item (2) of the Lemma.
\end{proof}

\section{}
\label{s:set}
In this appendix we discuss an alternative approach to proving Theorem \ref{thm-vexists} using Kingman's subadditive ergodic theorem: for an ergodic transformation $\mathcal{T}$ on a probability space and for any sequence $g_{n}$ of $L^1$ functions satisfying $g_{m+n}(\omega)\leq g_{m}(\omega)+g_{n}(\mathcal{T}\omega)$, $n^{-1}g_{n}$ converges a.e. to a constant. First passage times are naturally subadditive, i.e, $T(u,v)\leq T(u,w)+T(w,v)$ for all $u,v,w$, and this is the standard machinery used to show the existence of velocity for FPP on the Euclidean lattice $\Z^d$ using a simple argument analogous to the one described in Lemma \ref{velatpoles}. For general directions, however, it is not clear if there exists a translation (an action by a group element) along any given geodesic ray which will be translation invariant with respect to the underlying product measure, let alone be ergodic. 

The Calegari-Fujiwara machinery \cite{calegarifujiwara} generating the Patterson-Sullivan measure on the boundary via a Markov chain gives a way to approach Theorem \ref{thm-vexists} via Kingman's theorem which we briefly describe below. The reader would recognize this approach to be a more compact and sophisticated version of the argument presented in Section \ref{sec-vel}. We, however, believe that the more bare hands approach taken by us is more geometrically intuitive and would be more appealing to our intended audience simultaneously consisting of probabilists and geometers. Hence we only present a brief sketch of this argument for the interested reader. 
	
For the simplicity of exposition, let us restrict ourselves to the case where the Markov chain generating the Patterson-Sullivan measure (the Markov chain $N$ from Definition \ref{def-nmu}) is irreducible and aperiodic, i.e., it consists of a single recurrent class. The case where it has multiple components can be dealt with by decomposing it into aperiodic maximal components similar to the argument presented in Section \ref{sec-vel} and we shall not discuss this issue further. 

Recall that the Markov chain trajectories are geodesic rays started at 1 and each Markov chain step corresponds to taking one step along an edge on the geodesic which has an associated passage time. Hence one can consider the joint probability space determined by the Markov chain together with these i.i.d.\ weights. Although the different weights are i.i.d.\ across edges, observe that two different (finite) Markov chain trajectories can lead to the same vertex and hence the weight associated to the next step of the chain will be the same. Therefore the joint chain is not Markov; however it still exhibits a decay of correlation. Consider the transformation $\mathcal{T}$ which denotes a shift of length one along the Markov chain trajectory. If one considers the stationary version of the chain (since we have assumed the chain $N$ is irreducible and aperiodic it has a unique stationary distribution) this transformation is measure preserving. Once one checks the ergodicity of this transformation, one can apply the subadditive Ergodic theorem 
on the sequence of functions $g_{n}(\omega)=T(u_0,u_n)$ where $u_{i}$ denotes position of the chain on the $i$-th step. Notice that in the above argument we have started with a stationary chain whereas in our original description the chain was not necessarily stationary. However, since we have assumed the chain to be ergodic, it converges to the unique stationary distribution and standard techniques may be used to conclude almost sure convergence along the Markov chain trajectories started at 1 as well. An application of Fubini's theorem then would imply that there is an almost sure set of the Markov chain trajectories (i.e., a full measure subset of $\partial G$) such that almost sure convergence of the average passage time happens along these directions. 

We have not attempted to write down the details of this approach here as we believe that our proof from first principles provide a better geometric description of the set of directions along which the convergence holds.

\section*{Acknowledgments}
We thank Danny Calegari for useful email correspondence, particularly for explaining to us one of  the key points of \cite{calegarifujiwara}.
We thank Itai Benjamini for several useful comments on an earlier draft, including directions for future research. Comments of several others have helped improve the quality of the paper. The contents of Section \ref{sec-free3}, Remark \ref{rmk-ref} and Appendix \ref{s:set}, are due, 
at least in part, to comments on an earlier draft.  We thank the referee for a careful reading of the manuscript and for suggesting several corrections and improvements.
RB was partially supported by Ramanujan Fellowship (SB/S2/RJN-097/2017) and a MATRICS grant (MTR/2021/000093) from SERB, Govt.~of India, an ICTS--Simons Junior Faculty Fellowship, and by DAE, Govt.~of India, under project no.~project no. RTI4001 to ICTS. MM was partly supported by a DST JC Bose Fellowship, Matrics research project grant  MTR/2017/000005, CEFIPRA  project No. 5801-1
and by  the Department of Atomic Energy, Government of India, under project no.12-R\&D-TFR-5.01-0500. MM also acknowledges  partial support from the grant 346300 for IMPAN from the Simons Foundation and the matching 2015-2019 Polish MNiSW fund. Both authors were supported in part by an endowment of the Infosys Foundation.
This paper began at the International Centre for Theoretical Sciences (ICTS) during  the program - Probabilistic Methods in Negative Curvature (Code: ICTS/PMNC2019/03) and was completed during a visit to Indian Institute of Science, Bengaluru. We  gratefully acknowledge the hospitality of these institutions.

\bibliography{hypperc}
\bibliographystyle{alpha}

\end{document}